\tikzset{>=to}
\definecolor{rouge}{rgb}{0.85,0.1,0.15}
\definecolor{forestgreen}{rgb}{0.13,0.54,0.13}
\definecolor{vertforet}{RGB}{237, 135,45} 
\definecolor{bleu}{rgb}{0.1,0.2,0.8}
\definecolor{g-darkgreen}{RGB}{7, 141, 112}
\definecolor{g-green}{RGB}{38,206,170}
\definecolor{g-lightgreen}{RGB}{152,232,193}
\definecolor{g-lightblue}{RGB}{127,173,226}
\definecolor{g-indigo}{RGB}{80, 73, 204}
\definecolor{g-blue}{RGB}{61,26,120}
\definecolor{gq-green}{RGB}{74,129,34}
\definecolor{gq-mauve}{RGB}{181,126,220}
\setlist[itemize]{leftmargin=*}                  
\setlist[enumerate]{leftmargin=*,                
label=\textup{(\roman*)}}    
\numberwithin{equation}{section}
\newcommand{\fld}[1]{\mathbb{#1}} 
\newcommand{\ZZ}{\fld{Z}}
\newcommand{\NN}{\fld{N}}
\newcommand{\CC}{\fld{C}}
\newcommand{\eps}{\varepsilon}
\newcommand{\alg}[1]{\mathcal{#1}} 
\newcommand{\ca}[1]{\mathcal{#1}} 
\newcommand{\dih}{\mathsf{D}} 
\newcommand{\p}[1]{\rlap{\,#1}}
\newcommand{\modSB}{~\operatorname{mod}\,} 
\DeclareMathOperator{\sgn}{sgn}
\DeclareMathOperator{\End}{End}
\theoremstyle{plain}
\newtheorem{theorem}{Theorem}[section]
\newtheorem{lemma}[theorem]{Lemma}
\newtheorem{proposition}[theorem]{Proposition}
\newtheorem{example}[theorem]{Example}
\newtheorem{definition}[theorem]{Definition}
\newtheorem{notation}[theorem]{Notation}
\newtheorem*{remark}{Remark}
\newcommand{\sym}[1]{O_{#1}} 
\newcommand{\acomm}[2]{\left\lbrace #1,\, #2 \right\rbrace} 
\newcommand{\dcover}[1]{\widetilde{#1}}
\newcommand{\Lad}[1]{L_{#1}} 
\newcommand{\Clif}{\mathcal{C}}
\newcommand{\Lie}[1]{\mathfrak{#1}} 
\newcommand{\DDop}{\underline{D}} 
\newcommand{\dsig}[1]{\dcover{s}_{#1}}
\newcommand{\dtau}[1]{\dcover{t}_{#1}}
\newcommand{\Ortho}{\mathsf{O}}
\newcommand{\bbc}{\mathbb{C}}
\newcommand{\bbr}{\mathbb{R}}
\newcommand{\bbs}{\mathbb{S}}
\newcommand{\bbz}{\mathbb{Z}}
\newcommand{\ux}{\underline{x}}
\newcommand{\up}{\DDop} 
\newcommand{\fg}{\mathfrak{g}}
\newcommand{\tama}{\mathfrak{O}_\kappa}
\DeclareMathOperator{\ad}{ad}
\DeclareMathOperator{\Ad}{Ad}
\DeclareMathOperator{\id}{Id}
\newcommand{\Mab}{L_{\ib{++}{12}}}
\newcommand{\Mad}{L_{\ib{+-}{12}}}
\newcommand{\Mcb}{L_{\ib{-+}{12}}}
\newcommand{\Mcd}{L_{\ib{--}{12}}}
\newcommand{\La}{L_{\ib{+}{1}}}
\newcommand{\Lc}{L_{\ib{-}{1}}}
\newcommand{\Lb}{L_{\ib{+}{2}}}
\newcommand{\Ld}{L_{\ib{-}{2}}}
\newcommand{\dr}[1]{\dcover{r}_{#1}}
\newcommand{\df}[1]{\dcover{f}_{#1}}
\newcommand{\ib}[3][A]{\smash{\overset{#2}{#3\vphantom{#1}}}} 
\newcommand{\Tepsun}[1]{O_{\ib{#1+-}{122}}}
\newcommand{\Tepsdeux}[1]{O_{\ib{#1+-}{211}}}
\newcommand{\Tepsa}[1]{O_{\ib{#1+-}{abb}}}
\newcommand{\Tepsb}[1]{O_{\ib{#1+-}{baa}}}
\newcommand{\hwv}{\mathfrak{v}} 
\newcommand{\wv}{\mathfrak{u}} 
\newcommand{\TriSubAlg}{\mathfrak{T}}
\newcommand{\Mam}{N_{\ib{-}{1}}}
   \newcommand{\Mbm}{N_{\ib{-}{2}}}
   \newcommand{\Nmp}{N_{\ib{-+}{12}}}
   \newcommand{\Nmm}{N_{\ib{--}{12}}}
\newcommand{\linkbetween}[5] 
{
    \draw[color = #5] (#1,#2) circle(3pt);
    \draw[color = #5] (#3,#4) circle (3pt);
    \draw[very thick, color = #5, opacity = 0.6] (#1,#2) -- (#3,#4);
}
\definecolor{rouge}{rgb}{0.85,0.1,0.15}
\definecolor{forestgreen}{rgb}{0.13,0.54,0.13}
\definecolor{bleu}{rgb}{0.1,0.2,0.8}
\begin{document}
\allowdisplaybreaks

\title{The double dihedral Dunkl total angular momentum algebra}

\author[M De Martino]{Marcelo De Martino}
\address[Marcelo De Martino]{ Clifford Research Group,
Department of Electronics and Information Systems,
Faculty of Engineering and Architecture,
Ghent University,
Krijgslaan 281--S8, 9000 Gent, Belgium. \emph{Now at} Forward College, Rua das Flores 71, 1200-193 Lisboa, Portugal}
\email{Marcelo.demartino@forward-college.eu; \href{https://orcid.org/0000-0002-1851-1090}{ORCID:0000-0002-1851-1090}}
\author[A Langlois-R\'emillard]{Alexis Langlois-R\'emillard}
\address[Alexis Langlois-R\'emillard]{ Department of Applied Mathematics, Computer Science and Statistics, Faculty of Sciences, Ghent University, Krijgslaan 281--S9, 9000 Gent, Belgium. \emph{Now at} 
Hausdorff Center for Mathematics, 53115 Bonn, Germany.
}
\email{langlois@uni-bonn.de; \href{https://orcid.org/0000-0002-5919-8766}{ORCID:0000-0002-5919-8766}}
\author[R Oste]{Roy Oste}
\address[Roy Oste]{ Department of Applied Mathematics, Computer Science and Statistics, Faculty of Sciences, Ghent University, Krijgslaan 281--S9, 9000 Gent, Belgium.
}
\email{Roy.Oste@UGent.be; \href{https://orciSuperintegrability of the Dunkl–Coulomb problem in three-dimensionsd.org/0000-0002-3418-7067}{ORCID:0000-0002-3418-7067}}

\date{\today}

\keywords{Total angular momentum algebra; 
Dunkl--Dirac operator; Rational Cherednik algebra; Dihedral root system}
\subjclass{ 
16S80; 
17B10; 
20F55; 
81R12 
}
\begin{abstract}
The Dunkl total angular momentum algebra (TAMA) is realised as the dual partner of the orthosymplectic Lie superalgebra containing the Dunkl deformation of the Dirac operator. In this paper, we consider the case when the reflection group associated with the Dunkl operators is a product of two dihedral groups acting on a four-dimensional Euclidean space. We show that in this case there is a subalgebra of the total angular momentum algebra that admits a triangular decomposition. In analogy to the celebrated theory of semisimple Lie algebras, we use this triangular subalgebra to give precise necessary conditions that a finite-dimensional irreducible representation must obey, in terms of weights. In specific cases, which includes unitary representations, we construct a basis of weight vectors with explicit actions of all TAMA elements. Examples of these modules occur in the kernel of the Dunkl--Dirac operator in the context of deformations of Howe dual pairs.
\end{abstract}

\maketitle


\section{Introduction}

Total angular momentum operators naturally appear as symmetries of the Dirac operator in the study of the motion of fermions.
These operators form a realisation of the orthogonal Lie algebra associated with the $\mathrm{Spin}$- or $\mathrm{Pin}$-group, 
which fits in the framework of Howe dual pairs \cite{BDES10,Ho95}. As a consequence of this is that it allows for an algebraic description of the spinor-valued polynomials in the kernel of the Dirac operator, also called monogenic polynomials, in terms of $\mathrm{Spin}$-modules.

In this paper, we are interested in the Howe duality related with the Dunkl version of the Dirac operator.
The study of Dunkl deformations of Howe dual pairs is a recent area of research \cite{ciubotaru_deformations_2020,DBGV3d16,DBGV16,DBLROVdJ22,de_bie_algebra_2018,OSS09}. In all these cases, the main difficulty is that the full invariance for the orthogonal group is not present in the Dunkl setting. One is forced to deal with a finite reflection group and needs to compute the full centraliser algebra of the relevant Lie (super)algebra in question. This is a challenging setup as, typically, the centraliser algebra that arises has a rather complicated structure, despite its resemblance with universal enveloping algebras from Lie theory.

In general, let $V_\bbr$ be a Euclidean space (with its complexification denoted by $V$), $W\subset\Ortho(V_\bbr)$ be a real reflection group with root system $R$ and  $\kappa:R\to \CC$ be a $W$-invariant function. Attached to this data is the rational Cherednik algebra $H_{\kappa} =H_{\kappa}(V,W)$~\cite{EG02}. It has a faithful representation inside the endomorphism algebra of the polynomial representation where the partial derivatives are replaced by the Dunkl operators. It is well known \cite{He91} that the classical $\Lie{sl}(2)$-triple containing the Laplace operator is also present in $H_{\kappa}$ and the subalgebra $H_{\kappa}^{\Lie{so}(d)}\subset  H_{\kappa}$ of elements commuting with the $\Lie{sl}(2)$-triple was studied by Feigin and Hakobyan in \cite{FH15} (see also \cite{CaDM22,CDM18}). This algebra is known as the Dunkl angular momentum algebra for its relations with the angular momentum operators in the context of Calogero--Moser integrable systems (a very similar algebra, for the case $W=S_n$, has appeared in~\cite[Sect.~8, arXiv version]{Kuz96}). 

Now let $\Clif(V)$ be the Clifford algebra associated with $V$ and the bilinear symmetric form associated with its Euclidean structure. There is a realisation of the orthosymplectic Lie superalgebra $\Lie{osp}(1|2)$ inside $\Clif(V)\otimes H_{\kappa}$~\cite{OSS09}, and this algebra contains the Dunkl--Dirac operator. The relevant dual pair we are seeking to deform is ($\mathsf{Pin}(V_\bbr),\Lie{osp}(1|2)$) and the main algebra of study to us is $\tama = \tama(V,W)$, the (graded) centraliser algebra of $\Lie{osp}(1|2)$ inside $\Clif(V)\otimes H_{\kappa}$.
We shall call this algebra the \emph{Dunkl total angular momentum algebra}, or TAMA in short. In \cite{CDMO22}, the centre of this algebra was determined, and in \cite{Os22}, several structural properties of the TAMA were discussed, including a natural set of generators and commutation relations amongst them. However, at the moment, there is no known presentation of this algebra in terms of generators and relations, which both complicates and renders the theory interesting.

So far, we have discussed the rather general framework within which the algebra $\tama$ is inserted and let us now describe what is achieved in this paper. Building on previous studies for the cases in which $W=\ZZ_2^3\subset\Ortho(3)$~\cite{DBGV3d16,Huang22}, $W=\ZZ_2^d\subset \Ortho(d)$~\cite{DBGV16} (these papers were accomplished in the context of the Bannai--Ito algebra), $W=S_3\subset\Ortho(3)$~\cite{DBOVDJ18}, $W=G_2\subset\Ortho(3)$~\cite{LRO19} and $W=\dih_{2m}\times \ZZ_2\subset \Ortho(3)$~\cite{DBLROVdJ22} (this last work includes the complete classification of the finite-dimensional irreducible representations, along with the suitable restrictions on $\kappa$ for the existence of a unitarity structure), we aim to study the representation theory of the TAMA when $\dim V_\bbr = 4$ and $W=\dih_{2m_1}\times \dih_{2m_2}\subset \Ortho(4)$. 
A recent result of~\cite{Os22} states that the Dunkl total angular momentum algebra is generated by two- and three-index symmetries. Dimension four marks a change of nature of the algebra: in dimension three, the sole three-index symmetry super-anticommutes with all the elements of the algebra, but in higher dimensions the many three-index symmetries have more involved commutation relations. This is one of the motivations to study the Dunkl total angular momentum algebra for a space of dimension four specifically, as an important intermediate step toward a space of arbitrary dimension.

Apart from the change in structure that happens in dimension 4, which makes the study of this specific case interesting from a mathematical point of view, it is also important to study low-dimension examples in depth for their physical applications. Howe dual pairs are a well-studied topic in physics; see~\cite{BJMM20} for a recent review. However, we focus here on a Howe duality not covered by the aforementioned source, as it involves superalgebras~\cite{ChengWang12}. This appears in the context of physics in the study of higher-spin algebras~\cite{FN94,JM14,Ni91,Va04}. Specifically for dimension 4, recent work by Basile and Dhasmana~\cite{BD24} considered many dualities and potential candidates for deformation. Dunkl formalism in physics can be a way to provide such deformations, but much of the work studying low-dimensional cases in depth only considered $W=\mathbb Z_2^n$ and often only their realisation as polynomial representations~\cite{GMVZ14,GMVZ13,GLVZ13,Gha21,GS19,SH23}. There has been recent interest in providing worked-out examples for further, more complex, groups~\cite{DBLROVdJ22,Dun23Sigma,Dun23Jphy,FH22}. The representation theory for these groups is much richer, and we present the tools to describe it in enough generality to allow for a future extension to an arbitrary product of dihedral groups.

The main results of this article are a characterisation of the structure of finite-dimensional irreducible representations with respect to a triangular subalgebra (\cref{thm:AllWeightSpace}) and a coarse classification (see \cref{sec:CoClass}) of finite-dimensional irreducible representations of $\tama$ (Theorem~\ref{thm:ClassiLabel}). Moreover, we study properties of unitary finite-dimensional representations (\cref{prop:UnitModule}). When the parameter function is ``small'' (Definition~\ref{def:smallkappa}), we show that only specific representations can appear as finite-dimensional irreducible $\tama$-representations (\cref{thm:ClassiSmallKappa}). Such representations resemble the monogenic polynomials and we call them ``triangle-representations''  (\cref{def:mono-type}). For these representations, we are able to describe a basis of weight vectors with explicit actions (\cref{prop:MonoMaxi}).

To achieve the results described in the previous paragraph, we introduce a subalgebra $\TriSubAlg$ of $\tama$ with a ``triangular'' factorisation (\cref{prop:triangstruc}) and a weight theory to essentially capture the full action of $\tama$ (\cref{prop:WS_1D_If_HWS_1D}).  The existence of this triangular subalgebra is particular to the double dihedral case and is not guaranteed for general $W$. Nevertheless, this subalgebra is well suited to treat the case of arbitrary products of dihedral groups and hence the case studied in this article is a stepping stone to a study of the general representation theory when $W=\dih_{2m_1}\times \dots \times \dih_{2m_n}\subset \Ortho(2n)$.

We now go through the structure of the article. First, Section~\ref{sec:IniDefs} introduces the needed conventions and notions on dihedral groups and double coverings. The Dunkl TAMA for the group $W=\dih_{2m_1}\times\dih_{2m_2}$ is then presented in Section~\ref{sec:tama}. Section~\ref{sec:ladder} defines ladder operators and the triangular subalgebra $\TriSubAlg$. Finally, Section~\ref{sec:rep} contains the main results of the papers, their proofs, and examples. 

\section{Dihedral groups and double coverings}\label{sec:IniDefs}

Let $V_{\mathbb{R}}\cong \mathbb{R}^N$ be an $N$-dimensional Euclidean space endowed with the standard inner product $(\cdot,\cdot)$. From Section~\ref{ssec:Dihedral} on, we specialise to $N=4$.
We let $V\cong\mathbb{C}^N$ denote the complexification of $V_{\mathbb{R}}$. We will also denote by $(\cdot,\cdot)$ the non-degenerate complexified symmetric bilinear form on $V$ and denote by $B:V\to V^*$ the isomorphism induced by $(\cdot,\cdot)$, which is defined by $B(x)(y) = (x,y)$, for all $x,y\in V$.
Furthermore we denote by $x\mapsto \bar x$ the involution on $V = V_{\mathbb{R}}\otimes \mathbb{C}$ induced by complex conjugation.

\subsection{Clifford algebras}
We recall the definition and some properties of the Clifford algebra associated with $V_\mathbb{R}$. Let  $\ca C_{\mathbb{R}}$ denote the real Clifford algebra of the pair $(V_{\mathbb{R}},2(\cdot,\cdot))$. Specifically, this algebra is the quotient of the tensor algebra $T(V_{\mathbb{R}}) = \oplus_{j\geq 0} T^j(V_{\mathbb{R}})$ by the nonhomogeneous quadratic ideal $I$ generated by the elements of the form  $x\otimes y + y\otimes x -2(x,y)1$ for $x,y\in V_{\mathbb{R}}$.

We let $\gamma:V_{\mathbb{R}}\to\ca C_{\mathbb{R}}$ be the canonical embedding. If we denote the chosen $(\cdot,\cdot)$-orthonormal basis of $V_{\mathbb{R}}$ by $\{x_1,\ldots, x_N\}$, we let $e_j =\gamma(x_j)$ so that the Clifford algebra has the usual presentation as the unital associative algebra generated by $e_1,\ldots, e_N$ subject to the relations $\acomm{e_j}{e_k} = e_je_k + e_ke_j = 2\delta_{jk}$, for all $j,k =1,\ldots,N$. More generally, let $\bigwedge(V_\mathbb{R}) = \oplus_{p\geq 0} \bigwedge^p(V_\mathbb{R})$ be the exterior algebra on $V_\mathbb{R}$. Extend $\gamma:V_{\mathbb{R}}\to \ca C_{\mathbb{R}}$ to a linear isomorphism $\gamma: \bigwedge(V_{\mathbb{R}})\to \ca C_{\mathbb{R}}$ by declaring $\gamma(1):=1$ and, for each $p > 0$,
\begin{equation}\label{eq:quantization}
\gamma(v_1\wedge \cdots \wedge v_p) := \frac{1}{p!}\sum_{g\in S_p} \sgn(g)\gamma(v_{g(1)})\cdots \gamma(v_{g(p)}),
\end{equation}
for any $p$-tuple $(v_1,\ldots,v_p)$ of elements of $V_\mathbb{R}$. In particular, for each ordered subset $A = \{a_1,\ldots,a_p\}\subseteq \{1,\ldots,N\}$, we let $x_A:= x_{a_1}\wedge\cdots\wedge x_{a_p}\in \bigwedge^p(V_\mathbb{R})$, with $x_\emptyset := 1$ and put $e_A:=\gamma(x_A) = e_{a_1} \cdots e_{a_p}$. The set $\{e_A\mid A\subseteq \{1,\ldots, N\}\}$ forms a linear basis of $\ca C_\mathbb{R}$.

The Clifford algebra $\ca C_{\mathbb R}$ has a natural $\bbz_2$-grading $\ca C_{\mathbb{R}} = \ca C_{\mathbb{R},\bar 0} \oplus \ca C_{\mathbb{R},\bar 1}$ where $\ca C_{\mathbb{R},\bar{\jmath}}$, for $\jmath\in \{0,1\}$, is the image of $\sum_{k\geq 0} T^{2k+\jmath}(V_{\mathbb{R}})$. Finally, we let $\ca C$ denote the complexification of $\ca C_{\mathbb{R}}$ and we write $\ca C = \ca C_{\bar 0}\oplus \ca C_{\bar 1}$ for the induced $\mathbb{Z}_2$-grading. We shall also denote  by $\gamma$ the complexified isomorphism $\gamma:\bigwedge(V)\to \ca C$ defined in (\ref{eq:quantization}), and note that $\{e_A\mid A\subseteq \{1,\ldots,4\}\}$ is also a linear basis for $\ca C$.

Recall that if $\alg{A}$ is a unital, associative $\mathbb{Z}_2$-graded algebra, the graded commutator is defined as 
\begin{equation}\label{eq:Z2_products}
\llbracket a,b \rrbracket := ab -(-1)^{|a||b|}ba
\end{equation}
for homogeneous elements $a,b$ of degree $|a|$ and $|b|$, respectively, and extended accordingly. For $a,b\in \alg{A}$, 
we will denote the (ungraded) commutator
by $[a,b] := ab-ba$ and the anti-commutator by $\{a,b\}:=ab+ba$.

\subsection{Dihedral groups, double coverings and realisations}

In this section, we shall consider the double coverings of a product of dihedral groups. Before specialising to dihedral groups, we recall some general features of double coverings of real reflection groups. We follow the seminal works of Morris~\cite{Mo76,Mo80} and Schur~\cite{Sc11}. Suppose
that $W$ is a real reflection group of arbitrary rank $N$ and that $V_{\mathbb{R}}\cong \mathbb{R}^N$ is its reflection representation. There are two double coverings of $W$, a positive and a negative, reflecting the two possibilities for a definite symmetric bilinear form on $V_{\mathbb{R}}$. We shall only consider the positive double covering, which is compatible with our convention $e_j^2=1$ for the Clifford algebra. Let
\begin{equation}\label{eq:DoubCovSES}
    1 \to \mathbb{Z}_2 \to \mathsf{Pin}(N) \stackrel{\pi}{\to} \mathsf{O}(N) \to 1
\end{equation}
be the exact sequence that defines the double covering of the orthogonal group (see \cite[Theorem~2.14]{Mo76}) as a central extension. Here, $\mathsf{Pin}(N)$ (see \cite[Definition 2.12]{Mo76}) is a subgroup of $\ca C^\times$, the units of the (complexified) Clifford algebra associated with a positive-definite symmetric bilinear form of $V_{\mathbb{R}}$. The double covering of the subgroup $W \subset \mathsf{O}(N)$ is defined as $\dcover{W}:=\pi^{-1}(W)$. Because of this realisation of $\dcover{W}$ as a subgroup of the group of units in a Clifford algebra, we can use the $\mathbb{Z}_2$-grading of the latter to decompose
\begin{equation}\label{eq:DoubleCoverDecomp}
\dcover{W} = \dcover{W}_{\bar 0} \cup \dcover{W}_{\bar 1},
\end{equation}
into even and odd parts.

We let $z$ denote the generator of the kernel of $\pi$ in (\ref{eq:DoubCovSES}). Let also $e_\pm := \tfrac{1\pm z}{2}$ denote the canonical idempotents of $\mathbb{C}\dcover{W}$ and put $\mathbb{C}\dcover{W}_\pm := e_\pm(\mathbb{C}\dcover{W})$. Then
\begin{equation}\label{eq:splitting}
\mathbb{C}\dcover{W} = \mathbb{C}\dcover{W}_+ \oplus \mathbb{C}\dcover{W}_-.
\end{equation}
Moreover, $\mathbb{C}\dcover{W}_+\cong \mathbb{C}W$. Indeed, note that  if $\{\dcover{w},z\dcover{w}\} = \pi^{-1}(w)$ for $w\in W$, then $e_+\dcover w = e_+(z\dcover w)$, so the assignment $e_+\dcover w \mapsto \pi(\dcover w)$ defined on the canonical generators of $\mathbb{C}\dcover{W}$ is well-defined and induces the isomorphism. In light of (\ref{eq:splitting}), the representations of $\dcover{W}$ are split in two types: the first are the linear representations that factor through the action of $W$, and the second are the \emph{spin representations}. They are distinguished by the action of the central element $z$ of order 2: if it acts as $-1$ in the representation, then the representation is a spin representation. We shall denote by $\textup{Irr}(\dcover{W})$ the set of equivalence classes of irreducible representations of $\dcover{W}$ and by
$\textup{sIrr}(\dcover{W})\subset\textup{Irr}(\dcover{W})$
the subset of equivalence classes of irreducible spin representations. Finally, we remark that if $\rho:\bbc \dcover{W} \to \bbc W\otimes \ca C$ is the diagonal algebra homomorphism defined on the generators $\dcover w,z \in \dcover W$ by $\rho(\dcover w) = \pi(\dcover w)\otimes \dcover{w}$, $\rho(z) = 1\otimes (-1)$ and extended linearly, then
\begin{equation}\label{eq:DiagImage}
\rho(\mathbb{C} \dcover W) \cong \mathbb{C}\dcover{W}_-.
\end{equation}
This is an easy application of the Isomorphism Theorem; see \cite[Proposition~2.5]{CDMO22}.

\subsubsection{Dihedral groups}\label{ssec:Dihedral}

    From now on, we specialise our discussion to $\dim V=4$. The orthogonal group $\mathsf{O}(4)$ consists of all endomorphisms of $V_\bbr\cong\mathbb{R}^4$ that preserve the Euclidean norm. We consider a subgroup $W \cong \dih_{2m_1}\times \dih_{2m_2}\subset \Ortho(4)$, where $ \dih_{2m}$ denotes the dihedral group of order $2m$  with Coxeter presentation given by
 \begin{equation}
 \dih_{2m}= \left\langle s_1,\, s_m\ \middle|\  s_1^2 = s_m^2 =  (s_1s_m)^m =1\right\rangle.
 \end{equation}
Another presentation of the dihedral group $\dih_{2m}$ is in terms of the rotation $r := s_ms_1$ and the reflection (or flip) $f := s_m$
 \begin{equation}\label{e:dih2}
 \dih_{2m}= \left\langle r,\,f\ \middle|\  r^m = f^2 =  (rf)^2 =1\right\rangle\,.
 \end{equation}
The even elements of $\dih_{2m}$ form a cyclic group of $m$ elements, generated by $r$. The odd elements of $\dih_{2m}$ are reflections, given by $s_p = r^p f$ for $p=1,\dots, m$.

As a Weyl group, $W$ corresponds to the root system $R\subset V_{\mathbb{R}}$ of type $\mathsf{I}_2(m_1) \oplus \mathsf{I}_2(m_2)$.
In terms of an appropriate $(\cdot,\cdot)$-orthonormal basis of $V_{\mathbb{R}}$, which we will fix and denote by $\{x_1,\ldots, x_4\}$,
$R$ is realised as
\begin{equation}\label{eq:dihroots}
\begin{aligned}
\alpha_p &= (\sin(p\pi/m_1),\,-\cos(p\pi/m_1),\,0,\,0),&
\beta_q &= (0,\,0,\,\sin(q\pi/m_2),\,-\cos(q\pi/m_2)),
\end{aligned}
\end{equation}
for $p=1,\dots, 2m_1$ and $q=1,\dots, 2m_2$.
We fix the set of positive roots to be $R_+ = \left\{\alpha_1, \dots, \alpha_{m_1},\beta_1,\dots , \beta_{m_2} \right\}$. We decided to follow the same convention for the dihedral groups as Dunkl~\cite{Du89} and Humphreys~\cite{Hu90}. The associated reflections (see~\eqref{eq:refl}) ${s}_p$, $p=1,\dots, m_1$ and ${t}_q$, $q=1,\dots, m_2$, are given in matrix form by
\begin{equation}\label{eq:dihreflections}
\begin{aligned}
 {s}_p &=
 \begin{pmatrix}
 \cos(\tfrac{2p\pi}{m_1}) & \sin(\tfrac{2p\pi}{m_1}) & 0 & 0 \\
 \sin(\tfrac{2p\pi}{m_1}) & -\cos(\tfrac{2p\pi}{m_1}) & 0  & 0 \\
  0 & 0 & 1 & 0\\
  0 & 0 & 0 & 1\\
 \end{pmatrix}, &
 {t}_q &=
 \begin{pmatrix}
  1 & 0 & 0 & 0 \\
  0 & 1 & 0 & 0 \\
  0 & 0 & \cos(\tfrac{2q\pi}{m_2}) & \sin(\tfrac{2q\pi}{m_2})\\
  0 & 0 & \sin(\tfrac{2q\pi}{m_2}) & -\cos(\tfrac{2q\pi}{m_2})\\
 \end{pmatrix}.
\end{aligned}
\end{equation}

The structure of a dihedral group $\dih_{2m}$ depends on whether $m$ is even or odd. When it is odd, all reflections are in one conjugacy class; when $m$ is even, they fall into two conjugacy classes, and there is a non-trivial central element, which acts as minus the identity on the reflection representation; this is relevant for Theorem~\ref{thm:GradedCenterTAMA}.

\subsubsection{Double coverings}

The (positive) double covering of a dihedral group $G=\dih_{2m}$ of order $2m$ has the structure of a dihedral group of order $4m$: $\dcover{G} \cong \dih_{4m}$.
By~\cite[Thm. 3.4]{Mo80}, the double covering of the product of two Coxeter groups
is a quotient of the graded tensor product of the individual double coverings of the groups, where the  central extension elements are identified.

As an abstract group, $W=\dih_{2m_1}\times \dih_{2m_2}$ has the following Coxeter presentation
\begin{equation}
W = \left\langle s_1,\, s_{m_1},\, t_1,\, t_{m_2}\ \middle|\ \begin{matrix} s_1^2 = s_{m_1}^2 =  (s_1s_{m_1})^{m_1} =1,\\
 t_1^2 = t_{m_2}^2 = (t_1t_{m_2})^{m_2} =1,\end{matrix} \begin{matrix} (s_pt_q)^2=1,\\ (p=1,m_1;\;q=1,m_2)\end{matrix} \right\rangle.
\end{equation}
It follows from~\cite[Thm~3.6]{Mo76} that $\dcover{W}$ has the following presentation:
\begin{equation}\label{eq:dWgenrel}
\dcover{W} = \left\langle z, {\dcover s}_{1}, {\dcover s}_{m_1}, {\dcover t}_{1}, {\dcover t}_{m_2} \ \middle|\  \begin{matrix} {\dcover s}_{1}^{\ 2}= {\dcover s}_{m_1}^{\ 2}= 1,\   ({\dcover s}_{1}{\dcover s}_{m_1})^{m_1} =z^{m_1+1},\\
 {\dcover t}_{1}^{\ 2} = {\dcover t}_{m_2}^{\ 2} = 1,\ ({\dcover t}_{1}{\dcover t}_{m_2})^{m_2} =z^{m_2+1},\end{matrix}\ \begin{matrix}({\dcover s}_p{\dcover t}_q)^2=z,\,z^2=1,\\ (p=1,m_1;\;q=1,m_2)\end{matrix}\right\rangle,
\end{equation}
or (similar to~\eqref{e:dih2}), using ${\dcover r}_1 := z {\dcover s}_{m_1} {\dcover s}_1$, ${\dcover r}_2 := z  {\dcover t}_{m_2} {\dcover t}_1$, ${\dcover f}_1 :=  {\dcover s}_{m_1}$, ${\dcover f}_2 := {\dcover t}_{m_2}$, 
\begin{equation}\label{eq:dWgenrel2}
\dcover{W} = \left\langle z, {\dcover r}_{1}, {\dcover f}_{1}, {\dcover r}_{2}, {\dcover f}_{2} \ \middle|\
\begin{matrix} {\dcover r}_{j}^{\ m_j}= z,\ {\dcover f}_{j}^2= 1= ({\dcover r}_{j}{\dcover f}_{j})^{\ 2} ,\\
  {\dcover r}_{j}{\dcover r}_{k} = {\dcover r}_{k}{\dcover r}_{j},\
   {\dcover r}_{j}{\dcover f}_{k}= {\dcover f}_{k}{\dcover r}_{j},
  \end{matrix}\ \begin{matrix}({\dcover f}_1{\dcover f}_2)^2=z,\ z^2=1,\\ (j,k\in\{1,2\}, j\neq k)\end{matrix}\right\rangle.
\end{equation}

Let $\dcover{G}$ (resp.~$\dcover H$) denote the double cover of $\dih_{2m_1}$ (resp.~$\dih_{2m_2}$ ). In light of (\ref{eq:DoubleCoverDecomp}), we can decompose both groups into even and odd parts, so  
\begin{equation}\label{eq:DCDihDecomp}
    \dcover{G}_{\bar{0}} = \langle \dcover{r}_1 \rangle,\qquad \dcover{G}_{\bar{1}} = \dcover{G}_{\bar{0}}\dcover{f}_1 = \{(\dcover{r}_1)^j\dcover{f}_1\mid 0\leq j \leq 2m_1-1\}  = \dcover{f}_1\dcover{G}_{\bar{0}},
\end{equation}
and similarly for $\dcover{H}$. These splittings yield a decomposition of $\dcover{W}$ into four disjoint parts
\begin{equation}\label{eq:WZ2}
 \dcover{W} = \bigcup_{(\bar\imath,\bar\jmath\,)\in \mathbb{Z}_2^2}\dcover{W}_{(\bar\imath,\bar\jmath\,)}. \end{equation}
Here $\dcover{W}_{(\bar\imath,\bar\jmath\,)}$ is defined as the image of the multiplication map $\dcover{G}_{\bar\imath}\times \dcover{H}_{\bar\jmath}\to \dcover{W}$ defined by
\begin{equation}
(z_1^a\dcover{u},z_2^b\dcover{w}) \mapsto z^{a+b}\dcover{u}\dcover{w},
\end{equation}
for $z_1^a\dcover u\in \dcover G_{\bar\imath}$ and $z_2^b\dcover w\in \dcover H_{\bar\jmath}$, where we denote $z_1$ and $z_2$ to be the central extension elements in the double coverings of $G$ and $H$, respectively. It is straightforward to check that the maps $\dcover{G}_{\bar\imath}\times \dcover{H}_{\bar\jmath}\to \dcover{W}_{(\bar\imath,\bar\jmath\,)}$ are all $(2:1)$ and hence $|\dcover{W}_{(\bar\imath,\bar\jmath\,)}| = 2m_1m_2$.

As it will play a role in what follows, we mention in particular that the subgroup $ \dcover{W}_{(\bar 0,\bar 0)}$ is isomorphic to the quotient of the abelian group $C_{2m_1} \times C_{2m_2}$ by the group $C_2$ coming from the identification $\dr{1}^{\ m_1}=z=\dr{2}^{\ m_2}$:
\begin{equation}\label{eq:GddSubGrpdcW}
 \dcover{W}_{(\bar 0,\bar 0)} = \langle \dcover{r}_1,\dcover{r}_2\rangle \cong (C_{2m_1}\times C_{2m_2})/C_2.
\end{equation}

\subsubsection{Representation theory}
We now include a short review of the representation theory of the group $\dcover{W}$. Our focus here is on the spin representations, as these will be relevant for the representation theory of~$\tama$, and for that, the presentation~\eqref{eq:dWgenrel2} will be useful.
Note that the classification results below depend on the parity of the dihedral parameters $m_1$ and $m_2$.

A finite-dimensional $\dcover{W}$-module decomposes into a direct sum of one-dimensional simple modules for the abelian subgroup $\dcover W_{(\bar 0,\bar0)}$, see~\eqref{eq:GddSubGrpdcW}. The $\dcover W_{(\bar 0,\bar0)}$-modules are given by the $C_{2m_1}\times C_{2m_2}$-modules where $r_1^{m_1}$ and $r_2^{m_2}$ have the same action.
Specifically, these are given by $u(\ell_1,\ell_2)$, with $\ell_1\in \{0,\dots, 2m_1-1\}$, $\ell_2\in \{0,\dots, 2m_2-1\}$ and $\ell_1 \equiv \ell_2 \modSB 2$, endowed with the following action on $u\in u(\ell_1,\ell_2)$:
\begin{align}\label{e:actu}
	\dr{1}\cdot u &= \zeta_1^{\ell_1} u, & \dr{2}\cdot u&= \zeta_2^{\ell_2}u; & \zeta_1 &:= e^{i\pi/m_1}, & \zeta_2:=e^{i\pi /m_2}.
\end{align}
For this to be a spin representation, $z=\dr{1}^{\ m_1}=\dr{2}^{\ m_2}$ has to act by $-1$, which restricts the values of $\ell_1$ and $\ell_2$ to be odd integers. Next, we consider the induced $\dcover{W}$-representation
\begin{equation}\label{eq:Indlk}
    U(\ell_1,\ell_2) :=\mathrm{Ind}_{\dcover{W}_{(\bar 0, \bar 0)}}^{\dcover{W}}(u(\ell_1,\ell_2)).
\end{equation}
If $u\in u(\ell_1,\ell_2)$, then $ U(\ell_1,\ell_2) = \mathrm{span}\{ u,\df{1}u,\df{2}u,\df{1}\df{2}u\}$.
This leads to the classification of irreducible spin representations of $\dcover{W}$. The  complete details of the proof can be found in~\cite[Section 6.2.2]{LR23}.

\begin{theorem}\label{thm:spinirrepsW}
Let $\dcover{W}$ be the positive double covering of $W=\dih_{2m_1}\times \dih_{2m_2}$.
A list of all irreducible spin representations is given as follows.
\begin{itemize}
\item
For positive odd integers $\ell_1$ and $\ell_2$, with $\ell_1 < m_1$ and $\ell_2<m_2$, $U(\ell_1,\ell_2)$ is a four-dimensional irreducible spin representation with the actions of the elements given in matrix form by
	\begin{align*}
        \df{1} &= \left(\begin{smallmatrix}
                        0&1&0&0\\
                        1&0&0&0\\
                        0&0&0&1\\
                        0&0&1&0\\
                        \end{smallmatrix}\right), &
        \df{2} &= \left(\begin{smallmatrix}
                        0&0&1&0\\
                        0&0&0&-1\\
                        1&0&0&0\\
                        0&-1&0&0\\
                        \end{smallmatrix}\right) &
            \dr{1} &= \left(\begin{smallmatrix}
                        \zeta_1^{\ell_1}&0&0&0\\
                        0&\zeta_1^{-\ell_1}&0&0\\
                        0&0&\zeta_1^{\ell_1}&0\\
                        0&0&0&\zeta_1^{-\ell_1}\\
                        \end{smallmatrix}\right),
            & \dr{2} &= \left(\begin{smallmatrix}
                        \zeta_2^{\ell_2}&0&0&0\\
                        0&\zeta_2^{\ell_2}&0&0\\
                        0&0&\zeta_2^{-\ell_2}&0\\
                        0&0&0&\zeta_2^{-\ell_2}\\
                        \end{smallmatrix}\right).
	\end{align*}
	Note that $U(2m_1-\ell_1,\ell_2)$ or $U(\ell_1,2m_2-\ell_2)$ would result in a representation equivalent to $U(\ell_1,\ell_2)$.
This gives $\lfloor m_1/2 \rfloor \times \lfloor m_2/2\rfloor$ non-isomorphic irreducible spin representations of dimension 4.

\item If at least one of the dihedral parameters $m_1$ and $m_2$ is odd, there are also irreducible spin representations of dimension 2.
\begin{enumerate}
	\item If $m_1$ is odd, there are $m_2 $ representations $U(m_1,\ell_2)$ indexed by a positive odd integer $\ell_2$ with $\ell_2<2m_2$.
	The actions of the elements are given in matrix form by
	\begin{align*}
        \df{1} &= \left(\begin{smallmatrix}
                       	1&0\\0&-1
                       \end{smallmatrix}\right),&
        \df{2} &= \left(\begin{smallmatrix}
                       	0&1\\1&0
                       \end{smallmatrix}\right),&
        \dr{1}  &= \left(\begin{smallmatrix}
                       	-1&0\\0&-1
                       \end{smallmatrix}\right), &
        \dr{2}&= \left(\begin{smallmatrix}
                 	\zeta_2^{\ell_2}&0\\0&\zeta_2^{-\ell_2}
                 \end{smallmatrix}\right).
	\end{align*}
	    \item If $m_2$ is odd, there are $m_1 $ representations $U(\ell_1,m_2)$ indexed by a positive odd integer $\ell_1$ with $\ell_1 < 2m_1$.
	    The actions of the elements are given in matrix form by
	\begin{align*}
        \df{1} &=  \left(\begin{smallmatrix}
                       	0&1\\1&0
                       \end{smallmatrix}\right),&
        \df{2} &=\left(\begin{smallmatrix}
                       	1&0\\0&-1
                       \end{smallmatrix}\right),&
        \dr{1}  &= \left(\begin{smallmatrix}
                 	\zeta_1^{\ell_1}&0\\0&\zeta_1^{-\ell_1}
                 \end{smallmatrix}\right), &
        \dr{2}&= \left(\begin{smallmatrix}
                       	-1&0\\0&-1
                       \end{smallmatrix}\right).
	\end{align*}
 \end{enumerate}
If both $m_1$ and $m_2$ are odd, the representation $U(m_1,m_2)$ is in both of the two previous cases.
\end{itemize}
   \end{theorem}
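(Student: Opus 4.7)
The plan is to apply Clifford--Mackey theory to the normal abelian subgroup $\dcover{W}_{(\bar 0,\bar 0)} = \langle \dcover r_1, \dcover r_2\rangle$, which has index four in $\dcover W$ by~\eqref{eq:WZ2}. Its one-dimensional characters are the $u(\ell_1,\ell_2)$ recorded in~\eqref{e:actu}, and the spin requirement that $z = \dcover r_1^{m_1} = \dcover r_2^{m_2}$ act as $-1$ restricts us to $\ell_1$ and $\ell_2$ odd. Using the relations in~\eqref{eq:dWgenrel2} I would first check that conjugation by $\dcover f_1$ inverts $\dcover r_1$ while fixing $\dcover r_2$, and symmetrically for $\dcover f_2$, so that the quotient $\dcover W/\dcover W_{(\bar 0,\bar 0)} \cong (\bbz/2)^2$ acts on characters by independent sign changes on $(\ell_1,\ell_2)$. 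By Clifford's theorem, an irreducible spin representation of $\dcover W$ then corresponds to a single orbit of characters together with an irreducible representation of the inertia subgroup that extends a character from that orbit.

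The argument proceeds by case analysis on the orbit size. In the generic case $\ell_1 \neq m_1$ and $\ell_2 \neq m_2$, the orbit has size four and the inertia subgroup coincides with $\dcover W_{(\bar 0,\bar 0)}$, so $U(\ell_1,\ell_2)=\mathrm{Ind}_{\dcover W_{(\bar 0,\bar 0)}}^{\dcover W} u(\ell_1,\ell_2)$ is irreducible of dimension four; choosing $1\leq\ell_1<m_1$ and $1\leq\ell_2<m_2$ picks one representative per orbit, and the matrices in the statement are obtained by letting the generators act on the basis $\{u,\dcover f_1 u,\dcover f_2 u,\dcover f_1\dcover f_2 u\}$. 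When exactly one of $\ell_j=m_j$ occurs (which forces $m_j$ odd), the orbit has size two, the inertia subgroup has index two in $\dcover W$, and the character admits two extensions distinguished by the sign of $\dcover f_j$; each induces to an inequivalent two-dimensional irreducible. In the doubly degenerate case $\ell_1=m_1$, $\ell_2=m_2$ with both $m_1,m_2$ odd, the orbit is a point and the inertia is all of $\dcover W$; but no one-dimensional extension exists, since $(\dcover f_1\dcover f_2)^2=z$ would force $(\epsilon_1\epsilon_2)^2=-1$ for scalar actions $\epsilon_j$ of $\dcover f_j$. The unique irreducible above this character is therefore the two-dimensional projective representation of $(\bbz/2)^2$ with nontrivial cocycle, accounting for the overlap in the final assertion of the theorem.

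Finally I would write down the explicit matrices case by case, verify the defining relations~\eqref{eq:dWgenrel2} directly, and confirm the parametrisation: the equivalences $U(2m_1-\ell_1,\ell_2)\cong U(\ell_1,\ell_2)\cong U(\ell_1,2m_2-\ell_2)$ explain the ranges in the four-dimensional family, and summing squared dimensions gives $16\lfloor m_1/2\rfloor\lfloor m_2/2\rfloor$ plus four times the number of two-dimensional representations, which in every parity scenario totals $4m_1m_2 = |\dcover W|/2$, guaranteeing completeness of the list. The main delicate point is the doubly degenerate case: it is absent for a single dihedral factor and requires recognising the cocycle obstruction forced by $(\dcover f_1\dcover f_2)^2=z$, together with the corresponding identification of $U(m_1,m_2)$ as the simultaneous endpoint of cases (1) and (2).
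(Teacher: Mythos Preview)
Your proposal is correct and follows precisely the Clifford--Mackey strategy that the paper itself sets up: inducing the characters $u(\ell_1,\ell_2)$ of the index-four normal abelian subgroup $\dcover W_{(\bar 0,\bar 0)}$ as in~\eqref{eq:Indlk}, and analysing the resulting orbits under $(\mathbb{Z}/2)^2$. The paper does not actually prove the theorem in-text but defers the details to~\cite[Section~6.2.2]{LR23}; your orbit-and-inertia analysis, including the cocycle obstruction in the doubly degenerate case $(\ell_1,\ell_2)=(m_1,m_2)$ and the dimension count $16\lfloor m_1/2\rfloor\lfloor m_2/2\rfloor + 4\cdot(\text{number of 2-dimensional irreducibles}) = 4m_1m_2$, is exactly what is needed and matches the paper's parametrisation (in particular, your two extensions distinguished by the sign of $\dcover f_j$ correspond to the theorem's labelling by $\ell_2$ versus $2m_2-\ell_2$ in case~(i)).
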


\subsubsection{Clifford algebra spinors} In our setting, as $\dim V_\bbr =4$ is even, there is a unique, up to equivalence, simple module for $\Clif$, denoted $\bbs$. Upon restriction, $\bbs$ will determine an irreducible spin $\dcover{W}$-representation. We shall end this section by determining the $\dcover{W}$-action on $\bbs$. For $a\in\{1,2\}$,  write
\begin{equation}\label{eq:thetas}
\theta_a^+ := \theta_a = \tfrac{1}{2}(e_{2a-1} + ie_{2a}),\qquad \theta_a^- := \bar{\theta}_a =\tfrac{1}{2}(e_{2a-1} - ie_{2a}).
\end{equation}
The elements $\{\theta_1,\bar\theta_1,\theta_2,\bar\theta_2\}$ also generate $\Clif$ and they satisfy the Grassmann relations
\begin{equation}\label{eq:GrassmRels}
\theta_a\theta_b + \theta_b\theta_a = 0 =
\bar\theta_a\bar\theta_b + \bar\theta_b\bar\theta_a,\qquad
\theta_a\bar\theta_b + \bar\theta_b\theta_a = \delta_{ab},
\end{equation}
for $a,b\in\{1, 2\}$. The spin module $\bbs$ can be realised as the four-dimensional vector space $\bigwedge(\bar\theta_1,\bar\theta_2)$. In terms of the generators (\ref{eq:thetas}), $\bar\theta_a$ acts as a multiplication operator, while $\theta_a$ acts as an odd derivation with
$\theta_a(\theta_b)=0$ and $\theta_a(\bar\theta_b) = \delta_{ab}$.

\begin{proposition}
    In terms of Theorem \ref{thm:spinirrepsW}, as an irreducible spin $\dcover{W}$-representation, we have $\bbs\cong U(1,1)$.
\end{proposition}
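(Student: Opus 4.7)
The plan is to identify $\bbs$ with $U(1,1)$ by realising the generators of $\dcover{W}$ explicitly inside $\Clif$, computing their eigenvalues on the standard Clifford spinor basis, and invoking the classification in Theorem~\ref{thm:spinirrepsW}.

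First, I would pin down the embedding $\dcover{W}\subset \Pin(4)\subset \Clif^\times$. From~\eqref{eq:dihroots}, the roots $\alpha_{m_1}$ and $\beta_{m_2}$ are simply $x_2$ and $x_4$, so the Pin-group lifts of the corresponding simple reflections are $\df{1}=\dcover{s}_{m_1}=e_2$ and $\df{2}=\dcover{t}_{m_2}=e_4$. Similarly, $\dcover{s}_1=\sin(\pi/m_1)e_1-\cos(\pi/m_1)e_2$ and, using $\dr{1}=z\,\df{1}\,\dcover{s}_1$ together with $z=-1\in\Clif$, one finds
\[
\dr{1}=\cos(\pi/m_1)+\sin(\pi/m_1)\,e_1e_2,\qquad \dr{2}=\cos(\pi/m_2)+\sin(\pi/m_2)\,e_3e_4.
\]

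Second, I would compute the action on $\bbs\cong\bigwedge(\bar\theta_1,\bar\theta_2)$. Using $e_{2j-1}=\theta_j+\bar\theta_j$ and $e_{2j}=-i(\theta_j-\bar\theta_j)$ together with~\eqref{eq:GrassmRels} and the realisation of $\theta_j$ as an odd derivation, a direct verification shows that $e_1e_2$ and $e_3e_4$ act diagonally on $\{1,\bar\theta_1,\bar\theta_2,\bar\theta_1\bar\theta_2\}$ with eigenvalues $\pm i$. Combining, the four basis vectors are simultaneous eigenvectors of $\dr{1}$ and $\dr{2}$ with eigenvalue pairs $(\zeta_1,\zeta_2)$, $(\zeta_1^{-1},\zeta_2)$, $(\zeta_1,\zeta_2^{-1})$, $(\zeta_1^{-1},\zeta_2^{-1})$ respectively. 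This is exactly the character of the induced representation $U(1,1)$ restricted to $\dcover{W}_{(\bar 0,\bar 0)}$, as read off from~\eqref{e:actu}.

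Third, I would invoke Theorem~\ref{thm:spinirrepsW} to upgrade this character match to a $\dcover{W}$-isomorphism. Since $m_1,m_2\geq 2$, the eigenvalues $\zeta_1^{\pm 1}$ and $\zeta_2^{\pm 1}$ are all distinct from $-1$, so $\bbs$ cannot contain a $2$-dimensional spin subrepresentation of the form $U(m_1,\star)$ or $U(\star,m_2)$ (on which $\dr{1}$ or $\dr{2}$ acts as $-\id$). As there are no $1$-dimensional spin representations, $\bbs$ must be a $4$-dimensional irreducible spin module; the eigenvalue data for $\dr{1}$ and $\dr{2}$ then forces $\ell_1=\ell_2=1$, modulo the symmetry $\ell_j\leftrightarrow 2m_j-\ell_j$ already acknowledged in Theorem~\ref{thm:spinirrepsW}.

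The only real work is the mechanical Grassmann computation that $1,\bar\theta_1,\bar\theta_2,\bar\theta_1\bar\theta_2$ diagonalise $e_1e_2$ and $e_3e_4$; everything else is extracting consequences from the classification. If one insists on literally reproducing the matrices of Theorem~\ref{thm:spinirrepsW} rather than just matching characters, the bookkeeping hurdle is choosing phases in the basis (e.g.\ $\{1,i\bar\theta_1,i\bar\theta_2,-\bar\theta_1\bar\theta_2\}$) so that the off-diagonal actions of $\df{1}=e_2$ and $\df{2}=e_4$ match the stated sign pattern; this is routine but worth stating as a remark.
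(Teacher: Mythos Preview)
Your proposal is correct and follows essentially the same approach as the paper: both arguments boil down to computing the $(\dr1,\dr2)$-eigenvalues on the standard spinor basis $\{1,\bar\theta_1,\bar\theta_2,\bar\theta_1\wedge\bar\theta_2\}$ and reading off $(\ell_1,\ell_2)=(1,1)$. The only tactical difference is that the paper writes out the action of \emph{all} reflections $\gamma(\alpha_p),\gamma(\beta_q)$ explicitly and concludes irreducibility by inspection, whereas you compute only $\dr1,\dr2$ and deduce irreducibility from the classification in Theorem~\ref{thm:spinirrepsW} by ruling out two-dimensional summands; both are fine and of comparable length.
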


\begin{proof}
    We have that $\{1,\bar\theta_1,\bar\theta_2,\bar\theta_1\wedge\bar\theta_2\}$ is a basis of $\bbs$. Furthermore, the generators of $\dcover{W}$ are realised as $\gamma(\alpha_p)$ and $\gamma(\beta_q)$. One checks that $\gamma(\alpha_p)=i(\zeta_1^{-p}\theta_1-\zeta_1^p\bar\theta_1)$, while $\gamma(\beta_q)=i(\zeta_2^{-q}\theta_2-\zeta_2^q\bar\theta_2)$. It is then straightforward to obtain that $\gamma(\alpha_p)$, with $p=1,\ldots, m_1$, acts via the assignments
    \[
1\mapsto -i\zeta_1^{p}\bar\theta_1,\quad
\bar\theta_1\mapsto i\zeta_1^{-p}1,\quad
\bar\theta_2\mapsto -i\zeta_1^{p}\bar\theta_1\wedge\bar\theta_2,\quad
\bar\theta_1\wedge\bar\theta_2\mapsto i\zeta_1^{-p}\bar\theta_2,
    \]
    while $\gamma(\beta_q)$, with $q=1,\ldots, m_2$, acts via the assignments
    \[
1\mapsto -i\zeta_2^{q}\bar\theta_2,\quad
\bar\theta_1\mapsto i\zeta_2^{q}\bar\theta_1\wedge\bar\theta_2,\quad
\bar\theta_2\mapsto i\zeta_2^{-q}1,\quad
\bar\theta_1\wedge\bar\theta_2\mapsto -i\zeta_2^{-q}\bar\theta_1.
    \]
It follows that $\bbs$ is indeed irreducible and the $(\dr1,\dr2)$-eigenvalues of the elements $1$, $ \bar\theta_1$, $\bar\theta_2$ and $\bar\theta_1\wedge\bar\theta_2$ are given by $(\zeta_1, \zeta_2)$, $(\zeta_1^{-1},\zeta_2)$, $(\zeta_1,\zeta_2^{-1})$ and $(\zeta_1^{-1},\zeta_2^{-1})$,  respectively, thus finishing the proof.
\end{proof}

\section{Total angular momentum algebra}\label{sec:tama}

\subsection{Rational Cherednik algebra}
Recall the pair $(V,W)$ defined above in \cref{sec:IniDefs}. Fix $\kappa: R \to \bbc$ a $W$-invariant parameter function, that is, an assignment $\alpha\mapsto \kappa_\alpha\in \bbc$ such that $\kappa_{w(\alpha)} = \kappa_\alpha$, for all $\alpha \in R$ and $w\in W$.
For  $W= \dih_{2m_1}\times \dih_{2m_2}$, there can be at most 4 distinct values of $\kappa$, in the case when $m_1$ and $m_2$ are both even. We will denote these as $\{\kappa_j\}_{j=1}^4$, for which we have $\kappa_{\alpha_{2p-1}} = \kappa_1$, $\kappa_{\alpha_{2p}} = \kappa_2$, $\kappa_{\beta_{2q -1}} = \kappa_3$, $\kappa_{\beta_{2q}} = \kappa_4$, for the appropriate integer values of $p,q$. When $m_1$ (resp.~$m_2$) is odd,  we denote $\kappa_{\alpha_j} = \kappa_1$ (resp.~$\kappa_{\beta_j} = \kappa_3$) for all values of $j$.
For any $\alpha\in R_+$, the reflection ${s}_\alpha$ is given by
\begin{equation}\label{eq:refl}
    {s}_\alpha(x) = x - 2\frac{(x,\alpha)}{(\alpha,\alpha)}\alpha,
\end{equation}
for all $x\in V$. With $\alpha$ given as in (\ref{eq:dihroots}), the associated reflection ${s}_\alpha$ has matrix form as in (\ref{eq:dihreflections}). Note that, for $p=1,\ldots,m_1$ (resp. $q=1,\ldots,m_2$), ${s}_\alpha = {s}_p$ if $\alpha\in \{\alpha_p,\alpha_{p+m_1}\}$ (resp. ${s}_\beta = {t}_q$ if $\beta \in \{\beta_q,\beta_{q+m_2}\})$.

\begin{definition}[\cite{EG02}]
The rational Cherednik algebra $H_\kappa = H_\kappa(V,W)$ is the quotient of the smash-product algebra $T(V\oplus V^*)\rtimes \bbc W$ by the relations $[\xi,\eta]=0=[x,y]$, for all $\xi,\eta\in V^*,x,y\in V$ and
\begin{equation}\label{eq:rcarel}
[\xi,x] = \xi(x) + \psi_\kappa(B^{-1}(\xi),x),
\end{equation}
for all $\xi \in V^*$ and $x\in V$, where $\psi_\kappa(x,y) = \sum_{\alpha\in R_+} \frac{2\kappa_\alpha}{(\alpha,\alpha)}(x,\alpha)(y,\alpha){s}_\alpha$ and $B:V\to V^*$ is the linear isomorphism induced by the Euclidean structure.
\end{definition}
\begin{proposition}
In $H_\kappa$, we have $[B(x),y] = [B(y),x]$, for all $x,y\in V$.
\end{proposition}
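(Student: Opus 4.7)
The plan is to expand both sides of the desired identity using the defining relation \eqref{eq:rcarel} of the rational Cherednik algebra and then observe that each resulting term is symmetric in $x$ and $y$.

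First I would set $\xi=B(x)$ in \eqref{eq:rcarel}, which gives $B^{-1}(\xi)=x$ and $\xi(y)=B(x)(y)=(x,y)$, so that
\begin{equation*}
[B(x),y] = (x,y) + \psi_\kappa(x,y).
\end{equation*}
Exchanging the roles of $x$ and $y$ similarly yields
\begin{equation*}
[B(y),x] = (y,x) + \psi_\kappa(y,x).
\end{equation*}

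The proof then reduces to noting two symmetries: the complexified form $(\cdot,\cdot)$ is symmetric by construction, so $(x,y)=(y,x)$; and inspecting the explicit formula
\begin{equation*}
\psi_\kappa(x,y)=\sum_{\alpha\in R_+}\frac{2\kappa_\alpha}{(\alpha,\alpha)}(x,\alpha)(y,\alpha)\,s_\alpha,
\end{equation*}
each summand is symmetric in $x,y$ since $(x,\alpha)$ and $(y,\alpha)$ are scalars, hence $\psi_\kappa(x,y)=\psi_\kappa(y,x)$. Combining these two symmetries gives the equality of the two commutators.

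There is no real obstacle here; the statement is essentially an immediate unpacking of the defining relation together with the manifest symmetry of the Dunkl correction term $\psi_\kappa$. The only point worth emphasising is that one must apply \eqref{eq:rcarel} rather than simply invoking antisymmetry of $[\,\cdot,\cdot\,]$, because $B(x)$ and $y$ live in different factors of $V^*\oplus V$ and their commutator is governed by the cross relation, not by the zero relations $[x,y]=0=[\xi,\eta]$.
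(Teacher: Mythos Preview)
Your proof is correct and follows essentially the same approach as the paper: the paper's one-line proof simply states that the claim follows from the symmetry of $\psi_\kappa$ and the identity $B(x)(y)=(x,y)=B(y)(x)$, which is exactly what you unpack in detail.
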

\begin{proof}
Follows from the fact that $\psi_\kappa$ is symmetric and $B(x)(y) = (x,y) = B(y)(x)$.
\end{proof}

As a vector space, we have $H_\kappa = S(V)\otimes \bbc W \otimes S(V^*)$ where $S(V)$ is the symmetric algebra on $V$.
Given $\tau:W \to GL(V_\tau)$, an irreducible representation of $W$ (with representation space $V_\tau$), we denote by $M_\kappa(\tau) = H_\kappa \otimes_{\bbc W \otimes S(V^*)} V(\tau) = S(V) \otimes V_\tau$ the standard module of $H_\kappa$ where $V^*$ acts by zero on $V_\tau$.
When $\tau$ is the trivial representation, $M_\kappa(\tau)$ corresponds to the space of polynomial functions on $V^*$.
In this case, an explicit (faithful~\cite{EG02}) realisation of $H_\kappa$ is given by means of Dunkl operators~\cite{Du89}.

Let $\textup{Irr}(W)$ denote the set of (equivalence classes of) irreducible representations of $W$,
it is well known that the set $\textup{Irr}(W)$ parametrises the set of (equivalence classes of) simple modules of $H_\kappa$ and that there is a surjection $M_\kappa(\tau)\to L_\kappa(\tau)$ from the standard module
to the corresponding simple module $L_\kappa(\tau)$, for each $\tau \in \textup{Irr}(W)$.

\subsection{Dual pair}
We will now work in the tensor product $H_\kappa\otimes \ca C$.
The $\mathbb{Z}_2$-grading of the Clifford algebra $ \ca C$ naturally induces a $\mathbb{Z}_2$-grading on $H_\kappa\otimes \ca C$ by declaring
$(H_\kappa\otimes \ca C)_{\bar{\jmath}} = H_\kappa\otimes \ca C_{\bar{\jmath}},$
for $j\in \{0,1\}$.
To ease notation, we will identify $H_\kappa \cong H_\kappa\otimes \bbc\subset H_\kappa\otimes \ca C $
and $\ca C \cong \bbc\otimes \ca C\subset H_\kappa\otimes \ca C $.

Since $\bbc W\hookrightarrow H_\kappa$, we can define $\rho:\bbc\dcover W\to H_\kappa\otimes\ca C$ as in~(\ref{eq:DiagImage}).
The restriction of $\rho$ to $\dcover W$ is a group isomorphism, and we will sometimes identify $\rho(\dcover W)$ with $\dcover W$ and omit the notation $\rho$ when it is clear from the context that one is working in $H_\kappa\otimes\ca C$.

\begin{notation}\label{not:refl}
In the Clifford algebra realisation, $\dcover{W}$ is generated by~$\gamma(R)$, the image of the (normalised) root system~\eqref{eq:dihroots}. For a root $\alpha\in R$, we will denote $\dcover{s}_\alpha  := \rho(\gamma(\alpha)) =  s_\alpha\otimes \gamma(\alpha)\in \rho(\mathbb{C}\dcover W)$.  Similarly to the notations of~(\ref{eq:dihroots}), we shall also write $\dsig{p} :=  \dcover{s}_{\alpha_p}$ for $1\leq p \leq m_1$ and $\dtau{q} :=  \dcover{s}_{\beta_q}$, for $1\leq q \leq m_2$.
\end{notation}

Given the standard orthonormal basis $\{x_1,\ldots, x_4\}$ of $V_{\mathbb{R}}$, we let $\{\xi_1,\ldots,\xi_4\}$ be the dual, orthonormal basis of $V_{\mathbb{R}}^*$ defined by $\xi_j = B(x_j)$, for all $j =1,\ldots, 4$. With respect to these bases, consider the elements
\begin{equation}\label{eq:EvenPart}
\Delta_{\kappa} = \sum_{j=1}^4 \xi_j^2\otimes 1,\qquad E= \frac{1}{2}\sum_{j=1}^4 (x_j\xi_j + \xi_j x_j)\otimes 1,\qquad |x|^2 = \sum_{j=1}^4 x_j^2\otimes 1
\end{equation}
and
\begin{equation}\label{eq:OddPart}
\up = \sum_{j=1}^4 \xi_j\otimes e_j,\qquad \ux = \sum_{j=1}^4 x_j\otimes e_j
\end{equation}
of $H_\kappa\otimes \ca C$. Let $\fg_{\bar 0} = \textup{span}\{\Delta_{\kappa},E,|x|^2\}$ and $\fg_{\bar 1} = \textup{span}\{\up,\ux\}$. The following is well  known (see~\cite{OSS09}):

\begin{theorem}\label{thm:osp}
The vector subspace $\fg = \fg_{\bar 0} \oplus \fg_{\bar 1}$ of $H_\kappa\otimes \ca C$ spanned by $\{\Delta_{\kappa}, \up, E, \ux, |x|^2\}$ has the structure of a Lie superalgebra isomorphic to $\mathfrak{osp}(1|2,\bbc)$.
\end{theorem}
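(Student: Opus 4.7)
The plan is to identify the five-dimensional subspace $\fg$ with $\osp(1|2,\bbc)$ by checking the full table of (super)brackets on its generators. Recall that $\osp(1|2,\bbc)$ admits a presentation with even part $\mathfrak{sl}_2=\mathrm{span}\{e,f,h\}$ satisfying $[h,e]=2e$, $[h,f]=-2f$, $[e,f]=h$, and odd part $\mathrm{span}\{F_+,F_-\}$ satisfying $\{F_\pm,F_\pm\}=\pm 2e$ or $\mp 2f$, $\{F_+,F_-\}=h$, $[h,F_\pm]=\pm F_\pm$, $[e,F_-]=-F_+$, $[f,F_+]=-F_-$, and $[e,F_+]=0=[f,F_-]$. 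I would set up the candidate correspondence $h\leftrightarrow 2E$, $e\leftrightarrow |x|^2$, $f\leftrightarrow -\DLapl$, $F_+\leftrightarrow \ux$, $F_-\leftrightarrow \up$ (with the precise normalisations dictated by the forthcoming computations) and verify each of the twelve relations in turn.

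First I would handle the even-even sector: this is the Dunkl $\mathfrak{sl}_2$-triple of Heckman~\cite{He91} recalled in the introduction, giving $[E,\DLapl]=-2\DLapl$, $[E,|x|^2]=2|x|^2$, $[\DLapl,|x|^2]=4E$ inside $H_\kappa$, and these extend trivially to $H_\kappa\otimes\Clif$ since the Clifford factors are scalar. Next, for the even-odd sector: the ``highest-weight'' identities $[|x|^2,\ux]=0$ and $[\DLapl,\up]=0$ are immediate from $[x_j,x_k]=0=[\xi_j,\xi_k]$; the weight relations $[E,\ux]=\ux$ and $[E,\up]=-\up$ follow from $[x_j\xi_j+\xi_j x_j,x_k]=2x_k$ and the analogous relation for $\xi_k$, in which the reflection contributions arising from~\eqref{eq:rcarel} cancel by the symmetry of $\psi_\kappa$; and the raising/lowering brackets $[\DLapl,\ux]=2\up$ and $[|x|^2,\up]=-2\ux$ follow from the standard Dunkl identity $[\DLapl,x_k]=2\xi_k$ (and its transpose) inside $H_\kappa$, tensored with $e_k$ and summed.

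The odd-odd anticommutators are where the Clifford structure enters essentially, and constitute the main obstacle. For $\{\up,\up\}=2\DLapl$ and $\{\ux,\ux\}=2|x|^2$ I would expand, for instance, $\up^2=\sum_{j,k}\xi_j\xi_k\otimes e_je_k$ and use $[\xi_j,\xi_k]=0$ together with $\{e_j,e_k\}=2\delta_{jk}$ to symmetrise the double sum onto its diagonal. The delicate computation is the mixed bracket $\{\up,\ux\}=2E$: a direct expansion yields
\[
\{\up,\ux\}=\sum_j (\xi_j x_j+x_j\xi_j)\otimes 1 \;+\; \sum_{j\ne k}[\xi_j,x_k]\otimes e_je_k,
\]
where the off-diagonal contribution survives only with the antisymmetric Clifford factor $e_je_k=-e_ke_j$. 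The first sum is exactly $2E$ by~\eqref{eq:EvenPart}. For the second, relation~\eqref{eq:rcarel} gives $[\xi_j,x_k]=\psi_\kappa(x_j,x_k)$ for $j\ne k$, so the sum equals $\sum_\alpha \tfrac{2\kappa_\alpha}{(\alpha,\alpha)}s_\alpha\otimes \bigl(\sum_{j\ne k}\alpha_j\alpha_k e_je_k\bigr)$. The crucial simplification is the Clifford identity $\sum_{j\ne k}\alpha_j\alpha_k e_je_k = \gamma(\alpha)^2-\sum_j\alpha_j^2\,e_j^2 = (\alpha,\alpha)-(\alpha,\alpha)=0$, so every reflection contribution vanishes identically and only $2E$ remains. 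Once all twelve brackets are verified, $\fg$ is closed under the graded commutator and the resulting Lie superalgebra structure matches the presentation of $\osp(1|2,\bbc)$ under the stated correspondence.
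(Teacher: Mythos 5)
The paper does not prove \cref{thm:osp} itself; it simply cites~\cite{OSS09}, so you have supplied an actual verification where the paper has a citation. Your verification is essentially correct and isolates exactly the right key point: in the mixed anticommutator $\{\up,\ux\}$, the off-diagonal Cherednik contributions $\psi_\kappa(x_j,x_k)\otimes e_je_k$ vanish because the symmetric array $\alpha_j\alpha_k$ is contracted against the antisymmetric Clifford bivectors, i.e. $\sum_{j\ne k}\alpha_j\alpha_k e_je_k=\gamma(\alpha)^2-(\alpha,\alpha)=0$. This cancellation is the genuinely nontrivial part of the proof, and your treatment of the other brackets (Heckman's $\mathfrak{sl}_2$-triple for the even part; $[\DLapl,x_k]=2\xi_k$ and the corresponding $[|x|^2,\xi_k]$ identity for the off-diagonal even–odd brackets; symmetrisation of $\up^2$ and $\ux^2$ onto the diagonal for $\{\up,\up\}$ and $\{\ux,\ux\}$) is all standard and sound.

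One minor inconsistency worth flagging: the initial dictionary $h\leftrightarrow 2E$ is incompatible with the bracket $[E,\ux]=\ux$ you then compute. With $h=2E$ and $[h,F_+]=F_+$ one would need $[E,\ux]=\tfrac12\ux$, whereas Heckman's normalisation $[\Delta_\kappa,|x|^2]=4E$ together with $\{\up,\ux\}=2E$ and $[E,\ux]=\ux$ forces $h\leftrightarrow E$, $e\leftrightarrow |x|^2$, $f\leftrightarrow -\DLapl$ and $F_\pm\leftrightarrow \ux/\sqrt2,\ \up/\sqrt2$ (so that $\{F_+,F_-\}=\tfrac12\{\ux,\up\}=E=h$). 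You anticipate this by saying the normalisations are to be fixed by the computations, so it is not a gap, but the stated correspondence should be corrected to match the brackets you actually prove.
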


By definition of the elements of $\fg$, we have $\llbracket \fg, \rho(\mathbb{C}\dcover W)\rrbracket = 0$, using the graded commutator.

\begin{definition}
The Dunkl total angular momentum algebra (TAMA) $\tama = \tama(V,W)$ is defined as the graded centraliser in $H_\kappa\otimes \ca C$ of $\fg\cong\mathfrak{osp}(1|2,\bbc)$.
\end{definition}

\subsection{Structural properties of the TAMA}
We shall now recall some structural properties of the algebra $\tama\subseteq H_\kappa\otimes \ca C$ (see \cite{Os22}), specialised to our context of $(V,W)$. To that end, let $P\in \End(H_\kappa\otimes\ca C)$ be the element defined by
\begin{equation}\label{eq:Proj}
P = \id - \tfrac{1}{2}\ad(\up)\ad(\ux) = \id + \tfrac{1}{2}\ad(\ux)\ad(\up).
\end{equation}
Here, the adjoint action $\ad \colon H_\kappa\otimes\ca C \to \End(H_\kappa\otimes\ca C) $  is given by $ \ad(x)(y) = \llbracket x,y \rrbracket$ where we use the graded commutator (see~\eqref{eq:Z2_products}). The relevance of $P\in \End(H_\kappa\otimes\ca C)$ is manifest by the following result:
\begin{theorem}[\cite{Os22}]\label{thm:Projector}
When restricted to $\ca A = \textup{Cent}_{H_\kappa\otimes\ca C}(\fg_{\bar 0})$, we have $(P|_{\ca A})^2 = P|_{\ca A}$ and $\tama = P(\ca A)$.
\end{theorem}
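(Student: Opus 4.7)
The plan is to derive both assertions from three intermediate facts: (A) the two displayed expressions for $P$ coincide on $\ca A$; (B) $P|_{\tama}=\id$; and (C) $P(\ca A)\subseteq\tama$. From (B) and (C) the idempotence $(P|_{\ca A})^2=P|_{\ca A}$ follows at once, since for $a\in\ca A$ one has $P(a)\in\tama\subseteq\ca A$ (the latter inclusion because $\fg_{\bar 0}\subset\fg$) and then applying $P$ again gives $P(a)$ by (B); the equality $\tama=P(\ca A)$ is equally immediate from (B) and (C).

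Fact (A) is a short calculation: the difference of the two expressions for $P$ applied to $a$ is $\tfrac{1}{2}(\ad(\up)\ad(\ux)+\ad(\ux)\ad(\up))(a)$, which by the super Jacobi identity equals $\tfrac{1}{2}\ad(\llbracket\up,\ux\rrbracket)(a)=\ad(E)(a)$, vanishing for $a\in\ca A$ because $E\in\fg_{\bar 0}$. Fact (B) is easier still: if $a\in\tama$ then $\ad(\ux)(a)=0$, so either form of $P$ trivially gives $P(a)=a$.

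For Fact (C), the key structural observation is that for any odd element $u$ in a $\ZZ_2$-graded associative algebra, $\ad(u)^2=\ad(u^2)$, a two-line consequence of the super Jacobi identity applied to $u,u,\,\cdot\,$. Applied to $\up$ and $\ux$, this yields $\ad(\up)^2=\ad(\Delta_\kappa)$ and $\ad(\ux)^2=\ad(|x|^2)$. Using the first form of $P$, one computes
\[
\ad(\up)P(a)=\ad(\up)(a)-\tfrac{1}{2}\ad(\Delta_\kappa)\ad(\ux)(a);
\]
expanding the second summand by the graded Leibniz rule and invoking $\ad(\Delta_\kappa)(a)=0$ together with the $\osp(1|2)$-relation $[\Delta_\kappa,\ux]=2\up$ from \cref{thm:osp}, it simplifies to $-\ad(\up)(a)$, and the two terms cancel to give $\ad(\up)P(a)=0$. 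A symmetric argument using the second form of $P$, the identity $\ad(\ux)^2=\ad(|x|^2)$, and $[|x|^2,\up]=-2\ux$ shows $\ad(\ux)P(a)=0$. Since $\fg$ is generated as a Lie superalgebra by the two odd generators $\up,\ux$ (their graded squares and anticommutator recover $\Delta_\kappa,|x|^2,E$), simultaneous centralisation of $\up$ and $\ux$ forces centralisation of the whole of $\fg$, so $P(a)\in\tama$.

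The only real obstacle is bookkeeping of parity signs in the Leibniz and Jacobi manipulations, but no conceptual difficulty arises once the clean identity $\ad(u)^2=\ad(u^2)$ for odd $u$ is recognised as the organising principle of the whole argument.
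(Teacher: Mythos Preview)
Your argument is correct. The three intermediate facts (A), (B), (C) are each verified cleanly, and the identities $\ad(u)^2=\ad(u^2)$ for odd $u$, together with the $\osp(1|2)$ relations $[\Delta_\kappa,\ux]=2\up$ and $[|x|^2,\up]=-2\ux$, are exactly what make the cancellations in (C) work. One minor remark: the paper states the two formulas for $P$ as an equality in $\End(H_\kappa\otimes\ca C)$, but as your computation in (A) shows, the difference is $\ad(E)$, so the equality genuinely requires restriction to $\ca A$; your treatment is the careful one.

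As for comparison: the paper does not give a proof of this theorem at all---it is stated with attribution to \cite{Os22} and used as a black box. So there is no ``paper's own proof'' to compare against here; you have supplied a self-contained verification of a result the authors import.
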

Note that the Clifford algebra $\ca C \cong \bbc\otimes \ca C\subset H_\kappa\otimes \ca C $ trivially commutes with $\fg_{\bar 0}\subset H_\kappa\otimes \bbc\subset H_\kappa\otimes \ca C$, so $\ca C \subset \ca A$.
Recalling the linear isomorphism $\gamma:\bigwedge(V)\to \ca C$ of (\ref{eq:quantization}), there is a $W$-equivariant linear map $O:\bigwedge(V)\to \tama$, defined for $v\in \bigwedge(V)$ as
\begin{equation}\label{eq:SymmOps}
    O(v) = -\tfrac{1}{2}P(
    \gamma(v)).
\end{equation}
Because of the $W$-equivariance, we have the following interaction with $\rho(\mathbb{C}\dcover W)$:
\begin{equation}\label{eq:WComm}
     \rho(\dcover w)O(v) = (-1)^{k|\dcover w|}O(\pi(\dcover w)(v))\rho(\dcover w),
\end{equation}
for $v\in \bigwedge^k(V)$ and $\dcover w \in \dcover W$, where~\eqref{eq:DoubleCoverDecomp} gives the
$\mathbb{Z}_2$-grading of $\dcover W$. Furthermore, the following result was proved for a general reflection group $W$.
\begin{theorem}[\cite{Os22}]\label{thm:TAMAgen}
As an associative subalgebra of $H_\kappa\otimes\ca C$, the centraliser algebra $\tama$ is generated by
\[
\textstyle \{O(v)\;\mid\; v \in \bigwedge^2(V)  \textup{ or }v \in \bigwedge^3(V) \} \cup \{\rho(\dcover w)\mid \dcover w\in \dcover W\}.
\]
\end{theorem}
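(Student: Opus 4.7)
My plan is to exploit \cref{thm:Projector}, which identifies $\tama$ with the image $P(\ca A)$ of the centraliser $\ca A = \textup{Cent}_{H_\kappa\otimes \ca C}(\fg_{\bar 0})$ under the projector $P$; this reduces the problem to computing $P$ on a suitable set of generators of $\ca A$. Since $\fg_{\bar 0}$ lies entirely in $H_\kappa\otimes 1$ and the Clifford subalgebra commutes with $H_\kappa\otimes 1$, the centraliser factorises as $\ca A = H_\kappa^{\Lie{sl}(2)}\otimes \ca C$, where $H_\kappa^{\Lie{sl}(2)}$ is the Dunkl angular momentum algebra of Feigin--Hakobyan, generated as an associative algebra by $\bbc W$ together with two-index Dunkl angular momenta $L_{ij}$. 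Noting that $w\otimes 1 = \rho(\dcover w)\cdot(1\otimes \dcover w^{-1})$ already produces all of $\bbc W\otimes 1$ from $\rho(\bbc\dcover W)$ and the Clifford subalgebra, we obtain a generating set for $\ca A$ consisting of $\rho(\bbc\dcover W)$, the $L_{ij}$, and the Clifford generators $e_j=\gamma(x_j)$.

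The second ingredient is the $\tama$-equivariance of $P$: for any $a\in \tama$ and $b\in \ca A$, one has $P(ab) = a\,P(b)$ and $P(ba) = P(b)\,a$. This follows at once from $P = \id - \tfrac{1}{2}\ad(\up)\ad(\ux)$ together with the fact that elements of $\tama$ graded-commute with both $\up$ and $\ux$, so both adjoint actions pass through $a$. An induction on the length of a monomial in the generators of $\ca A$ then reduces the theorem to showing that the subalgebra $T\subseteq \tama$ generated by $\{O(v):v\in\bigwedge^2(V)\cup\bigwedge^3(V)\}\cup \rho(\bbc\dcover W)$ contains $P(g)$ for every one of the generators $g$ above. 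The elements of $\rho(\bbc\dcover W)$ lie in $T$ and are fixed by $P$, and a direct unfolding shows that $P(L_{ij})$ coincides, up to a summand already in $T$, with the two-index symmetry $O(x_i\wedge x_j)$.

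The main step is then to control $P(e_j)=-2\,O(x_j)$, and more generally to show that every symmetry $O(v)$ with $v\in\bigwedge^k(V)$ and $k\notin\{2,3\}$ belongs to $T$. For this I would use the Clifford product rule $\gamma(u)\gamma(v) = \gamma(u\wedge v) + (\text{contractions})$, which after applying $P$ and invoking the equivariance above descends to an identity of the form
\begin{equation*}
O(u)\,O(v) = O(u\wedge v) + (\text{polynomial in two- and three-index symmetries and elements of }\rho(\dcover W)).
\end{equation*}
In dimension four, iterating this identity recovers a one-index symmetry $O(x_\ell)$ as a product of a two-index and a three-index symmetry modulo $T$, and the unique four-index symmetry as a product involving an additional factor $O(x_\ell)$, again modulo $T$. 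The main obstacle is the bookkeeping of the contraction terms on the right-hand side: one must verify that every correction produced by the adjoint actions of $\up$ and $\ux$ on a Clifford monomial can itself be reabsorbed into $T$. This is the delicate combinatorial heart of the argument, and it is exactly where both the two-index and three-index symmetries are used together.
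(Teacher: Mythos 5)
The theorem you are asked to prove is \emph{cited} in the paper from~\cite{Os22} and carries no proof in the text, so there is no ``paper's own proof'' to compare against; I can only assess the internal soundness of your sketch. The framework you choose -- use \cref{thm:Projector} to identify $\tama$ with $P(\ca A)$, factor $\ca A = H_\kappa^{\Lie{sl}(2)}\otimes\ca C$, and reduce to computing $P$ on generators -- is the right ambient picture and is indeed what the projector $P$ of~\eqref{eq:Proj} is designed for. The $\tama$-bimodule equivariance $P(ab)=aP(b)$, $P(ba)=P(b)a$ for $a\in\tama$ that you derive is also correct.

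The gap is in the sentence ``An induction on the length of a monomial in the generators of $\ca A$ then reduces the theorem to showing that $T$ contains $P(g)$ for every generator $g$.'' The equivariance you invoke only lets you pull a factor through $P$ when that factor lies in $\tama$, and the generators $e_j\in\ca C$ and $L_{ij}\otimes 1$ of $\ca A$ are \emph{not} in $\tama$. If you try the obvious move on $b = g_1g_2\cdots g_n$, namely write $g_1 = P(g_1) + (g_1-P(g_1))$ and use the identity $P(P(S)T)=P(S)P(T)$ valid on $\ca A$, you get
\begin{equation*}
P(g_1\cdots g_n) = P(g_1)\,P(g_2\cdots g_n) + P\bigl((g_1-P(g_1))\,g_2\cdots g_n\bigr),
\end{equation*}
and the second summand is precisely of the form you started with, with a factor of the same complexity: for instance, $e_j - P(e_j) = e_j + 2O(x_j)$, and since $O(x_j)\in\tama$ the non-$\tama$ part of the correction is $e_j$ itself, so the recursion does not decrease anything. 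Without a filtration or weight-space argument on $\ca A$ controlling the defect $\id-P = \tfrac12\ad(\up)\ad(\ux)$, the inductive step does not close. This is the genuine missing idea.

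A secondary remark: you identify controlling $P(e_j) = -2O(x_j)$ and the four-index symmetry as ``the main step'' and ``the delicate combinatorial heart.'' In fact these are the easy parts: by~\eqref{eq:Ox} the one-index symmetry $O(x)$ lies \emph{inside} $\rho(\bbc\dcover W)$ for every $x\in V$ and hence is already in $T$ by fiat, and the four-index symmetry $Z$ is given explicitly in terms of one-, two- and three-index symmetries by~\eqref{eq:Z-symmetry2}. So the product-expansion bookkeeping you describe is not where the difficulty lies; the real work is the reduction step discussed above.
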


For completeness, we spell out the elements of the form~\eqref{eq:SymmOps}.
For $x\in V$, we have
    \begin{equation}\label{eq:Ox}
    O(x) = \sum_{\alpha \in R_+} \kappa_\alpha(x,\alpha)\dsig{\alpha} = \left(\sum_{p=1}^{m_1} \kappa_{\alpha_p}(x,\alpha_p)\dsig{p} +\sum_{q=1}^{m_2} \kappa_{\beta_q}(x,\beta_q)\dtau{q} \right)\in \rho(\mathbb{C}\dcover W).
  \end{equation}
Given vectors $x,y,z\in V$, we have:
\begin{equation}\label{eq:2indFormula}
O(x\wedge y) = (xB(y) - yB(x)) + \tfrac{1}{2}\gamma(x\wedge y)+ (O(x)\gamma(y) - O(y)\gamma(x)) ,
\end{equation}
which reduces to a total angular momentum operator when $\kappa=0$, and
\begin{equation}
\begin{aligned}\label{eq:3indFormula}
O(x\wedge y \wedge z) &= -\tfrac{1}{2}\gamma(x\wedge y\wedge z) +O(x\wedge y)\gamma(z) -  O(x\wedge z)\gamma(y) + O(y\wedge z)\gamma(x)\\
&\quad-O(x)\gamma(y\wedge z) + O(y)\gamma(x\wedge z)  - O(z)\gamma(x\wedge y) .
\end{aligned}
\end{equation}
We will describe the image of an element of the one-dimensional space $\bigwedge^4(V)$ in~\eqref{eq:Z-symmetry} below.
We note that there is a similar expression as~\eqref{eq:3indFormula}, but it
can also be expressed in terms of other elements following Theorem~\ref{thm:TAMAgen}, see~\eqref{eq:Z-symmetry2}.

\begin{notation}
For elements of the linear basis $\{x_A \mid A\subseteq \{1,\ldots,4\}\}$ of $\bigwedge(V)$, we shall use the notations $O_A = O(x_A)$ or even $O_{a_1a_2\cdots a_p}$, if $A=\{a_1,a_2,\ldots, a_p\}\subseteq \{1,\ldots,4\}$.
\end{notation}

We extract now some of the commutation relations from~\cite{Os22}.

\begin{theorem}\label{thm:TAMArel}
Let $a,b,c,p,q\in\{1,2,3,4\}$. The generators satisfy the following commutation relations:
\begin{equation}\label{eq:2ind2indComm}
    \llbracket O_{ab},O_{pq}\rrbracket = \delta_{bp}A_{aq} - \delta_{bq}A_{ap} - \delta_{ap}A_{bq} + \delta_{aq}A_{bp} + \{ O_a,O_{bpq} \} - \{ O_b,O_{apq} \},
\end{equation}
where $A_{ab} = O_{ab} + O_aO_b-O_bO_a$ is an anti-symmetric expression on the indices. Moreover,
\begin{subequations}
\begin{alignat}{2}\label{eq:2ind3indComm1}
    \llbracket O_{ab},O_{apq}\rrbracket  &= -(O_{bpq} + \{O_b,O_{pq}\} + [O_a,O_{abpq}]),\\\label{eq:2ind3indComm2}
    \llbracket O_{ab},O_{abq}\rrbracket  &= -(\{O_a,O_{aq} \} + \{O_b,O_{bq}\}),
\end{alignat}
\end{subequations}
and also
\begin{subequations}
\begin{alignat}{2}\label{eq:3ind3indComm1}
    \llbracket O_{abp},O_{abq}\rrbracket &= \{O_p,O_q\} + \{O_{ap}, O_{aq}\} + \{O_{bp}, O_{bq}\},\\\label{eq:3ind3indComm2}
    \llbracket O_{abc},O_{abc}\rrbracket &= 2(O_a^2 + O_b^2 + O_c^2 + O_{ab}^2 + O_{ac}^2 + O_{bc}^2 - \tfrac{1}{4}).
\end{alignat}
\end{subequations}
\end{theorem}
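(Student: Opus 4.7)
The plan is to verify each of the five commutation relations by direct computation inside $H_\kappa\otimes\ca C$, substituting the explicit formulas~\eqref{eq:2indFormula} and~\eqref{eq:3indFormula} for the two- and three-index symmetries. Under such a substitution, each $O_{ab}$ splits as a sum of (i) a ``classical'' angular-momentum part $x_aB(x_b)-x_bB(x_a)$, (ii) a pure Clifford part $\tfrac12\gamma(x_a\wedge x_b)$, and (iii) ``Dunkl-correction'' terms of the form $O(x_i)\gamma(x_j)$ with $O(x_i)\in\rho(\bbc\dcover{W})$; an analogous trinomial split holds for $O_{abc}$ with one extra $\gamma$-factor in each summand. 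A graded commutator of two such sums then decomposes according to the bidegree in $(S(V)\otimes S(V^*))\otimes \ca C$, and one matches the resulting pieces with the right-hand side term by term.

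The key ingredients are the Clifford relation $\{e_j,e_k\}=2\delta_{jk}$, the rational Cherednik commutator~\eqref{eq:rcarel} together with the symmetry of $\psi_\kappa$, and the $\dcover{W}$-equivariance property~\eqref{eq:WComm} which lets one slide group-algebra elements through $O$-symmetries at the cost of controlled signs. A powerful conceptual simplification is supplied by \cref{thm:Projector}: since both sides of each identity must lie in the $P$-image $\tama$, one may verify the identity first modulo the kernel of $P|_{\ca A}$ and apply $P$ at the end, rather than track every $\up$- or $\ux$-correction term by term. It is also efficient to handle~\eqref{eq:2ind3indComm2} and~\eqref{eq:3ind3indComm1} separately, as the repeated index patterns there force the potential four-index pseudoscalar contributions to drop, leaving only anticommutator terms.

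The main obstacle is the bookkeeping in~\eqref{eq:2ind2indComm} and~\eqref{eq:2ind3indComm1}, where four-index terms (notably $[O_a,O_{abpq}]$) emerge from products of two lower-index symmetries and must be correctly matched against the antisymmetric tensors $A_{aq}$ and the anticommutators $\{O_a,O_{bpq}\}$ on the right-hand side; the $\psi_\kappa$-contributions, which mix polynomial degrees through reflection operators, require especially careful sign tracking. Relation~\eqref{eq:3ind3indComm2} is of a different flavour: it is a graded Casimir-type identity in which the constant $-\tfrac{1}{4}$ arises from normal-ordering the Clifford pseudoscalar $\gamma(x_a\wedge x_b\wedge x_c)^2$, and verifying it first provides a useful sanity check for the normalisation conventions fixed in~\eqref{eq:SymmOps}.
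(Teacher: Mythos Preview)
The paper does not give its own proof of this theorem; the relations are quoted from \cite{Os22} (see the sentence immediately preceding the statement). Your proposed approach---direct expansion via~\eqref{eq:2indFormula} and~\eqref{eq:3indFormula}, organised with the projector $P$ of~\eqref{eq:Proj} so that one may compute modulo $\ker P|_{\ca A}$---is exactly the method of \cite{Os22}, and it is also the technique the present paper invokes whenever it needs explicit products of symmetries later on (cf.\ the proofs of Lemma~\ref{lem:H3index} and Lemma~\ref{lem:TT}, which compute $P(O(u)\gamma(v))$ directly). As a plan your sketch is sound; note, however, that it remains a plan: none of the five expansions is actually carried out, and the substantive work in each case is precisely the term-matching you describe as ``bookkeeping''.
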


\begin{remark}
The relations of Theorem~\ref{thm:TAMArel} can also be formulated for arbitrary vectors, which can be found in~\cite{Os22} together with, for instance, expressions for the product of two 3-index symmetries.
\end{remark}

\begin{remark}
The results described in Theorem~\ref{thm:TAMAgen} together with the relations in Theorem~\ref{thm:TAMArel} and \eqref{eq:WComm} do not yield a full presentation of $\tama$. Only when $\dim V = 3$ will the relations above describe all the relations that the generators satisfy.
\end{remark}

It will also be useful to consider an isotropic (with respect to~$(\cdot,\cdot)$) basis for the complexified space $V$ besides the orthonormal basis $\{x_1,x_2,x_3,x_4\}$ of $V_{\mathbb{R}}$.
We decompose $V = V_1\oplus V_2$, with $V_a$ the span of $\{x_{2a-1},x_{2a}\}$ for $a \in \{1,2\}$.
Recall that the space $V_1$ (resp.~$V_2$) is fixed under the action of the dihedral group $\dih_{2m_2}$ (resp.~$\dih_{2m_1}$).
For each 2-dimensional space $V_a$, we let
\begin{equation}\label{eq:z}
z_a^+ := z_a = x_{2a-1} + ix_{2a},\qquad z_a^- := \bar{z}_a =x_{2a-1} - ix_{2a}.
\end{equation}
We note that $(z_a^+, z_b^-) = 2 \delta_{a,b}$ while $(z_a^\pm, z_b^\pm) = 0$ for $a,b \in \{1,2\}$.

While the action of $W$ on the orthonormal basis is given by~\eqref{eq:dihreflections}, for the isotropic basis we have
\begin{equation}
\label{eq:actionWisotropic}
   s_p (z^\pm_1) = \zeta_1^{\pm2p} z^\mp_1, \qquad
   s_p (z^\pm_2) =  z^\pm_2, \qquad
   t_q (z^\pm_{2}) = \zeta_2^{\pm2q} z^\mp_{2}, \qquad
   t_q (z^\pm_1) =  z^\pm_{1},
\end{equation}
for $p=1,\dotsc,m_1$ and $q=1,\dotsc,m_2$, with $\zeta_1:=e^{i\pi/m_1}$ and  $\zeta_2:=e^{i\pi/m_2}$.

 \begin{notation}\label{not:isotropic}
 It will be convenient to succinctly express the symmetry operators with respect to the isotropic basis $\{z_1^+,z_1^-,z_2^+,z_2^-\}$. Accordingly, with $\delta,\varepsilon,\nu\in\{+,-\}$ and $a,b,c\in\{1,2\}$, we shall write,
    \begin{equation}\label{eq:notationisotropic}
    O_{\ib{\delta}{a}} := O(z_a^\delta),\quad O_{\ib{\delta\varepsilon}{ab}}:= O(z_a^\delta\wedge z_b^\varepsilon) \quad\text{and}\quad O_{\ib{\delta\varepsilon\nu}{abc}} := O(z_a^\delta\wedge z_b^\varepsilon\wedge z_c^\nu).
    \end{equation}
 In formulas, the signs  $\{+,-\}$ will be interpreted as $\{+1,-1\}$.  
 \end{notation}

Denote by $\Omega$ the Casimir and by $\mathcal{S}$ the Scasimir~\cite[Example 2, p.9]{FSS00} of the $\mathfrak{osp}(1|2,\bbc)$ realisation of Theorem~\ref{thm:osp}.
The Casimir is an element of $\tama$, but the Scasimir is not.
However, $\mathcal{S}$ is closely related to the symmetry corresponding to the volume form  $x_1\wedge x_2\wedge x_3\wedge x_4 \in \bigwedge^4(V)$.
Let us denote
\begin{equation}\label{eq:Z-symmetry}
Z := O_{1234} =  -O(z_1\wedge\bar{z}_1\wedge z_2\wedge\bar{z}_2)/4.
\end{equation}
 One has  $Z = \mathcal{S}e_{1234} = e_{1234}\mathcal{S}$. From the properties of the Clifford algebra pseudo-scalar $e_{1234}$, it follows that, for $v\in \bigwedge^k(V)$, we have
\begin{equation}\label{eq:Zcommutation}
ZO(v) = (-1)^{k} O(v) Z\p.
\end{equation}
Furthermore, $Z$ admits non-trivial relations with the other symmetries of $\tama$, see~\cite{Os22}.
\begin{proposition}\label{prop:Z}
    The element $Z$ can be expressed in terms of the other symmetries as
\begin{equation}\label{eq:Z-symmetry2}
Z = \{O_{12},O_{34}\}
- \{O_{13},O_{24}\}
+ \{O_{14},O_{23}\}
-2  (O_{123}O_4 + O_{124}O_3 + O_{134}O_2+ O_{234}O_1 ).
\end{equation}
or, with respect to the isotropic basis, as
\begin{equation}\label{eq:Z-symmetry3}
-4Z  = \{O_{\ib{+-}{11}},O_{\ib{+-}{22}}\}
- \{O_{\ib{++}{12}},O_{\ib{--}{12}}\}
+ \{O_{\ib{+-}{12}},O_{\ib{-+}{12}}\}
+ [O_{\ib{-}{2}}, O_{\ib{+-+}{112}}]
 -[O_{\ib{+}{2}} , O_{\ib{+--}{112}}]
+ [O_{\ib{-}{1}},O_{\ib{++-}{122}}]
 - [O_{\ib{+}{1}},O_{\ib{-+-}{122}}]
 \rlap{\,.}
\end{equation}
The square $Z^2$ has the following expression
\begin{equation}\label{eq:CasimirZ}
    Z^2 = \Omega + \tfrac{1}{4} = \tfrac{3}{4} - 2\sum_{j=1}^4O_j^2 -\!\!\sum_{1\leq j<k\leq 4}O_{jk}^2,
\end{equation}
or, with respect to the isotropic basis,
\begin{equation}\label{eq:CasimirZ2}
    Z^2 =  \tfrac{3}{4} - \{O_1^+,O_1^-\}  - \{O_2^+,O_2^-\}  +\tfrac14 (O_{\ib{+-}{11}})^2 +\tfrac14 (O_{\ib{+-}{22}})^2  -\tfrac14\{O_{\ib{+-}{12}},O_{\ib{-+}{12}}\} -\tfrac14 \{O_{\ib{++}{12}},O_{\ib{--}{12}}\}.
\end{equation}
\end{proposition}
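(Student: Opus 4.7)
My plan is to treat the two expressions for $Z$ first, then derive the formulas for $Z^2$. For \eqref{eq:Z-symmetry2}, I would start from the definition $Z=O_{1234}=-\tfrac{1}{2}P(e_{1234})$ given by \eqref{eq:SymmOps} and expand the projector \eqref{eq:Proj} applied to the volume element $e_{1234}$. The single action $\ad(\ux)(e_{1234})$ produces a sum of terms of the form $x_i\otimes (e_ie_{1234})$, and the iterated action $\ad(\up)\ad(\ux)(e_{1234})$ generates, via the rational Cherednik relation \eqref{eq:rcarel} and the Clifford anti-commutation rules, the contributions needed to match the right-hand side. The three anticommutators $\{O_{ab},O_{cd}\}$ arise from the three partitions of $\{1,2,3,4\}$ into complementary pairs $(\{1,2\},\{3,4\})$, $(\{1,3\},\{2,4\})$, $(\{1,4\},\{2,3\})$, with the sign coming from the sign of the volume-form permutation, while the four products $O_{abc}O_d$ come from one-versus-three splits. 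A cleaner variant is to show that the right-hand side of \eqref{eq:Z-symmetry2} lies in the graded centraliser of $\fg$, is $\dcover W$-equivariant in the sense of \eqref{eq:WComm} with $v=x_1\wedge x_2\wedge x_3\wedge x_4$, and has the required Clifford leading coefficient $-\tfrac{1}{2}e_{1234}$; then Theorem~\ref{thm:Projector} identifies it uniquely with $-\tfrac{1}{2}P(e_{1234})=Z$. Formula \eqref{eq:Z-symmetry3} follows from \eqref{eq:Z-symmetry2} by substituting the isotropic basis \eqref{eq:z} into every $O_{jk}$ and $O_{jkl}$ and using the linearity of $O$, the overall factor $-4$ on the left-hand side absorbing the Jacobian of the change of basis on the $4$-form.

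For \eqref{eq:CasimirZ}, the plan is to use the identification $Z=\mathcal{S}e_{1234}=e_{1234}\mathcal{S}$ recorded before the proposition, together with the computation $e_{1234}^2=(-1)^{4\cdot 3/2}=1$ valid in the four-dimensional Clifford algebra. This yields $Z^2=\mathcal{S}^2e_{1234}^2=\mathcal{S}^2$, and the classical identity $\mathcal{S}^2=\Omega+\tfrac{1}{4}$ for the Scasimir of $\osp(1|2,\bbc)$ gives the first equality. For the second equality, I would expand the Casimir $\Omega$ using the realisation of Theorem~\ref{thm:osp} and the explicit expressions \eqref{eq:EvenPart}--\eqref{eq:OddPart}, then reassemble the result using the definitions \eqref{eq:Ox}, \eqref{eq:2indFormula} of the one- and two-index symmetries. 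A combinatorially simpler derivation consists of summing the four instances of \eqref{eq:3ind3indComm2} over the triples $(abc)\subset\{1,2,3,4\}$ and combining with the formula \eqref{eq:Z-symmetry2} after squaring. Finally, \eqref{eq:CasimirZ2} follows from \eqref{eq:CasimirZ} by rewriting $\sum_jO_j^2$ and $\sum_{j<k}O_{jk}^2$ in the isotropic basis via the conversion \eqref{eq:z} and the notation \eqref{eq:notationisotropic}, which naturally produces the squared terms $(O_{\ib{+-}{11}})^2$ and $(O_{\ib{+-}{22}})^2$ together with the anticommutators $\{O_{\ib{+-}{12}},O_{\ib{-+}{12}}\}$ and $\{O_{\ib{++}{12}},O_{\ib{--}{12}}\}$ with the stated coefficients.

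The main obstacle is the bookkeeping for \eqref{eq:Z-symmetry2}: a direct projector expansion involves many Clifford-degree cross-terms and sign conventions from \eqref{eq:rcarel} and the anti-commutation rules, all of which must combine precisely into the compact symmetric form on the right-hand side. The centraliser-plus-equivariance route avoids much of this computation, but it requires knowing that a $\dcover W$-invariant element of $\tama$ with Clifford leading coefficient $-\tfrac{1}{2}e_{1234}$ is unique, which in turn rests on a careful control of the Clifford-degree filtration of $\tama$ already used to establish Theorem~\ref{thm:Projector} and Theorem~\ref{thm:TAMAgen}.
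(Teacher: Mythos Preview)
The paper does not actually prove this proposition; the sentence immediately preceding it attributes these identities to \cite{Os22}, and no argument is given in the text. So your proposal is not being compared against an in-paper proof but against a result imported from elsewhere.

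That said, your plan is largely sound. The route to \eqref{eq:CasimirZ} via $Z=\mathcal{S}e_{1234}$, $e_{1234}^2=1$, and the standard $\osp(1|2)$ relation $\mathcal{S}^2=\Omega+\tfrac{1}{4}$ is correct and is the natural derivation of the first equality; expanding the Casimir inside $H_\kappa\otimes\ca C$ in terms of the one- and two-index symmetries then gives the second. The basis-change arguments for \eqref{eq:Z-symmetry3} and \eqref{eq:CasimirZ2} are also correct in principle, though you should note that the products $O_{abc}O_d$ in \eqref{eq:Z-symmetry2} become genuine (anti)commutators in the isotropic version because of the sign relations recorded later as Lemma~\ref{lem:1index}; this is not just a Jacobian rescaling.

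The one genuine soft spot is your ``cleaner variant'' for \eqref{eq:Z-symmetry2}. Knowing that an element of $\tama$ is $\dcover W$-invariant and has Clifford leading term $-\tfrac{1}{2}e_{1234}$ does not uniquely determine it: $P$ is a projector, not a graded isomorphism, and $\tama$ contains $\dcover W$-invariant elements of lower Clifford degree (for instance anything central in $\rho(\bbc\dcover W)$, or polynomials in $Z^2$ itself). So the uniqueness step would require a sharper filtration statement than Theorem~\ref{thm:Projector} provides. Your first option, the direct expansion of $-\tfrac{1}{2}P(e_{1234})$ via \eqref{eq:Proj}, does go through and is presumably close to how \cite{Os22} obtains the formula; it is tedious but honest, and you correctly identify the combinatorics (pair-versus-pair and triple-versus-single splittings of $\{1,2,3,4\}$) that organise the computation.
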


The (graded) centre of $\tama$ was computed in \cite{CDMO22} for a general reflection group $W$.
It is always a univariate polynomial ring, but its generator is different depending on whether or not the longest element of $W$, which we denote by $w_0$, is a scalar multiple of the identity $(-1)_V$ in $\mathsf{GL}(V)$.
Note that for $W = \dih_{2m_1}\times\dih_{2m_2}$, this is the case if and only if both parameters of the dihedral groups are even.

\begin{theorem}[\cite{CDMO22}]\label{thm:GradedCenterTAMA}
The graded centre of $\tama$ is the polynomial ring $\mathbb{C}[\mathsf{S}]$ with generator
\[
\mathsf{S} = \left\{
\begin{array}{rl}
     \mathcal{S}w_0,& \textup{ if } w_0=(-1)_V,  \\
    \Omega, & \textup{ if } w_0\neq(-1)_V.
\end{array}
\right.
\]
\end{theorem}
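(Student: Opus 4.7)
The plan is to establish the inclusion $\mathbb{C}[\mathsf{S}]\subseteq$ (graded centre of $\tama$) by direct verification on generators, and then the reverse inclusion by computing the super-centraliser of $\tama$ in $H_\kappa\otimes\ca C$ together with a PBW-style reduction; the latter step is the main obstacle. Note that when $w_0=-1_V$, using $w_0^2=1$ gives $\mathsf{S}^2=\mathcal{S}^2=\Omega+\tfrac14$, so $\Omega$ already lies in $\mathbb{C}[\mathsf{S}]$ and the statement reduces to showing $\mathsf{S}$ generates the graded centre as a polynomial ring.

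For the forward direction, the Casimir $\Omega\in U(\fg)$ sits in $\tama$ and super-commutes with every element of $\tama$ — any such element super-commutes with the generators of $\fg$ and hence with any polynomial in them — so $\Omega$ lies in the graded centre in both cases. Assume now $w_0=-1_V$ and view $w_0$ as the element $w_0\otimes 1\in\mathbb{C}W\subset H_\kappa\otimes\ca C$. The Scasimir satisfies $\{\mathcal{S},\up\}=\{\mathcal{S},\ux\}=0$ and $[\mathcal{S},\fg_{\bar 0}]=0$ by its defining property, while $w_0x_j=-x_jw_0$ and $w_0\xi_j=-\xi_jw_0$ give $\{w_0,\up\}=\{w_0,\ux\}=0$ and $[w_0,\fg_{\bar 0}]=0$. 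The two sign flips cancel in $\mathsf{S}=\mathcal{S}w_0$, which has Clifford parity zero, placing $\mathsf{S}$ in $\tama$. To see that $\mathsf{S}$ is super-central in $\tama$, I would test against each generator from Theorem~\ref{thm:TAMAgen}: against $\rho(\dcover{w})$, the factor $\mathcal{S}$ super-commutes by the defining property of $\tama$ (applied to $\rho(\dcover{W})\subset\tama$) and $w_0$ commutes with $\rho(\dcover{w})$ because $w_0$ is central in $W$ and trivial on $\ca C$; against $O(v)$, the factor $\mathcal{S}$ super-commutes because $O(v)\in\tama$, while $w_0$ commutes with every term of formulas~\eqref{eq:Ox}, \eqref{eq:2indFormula}, and \eqref{eq:3indFormula}, since all vector entries appear in $w_0$-invariant pairs such as $x_a\xi_b$ and the $\mathbb{C}W$- and $\ca C$-factors are themselves $w_0$-invariant.

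The main obstacle is the reverse inclusion. My plan is to first describe the super-centraliser of $\tama$ in $H_\kappa\otimes\ca C$: it always contains $U(\fg)$ by definition of $\tama$, and a direct check against the generators of Theorem~\ref{thm:TAMAgen} shows that the only element of $\mathbb{C}W$ super-commuting with every $O(v)$ is (a scalar multiple of) $w_0$, and only in the case $w_0=-1_V$ — the sign-cancellation from the forward direction requires the $W$-element to act as $-1$ on every vector of $V$, a role played uniquely by $w_0=-1_V$. The graded centre of $\tama$ then equals the intersection of $\tama$ with this super-centraliser; the super-central elements of $U(\fg)$ under the Clifford grading are exactly $\mathbb{C}[\Omega]$, so this intersection produces $\mathbb{C}[\Omega]$ when $w_0\neq -1_V$ and, by twisting with $w_0$ when available, $\mathbb{C}[\mathsf{S}]$ when $w_0=-1_V$. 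To turn this into a rigorous argument, I would use the PBW-style decomposition of $H_\kappa\otimes\ca C$ afforded by Theorem~\ref{thm:Projector} together with the generating set of Theorem~\ref{thm:TAMAgen}, reducing any candidate super-central element of $\tama$ by induction on a suitable filtration degree to a polynomial in $\mathsf{S}$ (respectively $\Omega$). Algebraic independence of $\mathsf{S}$ follows by exhibiting $\tama$-representations on which $\mathsf{S}$ takes infinitely many distinct eigenvalues, for instance the finite-dimensional monogenic-type modules constructed later in the paper.
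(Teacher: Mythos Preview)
The paper does not give a proof of this theorem: it is quoted from \cite{CDMO22} and stated without argument, so there is no ``paper's own proof'' to compare against. I can still comment on your sketch.

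Your forward inclusion is essentially correct. The key observations are that $\mathcal S\in U(\fg)$ is even and hence (by the super-Leibniz rule) commutes in the ordinary sense with every element of the graded centraliser $\tama$, while $w_0\otimes 1$ commutes with the explicit formulas \eqref{eq:Ox}--\eqref{eq:3indFormula} because all $H_\kappa$-factors there are of even total degree in $V\oplus V^*$. One small slip: you justify $[\mathcal S,\rho(\dcover w)]=0$ by ``the defining property of $\tama$ applied to $\rho(\dcover W)\subset\tama$'', but that property says $\rho(\dcover w)$ super-commutes with $\fg$; you then need the extra step that this extends to all of $U(\fg)$, and in particular to the even element $\mathcal S$.

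The reverse inclusion is where the real content lies, and your plan has a genuine gap. You propose to compute the super-centraliser of $\tama$ inside $H_\kappa\otimes\ca C$ and then intersect with $\tama$. But what you actually argue is only that (i) $U(\fg)$ lies in this super-centraliser and (ii) among elements of $\mathbb C W$, only scalar multiples of $w_0$ (when $w_0=(-1)_V$) super-commute with the $O(v)$. Neither of these identifies the full super-centraliser: there could in principle be further elements mixing $U(\fg)$, $\mathbb C W$ and Clifford factors, and ruling those out is exactly the double-commutant theorem that \cite{CDMO22} proves. Your subsequent appeal to a ``PBW-style decomposition \ldots\ by induction on a suitable filtration degree'' is not yet an argument --- no filtration is specified, no leading-term analysis is carried out, and Theorem~\ref{thm:Projector} on its own does not give a basis of $\tama$ adapted to such an induction. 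In short, the missing idea is precisely the structural control over $\tama$ (or its commutant) that the cited reference supplies; your outline names the destination but not the vehicle.
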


\subsection{Star structures}\label{sec:unitarity}
Both $H_\kappa$ and $\ca C$ are equipped with natural $\ast$-structures whose product defines such a structure on $H_\kappa\otimes \ca C$. We shall denote all these structures simply by $\ast$.

For $H_\kappa$, we use the identification $B:V\to V^*$ induced by the Euclidean structure to define an involutive endomorphism of the space of generators $(V\oplus V^*)\otimes \mathbb{C} W$ via
\begin{equation}\label{eq:starRCA}
x^*=B(x), \qquad\xi^*=B^{-1}(\xi),\qquad w^* = w^{-1},
\end{equation}
for $x\in V_\mathbb{R}$, $\xi \in V^*_\mathbb{R}$ and $w\in W$. This is then extended to a conjugate-linear anti-involution on $H_\kappa$. This extension is only well-defined when the parameter function $\kappa$ is real-valued. Recall that for parameters $\kappa$ sufficiently close to $0$ (in the sense of~\cite{ES09}), the standard modules $M_\kappa(\tau)$ are unitarisable (with respect to the star structure just mentioned) and simple.

For $\ca C$, the transpose anti-involution on the tensor algebra $v_{i_1}\otimes \cdots \otimes v_{i_p} \mapsto
v_{i_p}\otimes \cdots \otimes v_{i_1}$
descends to a conjugate-linear anti-involution on the Clifford algebra (and likewise on the exterior algebra $\bigwedge V$) defined on the generators by $e_i^*=e_i$ and extended to a conjugate-linear anti-involution on $\ca C$. The map $O:\bigwedge V\to \tama$ of (\ref{eq:SymmOps}) behaves well with respect to the star structures.

\begin{lemma}\label{lem:starO}
    For any $v \in \bigwedge V$, we have $O(v)^* = O(v^*)$.
\end{lemma}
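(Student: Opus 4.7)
The plan is to reduce $O(v)^* = O(v^*)$ to three compatibility statements at the level of the building blocks of the definition $O(v) = -\tfrac12 P(\gamma(v))$: (a) the quantisation map $\gamma$ intertwines the $\ast$-structures on $\bigwedge V$ and $\ca C$; (b) the odd generators $\up$ and $\ux$ of $\fg_{\bar 1}$ are mutual adjoints; and (c) the projector $P$ of \eqref{eq:Proj} satisfies $P(a)^* = P(a^*)$ for all $a\in\ca A$. Since $\gamma(v)\in\ca C\subset\ca A$, these three ingredients combine immediately to give the claim.

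Compatibility (a) is essentially formal: both $\ast$-structures descend from the transpose anti-involution on $T(V)$, so applying $\ast$ termwise to the right-hand side of \eqref{eq:quantization} and reindexing the $S_p$-sum by the reversing permutation produces $\gamma(v)^*=\gamma(v^*)$. Compatibility (b) is a one-line verification: \eqref{eq:starRCA} gives $\xi_j^* = x_j$, and combined with $e_j^* = e_j$ this yields $\up^* = \sum_j x_j\otimes e_j = \ux$ and symmetrically $\ux^* = \up$.

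The substance lies in (c). First I would record the graded analogue $\llbracket a,b\rrbracket^* = -(-1)^{|a||b|}\llbracket a^*,b^*\rrbracket$ for homogeneous $a,b$, which is immediate from $\ast$ being a conjugate-linear anti-involution preserving the $\mathbb{Z}_2$-grading. Applying this identity twice to $\ad(\up)\ad(\ux)(a)$ and tracking the parity $|\ad(\ux)(a)|=|a|+1$ of the intermediate term, the super-signs collapse to
\begin{equation*}
\bigl(\ad(\up)\ad(\ux)(a)\bigr)^* = -\,\ad(\ux)\ad(\up)(a^*).
\end{equation*}
Computing $P(a)^*$ from the first expression of $P$ in \eqref{eq:Proj} and recognising the answer as $P(a^*)$ via the second expression then yields (c).

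The main subtlety I anticipate is precisely this super-sign bookkeeping, together with the need to invoke both forms of $P$ simultaneously: the equivalence of the two forms holds on $\ca A$ thanks to the super-Jacobi identity $\ad(\up)\ad(\ux) + \ad(\ux)\ad(\up) = \ad(\llbracket\up,\ux\rrbracket)$ and $\llbracket\up,\ux\rrbracket\in\fg_{\bar 0}$, and $\ca A$ is itself $\ast$-stable because $\fg_{\bar 0}$ is (indeed, $\Lapl_{\kappa}^* = |x|^2$, $E^*=E$, $|x|^2{}^*=\Lapl_{\kappa}$). Beyond Theorem \ref{thm:Projector}, no further structural input is needed.
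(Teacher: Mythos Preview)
Your proposal is correct and follows essentially the same approach as the paper's proof, which simply notes that $\ast$ swaps $\up$ and $\ux$, deduces $P(X)^* = P(X^*)$, and concludes. You have filled in the details the paper leaves implicit---the compatibility of $\gamma$ with $\ast$, the super-sign computation showing $(\ad(\up)\ad(\ux)(a))^* = -\ad(\ux)\ad(\up)(a^*)$, and the need for both forms of $P$ in \eqref{eq:Proj} (valid on $\ca A$)---all of which are correct.
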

\begin{proof}
    Note that the $\ast$-structure on $H_\kappa\otimes\ca C$ described in this section interchanges
    $\up$ and $\underline{x}$. Hence, the map $P$ of (\ref{eq:Proj}) satisfies $P(X^\ast) = P(X)^*$ for any $X$ in the domain of $P$. This implies the result.
\end{proof}
\begin{proposition}
    When the parameter function $\kappa$ is real-valued, both $\mathfrak{g}\cong \mathfrak{osp}(1|2)$ and its centraliser algebra $\mathfrak{O}_k$ are $\ast$-invariant subalgebras of $H_\kappa\otimes \ca C$.
\end{proposition}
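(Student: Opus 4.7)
The plan is to verify $\ast$-invariance separately for the two subalgebras. For $\fg$ I will compute $\ast$ directly on each of the five generators listed in \eqref{eq:EvenPart}--\eqref{eq:OddPart}; for $\tama$ I will use the abstract fact that if the anti-involution $\ast$ preserves the $\mathbb{Z}_2$-grading of $H_\kappa\otimes\ca C$ then the graded commutator transforms compatibly under $\ast$, so that the graded centraliser of any $\ast$-invariant subspace is automatically $\ast$-invariant. Both steps are formal and no serious obstacle is expected.

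For the first step, recall from \eqref{eq:starRCA} and the paragraph on $\ca C$ that on the chosen orthonormal bases one has $x_j^\ast=\xi_j$, $\xi_j^\ast=x_j$ and $e_j^\ast=e_j$. Since $\ast$ is a conjugate-linear anti-involution, applying it termwise to each generator in \eqref{eq:EvenPart}--\eqref{eq:OddPart} using $(AB)^\ast=B^\ast A^\ast$ gives immediately $\xsq^\ast=\DLapl$, $\DLapl^\ast=\xsq$, $E^\ast=E$, $\ux^\ast=\up$ and $\up^\ast=\ux$ (the self-adjointness of $E$ uses that both summands $x_j\xi_j$ and $\xi_j x_j$ are $\ast$-fixed, regardless of the non-commutation relation \eqref{eq:rcarel}). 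Consequently $\fg_{\bar 0}$ and $\fg_{\bar 1}$ are each stabilised by $\ast$, so $\fg$ is a $\ast$-invariant subspace of $H_\kappa\otimes\ca C$.

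For the second step, the $\mathbb{Z}_2$-grading on $H_\kappa\otimes\ca C$ is inherited from the Clifford factor, and $\ast$ fixes each $e_j$, hence preserves the grading: $|X^\ast|=|X|$ for every homogeneous $X$. Now let $X\in\tama$ and let $Y\in\fg$ be homogeneous. The identity $XY=(-1)^{|X||Y|}YX$ yields, upon applying $\ast$, the identity $Y^\ast X^\ast=(-1)^{|X^\ast||Y^\ast|}X^\ast Y^\ast$, i.e.\ $\llbracket X^\ast,Y^\ast\rrbracket=0$. By the first step $Y\mapsto Y^\ast$ is a bijection on each homogeneous component of $\fg$, so $X^\ast$ graded-commutes with every element of $\fg$ and thus $X^\ast\in\tama$. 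The hypothesis that $\kappa$ be real-valued is used only to ensure that the $\ast$-structure on $H_\kappa$ is well-defined, as recalled in the paragraph preceding the proposition.
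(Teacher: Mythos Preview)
Your proof is correct. The treatment of $\fg$ is identical to the paper's: direct computation shows $\ast$ swaps $\DLapl\leftrightarrow\xsq$ and $\up\leftrightarrow\ux$ and fixes $E$.

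For $\tama$, however, you take a genuinely different route. The paper invokes Lemma~\ref{lem:starO}, which says $O(v)^\ast=O(v^\ast)$, and combines this with the explicit generating set of Theorem~\ref{thm:TAMAgen} to see that the generators of $\tama$ (the symmetries $O(v)$ together with $\rho(\dcover W)$) are permuted by~$\ast$. Your argument bypasses both of those inputs: you observe that $\ast$ is a grading-preserving anti-involution, so applying $\ast$ to the relation $\llbracket X,Y\rrbracket=0$ immediately gives $\llbracket X^\ast,Y^\ast\rrbracket=0$, and since $\ast$ is a bijection on $\fg$ the graded centraliser is automatically $\ast$-stable. This is a cleaner and more general argument (it works for the graded centraliser of any $\ast$-invariant subspace), whereas the paper's route has the advantage of making the $\ast$-action on the distinguished generators $O(v)$ explicit, which is what gets used later in Proposition~\ref{prop:StarStrucTriangAlg}. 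One small point: you use $|X|$ without first reducing to homogeneous $X$; this is harmless since $\tama$ is itself $\mathbb{Z}_2$-graded (the homogeneous components of any $X\in\tama$ lie in $\tama$), but it would be worth saying explicitly.
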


\begin{proof}
    With the assumptions on $\kappa$, in terms of (\ref{eq:EvenPart}) and (\ref{eq:OddPart}), the $\ast$-structure interchanges $\Delta_{\kappa}\leftrightarrow |x|^2$ and $\up\leftrightarrow \underline{x}$ while $E^*=E$. As for $\mathfrak{O}_k$, using Lemma \ref{lem:starO} the star structure preserves the generators.
\end{proof}

For parameter functions $\kappa$ such that $M_\kappa(\tau)$ is irreducible and unitarisable, the module $M_\kappa(\tau)\otimes \mathbb{S}$ (with $\mathbb{S}$ the simple spin module of $\ca C$) is a unitarisable module for $H_\kappa\otimes \ca C$ as well as for $\mathfrak{g}$ and for $\tama$.

\section{Ladder operators and the triangular subalgebra}\label{sec:ladder}
 We remind the reader that $[\cdot,\cdot]$ and $\{\cdot,\cdot\}$ denote the usual ungraded commutator and anti-commuta\-tor product, while $\llbracket \cdot,\cdot \rrbracket$ is the $\bbz_2$-graded commutator.

\subsection{Definitions and notations}

We start by describing an abelian Lie subalgebra of $\tama$, which is specific to the geometrical constraints imposed by the  double dihedral reflection group.

\begin{definition}\label{def:Tzero}
For $a\in \{1,2\}$, define
\begin{equation}\label{eq:Hs}
H_a := \tfrac{1}{2}O(z_a\wedge\bar{z}_a)  = -iO(x_{2a-1}\wedge x_{2a})
\end{equation}
and let $\mathfrak{h} = \textup{span}\{H_1,H_2\}$. Furthermore, define $\mathfrak{a} = \textup{span}\{H_1,H_2,Z\}$ and $\mathfrak{t}_0 = \mathfrak{a}\oplus \textup{span}\{\dcover r_1,\dcover r_2\}$.
\end{definition}

\begin{proposition}\label{prop:HsFirst}
The subspace $\mathfrak{t}_0$ is an abelian Lie subalgebra of $\tama$.
\end{proposition}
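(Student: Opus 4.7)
The proof amounts to verifying that the five generators $H_1, H_2, Z, \dcover{r}_1, \dcover{r}_2$ of $\mathfrak{t}_0$ pairwise commute (ten pairs in total). The plan organises them by the structural tool invoked.

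The pairs involving one of the $\dcover{r}_a$ are direct. First, $[\dcover{r}_1,\dcover{r}_2]=0$ is a defining relation in~\eqref{eq:dWgenrel2}. Next, each $\dcover{r}_a$ lies in $\dcover{W}_{(\bar 0,\bar 0)}$ (even), so~\eqref{eq:WComm} specialises to $\dcover{r}_a\, O(v) = O(r_a(v))\,\dcover{r}_a$. From~\eqref{eq:actionWisotropic}, $r_a(z_b^\pm)$ is a scalar multiple of $z_b^\pm$, so $r_a$ fixes $z_b\wedge\bar{z}_b$; since $r_a$ has determinant $+1$ on $V$, it also fixes the volume form $x_1\wedge x_2\wedge x_3\wedge x_4$. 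This gives $[\dcover{r}_a,H_b]=[\dcover{r}_a,Z]=0$. The pair $[Z,H_a]$ is then immediate from~\eqref{eq:Zcommutation} with $k=2$, since $H_a$ equals $-i$ times $O$ of a $2$-form.

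The main obstacle is $[H_1,H_2]=0$. I would rely on the structural decomposition: as $W=\dih_{2m_1}\times\dih_{2m_2}$ acts on $V=V_1\oplus V_2$, the Cherednik algebra factors as $H_\kappa\cong H_{\kappa_1}(V_1,\dih_{2m_1})\otimes H_{\kappa_2}(V_2,\dih_{2m_2})$. Expanding via~\eqref{eq:2indFormula}, each summand of $H_1=-iO(x_1\wedge x_2)$ is built from generators in $\{x_1,x_2,\xi_1,\xi_2,s_1,\ldots,s_{m_1}\}\cup\{e_1,e_2\}$, and dually for $H_2$. Among generators from opposite planes, the only non-commuting pairs are the Clifford units $e_i$ with $e_j$ for $i\in\{1,2\}, j\in\{3,4\}$, which anticommute; every other pair commutes, because $\psi_\kappa(x_i,x_j)=0$ (each root is orthogonal to at least one of $x_i,x_j$), the reflections $s_p$ and $t_q$ fix each other's planes pointwise and $[s_p,t_q]=0$, and $H_\kappa\otimes 1$ commutes with $1\otimes\ca C$. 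A term-by-term inspection of~\eqref{eq:2indFormula} shows that $O(x_{2a-1}\wedge x_{2a})$ is even in the Clifford grading (each summand has Clifford degree $0$ or $2$), so the Clifford generators from $V_a$ always occur in even-sized blocks inside each monomial of $H_a$. When rearranging a monomial of $H_1 H_2$ into the corresponding one of $H_2 H_1$, these blocks swap past one another with sign $(-1)^{\textup{even}\cdot\textup{even}}=+1$, while every other adjacent swap contributes no sign; hence $H_1 H_2=H_2 H_1$.

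A more combinatorial alternative route applies~\eqref{eq:2ind2indComm} with $(a,b,p,q)=(1,2,3,4)$: the four Kronecker deltas all vanish and the identity reduces to the claim $\{O_1,O_{234}\}=\{O_2,O_{134}\}$, which can be checked by expanding the three-index symmetries via~\eqref{eq:3indFormula} and using the same orthogonality of the two planes.
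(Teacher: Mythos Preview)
Your proof is correct. For the pairs involving $\dcover{r}_a$ and for $[Z,H_a]$, you argue essentially as the paper does (the paper defers the $\dcover{r}_a$-commutations to the later Lemma~\ref{lem:actionWonT}, while you verify them directly via~\eqref{eq:WComm} and~\eqref{eq:actionWisotropic}).

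For $[H_1,H_2]=0$, your primary route is genuinely different. You exploit the tensor factorisation $H_\kappa \cong H_{\kappa_1}(V_1,\dih_{2m_1}) \otimes H_{\kappa_2}(V_2,\dih_{2m_2})$ and observe, via~\eqref{eq:2indFormula}, that each $H_a$ lies in $H_{\kappa_a}(V_a)\otimes\Clif(V_a)_{\bar 0}$; since these two subalgebras of $H_\kappa\otimes\Clif$ commute (the Cherednik factors by the tensor splitting, the even Clifford parts because anticommuting generators appear in pairs), so do $H_1$ and $H_2$. The paper instead applies the commutation relation~\eqref{eq:2ind2indComm} to reduce $[O_{12},O_{34}]$ to $\{O_1,O_{234}\}-\{O_2,O_{134}\}$ and argues this vanishes using the $W$-equivariance~\eqref{eq:WComm} together with~\eqref{eq:Ox}. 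Your alternative sketch reaches this same reduction, though your proposed finishing step (expanding $O_{abc}$ via~\eqref{eq:3indFormula}) differs from what the paper does, which works directly with~\eqref{eq:WComm} applied to the $\dsig{p}$-components of $O_1$ and $O_2$. Your factorisation argument buys independence from Theorem~\ref{thm:TAMArel} and is more conceptual; the paper's argument stays entirely inside the TAMA commutation-relation framework.
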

\begin{proof}
Using (\ref{eq:2ind2indComm}) we get $\llbracket H_1,H_2 \rrbracket = -\llbracket O_{12},O_{34} \rrbracket= - \{ O_1,O_{234} \} + \{ O_2,O_{134} \} = 0,$
since $\rho(\dcover \sigma)O(x\wedge x_3\wedge x_4) = -O(x\wedge x_3\wedge x_4) \rho(\dcover \sigma)$, whenever $x\in V_1$ and $\pi(\widetilde{\sigma})\in D_{2m_2}$, in view of relations~(\ref{eq:WComm}) and~\eqref{eq:Ox}.
That $Z$ commutes with $\mathfrak{h}$ follows from~(\ref{eq:Zcommutation}). Finally, it is straightforward to check that $\operatorname{span}\{\dr{1},\dr{2}\}$ commutes with $\mathfrak{a}$ (see Lemma~\ref{lem:actionWonT}, below).
\end{proof}

Let $\varpi_1,\varpi_2\in \mathfrak{h}^*$ denote the dual functionals defined by $\varpi_a(H_b) = \delta_{ab}$. Consider the subset
$\Phi = \Phi_{\bar 0} \cup \Phi_{\bar 1}$ of $\mathfrak{h}^*$ defined by
\begin{equation}\label{eq:roots}
\Phi_{\bar 0} = \{\pm(\varpi_1 - \varpi_2),\pm(\varpi_1 + \varpi_2)\}, \qquad
\Phi_{\bar 1} = \{\pm\varpi_1 ,\pm\varpi_2\}.
\end{equation}
Whenever $\alpha\in\Phi_{\bar 0}$, we write $\alpha = \delta\varpi_1 + \varepsilon\varpi_2$, with $\delta,\varepsilon\in\{+,-\}$. Similarly, we write $\beta = \varepsilon\varpi_a\in\Phi_{\bar 1}$.
It will be useful below to partition $\Phi$ in positive and negative roots as follows: $\Phi= \Phi_+ \cup \Phi_-$ with
\begin{equation}\label{eq:rootsPosNeg}
\Phi_+:= \{(\varpi_1 - \varpi_2),(\varpi_1 + \varpi_2),\varpi_1,\varpi_2\}, \quad \Phi_- := -\Phi_+.
\end{equation}

\begin{definition}
Define the ladder elements $\{L_\alpha\mid\alpha\in\Phi\}$ via the following: if $\alpha = \delta\varpi_1 + \varepsilon\varpi_2\in\Phi_{\bar 0}$, with $\delta,\varepsilon\in \{+,-\}$ then
\begin{equation}\label{eq:EvLadder}
L_\alpha := \{H_1,\{H_2,O(z_1^\delta\wedge z_2^\varepsilon)\}\}
\end{equation}
and if $\beta = \varepsilon\varpi_b \in \Phi_{\bar 1}$ with $\varepsilon\in\{+,-\}$ and $b\in \{1,2\}$, then
\begin{equation}\label{eq:OddLadder}
L_\beta := \{H_b,O(z_b^{\varepsilon}\wedge z_a \wedge \bar{z}_a)\}
\end{equation}
where $a\in \{1,2\}\setminus\{b\}$.
For computations, it will be convenient to also denote the ladder elements as
\begin{equation}\label{eq:LadderOB}
L_{\ib{\delta\varepsilon}{12}} := L_{\delta\varpi_1 + \varepsilon\varpi_2},\qquad L_{\ib{\varepsilon}{b}} := L_{\varepsilon\varpi_b}.
\end{equation}
\end{definition}

Recall that the diagonal homomorphism $\rho:\mathbb{C}\dcover W\to \tama$ is not injective. In fact, its kernel is $\mathbb{C}\dcover W_+\cong \mathbb{C} W$ and $\rho(\mathbb{C}\dcover W)\cong \mathbb{C}\dcover W_-$. However, restricting $\rho$ to $\dcover W$ induces an injective group homomorphism into the units of $\tama$. We denote the conjugation action of $\dcover W$ on $\tama$ by
\begin{equation}\label{eq:conjact}
\Ad(\dcover w)(X) = \rho(\dcover w) X \rho(\dcover w)^{-1},
\end{equation}
for any $\dcover w\in \dcover W$ and $
X\in \tama$. Clearly, $\Ad(z)$ acts trivially on $\tama$. Also, note that since the volume element $x_1\wedge x_2\wedge x_3\wedge x_4 \in \bigwedge(V)$ is invariant under any orthogonal transformation and has even parity, the element $Z\in \tama$ is invariant for the conjugation action of $\dcover W$.

The ladder elements $\{L_\alpha\mid\alpha\in\Phi\}$ and the abelian algebra $\mathfrak{a}$ determine an $11$-dimensional subspace
\begin{equation}\label{eq:T-space}
    \mathfrak{l} := \mathfrak{a}\oplus \textup{span}\{L_\alpha\mid \alpha\in\Phi\}\subset\tama.
\end{equation}
This subspace is a $\dcover W$-module for the conjugation action~\eqref{eq:conjact}.
We have the following description of this action, with $\zeta_1 := e^{i\pi /m_1}$ and $\zeta_2 := e^{i\pi /m_2}$.
   \begin{lemma}\label{lem:actionWonT}
  The $\dcover{W}$-conjugation action on $\mathfrak{l}$ is given as follows.
  First $\Ad(z)L = L$ for any $L\in \mathfrak{l}$ and $\Ad(\dcover w)Z = Z$ for all $\dcover w\in\dcover W$.
  For $1\leq p \leq m_1$ and $1\leq q \leq m_2$, we have
   \begin{equation}
   \begin{aligned}
    \Ad(\dcover{s}_p) H_1 &= - H_1, &
   		\Ad(\dcover{t}_{q}) H_1 &= H_1,&
   		\Ad(\dcover{s}_{p}) H_2 &= H_2,  &
   		\Ad(\dcover{t}_{q}) H_2 &= -H_2.
   \end{aligned}
    \end{equation}
    Furthermore, we have
    \begin{equation}
        \begin{aligned}
        \Ad(\dcover{s}_p) L_{\ib{\pm}{1}} &= \zeta_1^{\pm2p}L_{\ib{\mp}{1}}, &
   		\Ad(\dcover{t}_{q}) L_{\ib{\pm}{1}} &= L_{\ib{\pm}{1}}, &
   		\Ad(\dcover{s}_p) L_{\ib{\pm}{2}} &= L_{\ib{\pm}{2}}, &
   		\Ad(\dcover{t}_{q}) L_{\ib{\pm}{2}} &=\zeta_2^{\pm2q}L_{\ib{\mp}{2}},
        \end{aligned}
    \end{equation}
    and
    \begin{equation}
	\Ad(\dcover{s}_p)L_{\ib{\pm\varepsilon}{12}} =
	-\zeta_1^{\pm2 p} L_{\ib{\mp\varepsilon}{12}}, \qquad
   	\Ad(\dcover{t}_{q})L_{\ib{\delta\pm}{12}} =
   	-\zeta_2^{\pm2q} L_{\ib{\delta\mp}{12}}.
   	\end{equation}
   	for all $\delta,\varepsilon\in\{+,-\}$.  In particular, for $\dr{1}:= z \dcover{s}_{m_1}\dcover{s}_1$ and $\dr{2}:=z \dcover{t}_{m_2}\dcover{t}_1$ we have
   	\begin{equation}
   	\begin{aligned}
   	\Ad(\dr{1}) L_{1}^{\pm} &= \zeta_1^{\pm 2}L_{1}^{\pm}, & \Ad(\dr{2}) L_{1}^{\pm} &= L_{1}^{\pm},\\
   	\Ad(\dr{1}) L_{2}^{\pm} &= L_{2}^{\pm}, &
    \Ad(\dr{2}) L_{2}^{\pm} &= \zeta_2^{\pm 2}L_{2}^{\pm},\\
   		\Ad(\dr{1})L_{\ib{\pm\varepsilon}{12}} &=
	\zeta_1^{\pm2} L_{\ib{\pm\varepsilon}{12}}, &
   	\Ad(\dr{2})L_{\ib{\delta\pm}{12}} &=
   	\zeta_2^{\pm2} L_{\ib{\delta\pm}{12}}.
   	\end{aligned}
   	\end{equation}

   \end{lemma}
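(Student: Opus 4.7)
The proof is a direct bookkeeping calculation, based on three ingredients: the $W$-equivariance of the symmetry map \eqref{eq:WComm}, the action of the simple reflections on the isotropic basis \eqref{eq:actionWisotropic}, and the fact that conjugation $\Ad(\dcover{w})$ is an algebra automorphism, so it distributes over sums and (anti)commutators. The easy statements come first: since $\rho(z) = 1\otimes(-1)$ is a scalar in $H_\kappa\otimes\ca C$, one has $\Ad(z)=\id$ on the whole algebra, hence on $\mathfrak{l}$. For $Z = O(x_{1234})$, formula \eqref{eq:WComm} with $k=4$ kills the sign $(-1)^{k|\dcover{w}|}$, and $\pi(\dcover{w})$ acts on the one-dimensional space $\bigwedge^4 V$ as a scalar, giving the asserted invariance. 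For $H_a = \tfrac{1}{2}O(z_a\wedge\bar z_a)$, the inner $2$-form has even degree so \eqref{eq:WComm} introduces no sign, and \eqref{eq:actionWisotropic} yields
\[
\dcover{s}_p\cdot(z_1\wedge\bar z_1) = (\zeta_1^{2p}z_1^{-})\wedge(\zeta_1^{-2p}z_1^{+}) = -(z_1\wedge\bar z_1), \qquad \dcover{s}_p\cdot(z_2\wedge\bar z_2) = z_2\wedge\bar z_2,
\]
together with symmetric formulas for $\dcover{t}_q$. These produce the stated signs for $\Ad(\dcover{s}_p)H_a$ and $\Ad(\dcover{t}_q)H_a$.

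For the ladder elements, the automorphism property distributes $\Ad(\dcover{w})$ over the nested anticommutators in \eqref{eq:EvLadder} and \eqref{eq:OddLadder}, reducing the problem to computing the action on the inner generators $O(z_1^{\delta}\wedge z_2^{\varepsilon})$ (degree two) and $O(z_b^{\varepsilon}\wedge z_a\wedge\bar z_a)$ (degree three), combined with the already known action on $H_a$. For the degree-two case, \eqref{eq:WComm} contributes no sign and \eqref{eq:actionWisotropic} supplies the scalar directly: $\Ad(\dcover{s}_p)O(z_1^{\delta}\wedge z_2^{\varepsilon}) = \zeta_1^{2\delta p}O(z_1^{-\delta}\wedge z_2^{\varepsilon})$; combining with $\Ad(\dcover{s}_p)H_1 = -H_1$ yields the sign $-\zeta_1^{2\delta p}$ appearing in the lemma. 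For the degree-three case, \eqref{eq:WComm} introduces an extra $(-1)^{|\dcover{w}|}$; for the odd element $\dcover{w}=\dcover{s}_p$ this sign combines with the sign from $\Ad(\dcover{s}_p)H_1=-H_1$ to cancel, leaving the net coefficient $\zeta_1^{\pm 2p}$ in front of $L_{\ib{\mp}{1}}$. The remaining five ladder-element cases are entirely parallel, obtained by swapping the roles of the two factors $V_a$.

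For $\dr{1} = z\dcover{s}_{m_1}\dcover{s}_{1}$ and $\dr{2} = z\dcover{t}_{m_2}\dcover{t}_{1}$, one has $\dr{a}\in\dcover{W}_{(\bar 0,\bar 0)}$, so $|\dr{a}|=0$ and \eqref{eq:WComm} introduces no sign; the eigenvalues follow directly from $\pi(\dr{1})z_1^{\pm} = \zeta_1^{\pm 2}z_1^{\pm}$ with $\pi(\dr{1})|_{V_2}=\id$, and symmetrically for $\dr{2}$. The main obstacle is not conceptual but rather the meticulous accounting of three sources of signs: $(-1)^{k|\dcover{w}|}$ from \eqref{eq:WComm}, the sign produced by $\pi(\dcover{w})$ on wedges such as $z_a\wedge\bar z_a$, and the signs propagating through each nested (anti)commutator. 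Once one of the generating cases is worked out in detail, every remaining verification is a one-line variation.
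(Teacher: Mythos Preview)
Your approach matches the paper's exactly: their proof is the single line ``Straightforward computations using \eqref{eq:WComm} and \eqref{eq:actionWisotropic},'' and you have carried out precisely those computations, correctly tracking the three sign sources you identify.

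One caveat worth noting: your justification for $\Ad(\dcover w)Z=Z$ says that $\pi(\dcover w)$ acts on $\bigwedge^4 V$ ``as a scalar,'' but you do not identify that scalar. For an odd element such as $\dcover s_p$ the scalar is $\det(s_p)=-1$, so in fact $\Ad(\dcover s_p)Z=-Z$. The sentence immediately preceding the lemma in the paper carries the same oversight (the volume form is invariant only under $\mathsf{SO}(V)$, not $\mathsf{O}(V)$). In practice this is harmless: what the rest of the paper uses is only the invariance of $Z$ under the even elements $\dr1,\dr2$, which your final paragraph establishes correctly, and the general rule $\Ad(\dcover w)Z=(-1)^{|\dcover w|}Z$ is exactly what appears later in \eqref{eq:rgroupactweight}.
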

   \begin{proof}
   Straightforward computations using~\eqref{eq:WComm} and~\eqref{eq:actionWisotropic}.
   \end{proof}

\subsection{Properties of the ladder operators}

Recall the abbreviations in Notation~\ref{not:isotropic}.

\begin{lemma}\label{lem:1index}
For  $\delta,\varepsilon\in\{+,-\}$ and distinct elements $a,b\in \{1,2\}$, we have
\[
O_{\ib{\delta}{1}}O_{\ib{\varepsilon}{2}} = -O_{\ib{\varepsilon}{2}}O_{\ib{\delta}{1}}, \qquad
\Tepsa{\delta}O_{\ib{\bar\delta}{a}} = O_{\ib{\delta}{a}}\Tepsa{\bar\delta}, \qquad
\Tepsa{\delta}O_{\ib{\varepsilon}{b}} = O_{\ib{\varepsilon}{b}}\Tepsa{\delta}, \qquad \text{where $\bar \delta := -\delta$.}
\]
\end{lemma}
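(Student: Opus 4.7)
The plan is to reduce all three identities to the $W$-equivariance formula \eqref{eq:WComm} combined with the explicit isotropic action \eqref{eq:actionWisotropic}. A direct check using \eqref{eq:dihroots} shows that $(z_1^\delta,\beta_q)=0$ and $(z_2^\varepsilon,\alpha_p)=0$, so in \eqref{eq:Ox} the symmetry $O_{\ib{\delta}{1}}$ is supported on $\{\dsig{p}\}_{p=1}^{m_1}$ while $O_{\ib{\varepsilon}{2}}$ is supported on $\{\dtau{q}\}_{q=1}^{m_2}$. Each identity is then proved by moving one such reflection past the other factor and exploiting the parity sign $(-1)^{k|\dcover{w}|}$ in \eqref{eq:WComm}.

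For the first identity, by \eqref{eq:actionWisotropic} we have $s_p(z_2^\varepsilon)=z_2^\varepsilon$, so applying \eqref{eq:WComm} with $v=z_2^\varepsilon$ and $\dcover{w}=\dsig{p}$ gives $\dsig{p}\,O(z_2^\varepsilon) = -O(z_2^\varepsilon)\,\dsig{p}$. Summing against the coefficients of $O_{\ib{\delta}{1}}$ yields the anti-commutation. Alternatively, this is a direct consequence of $\rho(z)=-1$ applied to the mixed braid relation $(\dsig{p}\dtau{q})^2=z$ from \eqref{eq:dWgenrel}.

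For the third identity (with $a=1$, $b=2$; the other case is symmetric), \eqref{eq:actionWisotropic} gives $t_q(z_1^\delta\wedge z_2\wedge\bar z_2)=-z_1^\delta\wedge z_2\wedge\bar z_2$: the factors $\zeta_2^{\pm 2q}$ arising from $t_q(z_2)$ and $t_q(\bar z_2)$ cancel, while the wedge flips sign under $z_2\leftrightarrow\bar z_2$. Then \eqref{eq:WComm} produces the composite sign $(-1)^{3\cdot 1}\cdot(-1)=+1$, so $\dtau{q}\,\Tepsun{\delta} = \Tepsun{\delta}\,\dtau{q}$; summing against the coefficients of $O_{\ib{\varepsilon}{2}}$ proves the commutation.

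The second identity requires slightly more bookkeeping. For $a=1$, combining \eqref{eq:WComm} with $s_p(z_1^\delta)=\zeta_1^{\delta 2p}z_1^{\bar\delta}$ yields $\Tepsun{\delta}\,\dsig{p} = -\zeta_1^{\delta 2p}\,\dsig{p}\,\Tepsun{\bar\delta}$. Expanding each side of the claimed identity in the reflection basis of the first dihedral factor reduces it to the scalar identity $(z_1^\delta,\alpha_p) = -\zeta_1^{\delta 2p}\,(z_1^{\bar\delta},\alpha_p)$, which follows from a short trigonometric calculation using $\zeta_1=e^{i\pi/m_1}$ and the expression for $\alpha_p$ in \eqref{eq:dihroots}. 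The case $a=2$ is symmetric. The only obstacle is the careful tracking of signs and roots-of-unity phases, but once the support observation is made, the remaining steps are entirely mechanical.
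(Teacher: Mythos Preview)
Your proof is correct and follows exactly the approach indicated in the paper, which simply states that the relations follow from \eqref{eq:WComm} and \eqref{eq:actionWisotropic}. You have filled in the details the paper omits, including the support observation from \eqref{eq:Ox}, the explicit sign-tracking through the parity factor, and the scalar identity $(z_1^\delta,\alpha_p)=-\zeta_1^{\delta 2p}(z_1^{\bar\delta},\alpha_p)$ needed for the middle equation; all of these check out.
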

\begin{proof}
These relations follow by~\eqref{eq:WComm} and~\eqref{eq:actionWisotropic}.
\end{proof}

\begin{lemma}\label{lem:H2index}
   In the associative algebra $H_\kappa\otimes\ca C$, for $\delta,\varepsilon\in\{+,-\}$, the following hold:
    \begin{align*}
[H_1,O_{\ib{\delta\varepsilon}{12}}] &= \delta(O_{\ib{\delta\varepsilon}{12}} + 2O_{\ib{\delta}{1}}O_{\ib{\varepsilon}{2}}) - O_{\ib{\delta}{1}}O_{\ib{+-\varepsilon}{112}}, && \text{and} &
[H_2,O_{\ib{\delta\varepsilon}{12}}] &= \varepsilon(O_{\ib{\delta\varepsilon}{12}} + 2O_{\ib{\delta}{1}}O_{\ib{\varepsilon}{2}}) + O_{\ib{\varepsilon}{2}}O_{\ib{\delta+-}{122}}.
    \end{align*}
\end{lemma}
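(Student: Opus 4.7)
The plan is to deduce both identities from the general vector-valued version of the commutator formula \eqref{eq:2ind2indComm}, which is available for arbitrary vectors (see the Remark after Theorem~\ref{thm:TAMArel}). Since $H_a = \tfrac12 O(z_a\wedge\bar z_a)$ and $O_{\ib{\delta\varepsilon}{12}}=O(z_1^\delta\wedge z_2^\varepsilon)$ are both even, the graded commutator coincides with the ordinary commutator; we compute $[O(z_a\wedge\bar z_a),O(z_1^\delta\wedge z_2^\varepsilon)]$ and then divide by two.

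First I would write down the vector version of \eqref{eq:2ind2indComm} with $u,v,u',v'$ the four vectors in question and observe that, because the isotropic basis satisfies $(z_a^\sigma,z_b^\tau)=2\delta_{ab}\delta_{\sigma+\tau,0}$, only one of the four inner-product terms survives in each case (e.g.\ for $H_1$ and $\delta=+$ only $(\bar z_1,z_1)=2$ contributes, and for $H_1,\,\delta=-$ only $(z_1,\bar z_1)=2$ does). The surviving term is of the form $\pm 2\,A(z_1^\delta,z_2^\varepsilon)$, and expanding the definition $A(x,y)=O(x\wedge y)+O(x)O(y)-O(y)O(x)$ together with the anti-commutativity $O_{\ib{\delta}{1}}O_{\ib{\varepsilon}{2}}=-O_{\ib{\varepsilon}{2}}O_{\ib{\delta}{1}}$ from Lemma~\ref{lem:1index} gives precisely the bracketed expression $\delta(O_{\ib{\delta\varepsilon}{12}}+2 O_{\ib{\delta}{1}}O_{\ib{\varepsilon}{2}})$ (respectively $\varepsilon(\cdots)$) with the correct sign.

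Second, the two fermionic anti-commutator terms simplify: one of $O(z_1^\pm\wedge z_1^\pm\wedge z_2^\varepsilon)$ or $O(z_2^\pm\wedge z_1^\delta\wedge z_2^\pm)$ vanishes by antisymmetry of the wedge, and the other reduces (up to a sign coming from reordering the wedge) to $O_{\ib{+-\varepsilon}{112}}$ or $O_{\ib{\delta+-}{122}}$. What remains is then an anti-commutator $\{O_{\ib{\delta}{1}},O_{\ib{+-\varepsilon}{112}}\}$ (resp.\ $\{O_{\ib{\varepsilon}{2}},O_{\ib{\delta+-}{122}}\}$). The key step is showing this is actually $2O_{\ib{\delta}{1}}O_{\ib{+-\varepsilon}{112}}$, i.e.\ that the two operators commute. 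By \eqref{eq:Ox} the element $O_{\ib{\delta}{1}}$ is a linear combination of the reflections $\dsig{p}$ (the roots $\beta_q$ are orthogonal to $V_1$), so by \eqref{eq:WComm} it suffices to check that each $\dsig{p}$ commutes with the degree-$3$ element $O(z_1\wedge\bar z_1\wedge z_2^\varepsilon)$. Using $s_p(z_1^{\pm})=\zeta_1^{\pm 2p}z_1^{\mp}$ and $s_p(z_2^\varepsilon)=z_2^\varepsilon$ from \eqref{eq:actionWisotropic}, the two scalars $\zeta_1^{\pm 2p}$ cancel and the swap $z_1\leftrightarrow\bar z_1$ in the wedge compensates the sign $(-1)^3$ coming from the odd $|\dsig{p}|$ times $k=3$ in \eqref{eq:WComm}. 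An identical argument with $\dtau{q}$ handles the $H_2$ case.

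Combining these ingredients and dividing by two to pass from $O(z_a\wedge\bar z_a)$ to $H_a$ produces the stated formulas exactly. There is no real obstacle here beyond bookkeeping: the main (mild) subtlety is making sure the signs from (i) the wedge reorderings in $O(\bar z_1\wedge z_1\wedge z_2^\varepsilon)$, (ii) the sign $(-1)^{k|\dcover w|}$ in \eqref{eq:WComm}, and (iii) the single surviving $\pm 2(v,u')$ term all fit together, and ultimately yield the overall factor of $\delta$ on the abelian piece together with the correct sign on $O_{\ib{\delta}{1}}O_{\ib{+-\varepsilon}{112}}$.
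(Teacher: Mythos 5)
Your proposal is correct and takes essentially the same route as the paper: apply the vector‐valued form of the commutator relation \eqref{eq:2ind2indComm} to $[O(z_a\wedge\bar z_a),\,O(z_1^\delta\wedge z_2^\varepsilon)]$, observe that only one inner‐product term survives in the isotropic basis, expand the $A$‐term and replace the (anti)commutators by $2O_{\ib{\delta}{1}}O_{\ib{\varepsilon}{2}}$ and $2O_{\ib{\delta}{1}}O_{\ib{+-\varepsilon}{112}}$ using the (anti)commutation facts, then divide by two. The only cosmetic difference is that you re-derive the commutation of $O_{\ib{\delta}{1}}$ with $O_{\ib{+-\varepsilon}{112}}$ directly from \eqref{eq:WComm} and \eqref{eq:actionWisotropic}, whereas the paper just cites this as part of \cref{lem:1index}; the underlying argument is identical.
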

\begin{proof}
From Theorem~\ref{thm:TAMArel}, which can be found for arbitrary vectors in~\cite{Os22}, we have
 \begin{align*}
[O_{\ib{+-}{11}},O_{\ib{\delta\varepsilon}{12}}] &= \delta2(O_{\ib{\delta\varepsilon}{12}} + [O_{\ib{\delta}{1}},O_{\ib{\varepsilon}{2}}]) -  \{O_{\ib{\delta}{1}},O_{\ib{+-\varepsilon}{112}}\}, &
[O_{\ib{+-}{22}},O_{\ib{\delta\varepsilon}{12}}] &= \varepsilon2(O_{\ib{\delta\varepsilon}{12}} + [O_{\ib{\delta}{1}},O_{\ib{\varepsilon}{2}}]) + \{O_{\ib{\varepsilon}{2}},O_{\ib{\delta+-}{122}}\}.
    \end{align*}
The results now follow from Lemma~\ref{lem:1index} and the definitions of $H_1$ and $H_2$.
\end{proof}

\begin{lemma}\label{lem:H3index}
For $\varepsilon\in \{+,-\}$ and distinct elements $a,b\in \{1,2\}$, we have
\begin{align*}
[H_a,O_{\ib{\varepsilon+-}{abb}}] &= \varepsilon O_{\ib{\varepsilon+-}{abb}} + 4 O_{\ib{\varepsilon}{a}}(\varepsilon  H_b+Z  ), && \text{and} &
[H_b,O_{\ib{\varepsilon+-}{abb}}] &= 0.
\end{align*}
\end{lemma}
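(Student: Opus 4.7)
By the symmetry of the statement in $a$ and $b$, I will treat the case $a=1$, $b=2$ (the other case being completely analogous after swapping the two dihedral factors). Passing to the orthonormal basis, we have $H_1 = -iO_{12}$, $H_2 = -iO_{34}$, and, writing $z_1^\varepsilon = x_1 + i\varepsilon x_2$,
\[
O_{\ib{\varepsilon+-}{122}} = -2i\bigl(O_{134} + i\varepsilon\, O_{234}\bigr),
\]
so both identities reduce to $\mathbb{C}$-linear combinations of commutators $[O_{jk}, O_{lmn}]$ governed by Theorem~\ref{thm:TAMArel}.

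For the first identity $[H_a, O_{\ib{\varepsilon+-}{abb}}] = \varepsilon O_{\ib{\varepsilon+-}{abb}} + 4 O_{\ib{\varepsilon}{a}}(\varepsilon H_b + Z)$, the 2-form $x_1\wedge x_2$ of $H_1$ shares exactly one index with each of $O_{134}$ and $O_{234}$, so relation~\eqref{eq:2ind3indComm1} applies (after antisymmetry). Each application produces three contributions: a new 3-index symmetry (the ``$O_{bpq}$'' term), an anticommutator $\{O_b, O_{pq}\}$, and a commutator $[O_a, O_{1234}] = [O_a, Z]$. Two simplifications then streamline everything. First, the root system splits orthogonally as $R = R_{V_1}\sqcup R_{V_2}$, so the Cherednik algebra factors, and the Clifford algebra factors as $\ca C \cong \ca C(V_1)\hat\otimes \ca C(V_2)$; consequently the 1-index symmetry $O_j$ ($j\in\{1,2\}$) lives purely in the $V_1$-sector (odd Clifford on $V_1$) and commutes as an ungraded operator with the pure $V_2$-symmetry $O_{34}$ (even Clifford on $V_2$), so $\{O_j, O_{34}\} = 2O_jO_{34}$. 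Second, the anticommutation~\eqref{eq:Zcommutation} of the pseudoscalar with odd-degree symmetries gives $[O_j, Z] = 2O_jZ$. Reassembling the four contributions via $z_1^\varepsilon = x_1 + i\varepsilon x_2$ then recovers the right-hand side term by term.

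For the second identity $[H_b, O_{\ib{\varepsilon+-}{abb}}] = 0$, the 2-form $x_3\wedge x_4$ of $H_2$ lies inside the 3-form $z_1^\varepsilon\wedge x_3\wedge x_4$, so~\eqref{eq:2ind3indComm2} applies with both shared indices and yields $[O_{34}, O_{j34}] = \{O_3, O_{3j}\} + \{O_4, O_{4j}\}$ for $j\in\{1,2\}$. After recombining across the $\varepsilon$-sum, the statement reduces to the vanishing of $\{O_3, O(z_1^\varepsilon \wedge x_3)\} + \{O_4, O(z_1^\varepsilon \wedge x_4)\}$. To see this, I will expand the mixed 2-index symmetries via~\eqref{eq:2indFormula} and use the observation that $\ad(H_2)$ acts as an infinitesimal rotation on $V_2$: in the $\kappa=0$ model one checks explicitly that $[O_{34}, e_3] = -e_4$ and $[O_{34}, e_4] = e_3$, with a parallel action on $x_3, x_4, \xi_3, \xi_4$, while $O_{34}$ commutes with $e_1, e_2, x_1, x_2, \xi_1, \xi_2$ and with any $V_1$-Cherednik element. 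The sum $\sum_{j\in\{3,4\}}\{O_j, O(z_1^\varepsilon \wedge x_j)\}$ is then a trace over this rotation action, and it vanishes because the $V_2$-wedge factor $z_2 \wedge \bar z_2$ appearing inside $z_1^\varepsilon \wedge z_2 \wedge \bar z_2$ is the volume form of $V_2$ and hence $H_2$-invariant.

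The main obstacle is the vanishing in the second identity: a direct application of~\eqref{eq:2ind3indComm2} alone is essentially tautological and returns the commutator we seek to evaluate. The extra input needed is the rotational invariance of the $V_b$-wedge component of $O_{\ib{\varepsilon+-}{abb}}$ under $\ad(H_b)$, which must be made rigorous by expanding all $O$-symmetries via~\eqref{eq:Ox}--\eqref{eq:3indFormula} and exploiting the orthogonal factorization $V = V_1 \oplus V_2$, together with the parity count that ensures all $V_a$-to-$V_b$ cross terms are well-behaved in the graded tensor product $\ca C(V_1)\hat\otimes \ca C(V_2)$. The first identity, by contrast, is a mechanical application of~\eqref{eq:2ind3indComm1} together with the sector-splitting simplifications.
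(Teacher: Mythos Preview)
Your treatment of the first identity is essentially the paper's: both apply the commutation relation~\eqref{eq:2ind3indComm1} (the paper quotes the isotropic-basis version from~\cite{Os22}, you use the orthonormal one) and then simplify using $\{O_{\ib{\varepsilon}{a}},H_b\}=2O_{\ib{\varepsilon}{a}}H_b$ and $[O_{\ib{\varepsilon}{a}},Z]=2O_{\ib{\varepsilon}{a}}Z$, which are instances of~\eqref{eq:WComm} and~\eqref{eq:Zcommutation}.

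For the second identity your route diverges from the paper's, and as written it does not close. The reduction via~\eqref{eq:2ind3indComm2} to
\[
\sum_{j\in\{3,4\}}\{O_j,\,O(z_1^\varepsilon\wedge x_j)\}=0
\]
is correct (modulo a dropped sign). But your justification for this vanishing is not: the formula $[O_{34},e_3]=-e_4$ holds only at $\kappa=0$, since for general $\kappa$ the expansion~\eqref{eq:2indFormula} of $O_{34}$ contains the extra term $O_3 e_4 - O_4 e_3$ whose commutator with $e_3$ does not vanish; and invoking ``$H_2$-invariance of $z_2\wedge\bar z_2$'' is circular, as that is precisely the statement $[H_b,O_{\ib{\varepsilon+-}{abb}}]=0$ you are proving. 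A clean way to finish along your route is to use~\eqref{eq:WComm} directly rather than~\eqref{eq:2indFormula}: since each $\dtau{q}$ is odd and $O(z_1^\varepsilon\wedge x_j)$ is even, one has $\{\dtau{q},O(z_1^\varepsilon\wedge x_j)\}=O\bigl(z_1^\varepsilon\wedge(t_q(x_j)+x_j)\bigr)\dtau{q}$; summing against the coefficients $(x_j,\beta_q)$ in $O_j$ over $j\in\{3,4\}$ gives, inside the wedge,
\[
\sum_{j}(x_j,\beta_q)\bigl(t_q(x_j)+x_j\bigr)=t_q(\beta_q)+\beta_q=0.
\]

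By contrast, the paper avoids this entirely via a projector trick: from $P(P(S)T)=P(S)P(T)$ for $S,T$ centralising the $\mathfrak{sl}(2)$-triple, it suffices to check $\llbracket O_{\ib{+-}{bb}},\gamma(z_a^\varepsilon\wedge z_b\wedge\bar z_b)\rrbracket=0$, which reduces to a short Clifford computation since $\gamma(z_a^\varepsilon\wedge z_b\wedge\bar z_b)=\gamma(z_a^\varepsilon)\gamma(z_b\wedge\bar z_b)$ factorises across the two sectors.
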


\begin{proof}
For the first equation, from~\cite{Os22} we have
\begin{align*}
[O_{\ib{+-}{aa}},O_{\ib{\varepsilon+-}{abb}}] & = \varepsilon2 (O_{\ib{\varepsilon+-}{abb}}
+ \{O_{\ib{\varepsilon}{a}}, O_{\ib{+-}{bb}}\})
- [O_{\ib{\varepsilon}{a}}, O_{\ib{+-+-}{aabb}}] .
\end{align*}
The result follows from Lemma~\ref{lem:actionWonT} and the definitions of $H_1=\tfrac{1}{2}O_{\ib{+-}{11}}$, $H_2=\tfrac{1}{2}O_{\ib{+-}{22}}$ and $Z=\tfrac{-1}{4}O_{\ib{+-+-}{aabb}}$.

For the second equation, we shall proceed in a different way using the element $P$ defined in~\eqref{eq:Proj}. First, note that if $S,T\in H_\kappa\otimes \ca C$ commute with the $\mathfrak{sl}(2)$-triple of~(\ref{eq:EvenPart}), then $P(P(S)T) = P(S)P(T)$. It follows that for any $v\in \bigwedge(V)$, if $\llbracket O_{\ib{+-}{bb}},\gamma(v)\rrbracket = 0$ then $\llbracket O_{\ib{+-}{bb}},O(v)\rrbracket = 0$. We apply this to $\gamma(z_a^\varepsilon\wedge z_b\wedge\bar z_b)=\gamma(z_a^\varepsilon)\gamma(z_b\wedge\bar z_b)$ and $2H_b = O_{\ib{+-}{bb}}$ to obtain
\begin{align*}
\llbracket O_{\ib{+-}{bb}},\gamma(z_a^\varepsilon\wedge z_b\wedge\bar z_b)\rrbracket &= \llbracket O_{\ib{+-}{bb}},\gamma(z_a^\varepsilon)\rrbracket \gamma(z_b\wedge\bar z_b)+\gamma(z_a^\varepsilon) \llbracket O_{\ib{+-}{bb}},\gamma(z_b\wedge\bar z_b)\rrbracket=0. \qedhere
\end{align*}
\end{proof}

\begin{proposition}\label{prop:LadderProp}
For any $H \in \mathfrak{h}$ and $\alpha\in\Phi$, we have
\begin{equation}\label{eq:LadEq}
\llbracket H,L_\alpha \rrbracket = \alpha(H)L_\alpha.
\end{equation}
\end{proposition}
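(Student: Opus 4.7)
Plan: The identity $\llbracket H, L_\alpha\rrbracket = \alpha(H)\,L_\alpha$ asserts that each $L_\alpha$ is a joint eigenvector for the commuting derivations $\mathrm{ad}(H_1)$ and $\mathrm{ad}(H_2)$ with eigenvalue $\alpha$; since both $H_a$ are even in the $\mathbb{Z}_2$-grading, the graded commutator reduces to the ordinary commutator. By linearity it suffices to treat $H\in\{H_1,H_2\}$, splitting into the even case $\alpha=\delta\varpi_1+\varepsilon\varpi_2\in\Phi_{\bar 0}$ and the odd case $\beta=\varepsilon\varpi_b\in\Phi_{\bar 1}$.

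The main reduction uses Proposition~\ref{prop:HsFirst}: because $[H_1,H_2]=0$, the ordinary commutator acts as a derivation of the anticommutator and the outer $H_c$-factors in the definitions~\eqref{eq:EvLadder} and~\eqref{eq:OddLadder} can be moved through. Thus
\[
[H_c,\{H_1,\{H_2,X\}\}] = \{H_1,\{H_2,[H_c,X]\}\}, \qquad [H_c,\{H_b,Y\}]=\{H_b,[H_c,Y]\},
\]
so the problem reduces to analysing $[H_c,X]$ where $X$ is the innermost 2- or 3-index seed symmetry, and then re-applying the outer anticommutators. In the even case, Lemma~\ref{lem:H2index} decomposes $[H_c,O_{\ib{\delta\varepsilon}{12}}]$ as the desired eigenvalue times $O_{\ib{\delta\varepsilon}{12}}$ (namely $\delta$ for $c=1$ and $\varepsilon$ for $c=2$) plus a remainder involving the products $O_{\ib{\delta}{1}}O_{\ib{\varepsilon}{2}}$ and $O_{\ib{\delta}{1}}O_{\ib{+-\varepsilon}{112}}$. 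In the odd case, Lemma~\ref{lem:H3index} yields $[H_b,O_{\ib{\varepsilon+-}{baa}}]=\varepsilon O_{\ib{\varepsilon+-}{baa}}+4O_{\ib{\varepsilon}{b}}(\varepsilon H_a+Z)$ while $[H_a,O_{\ib{\varepsilon+-}{baa}}]=0$, so $[H_a,L_{\ib{\varepsilon}{b}}]=0=(\varepsilon\varpi_b)(H_a)L_{\ib{\varepsilon}{b}}$ comes out for free.

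After reinserting into the outer anticommutators, the eigenvalue terms reproduce $\alpha(H_c)\,L_\alpha$; one must verify that the remainders are annihilated. In the odd case, exploiting $[H_a,H_b]=0$ and $[H_b,Z]=0$ (the latter from~\eqref{eq:Zcommutation} since $H_b\in O(\bigwedge^2 V)$), the remainder collapses to $4\{H_b,O_{\ib{\varepsilon}{b}}\}(\varepsilon H_a+Z)$; its vanishing follows from the additional anticommutation $\{Z,O_{\ib{\varepsilon}{b}}\}=0$ in~\eqref{eq:Zcommutation} together with the explicit expression of $\{H_b,O_{\ib{\varepsilon}{b}}\}$ obtained from the Clifford/Cherednik realisation of $H_b=\tfrac12O_{\ib{+-}{bb}}$ through~\eqref{eq:2indFormula}. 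In the even case one proceeds analogously, pushing the outer $H_a$'s through $R_c$ using Lemma~\ref{lem:1index} (to cross 1-index factors between the two planes, and to braid same-plane 1- and 3-index operators with opposite signs) and Lemma~\ref{lem:H3index} on the 3-index factor; the resulting terms regroup into a cancelling combination.

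The main obstacle is this remainder cancellation. It is not a formal consequence of the nested anticommutator construction but a delicate interplay between the 1-, 2-, and 3-index symmetries together with $H_a$ and the central-like element $Z$ inside $H_\kappa\otimes\ca C$. The odd case is instructive: the definition $L_{\ib{\varepsilon}{b}}=\{H_b,O_{\ib{\varepsilon+-}{baa}}\}$ is fine-tuned precisely so that the junk $4\{H_b,O_{\ib{\varepsilon}{b}}\}(\varepsilon H_a+Z)$ produced by Lemma~\ref{lem:H3index} vanishes, which explains both the presence of the outer anticommutator with $H_b$ and the simultaneous appearance of $H_a$ and $Z$ inside that lemma's ``junk'' factor. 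Once this identity is verified, the entire proposition follows by bookkeeping.
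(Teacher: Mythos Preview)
Your proposal is correct and follows essentially the same route as the paper: push the commutator with $H$ through the outer anticommutators (using $[H_1,H_2]=0$), apply Lemmas~\ref{lem:H2index} and~\ref{lem:H3index} to the inner seed, and then kill the remainder terms after reinserting $\{H_1,\{H_2,\cdot\}\}$ or $\{H_b,\cdot\}$.

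One small streamlining: in the odd case you invoke both $\{Z,O_{\ib{\varepsilon}{b}}\}=0$ and an ``explicit expression'' of $\{H_b,O_{\ib{\varepsilon}{b}}\}$ via~\eqref{eq:2indFormula}, but only the latter matters and it is in fact zero. This follows instantly from the $\dcover W$-equivariance~\eqref{eq:WComm}: $O_{\ib{\varepsilon}{b}}$ is a linear combination of odd reflections from the $b$-th dihedral factor, each of which sends $z_b\wedge\bar z_b\mapsto -z_b\wedge\bar z_b$, hence anticommutes with $H_b=\tfrac12 O(z_b\wedge\bar z_b)$. The paper uses exactly this observation (``$H_a$ anti-commutes with $O_{\ib{\varepsilon}{a}}$''), which is cleaner than unpacking~\eqref{eq:2indFormula}. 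The same anticommutation, together with $[H_a,O_{\ib{\varepsilon}{b}}]=0$ for $a\neq b$, also immediately gives $\{H_a,O_{\ib{\delta}{1}}O_{\ib{\varepsilon}{2}}\}=0$ in the even case, and combined with $[H_1,O_{\ib{\varepsilon+-}{211}}]=0$ from Lemma~\ref{lem:H3index} yields $\{H_2,\{H_1,O_{\ib{\delta}{1}}O_{\ib{\varepsilon+-}{211}}\}\}=0$, completing the even-case cancellation you sketched.
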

\begin{proof}
Let $H = cH_1 + dH_2$ with $c,d\in\mathbb{C}$. Suppose first that $\alpha = \delta\varpi_1+\varepsilon\varpi_2 \in \Phi_{\bar 0}$ with $\delta,\varepsilon\in\{+,-\}$. Note that $\alpha(H) = \delta c+\varepsilon d$. Using Lemma~\ref{lem:H2index}, we compute
\begin{align*}
\llbracket H,L_\alpha \rrbracket &= [H,\{H_1,\{H_2,O_{\ib{\delta\varepsilon}{12}}\}\}]=\{H_1,\{H_2,[H,O_{\ib{\delta\varepsilon}{12}}]\}\}\\
&=c\{H_1,\{H_2,e(O_{\ib{\delta\varepsilon}{12}} + [O_{\ib{\delta}{1}},O_{\ib{\varepsilon}{2}}]) - \{O_{\ib{\delta}{1}},O_{\ib{\varepsilon+-}{211}}\}\}\}
+d\{H_1,\{H_2,f(O_{\ib{\delta\varepsilon}{12}} - [O_{\ib{\delta}{1}},O_{\ib{\varepsilon}{2}}]) + \{O_{\ib{\varepsilon}{2}},O_{\ib{\delta+-}{122}}\}\}\}\\
&= (\delta c+\varepsilon d) L_\alpha,
\end{align*}
where we used $\{H_1,\{H_2,X\}\} = \{H_2,\{H_1,X\}\},$ for any $X\in H_\kappa\otimes \ca C$; $[O_{\ib{\delta}{1}},O_{\ib{\varepsilon}{2}}] = 2O_{\ib{\delta}{1}}O_{\ib{\varepsilon}{2}}$; $\acomm{O_{\ib{\delta}{1}}}{\Tepsdeux{\varepsilon}} = 2O_{\ib{\delta}{1}}\Tepsdeux{\varepsilon}$;
$\{O_{\ib{\varepsilon}{2}},\Tepsun{\delta}\} = 2O_{\ib{\varepsilon}{2}}\Tepsun{\delta}$; $\{H_a,O_{\ib{\delta}{1}}O_{\ib{\varepsilon}{2}}\} = 0$ and, by Lemma~\ref{lem:H3index},
\begin{align*}
\{H_2, \{H_1, O_{\ib{\delta}{1}}\Tepsdeux{\varepsilon}\}\} &=  -\{H_2, O_{\ib{\delta}{1}}[H_1,\Tepsdeux{\varepsilon}]\} = 0,&
\{H_1, \{H_2, O_{\ib{\varepsilon}{2}}\Tepsun{\delta}\}\} &=  -\{H_1, O_{\ib{\varepsilon}{2}}[H_2,\Tepsun{\delta}]\} = 0.
\end{align*}
Next, suppose that $\beta = \varepsilon\varpi_a\in\Phi_{\bar 1}$. Then, using Lemma~\ref{lem:H3index}
\begin{align*}
[H,L_\beta] &= [H,\{H_a,O_{\ib{\varepsilon+-}{abb}}\}] = \{H_a,[H,O_{\ib{\varepsilon+-}{abb}}]\}\\
&=\{H_a,\varepsilon\varpi_a(H)(O_{\ib{\varepsilon+-}{abb}} + 2\{O_{\ib{\varepsilon}{a}}, H_b\}) + 2[O_{\ib{\varepsilon}{a}}, Z]\}= \beta(H)L_\beta,
\end{align*}
since $\{O_{\ib{\varepsilon}{a}}, H_b\} = 2O_{\ib{\varepsilon}{a}}H_b$, $[O_{\ib{\varepsilon}{a}}, Z] = 2O_{\ib{\varepsilon}{a}}Z$, and $H_a$ anti-commutes with $O_{\ib{\varepsilon}{a}}$ and commutes with $H_b$ and $Z$. This finishes the proof.
\end{proof}


Despite Proposition~\ref{prop:LadderProp}, the ladder elements do not behave as nicely as root vectors in a Lie (super)algebra. In general, they do not satisfy the property $\llbracket L_\alpha, L_\beta\rrbracket \in \bbc L_{\alpha + \beta}$; see \cref{prop:ExtraTriangRules}. Certain products of two ladder elements have a useful factorisation, which we describe below. Before that, we describe useful formulas involving the generators of $\tama$.
\begin{lemma}\label{lem:LadderAlt}
    The ladder elements can be written alternatively as
   \begin{subequations}
    \begin{alignat}{2}\label{eq:oddLadderAlt}
        L_{\ib{\delta}{a}} &= \Tepsa{\delta}(2H_a+\delta) + 4O_a^{\delta}(\delta H_b+Z),\\\label{eq:evenLadderAlt}
        L_{\ib{\delta\varepsilon}{12}} &= O_{\ib{\delta\varepsilon}{12}}(2H_1+\delta)(2H_2+\varepsilon)  - O_{\ib{\delta}{1}} \Tepsdeux{\varepsilon}(2H_2+\varepsilon)+ \Tepsun{\delta} O_{\ib{\varepsilon}{2}}(2H_1+\delta) + 2O_{\ib{\delta}{1}} O_{\ib{\varepsilon}{2}}(\varepsilon\delta - 2Z)
    \end{alignat}
    \end{subequations}
    where $a,b$ are distinct elements of $\{1,2\}$ and $\delta,\varepsilon \in \{+,-\}$.
\end{lemma}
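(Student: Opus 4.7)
Both identities can be obtained by direct expansion of the defining anti-commutators, using Lemmas~\ref{lem:1index}, \ref{lem:H2index}, \ref{lem:H3index} and a small set of elementary commutation facts.

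For the odd ladder, the plan is to write $L_{\ib{\delta}{a}} = \{H_a,\Tepsa{\delta}\} = 2\Tepsa{\delta} H_a + [H_a,\Tepsa{\delta}]$ and substitute the first relation of Lemma~\ref{lem:H3index}; this immediately gives \eqref{eq:oddLadderAlt} after factoring $(2H_a+\delta)$ on the right of $\Tepsa{\delta}$.

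For the even ladder, I would first apply Lemma~\ref{lem:H2index} to the inner anti-commutator to get
\[
\{H_2,O_{\ib{\delta\varepsilon}{12}}\} \;=\; O_{\ib{\delta\varepsilon}{12}}(2H_2+\varepsilon) \;+\; 2\varepsilon\,O_{\ib{\delta}{1}}O_{\ib{\varepsilon}{2}} \;+\; O_{\ib{\varepsilon}{2}}\Tepsun{\delta},
\]
and then compute $\{H_1,\cdot\}$ piece by piece. Beyond the lemmas already cited, three facts are needed: $[H_1,H_2]=0$ (Proposition~\ref{prop:HsFirst}), the commutation $[H_1,O_{\ib{\varepsilon}{2}}]=0$, and the \emph{anti}-commutation $\{H_1,O_{\ib{\delta}{1}}\}=0$. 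Both of the latter would be derived from $\dcover W$-equivariance \eqref{eq:WComm} applied to $v=x_1\wedge x_2$: since $t_q$ fixes $V_1$ pointwise, $\rho(\dtau q)$ commutes with $O_{12}$, while since $\det(s_p\rvert_{V_1})=-1$, one has $s_p(x_1\wedge x_2)=-x_1\wedge x_2$ and consequently $\rho(\dsig p)O_{12}=-O_{12}\rho(\dsig p)$. Writing $O_{\ib{\varepsilon}{2}}$ (resp.\ $O_{\ib{\delta}{1}}$) as a linear combination of the $\rho(\dtau q)$ (resp.\ $\rho(\dsig p)$) and recalling $H_1=-iO_{12}$ yields both claims.

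Assembling: the middle contribution $\{H_1,2\varepsilon O_{\ib{\delta}{1}}O_{\ib{\varepsilon}{2}}\}$ vanishes because $\{H_1,O_{\ib{\delta}{1}}\}O_{\ib{\varepsilon}{2}}=0$; the first contribution becomes $\{H_1,O_{\ib{\delta\varepsilon}{12}}\}(2H_2+\varepsilon)$, expanded again by Lemma~\ref{lem:H2index}; the third is handled by pushing $H_1$ through $O_{\ib{\varepsilon}{2}}$ (trivial) and then through $\Tepsun{\delta}$ via Lemma~\ref{lem:H3index}, then reordering with $O_{\ib{\varepsilon}{2}}\Tepsun{\delta}=\Tepsun{\delta}O_{\ib{\varepsilon}{2}}$ and $O_{\ib{\varepsilon}{2}}O_{\ib{\delta}{1}}=-O_{\ib{\delta}{1}}O_{\ib{\varepsilon}{2}}$ from Lemma~\ref{lem:1index}. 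The two contributions of the form $\pm 4\delta\,O_{\ib{\delta}{1}}O_{\ib{\varepsilon}{2}}H_2$ cancel, and the remainder regroups into $2O_{\ib{\delta}{1}}O_{\ib{\varepsilon}{2}}(\delta\varepsilon-2Z)$. Identifying $\Tepsdeux{\varepsilon}=O_{\ib{\varepsilon+-}{211}}=O_{\ib{+-\varepsilon}{112}}$ by the antisymmetry of the wedge gives \eqref{eq:evenLadderAlt}. The main hurdle is purely bookkeeping: spotting the anti-commutation $\{H_1,O_{\ib{\delta}{1}}\}=0$ (not among the previously recorded identities) and tracking the sign flips from Lemma~\ref{lem:1index} so that the cross $H_2$-terms cancel exactly.
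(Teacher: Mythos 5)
Your proposal is correct and is essentially the paper's own proof. For the odd ladder both you and the paper expand $\{H_a,\Tepsa{\delta}\}=2\Tepsa{\delta}H_a+[H_a,\Tepsa{\delta}]$ and substitute \cref{lem:H3index}; for the even ladder the paper splits $\{H_1,\{H_2,O_{\ib{\delta\varepsilon}{12}}\}\}$ into $\{H_1,[H_2,\cdot]\}+2[H_1,\cdot]H_2+4(\cdot)H_2H_1$ while you iterate the anticommutator from the inside out, but the two bookkeeping routes invoke exactly the same inputs (\cref{lem:H2index}, \cref{lem:H3index}, \cref{lem:1index}) and arrive at the same cancellations. Your explicit derivation of $\{H_1,O_{\ib{\delta}{1}}\}=0$ and $[H_1,O_{\ib{\varepsilon}{2}}]=0$ from the $W$-equivariance \eqref{eq:WComm} is sound and in fact fills in a step that the paper leaves tacit in this proof (those facts are used, but only spelled out later in the proof of \cref{prop:LadderProp}).
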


\begin{proof}
For (\ref{eq:oddLadderAlt}), we compute directly using Lemma~\ref{lem:H3index}:
\[
L_{\ib{\delta}{a}} = \{H_a,O_{\ib{\delta+-}{abb}}\} = 2O_{\ib{\delta+-}{abb}}H_a + [H_a,O_{\ib{\delta+-}{abb}}] = O_{\ib{\delta+-}{abb}}(2H_a+\delta) + 4O_a^{\delta}(\delta H_b+Z).
\]
The expression (\ref{eq:evenLadderAlt}) is slightly more complicated. From the definition of $L_{\ib{\delta\varepsilon}{12}}$ we compute
\[
L_{\ib{\delta\varepsilon}{12}} = \{H_1,[H_2,O_{\ib{\delta\varepsilon}{12}}]\} + 2[H_1,O_{\ib{\delta\varepsilon}{12}}]H_2 + 4O_{\ib{\delta\varepsilon}{12}}H_2H_1.
\]
Substituting the identities, which are computed using Lemma~\ref{lem:H2index},
\begin{align*}
    \{H_1,[H_2,O_{\ib{\delta\varepsilon}{12}}]\} &=
    O_{\ib{\delta\varepsilon}{12}}(2\varepsilon H_1 + \varepsilon\delta) + O_{\ib{\delta}{1}} O_{\ib{\varepsilon}{2}}(2\varepsilon\delta - 4Z -4\delta H_2) - \varepsilon O_{\ib{\delta}{1}} O_{\ib{\varepsilon}{2}}+ \Tepsun{\delta}O_{\ib{\varepsilon}{2}}(2H_1+\delta),\\
2[H_1,O_{\ib{\delta\varepsilon}{12}}]H_2 &= 2\delta O_{\ib{\delta\varepsilon}{12}}  H_2 + 4 \delta O_{\ib{\delta}{1}} O_{\ib{\varepsilon}{2}}  H_2 - 2O_{\ib{\delta}{1}} \Tepsun{\delta} H_2,
\end{align*}
the claim follows.
\end{proof}

\begin{lemma}\label{lem:producttwoindex}
    The following factorisations hold
    \begin{subequations}
    \begin{alignat}{2}
O_{\ib{\pm\pm}{12}}O_{\ib{\mp\mp}{12}}
& = (H_1+ H_2)^2 \mp 2(H_1+H_2)- Z^2 + Z   +\tfrac34
- 2 (O_{\ib{\pm}{1}}O_{\ib{\mp}{1}}+O_{\ib{\pm}{2}}O_{\ib{\mp}{2}}) 
\mp (O_{\ib{\pm}{1}}O_{\ib{\mp+-}{122}}+O_{\ib{\pm}{2}}O_{\ib{+-\mp}{112}}),  \\
O_{\ib{\pm\mp}{12}} O_{\ib{\mp\pm}{12}}
& = (H_1- H_2)^2 \mp 2(H_1 -H_2)- Z^2 - Z +\tfrac34
- 2 (O_{\ib{\pm}{1}}O_{\ib{\mp}{1}}+O_{\ib{\mp}{2}}O_{\ib{\pm}{2}}) 
\pm(O_{\ib{\pm}{1}}O_{\ib{\mp+-}{122}}-O_{\ib{\mp}{2}}O_{\ib{+-\pm}{112}})   .
\end{alignat}
\end{subequations}
\end{lemma}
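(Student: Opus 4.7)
The plan is to recover each product $O_{\ib{\delta\varepsilon}{12}}O_{\ib{-\delta,-\varepsilon}{12}}$ from the decomposition $XY=\tfrac{1}{2}(\{X,Y\}+[X,Y])$, computing the anti-commutator and the commutator separately by two different routes and then assembling.

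For the two anti-commutators $\{O_{\ib{\pm\pm}{12}},O_{\ib{\mp\mp}{12}}\}$ and $\{O_{\ib{\pm\mp}{12}},O_{\ib{\mp\pm}{12}}\}$, I would exploit the two ``global'' identities \eqref{eq:CasimirZ2} and \eqref{eq:Z-symmetry3}. After substituting $O_{\ib{+-}{aa}}=2H_a$ and using $[H_1,H_2]=0$, the first reads as a ``sum'' equation isolating $\{O_{\ib{+-}{12}},O_{\ib{-+}{12}}\}+\{O_{\ib{++}{12}},O_{\ib{--}{12}}\}$ in terms of $H_a^2$, $\{O_{\ib{\pm}{a}},O_{\ib{\mp}{a}}\}$ and $Z^2$; the second gives a ``difference'' equation for $\{O_{\ib{+-}{12}},O_{\ib{-+}{12}}\}-\{O_{\ib{++}{12}},O_{\ib{--}{12}}\}$ in terms of $H_1H_2$, $Z$ and the four commutators $[O_{\ib{\pm}{a}},O_{\ib{+-\mp}{abb}}]$ appearing in~\eqref{eq:Z-symmetry3}. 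Solving the resulting $2\times 2$ linear system isolates each anti-commutator individually.

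For the corresponding commutators, I would apply the vector-valued form of \eqref{eq:2ind2indComm} (available in \cite{Os22}) to the isotropic pairs $(z_1^\delta\wedge z_2^\varepsilon,\,z_1^{-\delta}\wedge z_2^{-\varepsilon})$ and $(z_1^\delta\wedge z_2^{-\varepsilon},\,z_1^{-\delta}\wedge z_2^{\varepsilon})$. Since the only non-vanishing isotropic pairings are $(z_a^{\pm},z_a^{\mp})=2$, only two of the four $A$-terms survive, namely the diagonal ones $A(z_a^{\varepsilon},z_a^{-\varepsilon})=\varepsilon(2H_a+[O_{\ib{+}{a}},O_{\ib{-}{a}}])$; the remaining contribution is a pair of anti-commutators $\pm\{O_{\ib{\pm}{a}},O_{\ib{+-\mp}{abb}}\}$ obtained after re-ordering wedges with the antisymmetry of $\wedge$ into the canonical form of Notation~\ref{not:isotropic}. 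For the off-diagonal pair one must additionally invoke $[O_{\ib{\delta}{1}},O_{\ib{\varepsilon}{2}}]=2O_{\ib{\delta}{1}}O_{\ib{\varepsilon}{2}}$ from Lemma~\ref{lem:1index} to simplify the cross $A$-terms.

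Combining via $XY=\tfrac{1}{2}(\{X,Y\}+[X,Y])$, the pure $H_a^2$ and $H_1H_2$ pieces from Step~1 combine with the linear $\mp 2H_a$ contributions from Step~2 to produce the squared shifts $(H_1\pm H_2)^2\mp 2(H_1\pm H_2)$; the combinations $-\{O_{\ib{+}{a}},O_{\ib{-}{a}}\}-[O_{\ib{+}{a}},O_{\ib{-}{a}}]=-2O_{\ib{+}{a}}O_{\ib{-}{a}}$ produce the one-sided $O_{\ib{\pm}{a}}$-products; and the constants $\tfrac{3}{4}$ and $-Z^2\pm Z$ are inherited directly from the anti-commutator. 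The main obstacle is in reconciling the three-index/one-index cross terms: the commutators $[O_{\ib{\pm}{a}},O_{\ib{+-\mp}{abb}}]$ from Step~1 must combine with the anti-commutators $\{O_{\ib{\pm}{a}},O_{\ib{+-\mp}{abb}}\}$ from Step~2 to yield the asymmetric products $\mp(O_{\ib{\pm}{1}}O_{\ib{\mp+-}{122}}+O_{\ib{\pm}{2}}O_{\ib{+-\mp}{112}})$ in the statement. This cancellation reduces to the commutation identities $O_{\ib{-}{1}}O_{\ib{++-}{122}}=O_{\ib{-+-}{122}}O_{\ib{+}{1}}$ and $O_{\ib{-}{2}}O_{\ib{+-+}{112}}=O_{\ib{+--}{112}}O_{\ib{+}{2}}$ (and their $\pm$-swaps), which follow from the $W$-equivariance \eqref{eq:WComm} and the formulas \eqref{eq:Ox}, \eqref{eq:actionWisotropic} after a short phase computation with the factors $\zeta_a^{\pm 2p}$.
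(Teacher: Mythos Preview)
Your approach is essentially identical to the paper's: you decompose each product as $\tfrac{1}{2}(\{X,Y\}+[X,Y])$, compute the commutator via the vector-valued form of \eqref{eq:2ind2indComm}, and extract each anti-commutator by combining \eqref{eq:Z-symmetry3} and \eqref{eq:CasimirZ2} (the paper phrases this as computing $Z\mp Z^2$, which is exactly your $2\times 2$ linear system). Your more detailed discussion of how the cross terms $O_{\ib{\pm}{a}}O_{\ib{\mp+-}{abb}}$ assemble is correct and relies precisely on the identities of Lemma~\ref{lem:1index}, which the paper uses implicitly.
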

\begin{proof}
Note that $O_{\ib{\pm\pm}{12}}O_{\ib{\mp\mp}{12}} = \tfrac{1}{2}(
\{O_{\ib{++}{12}},O_{\ib{--}{12}}\}\pm [O_{\ib{++}{12}},O_{\ib{--}{12}}])$. From
Theorem~\ref{thm:TAMArel}, (whose commutation relations for arbitrary vectors can be found in~\cite{Os22}), we get
\[
\tfrac{1}{2}[O_{\ib{++}{12}},O_{\ib{--}{12}}]=-(2H_1+2H_2+[O_1^+,O_1^-]+[O_2^+,O_2^-])-\tfrac{1}{2}O_{\ib{+}{1}}O_{\ib{-+-}{122}} -\tfrac{1}{2}O_{\ib{-}{1}}O_{\ib{++-}{122}}
-\tfrac{1}{2}O_{\ib{+}{2}}O_{\ib{+--}{112}}
-\tfrac{1}{2}O_{\ib{-}{2}}O_{\ib{+-+}{112}}.
\]
Computing $Z-Z^2$ using \eqref{eq:Z-symmetry3} and \eqref{eq:CasimirZ2} of Proposition~\ref{prop:Z}, we obtain an expression for
$
\tfrac{1}{2}\{O_{\ib{++}{12}},O_{\ib{--}{12}}\}
$ and settle the first equation. The second equation is similarly treated from
$O_{\ib{\pm\mp}{12}}O_{\ib{\mp\pm}{12}} = \tfrac{1}{2}(
\{O_{\ib{+-}{12}},O_{\ib{-+}{12}}\}\pm [O_{\ib{+-}{12}},O_{\ib{-+}{12}}])$, where we use $Z+Z^2$ to compute $\tfrac{1}{2}\{O_{\ib{+-}{12}},O_{\ib{-+}{12}}\}$.
\end{proof}

\begin{lemma}\label{lem:TT}
For distinct elements $a,b\in \{1,2\}$, we have
\begin{equation}\label{eq:TT1}
    O_{\ib{\pm+-}{abb}}O_{\ib{\mp+-}{abb}} = 4((Z \mp H_b)^2 - (H_a \mp \tfrac12)^2 +  O_{\ib{\pm}{a}} O_{\ib{\mp}{a}}),
\end{equation}
 and for  $\delta,\varepsilon\in\{+,-\}$
 \begin{equation}\label{eq:TT2}
    \Tepsa{\delta} \Tepsb{\varepsilon} = 4O_{\ib{\delta\varepsilon}{ab}}( \varepsilon H_a- \delta H_b +Z -\tfrac{1}{2}\delta\varepsilon ) - 2\varepsilon O_{\ib{\delta}{a}} \Tepsb{\varepsilon} - 2\delta \Tepsa{\delta} O_{\ib{\varepsilon}{b}}  -  4\delta\varepsilon O_{\ib{\delta}{a}} O_{\ib{\varepsilon}{b}}.
\end{equation}
Furthermore,
\begin{equation}\label{eq:LongShortO-comm}
    [O_{\ib{\delta\pm}{12}},O_{\ib{\delta+-}{122}}] = \pm 2\{O_{\ib{\delta}{1}},O_{\ib{\delta\pm}{12}}\}
    \qquad\textup{and}\qquad
    [O_{\ib{\pm\varepsilon}{12}},O_{\ib{\varepsilon+-}{211}}] = \pm 2\{O_{\ib{\varepsilon}{2}},O_{\ib{\pm\varepsilon}{12}}\}.
\end{equation}
\end{lemma}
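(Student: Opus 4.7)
My plan is to treat each of the three identities as an exercise in the commutation relations of Theorem \ref{thm:TAMArel} and their arbitrary-vector analogues (from \cite{Os22}), combined with the definitions $2H_a = O_{\ib{+-}{aa}}$ and $-4Z = O_{\ib{+-+-}{aabb}}$, together with the earlier lemmas of this subsection. In each case the task is to expand a product of two odd symmetries (or a commutator between an even and an odd one) and then recognise the right-hand side by strategic regrouping.

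For \eqref{eq:TT1} I would first split
$
O_{\ib{\pm+-}{abb}}O_{\ib{\mp+-}{abb}}
= \tfrac12 \llbracket O_{\ib{\pm+-}{abb}}, O_{\ib{\mp+-}{abb}}\rrbracket
+ \tfrac12 [O_{\ib{\pm+-}{abb}}, O_{\ib{\mp+-}{abb}}]
$. Since both factors are odd, the graded bracket coincides with the ordinary anti-commutator, and it is computed via the arbitrary-vector analogue of \eqref{eq:3ind3indComm1} with the shared indices $z_b^+,z_b^-$: it yields a sum of three anti-commutators in $O_a^\pm$, in the 2-index symmetries $O_{\ib{\pm+}{ab}}, O_{\ib{\pm-}{ab}}$, and in $O(z_b^+\wedge z_b^-)=2H_b$. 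Regrouping the 2-index contributions by means of the Casimir formula \eqref{eq:CasimirZ2} extracts a $Z^2 + \tfrac34$ block, which combines with the linear-in-$Z$ and linear-in-$H_b$ cross terms produced by the ordinary commutator $[O_{\ib{\pm+-}{abb}}, O_{\ib{\mp+-}{abb}}]$ (evaluated using \eqref{eq:Zcommutation} and the fact that $-4Z=O_{\ib{+-+-}{aabb}}$ behaves predictably against 3-index odd symmetries) to form the completed square $(Z\mp H_b)^2$. The remaining $(H_a\mp\tfrac12)^2$ and $O_a^\pm O_a^\mp$ terms arise from the $\{O_a^+,O_a^-\}$ piece after applying Lemma \ref{lem:1index}.

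For \eqref{eq:TT2}, the two factors $\Tepsa{\delta}=O(z_a^\delta\wedge z_b^+\wedge z_b^-)$ and $\Tepsb{\varepsilon}=O(z_b^\varepsilon\wedge z_a^+\wedge z_a^-)$ share no isotropic vector but each contains a full diagonal $z_c^+\wedge z_c^-$. I would apply the general-vector product formula for two 3-index symmetries from \cite{Os22}, noting that the diagonals produce $2H_a$ and $2H_b$ while the single non-diagonal wedge $z_a^\delta\wedge z_b^\varepsilon$ accounts for the $O_{\ib{\delta\varepsilon}{ab}}$ prefactor of the right-hand side. Commuting $H_a$, $H_b$ past $O_{\ib{\delta\varepsilon}{ab}}$ via Lemma \ref{lem:H3index} and Lemma \ref{lem:H2index} and then collecting produces the combination $\varepsilon H_a - \delta H_b + Z - \tfrac12\delta\varepsilon$, together with the three correction terms $-2\varepsilon O_{\ib{\delta}{a}}\Tepsb{\varepsilon}$, $-2\delta \Tepsa{\delta}O_{\ib{\varepsilon}{b}}$ and $-4\delta\varepsilon O_{\ib{\delta}{a}}O_{\ib{\varepsilon}{b}}$. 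Equation \eqref{eq:LongShortO-comm} is the most direct of the three: it is the (ordinary) commutator of a 2-index symmetry with a 3-index one sharing both of the 2-index's vectors, and it follows by substituting $O(z_2^\pm\wedge z_2^\pm)=0$ and $O(z_2^+\wedge z_2^-)=2H_2$ into the arbitrary-vector analogue of \eqref{eq:2ind3indComm2}; only the $\{O_{\ib{\delta}{1}},O_{\ib{\delta\pm}{12}}\}$ term survives, with the $\pm$ sign tracking which of $z_2^+$ or $z_2^-$ pairs with the 2-index.

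The main obstacle I expect is the bookkeeping in the isotropic basis. The relations of Theorem \ref{thm:TAMArel} are stated in an orthonormal basis; their arbitrary-vector counterparts carry inner-product correction terms, and since $(z_a^\pm,z_a^\pm)=0$ while $(z_a^+,z_a^-)=2$, these corrections are non-trivial and must be tracked carefully through every expansion. In particular, identifying the completed squares $(Z\mp H_b)^2$ and $(H_a\mp\tfrac12)^2$ in \eqref{eq:TT1} amidst the proliferation of intermediate terms requires a careful use of \eqref{eq:CasimirZ2} to collect $Z^2$, and a careful interplay between the graded-bracket and ordinary-commutator contributions in order to match the $\mp$ signs on both blocks simultaneously.
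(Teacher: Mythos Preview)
For \eqref{eq:TT2} and \eqref{eq:LongShortO-comm} your plan matches the paper's: the latter indeed follows from the arbitrary-vector forms of \eqref{eq:2ind3indComm1}--\eqref{eq:2ind3indComm2}, and for \eqref{eq:TT2} the paper likewise invokes the product formula for two 3-index symmetries from \cite{Os22}.

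For \eqref{eq:TT1}, however, your split into graded bracket plus ordinary commutator has a real gap. The relations of Theorem~\ref{thm:TAMArel} and their arbitrary-vector analogues supply only \emph{graded} commutators; for two odd symmetries that is the anti-commutator, and you are then left needing the ordinary commutator $[O_{\ib{\pm+-}{abb}}, O_{\ib{\mp+-}{abb}}]$ from some other source, which nothing you cite provides. Equation~\eqref{eq:Zcommutation} does not help here: it says $Z$ anti-commutes with odd symmetries, not how two 3-index symmetries commute with one another. In addition, the 2-index anti-commutators that the analogue of \eqref{eq:3ind3indComm1} (with shared vectors $z_b^+,z_b^-$) actually produces are $\{O_{\ib{++}{ab}},O_{\ib{-+}{ab}}\}$ and $\{O_{\ib{+-}{ab}},O_{\ib{--}{ab}}\}$, with the $b$-sign fixed inside each bracket; these are not the pairings $\{O_{\ib{+-}{12}},O_{\ib{-+}{12}}\}$ and $\{O_{\ib{++}{12}},O_{\ib{--}{12}}\}$ occurring in \eqref{eq:CasimirZ2}, so the proposed regrouping to extract $Z^2+\tfrac34$ would require further identities you have not named.

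The paper avoids all of this by computing the product directly rather than any symmetrised version. It uses the projection identity
\[
O(u\wedge v\wedge w)\,O(x\wedge y\wedge z) \;=\; -\tfrac{1}{2}\,P\bigl(O(u\wedge v\wedge w)\,\gamma(x\wedge y\wedge z)\bigr),
\]
which follows from $P(P(S)T)=P(S)P(T)$ for $S,T$ commuting with the $\mathfrak{sl}(2)$-triple, together with \eqref{eq:SymmOps} and the expansion \eqref{eq:3indFormula}. Specialising the six vectors to the appropriate isotropic choices yields \eqref{eq:TT1} and \eqref{eq:TT2} in one stroke. The simplest fix for your argument is simply to do for \eqref{eq:TT1} what you already propose for \eqref{eq:TT2}.
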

\begin{proof}
Following the strategy in \cite{Os22}, using the expansion (\ref{eq:3indFormula}) of the $3$-index symmetries and the projection $P$ of (\ref{eq:Proj}) we compute the product
$O(u\wedge v\wedge w)O(x\wedge y\wedge z)=-\tfrac{1}{2}P(O(u\wedge v\wedge w)\gamma(x\wedge y\wedge z))$ for vectors $u,v,w,x,y,z\in V$.
The choice $u=z_a^\pm,v=z_b,w=\bar z_b$ and $x=z_a^\mp,y=z_b,z=\bar z_b$ yields (\ref{eq:TT1}), while choosing  $u=z_a^\delta,v=z_b,w=\bar z_b$ and $x=z_b^\varepsilon,y=z_a,z=\bar z_a$ yields (\ref{eq:TT2}). The last equations follow from \eqref{eq:2ind3indComm1} and \eqref{eq:2ind3indComm2}. They also follow from \cite[Proposition 4.13]{Os22}, which is formulated in a more general fashion.
\end{proof}

\begin{lemma}\label{lem:LT}
For $\delta,\varepsilon\in\{+,-\}$ and distinct elements $a,b\in \{1,2\}$, we have
\begin{equation}\label{eq:TL}
   O_{\ib{\delta+-}{abb}}L_{\ib{\eps}{b}}  
   = 4(O_{\ib{\delta\varepsilon}{ab}}(2H_b + \eps)  + O_b^{\eps}O_{\ib{\delta+-}{abb}})
  ( \varepsilon H_a- \delta H_b + Z-\tfrac{1}{2}\delta \varepsilon) - 2\varepsilon( O_{\ib{\delta}{a}} \Tepsb{\varepsilon}  +2\delta O_{\ib{\delta}{a}} O_{\ib{\varepsilon}{b}})(2H_b + \eps) ,
\end{equation}
 and 
 \begin{equation}\label{eq:LT}
 \begin{aligned}
   L_{\ib{\eps}{b}}O_{\ib{\delta+-}{abb}}     &=
4(O_{\ib{\delta\varepsilon}{ab}}(2H_b + \eps) +O_{\ib{\eps}{b}} O_{\ib{\delta+-}{abb}})
( \varepsilon H_a- \delta H_b - Z+\tfrac{1}{2}\delta \varepsilon) \\
&\quad- 2\varepsilon (  O_{\ib{\delta}{a}}\Tepsb{\varepsilon}
-2\delta O_{\ib{\delta}{a}}O_{\ib{\varepsilon}{b}} )(2H_b + \eps) 
  -16 O_{\ib{\delta}{a}}O_{\ib{\eps}{b}}  (\delta H_b + Z).
\end{aligned}
\end{equation}
\end{lemma}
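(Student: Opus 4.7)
The plan is to substitute the alternative expression for $L_{\ib{\eps}{b}}$ from~(\ref{eq:oddLadderAlt}) of Lemma~\ref{lem:LadderAlt}, namely
\[
L_{\ib{\eps}{b}} = \Tepsb{\eps}(2H_b+\eps) + 4 O_{\ib{\eps}{b}}(\eps H_a + Z),
\]
into the products $\Tepsa{\delta}L_{\ib{\eps}{b}}$ and $L_{\ib{\eps}{b}}\Tepsa{\delta}$, then to expand the resulting products of three-index symmetries using~(\ref{eq:TT2}) of Lemma~\ref{lem:TT}, and finally to regroup with the help of the commutation relations from Lemmas~\ref{lem:1index},~\ref{lem:H2index} and~\ref{lem:H3index}.

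For~(\ref{eq:TL}), multiplying $\Tepsa{\delta}$ on the left distributes as $\Tepsa{\delta}\Tepsb{\eps}(2H_b+\eps) + 4\Tepsa{\delta}O_{\ib{\eps}{b}}(\eps H_a + Z)$. The first summand is expanded via~(\ref{eq:TT2}); in the second summand, one uses the commutation $\Tepsa{\delta}O_{\ib{\eps}{b}} = O_{\ib{\eps}{b}}\Tepsa{\delta}$ from Lemma~\ref{lem:1index} to pass $\Tepsa{\delta}$ to the right of $O_{\ib{\eps}{b}}$. The term $-2\delta\Tepsa{\delta}O_{\ib{\eps}{b}}(2H_b+\eps)$ produced by~(\ref{eq:TT2}) then merges with the second summand $4 O_{\ib{\eps}{b}}\Tepsa{\delta}(\eps H_a + Z)$ into the single factorised piece $4 O_{\ib{\eps}{b}}\Tepsa{\delta}(\eps H_a - \delta H_b + Z - \tfrac{1}{2}\delta\eps)$, because $H_a$, $H_b$, $Z$ commute pairwise by Proposition~\ref{prop:HsFirst} and may be freely reordered inside a scalar polynomial factor.

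For~(\ref{eq:LT}), the analysis is similar but with $\Tepsa{\delta}$ acting on the right. The key point is that moving $(2H_b+\eps)$ past $\Tepsa{\delta}$ is free since $[H_b,\Tepsa{\delta}]=0$ by the second part of Lemma~\ref{lem:H3index}, whereas moving $(\eps H_a + Z)$ past $\Tepsa{\delta}$ produces two nontrivial corrections: the commutator $[H_a,\Tepsa{\delta}] = \delta\Tepsa{\delta} + 4 O_{\ib{\delta}{a}}(\delta H_b + Z)$ from the first part of Lemma~\ref{lem:H3index}, together with the sign from $Z\Tepsa{\delta} = -\Tepsa{\delta}Z$ (since $\Tepsa{\delta}$ is odd and $Z$ has parity four, cf.~(\ref{eq:Zcommutation})). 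Combining this with the expansion of $\Tepsb{\eps}\Tepsa{\delta}$ obtained from~(\ref{eq:TT2}) (after swapping roles of the indices and using $O_{\ib{\eps\delta}{ba}} = -O_{\ib{\delta\eps}{ab}}$ and the anti-commutation $O_{\ib{\eps}{b}}O_{\ib{\delta}{a}} = -O_{\ib{\delta}{a}}O_{\ib{\eps}{b}}$ of Lemma~\ref{lem:1index}), and regrouping as above yields the stated expression, the extra term $-16\,O_{\ib{\delta}{a}}O_{\ib{\eps}{b}}(\delta H_b + Z)$ coming precisely from the $4 O_{\ib{\delta}{a}}(\delta H_b + Z)$ correction picked up when pushing $H_a$ through $\Tepsa{\delta}$.

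The main obstacle is bookkeeping rather than conceptual: three-index symmetries have nontrivial (anti-)commutators with $H_a$ and $Z$, and intermediate terms such as $O_{\ib{\eps}{b}}\Tepsa{\delta}(\eps H_a + Z)$ and $-2\delta O_{\ib{\eps}{b}}\Tepsa{\delta}(2H_b+\eps)$ must be recognised as combining into a single factorised block of the form $4 O_{\ib{\eps}{b}}\Tepsa{\delta}(\eps H_a - \delta H_b \pm Z \mp \tfrac{1}{2}\delta\eps)$, with the sign of the $Z$ and constant pieces being opposite in the two formulas due to the parity of $\Tepsa{\delta}$. Identifying these groupings is what turns a long string of terms into the compact form on the right-hand sides of~(\ref{eq:TL}) and~(\ref{eq:LT}).
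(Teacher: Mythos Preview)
Your proof sketch is correct and follows essentially the same approach as the paper: substitute~(\ref{eq:oddLadderAlt}) for $L_{\ib{\eps}{b}}$, expand the product of three-index symmetries via~(\ref{eq:TT2}), and regroup using the commutation relations for $H_a$, $H_b$, $Z$ and the one-index symmetries. The only minor remark is that Lemma~\ref{lem:H2index} is not actually needed here---the regrouping relies only on Lemma~\ref{lem:1index}, Lemma~\ref{lem:H3index}, the anti-commutation~(\ref{eq:Zcommutation}), and Proposition~\ref{prop:HsFirst}.
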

\begin{proof}
Using 
\eqref{eq:oddLadderAlt} of \cref{lem:LadderAlt}
and 
\eqref{eq:TT2} of \cref{lem:TT}
we have 
  \begin{align*}
O_{\ib{\delta+-}{abb}}L_{\ib{\eps}{b}}  &= 2(2O_{\ib{\delta\varepsilon}{ab}}( \varepsilon H_a- \delta H_b + Z-\tfrac{1}{2}\delta \varepsilon) - \varepsilon O_{\ib{\delta}{a}} \Tepsb{\varepsilon} - \delta \Tepsa{\delta} O_{\ib{\varepsilon}{b}}  -  4\delta\varepsilon O_{\ib{\delta}{a}} O_{\ib{\varepsilon}{b}})(2H_b + \eps) + 4O_{\ib{\delta+-}{abb}}O_b^{\eps}(\eps H_a+Z)\\
 & = 2(2O_{\ib{\delta\varepsilon}{ab}}( \varepsilon H_a- \delta H_b + Z-\tfrac{1}{2}\delta \varepsilon) - \varepsilon O_{\ib{\delta}{a}} \Tepsb{\varepsilon}  -  2\delta\varepsilon O_{\ib{\delta}{a}} O_{\ib{\varepsilon}{b}})(2H_b + \eps) + 4O_b^{\eps}O_{\ib{\delta+-}{abb}}(\eps H_a  -\delta H_b + Z -\tfrac{\delta\varepsilon}{2})\\
 & = 4(O_{\ib{\delta\varepsilon}{ab}}(2H_b + \eps)  + O_b^{\eps}O_{\ib{\delta+-}{abb}})
  ( \varepsilon H_a- \delta H_b + Z-\tfrac{1}{2}\delta \varepsilon) - 2\varepsilon( O_{\ib{\delta}{a}} \Tepsb{\varepsilon}  +2\delta O_{\ib{\delta}{a}} O_{\ib{\varepsilon}{b}})(2H_b + \eps)  .
  \end{align*}
  
Using Lemma~\ref{lem:H3index}, 
\eqref{eq:oddLadderAlt} of \cref{lem:LadderAlt}
and 
\eqref{eq:TT2} of \cref{lem:TT},
we have 
\begin{align*}   
L_{\ib{\eps}{b}}O_{\ib{\delta+-}{abb}}  &=
(O_{\ib{\eps+-}{baa}}(2H_b + \eps) + 4  O_{\ib{\eps}{b}}(\eps H_a+Z))O_{\ib{\delta+-}{abb}}
\\
&=
O_{\ib{\eps+-}{baa}}O_{\ib{\delta+-}{abb}}(2H_b + \eps)
+ 
4  O_{\ib{\eps}{b}} O_{\ib{\delta+-}{abb}}(\eps H_a-Z +\delta\eps )  +16 O_{\ib{\eps}{b}} O_{\ib{\delta}{a}} (\delta H_b + Z)\\  
  &=
(4O_{\ib{\varepsilon\delta}{ba}}( \delta H_b -\varepsilon H_a + Z-\tfrac{1}{2}\delta \varepsilon) 
- 2\delta  O_{\ib{\varepsilon}{b}}  \Tepsa{\delta}
- 2\varepsilon  \Tepsb{\varepsilon} O_{\ib{\delta}{a}}
-  4\delta\varepsilon O_{\ib{\varepsilon}{b}}O_{\ib{\delta}{a}} )(2H_b + \eps) \\
&\quad  + 
4  O_{\ib{\eps}{b}} O_{\ib{\delta+-}{abb}}(\eps H_a-Z +\delta\eps ) 
+16 O_{\ib{\eps}{b}} O_{\ib{\delta}{a}} (\delta H_b + Z)\\  
& =
4(O_{\ib{\delta\varepsilon}{ab}}(2H_b + \eps) +O_{\ib{\eps}{b}} O_{\ib{\delta+-}{abb}})
( \varepsilon H_a- \delta H_b - Z+\tfrac{1}{2}\delta \varepsilon) 
- 2\varepsilon ( \Tepsb{\varepsilon} O_{\ib{\delta}{a}}
+2\delta O_{\ib{\varepsilon}{b}}O_{\ib{\delta}{a}} )(2H_b + \eps) \\
&\quad
  +16 O_{\ib{\eps}{b}} O_{\ib{\delta}{a}} (\delta H_b + Z),
  \end{align*} 
  finishing the proof.
\end{proof}

\begin{proposition}\label{prop:OddOppLadders}
For $\beta = \delta \varpi_a \in \Phi_{\bar 1}$, with $\delta\in\{+,-\}$, $a\in \{1,2\}$ and $\{b\} = \{1,2\}\setminus \{a\}$, we have 
\begin{equation}
L_\beta L_{-\beta} = 16((H_a - \tfrac{1}{2}\delta)^2 - O_a^\delta O_a^{\bar\delta})((Z - \delta H_b)^2-(H_a - \tfrac{1}{2}\delta)^2),
\end{equation}
where $\bar\delta := -\delta$.
\end{proposition}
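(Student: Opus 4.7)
The plan is to expand the product $L_\beta L_{-\beta}$ from the alternative expression \eqref{eq:oddLadderAlt} of Lemma~\ref{lem:LadderAlt} and then systematically collapse the result using the commutation data among the five building blocks $H_a$, $H_b$, $Z$, $O_a^\delta$, $\Tepsa{\delta}$. The workhorse is the same-index product identity \eqref{eq:TT1} of Lemma~\ref{lem:TT}, which already contains two of the three factors appearing on the right-hand side, namely $(Z-\delta H_b)^2$ and $(H_a-\tfrac{1}{2}\delta)^2$.

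Before expanding, I collect the elementary commutation facts. The triple $H_a,H_b,Z$ is mutually commutative (Proposition~\ref{prop:HsFirst}); $H_b$ commutes with both $O_a^\delta$ and $\Tepsa{\delta}$ while $H_a$ anti-commutes with $O_a^\delta$ (consequences of Lemma~\ref{lem:actionWonT}); $Z$ anti-commutes with $O_a^\delta$ and with $\Tepsa{\delta}$ by \eqref{eq:Zcommutation}; the non-trivial commutator $[H_a,\Tepsa{\delta}]$ is given by Lemma~\ref{lem:H3index}; and the swap $\Tepsa{\delta}O_a^{\bar\delta}=O_a^\delta\Tepsa{\bar\delta}$ is from Lemma~\ref{lem:1index}. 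Using Lemma~\ref{lem:H3index} to push $(2H_a+\delta)$ across $\Tepsa{\delta}$, one obtains the right-multiplied form
\[
L_{\ib{\delta}{a}} = (2H_a-\delta)\Tepsa{\delta} - 4 O_a^\delta(\delta H_b+Z).
\]
Setting $R:=2H_a+\delta$, $R':=2H_a-\delta$, $S:=\delta H_b+Z$ and $S':=Z-\delta H_b$, I then write $L_\beta = R'\Tepsa{\delta} - 4 O_a^\delta S$ and $L_{-\beta}=\Tepsa{\bar\delta}R' + 4 O_a^{\bar\delta}S'$ (the latter being \eqref{eq:oddLadderAlt} with $\delta$ replaced by $\bar\delta$).

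Expanding $L_\beta L_{-\beta}$ yields four terms. The diagonal $R'\Tepsa{\delta}\Tepsa{\bar\delta}R'$ collapses via \eqref{eq:TT1} to $16 A(B-A+C)$, where I abbreviate $A:=(H_a-\tfrac{1}{2}\delta)^2$, $B:=(Z-\delta H_b)^2$ and $C:=O_a^\delta O_a^{\bar\delta}$; here $R'$ commutes with each of $A,B,C$ (for $C$, because $H_a$ anti-commutes with each of $O_a^\delta$ and $O_a^{\bar\delta}$ separately). The two cross terms are rearranged using $\Tepsa{\delta}O_a^{\bar\delta}=O_a^\delta\Tepsa{\bar\delta}$, the anti-commutation $R'O_a^\delta=-O_a^\delta R$, the analogue $R\Tepsa{\bar\delta}=\Tepsa{\bar\delta}R'+8 O_a^{\bar\delta}S'$ of the right-multiplied move, and the relation $S\Tepsa{\bar\delta}=-\Tepsa{\bar\delta}S'$ (which follows at once from $Z\Tepsa{\bar\delta}=-\Tepsa{\bar\delta}Z$ and $[H_b,\Tepsa{\bar\delta}]=0$). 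After these moves, the two ``$O_a^\delta\Tepsa{\bar\delta}R'S'$'' contributions cancel and a residual $-32\,CB$ survives. The last term $-16 O_a^\delta S O_a^{\bar\delta}S'$ reduces via $SO_a^{\bar\delta}=-O_a^{\bar\delta}S'$ to $+16\,CB$. Adding the four contributions yields $16A(B-A+C)-16\,CB=16(A-C)(B-A)$, which is the claim.

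The main obstacle is the sign bookkeeping: with five building blocks, a mix of commuting/anti-commuting pairs, and the one genuinely non-trivial commutator $[H_a,\Tepsa{\delta}]$ whose ``error term'' itself generates further $O_a^\delta(\delta H_b+Z)$ contributions, one must be careful that nothing is lost in the reshuffling. What makes the computation close up so cleanly is that every correction term produced by $[H_a,\Tepsa{\delta}]$ is eventually absorbed either into the factor $C$ via \eqref{eq:TT1} on the diagonal, or cancels between the two cross terms.
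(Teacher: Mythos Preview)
Your proof is correct and follows essentially the same route as the paper: expand the product using the alternative form \eqref{eq:oddLadderAlt}, exploit the swap $\Tepsa{\delta}O_a^{\bar\delta}=O_a^\delta\Tepsa{\bar\delta}$ together with Lemma~\ref{lem:H3index} to make the cross terms cancel, and finish with \eqref{eq:TT1}. The only cosmetic difference is that the paper puts \emph{both} ladder elements in left-multiplied form $L_{\ib{\delta}{a}}=(2H_a-\delta)\Tepsa{\delta}+4(Z-\delta H_b)O_a^\delta$, which yields the slightly cleaner intermediate identity $L_\beta L_{-\beta}=4(H_a-\tfrac{\delta}{2})^2\Tepsa{\delta}\Tepsa{\bar\delta}-16(Z-\delta H_b)^2 O_a^\delta O_a^{\bar\delta}$ before invoking \eqref{eq:TT1}, whereas you keep $L_{-\beta}$ in its original right-multiplied form and obtain the same cancellations one step later.
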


\begin{proof}
From (\ref{eq:oddLadderAlt}) we have
$L_{\ib{\delta}{a}} =  (2H_a - \delta)\Tepsa{\delta}  + 4(Z -\delta H_b)O_{\ib{\delta}{a}}.$
Using $\Tepsa{\delta}O_{\ib{\bar\delta}{a}} = O_{\ib{\delta}{a}}\Tepsa{\bar\delta}$ and Lemma~\ref{lem:H3index}, we get
\begin{equation}\label{eq:OddLadderEq2}
    L_{\ib{\delta}{a}}L_{\ib{\bar\delta}{a}}
    =  4(H_a - \tfrac{1}{2}\delta)^2\Tepsa{\delta} \Tepsa{\bar\delta} - 16(Z -\delta H_b)^2O_{\ib{\delta}{a}}O_{\ib{\bar\delta}{a}}.
\end{equation}
The desired claim follows by Lemma~\ref{lem:TT}.
\end{proof}

\begin{proposition}\label{prop:EvenEvenLadders}
    For $\alpha = \delta \varpi_1 + \varepsilon\varpi_2 \in \Phi_{\bar 0}$ with $\delta,\varepsilon \in \{+,-\}$, we have the factorisation
    \[
    L_\alpha L_{-\alpha} = 16((H_1 - \tfrac{1}{2}\delta)^2 - O_1^\delta O_1^{\bar \delta})
                             ((H_2 - \tfrac{1}{2}\varepsilon)^2 - O_2^\varepsilon O_2^{\bar\varepsilon})((\delta H_1 + \varepsilon H_2 -1)^2 - (Z - \tfrac{1}{2}\delta\varepsilon)^2 ),
    \]
    where $\bar\delta := -\delta$ and $\bar\varepsilon := -\varepsilon$.
\end{proposition}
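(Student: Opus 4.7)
The plan is to imitate the proof strategy of Proposition~\ref{prop:OddOppLadders}, but with considerably more bookkeeping. Starting from the alternative expression \eqref{eq:evenLadderAlt} of Lemma~\ref{lem:LadderAlt}, write $L_\alpha$ as a sum of four monomials, each of the form (symmetry prefactor)$\cdot$(polynomial in $H_1,H_2,Z$): the pure 2-index term $O_{\ib{\delta\varepsilon}{12}}(2H_1+\delta)(2H_2+\varepsilon)$, the two mixed 1-index/3-index terms $-O_{\ib{\delta}{1}}\Tepsdeux{\varepsilon}(2H_2+\varepsilon)$ and $\Tepsun{\delta}O_{\ib{\varepsilon}{2}}(2H_1+\delta)$, and the bilinear term $2O_{\ib{\delta}{1}}O_{\ib{\varepsilon}{2}}(\varepsilon\delta - 2Z)$. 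The analogous expansion with $\delta,\varepsilon$ replaced by their negatives applies to $L_{-\alpha}$. A first, useful sanity check before computing is that $L_\alpha L_{-\alpha}$ has weight zero by Proposition~\ref{prop:LadderProp} and commutes with $Z$ (as both $L_\alpha$ and $L_{-\alpha}$ are even in the $\mathbb{Z}_2$-grading, by inspection of their constituent monomials and \eqref{eq:Zcommutation}), so the final answer must be a polynomial in $H_1,H_2,Z,O_1^\delta O_1^{\bar\delta},O_2^\varepsilon O_2^{\bar\varepsilon}$.

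Next, expand $L_\alpha L_{-\alpha}$ as a sum of sixteen products of these monomials. For each one, push every weight-zero abelian factor to the right using Lemmas~\ref{lem:1index}, \ref{lem:H2index}, \ref{lem:H3index}, and the parity rule \eqref{eq:Zcommutation} ($Z$ commutes with 2-index symmetries and anti-commutes with 1- and 3-index symmetries). The resulting symmetry-monomial prefactor of each term is then reduced: the diagonal $T_1^+ T_1^-$ product is evaluated via Lemma~\ref{lem:producttwoindex}; the diagonal 3-index products $\Tepsun{\delta}\Tepsun{\bar\delta}$ and $\Tepsdeux{\varepsilon}\Tepsdeux{\bar\varepsilon}$ are handled by \eqref{eq:TT1}; the off-diagonal products $\Tepsun{\delta}\Tepsdeux{\pm\varepsilon}$ by \eqref{eq:TT2}; and the mixed 2-index/3-index products appearing in cross-terms like $T_1^+ T_2^-$ or $T_3^+ T_1^-$ by Lemma~\ref{lem:LT} (and the last identities \eqref{eq:LongShortO-comm}). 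Finally, use \eqref{eq:CasimirZ2} to eliminate redundant combinations of $Z^2$ with $H_a^2$ and $\{O_a^+,O_a^-\}$; every term then becomes a polynomial in $H_1, H_2, Z, O_1^\delta O_1^{\bar\delta}$ and $O_2^\varepsilon O_2^{\bar\varepsilon}$, and collecting yields the claimed factorisation.

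I expect the main obstacle to be the sheer combinatorial bookkeeping of the sixteen-term expansion, especially tracking sign conventions in $\delta,\varepsilon$, the orderings of commutations, and the proliferation of cross-terms from mixed products of 1- and 3-index symmetries. In contrast to the odd case, where $L_\beta$ is a sum of only two monomials and the product has a transparent four-term expansion (with only \eqref{eq:TT1} needed), here the bilinear ``$O_{\ib{\delta}{1}}O_{\ib{\varepsilon}{2}}(\varepsilon\delta-2Z)$'' summand in $L_\alpha$ interacts with all four summands of $L_{-\alpha}$, producing many apparently distinct but ultimately cancelling contributions. Two useful organisational principles make the computation tractable: the final answer must be $\Ad(\dr 1),\Ad(\dr 2)$-invariant (by Lemma~\ref{lem:actionWonT}, since $L_\alpha$ and $L_{-\alpha}$ transform by inverse characters), which rules out many potential surviving monomials, and the expected product structure $16\cdot A_{1,\delta}\cdot A_{2,\varepsilon}\cdot C$ (with $A_{a,\sigma}:=(H_a-\tfrac12\sigma)^2-O_a^\sigma O_a^{\bar\sigma}$ and $C:=(\delta H_1+\varepsilon H_2-1)^2-(Z-\tfrac12\delta\varepsilon)^2$) dictates exactly which polynomials in $H_1,H_2,Z$ should appear as coefficients of $1,O_1^\delta O_1^{\bar\delta},O_2^\varepsilon O_2^{\bar\varepsilon}$, and $O_1^\delta O_1^{\bar\delta}O_2^\varepsilon O_2^{\bar\varepsilon}$, providing a term-by-term target to match.
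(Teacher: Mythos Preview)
Your plan is sound in principle---the sixteen-term expansion is a valid computation and the lemmas you cite (together with the relations in \cite{Os22}) are enough to reduce every cross-term to a polynomial in $H_1,H_2,Z,O_1^\delta O_1^{\bar\delta},O_2^\varepsilon O_2^{\bar\varepsilon}$---but the paper takes a completely different and much shorter route. Rather than expanding $L_\alpha L_{-\alpha}$ directly, the paper computes the four-fold product $L_1^\delta L_2^\varepsilon L_2^{\bar\varepsilon} L_1^{\bar\delta}$ in two ways. On one hand, Proposition~\ref{prop:OddOddLaddersIsEven} gives $L_1^\delta L_2^\varepsilon = 4L_\alpha\Upsilon$ and $L_2^{\bar\varepsilon}L_1^{\bar\delta} = -4L_{-\alpha}\Upsilon$ with $\Upsilon := Z+\varepsilon H_1 -\delta H_2 -\tfrac12\delta\varepsilon$, and since $[\Upsilon,L_{-\alpha}]=0$ one gets $-16L_\alpha L_{-\alpha}\Upsilon^2$. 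On the other hand, applying Proposition~\ref{prop:OddOppLadders} first to the inner pair $L_2^\varepsilon L_2^{\bar\varepsilon}$, then commuting the resulting $\mathfrak{T}_0$-polynomial past $L_1^{\bar\delta}$, and finally applying Proposition~\ref{prop:OddOppLadders} to $L_1^\delta L_1^{\bar\delta}$, yields an explicit element of $\mathfrak{T}_0$. Since $\mathfrak{T}_0$ is a polynomial ring (hence a domain), one cancels $\Upsilon^2$ and reads off the claimed factorisation after completing squares.

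The trade-off: your direct approach is self-contained and does not rely on Proposition~\ref{prop:OddOddLaddersIsEven} (which is stated \emph{after} the present proposition in the paper, though its proof is independent), but it requires tracking sixteen products and several nontrivial reductions---in particular, the cross-terms of the form $O_{\ib{\delta\varepsilon}{12}}\cdot O_{\ib{\bar\delta}{1}}\Tepsdeux{\bar\varepsilon}$ are not quite covered by Lemma~\ref{lem:LT} as written and need separate treatment via \eqref{eq:LongShortO-comm} and the $\dcover W$-commutation rules. The paper's approach buys brevity and transparency by reducing the even case entirely to the already-established odd case, at the cost of the slightly delicate division step in a commutative subalgebra.
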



\begin{proof}
For $\delta,\varepsilon\in\{+,-\}$ and distinct elements $a,b\in \{1,2\}$, consider the following elements of $\tama$
\begin{align*}
p_a^\delta &:= (H_a - \tfrac{1}{2}\delta)^2 - O_a^\delta O_a^{\bar\delta}, &
\Xi^{\delta,\varepsilon} &:= 4\varepsilon H_1 Z + 2\varepsilon\delta Z + 2\delta H_1 + 1,\\
q_a^\delta &:= (Z -\delta H_b)^2 - (H_a - \tfrac{1}{2}\delta)^2,
&
\Upsilon^{\delta,\varepsilon} &:= Z+\varepsilon H_1 -\delta H_2 - \tfrac{1}{2}\delta\varepsilon.
\end{align*}
These elements all live in the associative subalgebra of $\tama$ generated by $\mathfrak{t}_0$ (see Definition~\ref{def:Tzero}). Now fix $\delta,\eps$ from the root $\alpha = \delta \varpi_1 + \varepsilon\varpi_2 \in \Phi_{\bar 0}$ of the statement.
It is straightforward to check that $\Upsilon^{\delta,\varepsilon}$ divides both $q_1^\delta$ and $q_2^\varepsilon + \Xi^{\bar\delta,\varepsilon}$ and we have
\[
q_1^\delta = (Z-\varepsilon H_1 -\delta H_2 + \tfrac{1}{2}\delta\varepsilon)\Upsilon^{\delta,\varepsilon},
\quad
q_2^\varepsilon + \Xi^{\bar\delta,\varepsilon} = (Z+\varepsilon H_1 +\delta H_2 - \tfrac{3}{2}\delta\varepsilon)\Upsilon^{\delta,\varepsilon}.
\]
Now, on the one hand (as $[\Upsilon^{\delta,\varepsilon},L_{-\alpha}]=0$), we have
$L_1^\delta L_2^\varepsilon L_2^{\bar\varepsilon} L_1^{\bar\delta} = -16L_\alpha L_{-\alpha} (\Upsilon^{\delta,\varepsilon})^2$
and on the other hand, using $[q_2^\varepsilon, L_1^{\bar\delta}] = L_1^{ \bar\delta}\Xi^{\bar\delta,\varepsilon}$ and $[p_2^\varepsilon,L_1^{\bar\delta}]=0$,
we have
\begin{align*}
L_1^\delta L_2^\varepsilon L_2^{\bar\varepsilon} L_1^{\bar\delta} &=L_1^\delta (16p_2^\varepsilon q_2^\varepsilon) L_1^{\bar\delta} 
=16 L_1^\delta L_1^{\bar\delta} p_2^\varepsilon q_2^\varepsilon + 16 L_1^\delta [p_2^\varepsilon q_2^\varepsilon, L_1^{\bar\delta}]\\
&=16 L_1^\delta L_1^{\bar\delta} p_2^\varepsilon (q_2^\varepsilon + \Xi^{\bar\delta,\varepsilon})
= 16^2 p_1^\delta p_2^\varepsilon q_1^\delta (q_2^\varepsilon + \Xi^{\bar\delta,\varepsilon}).
\end{align*}
Factoring out $(\Upsilon^{\delta,\varepsilon})^2$ (since the associative algebra generated by $\mathfrak{t}_0$ is isomorphic to a polynomial ring, thus a domain), we get
$L_\alpha L_{-\alpha}= 16(p_1^\delta p_2^\varepsilon)((\delta H_1 + \varepsilon H_2 - 1)^2 - (Z-\tfrac{1}{2}\delta\varepsilon)^2),
$ where
\[
(Z-\varepsilon H_1 -\delta H_2 + \tfrac{1}{2}\delta\varepsilon)(Z+\varepsilon H_1 +\delta H_2 - \tfrac{3}{2}\delta\varepsilon)) = (Z-\tfrac{1}{2}\delta\varepsilon)^2 - (\delta H_1 + \varepsilon H_2 - 1)^2
\]
is obtained by completing squares. This finishes the proof.
\end{proof}

\begin{proposition}\label{prop:OddOddLaddersIsEven}
The ladder operators associated with odd roots admit the following factorisations
\begin{equation}
L_a^\delta L_b^\varepsilon = 4L_{\ib{\delta\varepsilon}{ab}}(Z+\varepsilon H_a -\delta H_b - \tfrac{1}{2}\delta\varepsilon),
\end{equation}
for $\delta,\varepsilon \in \{+,-\}$ and distinct elements $a,b\in \{1,2\}$.
\end{proposition}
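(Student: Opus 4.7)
The plan is to reduce the product $L_a^\delta L_b^\varepsilon$ to a single application of the formula \eqref{eq:TL}. Because $a\neq b$, Proposition~\ref{prop:LadderProp} gives $[H_a, L_b^\varepsilon] = (\varepsilon\varpi_b)(H_a)\, L_b^\varepsilon = 0$, where the graded and ungraded commutators coincide since $H_a$ is even. Combined with the definition $L_a^\delta = \{H_a, \Tepsa{\delta}\}$, this permits the rewriting
\[
L_a^\delta L_b^\varepsilon \;=\; \{H_a, \Tepsa{\delta}\}\, L_b^\varepsilon \;=\; \{H_a,\, \Tepsa{\delta} L_b^\varepsilon\}.
\]

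Next, \eqref{eq:TL} expresses $\Tepsa{\delta} L_b^\varepsilon = 4A\,\Upsilon + B$, where
\[
\Upsilon \;:=\; \varepsilon H_a - \delta H_b + Z - \tfrac{1}{2}\delta\varepsilon
\]
is precisely the target factor appearing in the statement, while $A$ and $B$ collect the remaining summands. The crucial observation is that $\Upsilon$ lies in the commutative subalgebra generated by $\mathfrak{a}$ (Proposition~\ref{prop:HsFirst}), hence commutes with $H_a$, and the anti-commutator splits as
\[
\{H_a,\, \Tepsa{\delta} L_b^\varepsilon\} \;=\; 4\,\{H_a, A\}\,\Upsilon \;+\; \{H_a, B\}.
\]

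It then remains to verify separately that $\{H_a, B\} = 0$ and $\{H_a, A\} = L_{\ib{\delta\varepsilon}{ab}}$. For the first, each summand of $B = -2\varepsilon(O_a^\delta \Tepsb{\varepsilon} + 2\delta O_a^\delta O_b^\varepsilon)(2H_b + \varepsilon)$ has the form $O_a^\delta X$ with $[H_a, X]=0$, by Lemmas~\ref{lem:H3index}, \ref{lem:1index} and Proposition~\ref{prop:HsFirst}; combined with $\{H_a, O_a^\delta\} = 0$ (an immediate consequence of \eqref{eq:WComm} applied to the reflections appearing in $O_a^\delta$), this forces the anti-commutator to vanish. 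For the second identity, I would distribute $\{H_a,\cdot\}$ over the two summands of $A$, apply Lemma~\ref{lem:H2index} to compute $\{H_a, O_{\ib{\delta\varepsilon}{ab}}\}$, and substitute \eqref{eq:oddLadderAlt} into the copy of $L_a^\delta$ that arises. After reordering with Lemma~\ref{lem:1index} and comparing termwise against the alternative formula \eqref{eq:evenLadderAlt} for $L_{\ib{\delta\varepsilon}{ab}}$, the discrepancy collapses to an expression proportional to $O_a^\delta O_b^\varepsilon$ with scalar coefficient $2\delta(2H_b + \varepsilon) - 4(\delta H_b + Z) - 2(\varepsilon\delta - 2Z)$, which vanishes identically.

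The main obstacle is purely bookkeeping: tracking the signs generated by permuting isotropic indices in the three-index symmetries, and selecting the appropriate form of each auxiliary lemma at each step. The argument is symmetric in $(a,b)$, so the case $a=2,\,b=1$ follows by relabelling.
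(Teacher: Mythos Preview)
Your argument is correct and takes a genuinely different route from the paper's proof. The paper expands both odd ladder operators via \eqref{eq:oddLadderAlt}, multiplies the two resulting expressions term by term (producing a four-term sum such as \eqref{eq:PrododdLadders2}), then substitutes the product formula \eqref{eq:TT2} for $\Tepsa{\delta}\Tepsb{\varepsilon}$ and recognises the outcome as $4L_{\ib{\delta\varepsilon}{ab}}$ times the factor $\Upsilon$ by matching against \eqref{eq:evenLadderAlt}. You instead keep $L_a^\delta=\{H_a,\Tepsa{\delta}\}$ unexpanded, pull the anti-commutator outside using $[H_a,L_b^\varepsilon]=0$, and feed in the already-packaged identity \eqref{eq:TL} for $\Tepsa{\delta}L_b^\varepsilon$; the factor $\Upsilon$ is then isolated automatically because it lies in $\mathfrak{T}_0$, and the remaining work reduces to the two clean verifications $\{H_a,B\}=0$ and $\{H_a,A\}=L_{\ib{\delta\varepsilon}{ab}}$. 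Your approach leverages Lemma~\ref{lem:LT} (which the paper proves but does not use here) and thereby avoids having to re-derive the product $\Tepsa{\delta}\Tepsb{\varepsilon}$ inside the computation; the paper's approach is more self-contained but requires more explicit bookkeeping. One minor quibble: the vanishing $[H_a,O_b^\varepsilon]=0$ follows from \eqref{eq:WComm} rather than Lemma~\ref{lem:1index}, but this does not affect the argument.
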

\begin{proof}
Using (\ref{eq:oddLadderAlt}), the commutation relations of Lemma~\ref{lem:H3index} and the action described in Lemma~\ref{lem:actionWonT}, we compute that
\begin{multline}\label{eq:PrododdLadders2}
     L_{\ib{\delta}{a}} L_{\ib{\varepsilon}{b}} = \Tepsa{\delta}  \Tepsb{\varepsilon}(2H_a+\delta) (2H_b+\varepsilon)+ 4 \Tepsa{\delta}O_{\ib{\varepsilon}{b}} (2H_a+\delta)(Z+\varepsilon H_b) \\
- 4O_{\ib{\delta}{a}} \Tepsb{\varepsilon}( Z-\delta\varepsilon -\delta H_b)(2H_b+\varepsilon) - 16 O_{\ib{\delta}{a}} O_{\ib{\varepsilon}{b}}( Z-\delta\varepsilon -\delta H_b)(Z+\varepsilon H_a).
\end{multline}
Substituting (\ref{eq:TT2}) in (\ref{eq:PrododdLadders2}) and using the expression (\ref{eq:evenLadderAlt}) yield the desired result.
\end{proof}

\subsection{The triangular subalgebra}

Recall the partition $\Phi = \Phi_+\cup \Phi_-$ of~(\ref{eq:rootsPosNeg})
and the vector space $\mathfrak{t}_0$ of Definition~\ref{def:Tzero}.
Given any vector subspace $U\subset \tama$, let $\alg A(U)$ denote the associative subalgebra of $\tama$ generated by $U$.

\begin{definition}
    We let $\mathfrak{t} = \mathfrak{t}_0 \oplus \mathfrak{t}_+\oplus \mathfrak{t}_-$   with  $\mathfrak{t}_{\pm} = \mathrm{span}\{L_\alpha\mid \alpha \in \Phi_{\pm}\}$. The subalgebra  $\mathfrak{T}= \alg A(\mathfrak t )$ will be called the triangular subalgebra of $\tama$. Moreover, let $\mathfrak{T}_\pm = \alg A(\mathfrak{t}_\pm)$, $\mathfrak{T}_0 = \alg A(\mathfrak{t}_{0})$.
\end{definition}

This nomenclature is justified by Proposition~\ref{prop:triangstruc}, proved below. The computations in \cref{prop:OddOppLadders,prop:OddOddLaddersIsEven,prop:EvenEvenLadders} indicate that the product of ladder elements is again a ladder element, modulo right multiplication by a polynomial expression on $\mathfrak{t}_0$.

\begin{proposition}\label{prop:ExtraTriangRules}
    Let $(\alpha,\beta)$ be a pair of non-opposite roots. Then, the following assertions hold.
    \begin{enumerate}
        \item If $(\alpha,\beta)\in\Phi_{\bar 0}\times\Phi_{\bar 1} \cup \Phi_{\bar 1}\times\Phi_{\bar 0}$ and the angle between these roots is obtuse, then $L_\alpha L_\beta = (L_{\alpha+\beta}) p$, for some $p \in \mathfrak{T}_0$. \label{prop:ExtraTriangRulesItemi}
        \item If $(\alpha,\beta)\in\Phi_{\bar 0}\times\Phi_{\bar 1} \cup \Phi_{\bar 1}\times\Phi_{\bar 0}$ and the angle between these roots is acute, then  $[L_\alpha, L_\beta] = 0$. \label{prop:ExtraTriangRulesItemii}
        \item If $(\alpha,\beta)\in\Phi_{\bar 0}\times\Phi_{\bar 0}$, then $L_\alpha L_\beta = (L_{(\alpha+\beta)/2})^2 p$,
        for some $p \in \mathfrak{T}_0$. \label{prop:ExtraTriangRulesItemiii}
    \end{enumerate}
\end{proposition}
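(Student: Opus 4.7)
The plan is to establish all three items by associativity arguments applied to triple (and quadruple) products of ladder operators, combined with the factorisations from Propositions~\ref{prop:OddOppLadders}, \ref{prop:EvenEvenLadders}, and especially~\ref{prop:OddOddLaddersIsEven}. By Lemma~\ref{lem:actionWonT}, the conjugation action of $\dcover W$ permutes the ladders (up to scalars) and acts on $\Phi$ via the underlying Weyl group, so in particular preserves the angle between roots; combined with the symmetry $\varpi_1\leftrightarrow\varpi_2$, each of (i), (ii), (iii) reduces to a single representative pair.

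For (ii), take $\alpha=\varpi_1+\varpi_2$ and $\beta=\varpi_1$. Associativity of $L_1^+L_2^+L_1^+$ gives $(L_1^+L_2^+)L_1^+=L_1^+(L_2^+L_1^+)$; applying Proposition~\ref{prop:OddOddLaddersIsEven} to each inner product yields
\[
L_1^+L_2^+=4L_{\ib{++}{12}}(Z+H_1-H_2-\tfrac12),\qquad L_2^+L_1^+=-4L_{\ib{++}{12}}(Z+H_2-H_1-\tfrac12).
\]
Commuting $L_1^+$ past the polynomial factors, using Proposition~\ref{prop:LadderProp} together with the anti-commutation $\{Z,L_1^+\}=0$ from~\eqref{eq:Zcommutation}, both sides become a scalar multiple of either $L_{\ib{++}{12}}L_1^+$ or $L_1^+L_{\ib{++}{12}}$ followed on the right by the \emph{same} polynomial $(-Z+H_1-H_2+\tfrac12)$. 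Cancelling this common factor---legitimate because the algebra generated by $\mathfrak{t}_0$ is a commutative polynomial ring, hence a domain, as invoked in the proof of Proposition~\ref{prop:EvenEvenLadders}---gives $[L_{\ib{++}{12}},L_1^+]=0$.

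The remaining cases follow the same scheme but require chaining several such identities. For (i), take $\alpha=-\varpi_1-\varpi_2$ and $\beta=\varpi_1$, so $\alpha+\beta=-\varpi_2$: one associativity identity on $L_1^-L_2^-L_1^+$ relates $L_{\ib{--}{12}}L_1^+$ to another case-(i) product $L_1^-L_{\ib{+-}{12}}$, while a second identity coming from $L_2^-L_1^+L_1^-$---which evaluates $L_1^+L_1^-$ via Proposition~\ref{prop:OddOppLadders}---expresses the same product as a right multiple of $L_2^-$; eliminating the auxiliary product and cancelling common polynomial factors yields $L_{\ib{--}{12}}L_1^+=L_2^-\cdot p$ for some $p\in\mathfrak{T}_0$. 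For (iii), take $\alpha=\varpi_1+\varpi_2$ and $\beta=\varpi_1-\varpi_2$, so $(\alpha+\beta)/2=\varpi_1$: evaluating $L_1^+L_2^+L_1^+L_2^-$ as $(L_1^+L_2^+)(L_1^+L_2^-)$ produces a $\mathfrak{T}_0$-multiple of $L_{\ib{++}{12}}L_{\ib{+-}{12}}$, while the bracketing $L_1^+L_2^+(L_1^+L_2^-)$, combined with the case-(i) factorisation $L_2^+L_{\ib{+-}{12}}\in L_1^+\cdot\mathfrak{T}_0$, produces a $\mathfrak{T}_0$-multiple of $(L_1^+)^2$; cancellation yields the asserted identity. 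The main obstacle throughout is not conceptual but one of careful bookkeeping---tracking polynomial factors through multiple commutations and justifying each cancellation in $\mathfrak{T}_0$; a mechanical alternative that bypasses the associativity trick is to substitute the expansions~\eqref{eq:evenLadderAlt} and~\eqref{eq:oddLadderAlt} directly into $L_\alpha L_\beta$ and simplify using Lemmas~\ref{lem:TT} and~\ref{lem:LT}, at the cost of substantially more computation.
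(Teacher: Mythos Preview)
Your approach---associativity of carefully chosen products of odd ladders, combined with the factorisations of Propositions~\ref{prop:OddOppLadders}--\ref{prop:OddOddLaddersIsEven} and cancellation of common $\mathfrak{T}_0$-factors---is exactly the one taken in the paper. Item~(ii) in particular matches the paper's argument line for line.

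There is, however, a concrete slip in your treatment of~(i). Your first identity (from $L_1^-L_2^-L_1^+$) produces the auxiliary product $L_1^-L_{\ib{+-}{12}}$, but your second identity (from $L_2^-L_1^+L_1^-$) yields $L_{\ib{+-}{12}}L_1^-$, with the factors in the opposite order; these are not the same, and since the pair $(-\varpi_1,\varpi_1-\varpi_2)$ is obtuse rather than acute, you cannot invoke~(ii) to interchange them without circularity. The fix is easy: replace the second triple product by $L_1^-L_1^+L_2^-$, whose two bracketings give $q\,L_2^-$ (via Proposition~\ref{prop:OddOppLadders}) and $L_1^-L_{\ib{+-}{12}}\cdot(\text{poly})$ (via Proposition~\ref{prop:OddOddLaddersIsEven}), which now chains correctly with your first identity. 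The paper in fact avoids the chaining altogether by using a \emph{single} triple product $L_{\ib{\delta}{a}}L_{\ib{\varepsilon}{b}}L_{\ib{\bar\varepsilon}{b}}$: one bracketing gives $L_{\ib{\delta}{a}}q_1$ directly, the other gives $L_{\ib{\delta\varepsilon}{ab}}L_{\ib{\bar\varepsilon}{b}}q_2$, and one checks $q_2\mid q_1$.

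For~(iii), the paper's choice is $L_{\ib{\delta}{a}}(L_{\ib{\varepsilon}{b}}L_{\ib{\bar\varepsilon}{b}})L_{\ib{\delta}{a}}$, which is self-contained and does not rely on~(i); your route via~(i) is also valid once~(i) is in hand, just less economical.
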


\begin{proof}
In \ref{prop:ExtraTriangRulesItemi}, note that $\alpha + \beta \in \Phi_{\bar 1}$. We can thus assume $\alpha = \delta\varpi_a + \eps\varpi_b$ and $\beta = -\varepsilon\varpi_b$ for $\delta,\eps\in\{+,-\}$ and $a,b\in\{1,2\}$. Then there exist polynomials $q_1,q_2\in \mathfrak{T}_0$ such that $q_2$ commutes with $L_{\ib{\bar\varepsilon}{b}}$ and
\[
    L_{\ib{\delta}{a}}(L_{\ib{\varepsilon}{b}}L_{\ib{\bar\varepsilon}{b}}) = L_{\ib{\delta}{a}}q_1
    =L_{\ib{\delta\varepsilon}{ab}}L_{\ib{\bar\varepsilon}{b}}q_2 = (L_{\ib{\delta}{a}}L_{\ib{\varepsilon}{b}})L_{\ib{\bar\varepsilon}{b}},
\]
where we recall $\bar\varepsilon := -\varepsilon$.
Moreover, from the precise expressions for $q_1$ and $q_2$ coming from \cref{prop:OddOppLadders,prop:OddOddLaddersIsEven}, one can compute that $q_2$ divides $q_1$. For \ref{prop:ExtraTriangRulesItemii}, we can assume $\alpha = \delta\varpi_a + \varepsilon\varpi_b$ and $\beta = \varepsilon\varpi_b$ so that, using Proposition~\ref{prop:OddOddLaddersIsEven}, we can find $q_1,q_2\in \mathfrak{T}_0$ such that
\[
L_{\ib{\varepsilon}{b}}(L_{\ib{\delta}{a}}L_{\ib{\varepsilon}{b}}) = L_{\ib{\varepsilon}{b}}L_{\ib{\delta\varepsilon}{ab}}q_1 =
L_{\ib{\delta\varepsilon}{ab}}q_2L_{\ib{\varepsilon}{b}} =(L_{\ib{\varepsilon}{b}}L_{\ib{\delta}{a}})L_{\ib{\varepsilon}{b}}.
\]
In addition, one can compute that $q_2L_{\ib{\varepsilon}{b}} = L_{\ib{\varepsilon}{b}}q_1$.

Finally, for \ref{prop:ExtraTriangRulesItemiii}, we can assume $\alpha = \delta\varpi_a + \varepsilon\varpi_b$ and $\beta = \delta\varpi_a -\varepsilon\varpi_b$. Using again \cref{prop:OddOppLadders,prop:OddOddLaddersIsEven}, we find polynomials $q_1,q_2,q_3\in \mathfrak{T}_0$ such that
\[
L_{\ib{\delta}{a}}(L_{\ib{\varepsilon}{b}}L_{\ib{\bar\varepsilon}{b}})L_{\ib{\delta}{a}} =
L_{\ib{\delta}{a}} L_{\ib{\delta}{a}}q_1 = L_{\ib{\delta\varepsilon}{ab}} L_{\ib{\delta\bar\varepsilon}{ab}}q_2q_3=
(L_{\ib{\delta}{a}}L_{\ib{\varepsilon}{b}})(L_{\ib{\bar\varepsilon}{b}}L_{\ib{\delta}{a}})
\]
with the property that $q_2,q_3$ are coprime of degree one and both divide $q_1$; hence
$q_2q_3$ divides $q_1$.
\end{proof}

\begin{proposition}\label{prop:triangstruc}
 The triangular subalgebra $\mathfrak{T}$ admits a decomposition
\begin{equation}
    \mathfrak{T} =
    \mathfrak{T}_-\mathfrak{T}_+\mathfrak{T}_0.
\end{equation}
\end{proposition}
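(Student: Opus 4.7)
The plan is to prove $\mathfrak{T} \subseteq \mathfrak{T}_-\mathfrak{T}_+\mathfrak{T}_0$, the reverse inclusion being immediate. Since $\mathfrak{T}$ is spanned by monomials in the generating set $\mathfrak{t}=\mathfrak{t}_0\oplus\mathfrak{t}_+\oplus\mathfrak{t}_-$, it suffices to place every such monomial in $\mathfrak{T}_-\mathfrak{T}_+\mathfrak{T}_0$. The first step would be the preliminary commutation fact that any generator of $\mathfrak{t}_0$ slides past a ladder operator at the cost of a right $\mathfrak{T}_0$-factor: \cref{prop:LadderProp} gives $HL_\alpha = L_\alpha(H+\alpha(H))$ for $H\in\mathfrak{h}$, \cref{lem:actionWonT} gives $\dr{i}L_\alpha = cL_\alpha\dr{i}$ for a root of unity $c$, and~\eqref{eq:Zcommutation} gives $ZL_\alpha = (-1)^{|L_\alpha|}L_\alpha Z$. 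Iterating shows both $\mathfrak{T}_0\mathfrak{T}_\pm\subseteq\mathfrak{T}_\pm\mathfrak{T}_0$ and, more importantly, that every monomial in the generators of $\mathfrak{T}$ may be rewritten as a pure ladder product $M = L_{\alpha_1}\cdots L_{\alpha_n}$ followed by a single element of $\mathfrak{T}_0$.

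The second step is a reduction on such a pure ladder product. To $M$ I would attach the complexity
\begin{equation*}
(C(M),B(M)) := \bigl(3k_e(M)+2k_o(M),\ \#\{(i,j):i<j,\ \alpha_i\in\Phi_+,\ \alpha_j\in\Phi_-\}\bigr),
\end{equation*}
ordered lexicographically, where $k_e(M)$ and $k_o(M)$ count even and odd ladders in $M$. I would argue by strong induction on $(C(M),B(M))$. If $B(M)=0$, all negative ladders precede all positive ones and $M\in\mathfrak{T}_-\mathfrak{T}_+$; otherwise there is an adjacent inversion at positions $i, i+1$, and exactly one of five rewriting rules applies to $L_{\alpha_i}L_{\alpha_{i+1}}$: \cref{prop:OddOppLadders} or \cref{prop:EvenEvenLadders} if the roots are opposite (yielding an element of $\mathfrak{T}_0$), \cref{prop:ExtraTriangRules}\textup{(i)} if the roots are of different parity and obtuse (yielding $L_{\alpha_i+\alpha_{i+1}}p$), \cref{prop:ExtraTriangRules}\textup{(ii)} if of different parity and acute (the mere swap), \cref{prop:ExtraTriangRules}\textup{(iii)} if both are even and non-opposite (yielding $L_\gamma^2\, p$ with $\gamma=(\alpha_i+\alpha_{i+1})/2\in\Phi_{\bar 1}$), or \cref{prop:OddOddLaddersIsEven} if both are odd and non-opposite (necessarily with distinct indices, yielding a single even ladder times a $\mathfrak{T}_0$-element). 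In each case the $\mathfrak{T}_0$-factor created is pushed past the trailing ladders by the first step, with no effect on $(C,B)$ of the remaining ladder word. A case-by-case check of the resulting change $(\Delta k_e,\Delta k_o)$ gives $\Delta C\in\{-6,-4,-3,-2,-1,0\}$, the only case with $\Delta C=0$ being the acute swap, where $\Delta B = -1$. Thus $(C,B)$ strictly decreases with each substitution, the induction terminates, and $M\in\mathfrak{T}_-\mathfrak{T}_+\mathfrak{T}_0$.

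The main obstacle is the choice of the complexity $C$: the naive word length fails because the even-even substitution \cref{prop:ExtraTriangRules}\textup{(iii)} leaves the total number of ladders unchanged, and the odd-odd substitution of \cref{prop:OddOddLaddersIsEven} does not reduce it enough to be useful in isolation. The weights $3$ and $2$ on $k_e$ and $k_o$ in $C$ are calibrated precisely so that every "parity flip" between even and odd ladders (two evens into two odds, or two odds into one even) contributes a strict decrease; this leaves only the genuine transposition in \cref{prop:ExtraTriangRules}\textup{(ii)} with $\Delta C = 0$, which the inversion tiebreaker $B$ resolves by strictly decreasing.
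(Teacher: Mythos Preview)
Your proof is correct and, in one respect, cleaner than the paper's. Both arguments begin identically: push every $\mathfrak{t}_0$-factor to the right using \cref{prop:LadderProp}, \cref{lem:actionWonT} and~\eqref{eq:Zcommutation}, then straighten a pure ladder word by rewriting adjacent inversions. The difference lies in the complexity measure. The paper uses only the inversion count $B$; this fails to decrease when the even--even rule of \cref{prop:ExtraTriangRules}\ref{prop:ExtraTriangRulesItemiii} replaces $L_\alpha L_\beta$ by $L_\gamma^2 p$ with $\gamma\in\Phi_+$, so the paper inserts a nested argument: take the rightmost inversion, exploit that the tail is already ordered, and first reorder $T L_1\cdots L_m$ (where $T=L_\gamma$ is odd, so case~(d) cannot recur at the first step) before handling $T^2 L_1\cdots L_m$. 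Your weighted count $C=3k_e+2k_o$ sidesteps this entirely: every non-swap rule strictly lowers $C$ (in particular the even--even rule gives $\Delta C=-2$ regardless of the sign of $\gamma$), and the acute swap of \cref{prop:ExtraTriangRules}\ref{prop:ExtraTriangRulesItemii} is the lone $\Delta C=0$ case, resolved by $\Delta B=-1$. What this buys you is a uniform termination argument with no special handling of the even--even case; what the paper's approach buys is one fewer invariant to introduce, at the cost of a more delicate subinduction.
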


\begin{proof}
Due to Proposition~\ref{prop:LadderProp}, Lemma~\ref{lem:actionWonT} and $ZL_\alpha = (-1)^{|\alpha|}L_\alpha Z$, it is clear that any expression in $\mathfrak{T}$ is written as sums of elements in $\mathfrak{T}_{\pm}\mathfrak{T}_0$. Now let $M = T_1T_2\cdots T_nA$ be any monomial expression in $\mathfrak{T}$, with $T_i\in\mathfrak{T}_\pm$ and $A\in \mathfrak{T}_0$. We show that we can rearrange $M$ and write it as an element in $\mathfrak{T}_-\mathfrak{T}_+\mathfrak{T}_0$, that is, a finite sum $M = \sum_j N_jP_jA_j$, with $N_j\in \mathfrak{T}_-$, $P_j\in \mathfrak{T}_+$ and $A_j\in\mathfrak{T}_0$. Define the inversion set of a monomial $M = T_1T_2\cdots T_nA$ to be
\[
\mathcal{I}(M) = \{(i,j)\mid 1\leq i,j \leq n, i<j \textup{ and } T_i\in \mathfrak{T}_+, T_j\in \mathfrak{T}_-\}.
\]
If $|\mathcal{I}(M)| = 0$, we are done. Else, there exist ladder elements $T_i,T_{i+1}$ with $T_i = L_\alpha\in \mathfrak{T}_+$ and $T_{i+1}=L_\beta\in \mathfrak{T}_-$. Moreover, we can assume that the index $i$ is maximal for this property, which implies that the remaining factors $T_{i+2}\cdots T_n$ are correctly ordered. There are four cases we need to analyse: (a) $(\alpha,\beta)\in \Phi_{\bar 1}\times \Phi_{\bar 1}$, (b) $(\alpha,\beta)\in \Phi_{\bar 0}\times \Phi_{\bar 1}$, (c) $(\alpha,\beta)\in \Phi_{\bar 1}\times \Phi_{\bar 0}$ and (d) $(\alpha,\beta)\in \Phi_{\bar 0}\times \Phi_{\bar 0}$. We claim that, in each case, when we evaluate $T_{i}T_{i+1}$, we get $M = \sum_j M_j$ with each $M_j\in \mathfrak{T}_{\pm}\mathfrak{T}_0$ monomial with $|\mathcal{I}(M_j)|<|\mathcal{I}(M)|$. So after finitely many steps, we shall write $M\in \mathfrak{T}_-\mathfrak{T}_+\mathfrak{T}_0$.

Note that we can assume that $(\alpha,\beta)$ are not opposite roots, since otherwise $L_\alpha L_{-\alpha} \in \mathfrak{T}_0$ would trivially decrease the cardinality of the inversion set. In case (a), we then use Proposition~\ref{prop:OddOddLaddersIsEven} and in (b), (c) we use Lemma~\ref{prop:ExtraTriangRules} items~\ref{prop:ExtraTriangRulesItemi} and~\ref{prop:ExtraTriangRulesItemii}. In all these cases, we can either commute $T_iT_{i+1} = T_{i+1}T_i$ or $T_iT_{i+1} = TA$ for some $T\in \mathfrak{T}$ and $A \in \mathfrak{T}_0$. After sending $A$ to the right, we have a linear combination of monomials with smaller inversion sets.

Case (d) is a bit more complicated, since, by Lemma~\ref{prop:ExtraTriangRules} item~\ref{prop:ExtraTriangRulesItemiii}, the product $T_iT_{i+1} = T^2A$, for some odd ladder element $T$. If $T\in \mathfrak{T}_-$, after sending $A$ to the right, this would produce correctly ordered monomials $M'$ so that in each $T_1\cdots T_{i-1} M'$, the maximal index where an inversion occurs becomes $i_1<i$, and we can repeat the process for that $i_1$. So assume $T\in \mathfrak{T}_+$. Note that this could increase the inversion sets of the produced monomials $M'$ after we send $A$ to the right. However, from the maximality assumption on $i$, we necessarily need to deal with the configuration $T^2L_1\cdots L_m$ where  each $L_j\in \mathfrak{T}_-$. Since $T$ is an odd ladder element, when we deal with this configuration, case (iv) will not occur in the first step, and hence we can, after finitely many steps, reorder $T(L_1\cdots L_m) = \sum_jM_jP_jA_j$ in $\mathfrak{T}_-\mathfrak{T}_+\mathfrak{T}_0$. We then obtain $T^2L_1\cdots L_m = \sum_j TM_jP_jA_j$. But note that $|\mathcal{I}(TL_1\cdots L_m)|=m$ and also $|\mathcal{I}(TM_j)|\leq m$ for each $j$, since the expressions $M_j\in\mathfrak{T}_-$ are a product of negative ladder elements with at most $m$ factors. After reordering each $TM_j$ (using the same principle that allowed us to reorder $TL_1\cdots L_m)$, the original monomial is replaced by a linear combination
\[
M = T_1T_2\cdots T_nA = T_1T_2\cdots T_{i-1}\left(\sum_k M_kP_kA_k\right).
\]
with $|\mathcal{I}(T_1T_2\cdots T_{i-1}M_k)|<|\mathcal{I}(M)|$, for each $k$.  This finishes the proof.
\end{proof}
\begin{remark}
Note that the decomposition of \cref{prop:triangstruc} is not a full PBW-type decomposition, because the multiplication map $m:\mathfrak T_-\otimes\mathfrak T_+ \otimes \mathfrak T_0\to \mathfrak T$ is not injective. Indeed, $1\otimes \La\Lb \otimes 1 - 4\otimes \Mab \otimes (Z + H_1 -H_2 - \frac12 ) $ lies in the kernel, using \cref{prop:OddOddLaddersIsEven}. Nevertheless, the existence of such a triangular reordering of the monomials in $\mathfrak{T}$ will prove to be very useful when dealing with the representation theory of the $\tama$.
\end{remark}

To conclude this section, we state and prove the following result related to the $\ast$-structure defined in \cref{sec:unitarity}.

\begin{proposition}\label{prop:StarStrucTriangAlg}
   For $\kappa$ real-valued, the triangular subalgebra $\mathfrak{T}$ is a $*$-subalgebra of $\mathfrak{O}_\kappa$, with $\mathfrak{T}_0^* = \mathfrak{T}_0$ and $\mathfrak{T}_\pm^* = \mathfrak{T}_\mp$.
    In particular, for $a\in \{1,2\}$ and $\alpha \in \Phi = \Phi_{\bar 0}\cup \Phi_{\bar 1}$, we have
    \begin{equation}
    H_a^* = H_a, \qquad L_\alpha^* = -(-1)^{|\alpha|}L_{-\alpha},\qquad Z^*=Z,\qquad (\dr1)^*=(\dr1)^{-1},\qquad (\dr2)^*=(\dr2)^{-1}.
    \end{equation}
\end{proposition}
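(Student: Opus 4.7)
The plan is to verify the claimed identities one by one on each generator of $\mathfrak{T}$, and then observe that these identities imply the advertised behaviour of $\mathfrak{T}_0$, $\mathfrak{T}_\pm$ and the whole triangular subalgebra under $\ast$. The key tool is \cref{lem:starO} combined with the fact that the conjugate-linear anti-involution on $\bigwedge V$ reverses the order of factors: $(v_1\wedge\cdots\wedge v_p)^\ast = v_p^\ast\wedge\cdots\wedge v_1^\ast$, together with $(z_a^\pm)^\ast = z_a^\mp$ (since $x_j^\ast = x_j$ and the map is conjugate-linear). Also, for the anti-involution on the group algebra we have $w^\ast = w^{-1}$ for $w\in W$.

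First I would dispatch the easy generators. For $H_a = \tfrac12 O(z_a\wedge\bar z_a)$ one computes $(z_a\wedge\bar z_a)^\ast = z_a\wedge\bar z_a$, so $H_a^\ast = H_a$ by \cref{lem:starO}. For $Z = O(x_1\wedge x_2\wedge x_3\wedge x_4)$ the reversal of four real vectors introduces the sign $(-1)^{4\cdot 3/2}=+1$, so $Z^\ast = Z$. For the group elements, $\rho(\dsig{p})$ and $\rho(\dtau{q})$ are, in the Clifford picture, products of the self-adjoint involutions $s_{\alpha_p}\otimes\gamma(\alpha_p)$ and $s_{\beta_q}\otimes\gamma(\beta_q)$; each satisfies $X^\ast = X = X^{-1}$ because the roots $\alpha_p,\beta_q$ are real unit vectors. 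Since $\dr 1 = z\dcover s_{m_1}\dcover s_1$ and $\dr 2 = z\dcover t_{m_2}\dcover t_1$ and $\rho(z)=-1$, the relation $\rho(\dr a)^\ast = \rho(\dr a)^{-1}$ follows.

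The main computation is the formula $L_\alpha^\ast = -(-1)^{|\alpha|}L_{-\alpha}$. For $\alpha = \delta\varpi_1+\varepsilon\varpi_2\in\Phi_{\bar 0}$, the anti-involution gives $(z_1^\delta\wedge z_2^\varepsilon)^\ast = z_2^{-\varepsilon}\wedge z_1^{-\delta} = -z_1^{-\delta}\wedge z_2^{-\varepsilon}$, hence $(O_{\ib{\delta\varepsilon}{12}})^\ast = -O_{\ib{-\delta,-\varepsilon}{12}}$ by \cref{lem:starO}. Combining with $H_a^\ast = H_a$ and the identity $\{A,B\}^\ast = \{A^\ast,B^\ast\}$, definition~\eqref{eq:EvLadder} yields $L_\alpha^\ast = -L_{-\alpha}$, which agrees with $-(-1)^{|\alpha|}L_{-\alpha}$ since $|\alpha|=0$. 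For $\beta=\varepsilon\varpi_a\in\Phi_{\bar 1}$, the reversed wedge is $(z_a^\varepsilon\wedge z_b\wedge\bar z_b)^\ast = z_b\wedge\bar z_b\wedge z_a^{-\varepsilon}$, and moving $z_a^{-\varepsilon}$ back to the first slot costs two transpositions, so $O_{\ib{\varepsilon+-}{abb}}^\ast = O_{\ib{-\varepsilon+-}{abb}}$. Then \eqref{eq:OddLadder} gives $L_\beta^\ast = L_{-\beta}$, consistent with $|\beta|=1$.

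Finally, $\mathfrak{T}_0 = \alg A(\mathfrak{t}_0)$ is generated by $H_1,H_2,Z,\dr1,\dr2$, each sent by $\ast$ to an element of $\mathfrak{T}_0$, so $\mathfrak{T}_0^\ast=\mathfrak{T}_0$. The identity $L_\alpha^\ast = \pm L_{-\alpha}$ together with $\Phi_\pm = -\Phi_\mp$ gives $\mathfrak{t}_\pm^\ast = \mathfrak{t}_\mp$ and hence $\mathfrak{T}_\pm^\ast = \mathfrak{T}_\mp$ at the level of generated associative subalgebras. Since $\mathfrak{T}=\alg A(\mathfrak{t})$ and $\mathfrak{t}^\ast=\mathfrak{t}$, the algebra $\mathfrak{T}$ is $\ast$-stable. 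The only delicate step in all of this is the careful bookkeeping of the signs produced by the order-reversal in the wedge products, particularly for the odd ladder elements; once the signs are sorted out, no further work is needed.
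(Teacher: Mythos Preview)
Your proof is correct and follows essentially the same approach as the paper: both rely on \cref{lem:starO} together with the identities $\{u,v\}^\ast=\{u^\ast,v^\ast\}$ and $\{u,\{v,x\}\}^\ast=\{u^\ast,\{v^\ast,x^\ast\}\}$ to reduce the question to computing $(z_1^\delta\wedge z_2^\varepsilon)^\ast$ and $(z_a^\varepsilon\wedge z_b\wedge\bar z_b)^\ast$, while the group-element claim follows from $\dcover{W}_{(\bar 0,\bar 0)}$ being $\ast$-closed. Your version simply makes the sign bookkeeping in the wedge reversals explicit where the paper leaves it as a ``straightforward computation''.
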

\begin{proof}
The equations above follow from straightforward computations using Lemma~\ref{lem:starO} and the  identities $\{u,v\}^* = \{u^*,v^*\}$ and $\{u,\{v,x\}\}^* = \{u^*,\{v^*,x^*\}\}$, which hold for any associative algebra endowed with an anti-involution. It is also clear that $\dcover{W}_{(\bar 0,\bar 0)}$ is closed for the $\ast$-operation, finishing the proof.
\end{proof}

\section{Finite representation theory of the symmetry algebra}  \label{sec:rep}

In this section, we investigate the finite-dimensional irreducible representations of $\tama$. In order to discuss $\ast$-unitary representations of $\tama$, we will assume the parameter function $\kappa$ to be real-valued. 

The main tool we will use for our investigation is the triangular subalgebra $\TriSubAlg$.
We start by defining a weight theory using the fact that $\mathfrak{t}_0$ is abelian (see Proposition \ref{prop:HsFirst}), and we 
will later use the highest weight as a label for a finite-dimensional irreducible representation. In Theorem~\ref{thm:ClassiLabel}, we will give a set of equations these labels must satisfy.  

\begin{definition}\label{def:Weights}
For $M$ a $\tama$-module (or $\mathfrak{T}_0$-module) and $\mu\in\mathfrak{t}_0^*$, $M_\mu = \{v\in M\mid Av = \mu(A)v,\textup{ for all } A\in\mathfrak{t}_0\}$ and we denote by $\textup{Wt}_{\mathfrak{t}_0}(M) = \{\mu\in\mathfrak{t}_0^*\mid M_\mu\neq 0\}$ the set of $\mathfrak{t}_0$-weights of $M$. The set of $\mathfrak{h}$-weights $\textup{Wt}_\mathfrak{h}(M)$ is defined similarly. 
Whenever convenient, we shall see a $\mathfrak{t}_0$-weight as a $5$-tuple, consisting of the values of the weight with respect to the elements $H_1,H_2,Z,\dr1,\dr2 \in \mathfrak{t}_0$, and an $\mathfrak{h}$-weight as a $2$-tuple consisting of the values of the weight with respect to the elements $H_1$, $H_2$.

For any $\lambda\in \textup{Wt}_{\mathfrak{t}_0}(M)$, a nonzero vector $\hwv\in M_\lambda$ will be called a weight vector of the weight space $ M_\lambda$.
\end{definition}

We will now look at how the ladder operators and the group interact with the weight spaces. 
Recall that, 
in view of Theorem~\ref{thm:TAMAgen} and \cref{eq:DiagImage}, the restriction of any $\tama$-module $M$ to the subalgebra $\rho(\mathbb{C}\dcover W)\subset \tama$ decomposes, by Maschke's Theorem, as a direct sum of irreducible spin representations. The eigenvalues of $\dr1$ and $\dr2$ are then of the form~\eqref{e:actu} with  $\ell_1$ and $\ell_2$ odd integers. 

We define actions of (the lattice generated by) the root system $ \Phi$ and the group $\dcover W$ on $\mathfrak{t}_0^*$ as follows. For $\alpha = \alpha_1 \varpi_1 + \alpha_2 \varpi_2 \in \Phi$ and $\mu = (\lambda_1,\lambda_2,\Lambda,\zeta_1^{\ell_1},\zeta_2^{\ell_2}) \in \mathfrak{t}_0^*$, 
    \begin{equation}\label{eq:rootactweight}
\alpha\cdot\mu  = (\lambda_1 + \alpha_1,\lambda_2 +\alpha_2,(-1)^{|\alpha|}\Lambda,\zeta_1^{\ell_1+2\alpha_1 },\zeta_2^{\ell_2+2\alpha_2 }),
\end{equation}
while, recalling the decomposition~\eqref{eq:WZ2} of $\dcover{W}$, for $\dcover w \in\dcover{W}_{(\bar\imath,\bar\jmath\,)}$ with $(\bar\imath,\bar\jmath\,)\in \mathbb{Z}_2^2$, we let
 \begin{equation}\label{eq:rgroupactweight}
 \dcover w \cdot \mu = ((-1)^{\bar\imath}\lambda_1 ,(-1)^{\bar\jmath}\lambda_2 ,(-1)^{\bar\imath+\bar\jmath}\Lambda,\zeta_1^{(-1)^{\bar\imath}\ell_1 },\zeta_2^{(-1)^{\bar\jmath}\ell_2 }).
\end{equation}
One readily verifies that these actions are compatible with the group operations:  for $\mu\in \mathfrak{t}_0^*$ one has $(\alpha + \beta) \cdot \mu = \alpha \cdot( \beta \cdot \mu)$ for $\alpha,\beta \in \Phi$ and $(\dcover w_1\dcover w_2) \cdot \mu = \dcover w_1 \cdot( \dcover w_2 \cdot \mu)$ 
for $\dcover w_1,\dcover w_2\in \dcover{W}$.
\begin{lemma}\label{lem:actRootsWeights}
Let $M$ be a $\tama$-module, $\mu  \in \textup{Wt}_{\mathfrak{t}_0}(M)$, $\alpha \in \Phi$, $w \in\dcover{W}$. Then 
$L_{\alpha} M_\mu \subset M_{\alpha\cdot\mu}$ 
and $\dcover w M_\mu \subset M_{\dcover{w}\cdot\mu}$. 
\end{lemma}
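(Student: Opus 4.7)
The overall strategy is, for each inclusion, to fix $v\in M_\mu$ with weight coordinates $(\lambda_1,\lambda_2,\Lambda,\zeta_1^{\ell_1},\zeta_2^{\ell_2})$ and to verify, one generator at a time, that every element of $\mathfrak{t}_0 = \operatorname{span}\{H_1,H_2,Z,\dr1,\dr2\}$ acts on the proposed vector ($L_\alpha v$ or $\rho(\dcover w)v$) by the eigenvalue predicted by~\eqref{eq:rootactweight} or~\eqref{eq:rgroupactweight}. The work thus reduces to five short commutation computations per claim, each directly supplied by results of Section~\ref{sec:ladder}.

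For the ladder inclusion, the $H_a$-eigenvalues are immediate from Proposition~\ref{prop:LadderProp}, noting that since $H_a$ is even the graded commutator coincides with the ordinary one, so $H_aL_\alpha v=(\lambda_a+\alpha(H_a))L_\alpha v$. For the $Z$-eigenvalue I would observe, by inspection of~\eqref{eq:EvLadder}--\eqref{eq:OddLadder}, that each $L_\alpha$ has Clifford parity exactly $|\alpha|$: the even ladders are assembled from $H_1$, $H_2$ and a $2$-index symmetry, all Clifford-even, while the odd ladders contain a single $3$-index symmetry. Equation~\eqref{eq:Zcommutation} then yields $ZL_\alpha=(-1)^{|\alpha|}L_\alpha Z$, giving the required factor $(-1)^{|\alpha|}\Lambda$. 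Finally, Lemma~\ref{lem:actionWonT} directly provides $\Ad(\dr a)L_\alpha=\zeta_a^{2\alpha_a}L_\alpha$ for $\alpha=\alpha_1\varpi_1+\alpha_2\varpi_2$, so $\dr a L_\alpha v=\zeta_a^{\ell_a+2\alpha_a}L_\alpha v$. Together these four computations reproduce~\eqref{eq:rootactweight}.

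For the group inclusion I would use the identity $A\,\rho(\dcover w)v=\rho(\dcover w)\,\Ad(\dcover w^{-1})(A)\,v$ and compute $\Ad(\dcover w^{-1})A$ for each $A\in\mathfrak{t}_0$. Writing $\dcover w\in\dcover W_{(\bar\imath,\bar\jmath)}$, Lemma~\ref{lem:actionWonT} immediately gives $\Ad(\dcover w)H_1=(-1)^{\bar\imath}H_1$ and $\Ad(\dcover w)H_2=(-1)^{\bar\jmath}H_2$, since any Clifford-odd factor in the relevant dihedral component contributes a sign. The dihedral identity that a reflection inverts the corresponding rotation gives $\Ad(\dcover w)\dr1=\dr1^{(-1)^{\bar\imath}}$ and $\Ad(\dcover w)\dr2=\dr2^{(-1)^{\bar\jmath}}$. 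For $A=Z$ I would apply~\eqref{eq:WComm} to the volume form $\omega=x_1\wedge x_2\wedge x_3\wedge x_4\in\bigwedge^4(V)$: the sign $(-1)^{k|\dcover w|}$ is trivial because $k=4$, while $\pi(\dcover w)\omega=\det\pi(\dcover w)\cdot\omega=(-1)^{\bar\imath+\bar\jmath}\omega$, so $\Ad(\dcover w)Z=(-1)^{\bar\imath+\bar\jmath}Z$. Collating these five eigenvalues exactly reproduces~\eqref{eq:rgroupactweight}.

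No step presents a real obstacle; the proof is essentially careful bookkeeping of structural identities already in hand. The only mildly delicate point is the $Z$-transformation under group conjugation, which must be extracted from~\eqref{eq:WComm} applied to the top-degree form, rather than read off from Lemma~\ref{lem:actionWonT}, in order to recover the determinantal twist $(-1)^{\bar\imath+\bar\jmath}$.
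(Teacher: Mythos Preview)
Your argument is correct and is precisely an expanded version of the paper's one-line proof, which cites Proposition~\ref{prop:LadderProp}, Lemma~\ref{lem:actionWonT}, and~\eqref{eq:Zcommutation}; you simply carry out the five eigenvalue checks that those references encode.

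One point deserves comment. Your handling of the $Z$-weight under group conjugation via~\eqref{eq:WComm}, yielding $\Ad(\dcover w)Z=(-1)^{\bar\imath+\bar\jmath}Z$, is in fact more accurate than the paper's own supporting text: Lemma~\ref{lem:actionWonT} asserts $\Ad(\dcover w)Z=Z$ for all $\dcover w\in\dcover W$, based on the (incorrect) remark that the volume form is invariant under every orthogonal transformation. A reflection sends $x_1\wedge x_2\wedge x_3\wedge x_4$ to its negative, and it is exactly your determinant factor $\det\pi(\dcover w)=(-1)^{\bar\imath+\bar\jmath}$ that is required to reproduce the $Z$-component of~\eqref{eq:rgroupactweight}. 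So your instinct that this step should be extracted from~\eqref{eq:WComm} rather than read off from Lemma~\ref{lem:actionWonT} was well founded.
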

\begin{proof}
  This follows from Proposition~\ref{prop:LadderProp}, Lemma~\ref{lem:actionWonT}, and finally \eqref{eq:Zcommutation} for the $Z$-weight.
\end{proof}

We introduce a lexicographic-like ordering on the $\mathfrak{t}_0$-weights for the sake of defining the notion of highest weight, which we will use to label finite-dimensional irreducible representations. The definition below will have the consequence that the highest weight is annihilated by all positive ladder operators. In general this condition alone will not yield a single weight. Moreover, in order to make a coherent choice, regardless of the value of the parameter function $\kappa$, we define the order at the restriction of the lattice at $\kappa=0$; see \cref{fig:ExKappaToZero} for an example.

\begin{definition} 
Let $M$ be a $\tama$-module. 
For $\lambda,\mu\in \textup{Wt}_{\mathfrak{t}_0}(M) $, we denote by 
$\lambda_0$ (resp.~$\mu_0$) the evaluation of $\lambda$ (resp.~$\mu$) at $\kappa=0$.
For $\lambda,\mu\in \textup{Wt}_{\mathfrak{t}_0}(M) $, we say that $\lambda$ is higher than $\mu$ when
$\lambda_0(H_1) \geq \mu_0(H_1)$, and if $\lambda_0(H_1) = \mu_0(H_1)$, then $\lambda_0(H_2)\geq \mu_0(H_2)$, where if $\lambda_0(H_2) = \mu_0(H_2)$, then $\lambda_0(Z)\geq \mu_0(Z)$, where if $\lambda_0(Z)= \mu_0(Z)$, then $\ell_1 \leq \ell_1'$ where  $\ell_1,\ell_1'\in \{1,\dots, 2m_1-1\}$ are odd integers such that $\lambda(\dr1)=\zeta_1^{\ell_1} $ and $\mu(\dr1)=\zeta_1^{\ell_1'} $ and finally if  $\ell_1 = \ell_1'$ then $\ell_2 \leq \ell_2'$ where  $\ell_2,\ell_2'\in \{1,\dots, 2m_2-1\}$ are odd integers such that $\lambda(\dr2)=\zeta_2^{\ell_2} $ and $\mu(\dr2)=\zeta_2^{\ell_2'} $. 

A $\mathfrak{t}_0$-weight $\lambda \in \textup{Wt}_{\mathfrak{t}_0}(M)$ will be called a highest weight (of $M$)  
if it is higher than all other $\mu \in \textup{Wt}_{\mathfrak{t}_0}(M)$, and any nonzero vector $\hwv\in M_\lambda$ will be called a highest-weight vector.
\end{definition}

\newcommand{\DIMex}{2}
\newcommand{\pkapa}{3/4}
\newcommand{\pkapb}{9/4}
\begin{figure}[h]
    \[
    \begin{tikzpicture}[scale=.8,baseline={(current bounding box.center)}]
 \foreach \i in {2,...,\DIMex}
    {
    \foreach \j in {\i,...,\DIMex}
        {
    	\foreach \x/\y in {1/1, 1/-1, -1/1, -1/-1}
             {
            \linkbetween{{\x*(\DIMex-\j+1-.5+\pkapa)}}{{\y*(\i-.5-\pkapb}}{{\x*(\DIMex-\j+1-.5+\pkapa)}}{{\y*(\i-1-.5-\pkapb)}}{orange}
            \linkbetween{{\x*(\i-.5+\pkapa)}}{{\y*(1-.5-\pkapb)}}{{\x*(\i-1-.5+\pkapa)}}{{\y*(1-.5-\pkapb)}}{teal} }
        }
    }
    \foreach \i in {1,...,\DIMex}
        {
        \foreach \j in {\i,...,\DIMex}
        {
        \foreach \x/\y in {1/1, 1/-1, -1/1, -1/-1}
        {
    \draw[color = black,fill=gray, opacity=.7]
    ({\x*(\i-.5+\pkapa)},{\y*(\DIMex-\j+1-.5-\pkapb)}) circle(3pt);
    }
    }}
    \draw (\DIMex-.5+\pkapa,1-\pkapb) node {$\hwv$};
    \draw[dotted,->] (-\DIMex,0) -- (\DIMex,0) node[right] {$\varpi_1$};
    \draw[dotted,->] (0,-\DIMex) -- (0,\DIMex) node[above] {$\varpi_2$} ;
\end{tikzpicture}
\qquad \xrightarrow{\kappa =0}
\qquad 
  \begin{tikzpicture}[scale=.8, baseline={(current bounding box.center)}]
 \foreach \i in {2,...,\DIMex}
    {
    \foreach \j in {\i,...,\DIMex}
        {
    	\foreach \x/\y in {1/1, 1/-1, -1/1, -1/-1}
             {
            \linkbetween{{\x*(\DIMex-\j+1-.5)}}{{\y*(\i-.5}}{{\x*(\DIMex-\j+1-.5)}}{{\y*(\i-1-.5)}}{orange}
            \linkbetween{{\x*(\i-.5)}}{{\y*(1-.5)}}{{\x*(\i-1-.5)}}{{\y*(1-.5)}}{teal} }
        }
    }
    \foreach \i in {1,...,\DIMex}
        {
        \foreach \j in {\i,...,\DIMex}
        {
        \foreach \x/\y in {1/1, 1/-1, -1/1, -1/-1}
        {
    \draw[color = black,fill=gray, opacity=.7]
    ({\x*(\i-.5)},{\y*(\DIMex-\j+1-.5)}) circle(3pt);
    }
    }}
    \draw (\DIMex-.5,1) node {$\hwv$};
    \draw[dotted,->] (-\DIMex,0) -- (\DIMex,0) node[right] {$\varpi_1$};
    \draw[dotted,->] (0,-\DIMex) -- (0,\DIMex) node[above] {$\varpi_2$} ;
\end{tikzpicture}
\]
\caption{
Depiction of the $\mathfrak{h}$-weights in the $\tama$-module given by the polynomial Dunkl monogenics of degree 1 associated with the group $W=D_6\times D_6$.
On the left is the case where $\kappa_1 = 1/4, \kappa_3 = -3/4$, and on the right is how they are placed at $\kappa=0$. 
In order for the vector $\hwv$ on the left to reduce to the one on the right, we define the order on $\mathfrak{t}_0$-weights at $\kappa=0$.
Each cluster of three connected nodes represents a  $\TriSubAlg$-submodule.
}\label{fig:ExKappaToZero}
\end{figure}
We note that when restricted to $\mathfrak{h}$-weights and on an simple $\TriSubAlg$-submodule of a $\tama$-module, the lexicographical order used in the previous definition amounts to an $\mathfrak{h}$-weight being bigger than another $\mathfrak{h}$-weight if their difference is a linear combination of positive roots with positive integer coefficients, in analogy with the theory of semisimple Lie algebras. 
In general this does not hold on the full $\tama$-module because of the action of $\dcover W$, see~\eqref{eq:rgroupactweight}.

\begin{lemma}
Let $M$ be a $\tama$-module with a highest-weight vector $\hwv \in M$. We have
 $L_\alpha\hwv=0$ for all $\alpha\in\Phi_+$.
\end{lemma}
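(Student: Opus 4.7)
The plan is to argue by contradiction: if some positive ladder operator did not annihilate $\hwv$, it would produce a weight vector strictly higher than $\lambda$, the weight of $\hwv$, violating maximality. Concretely, suppose $L_\alpha \hwv \neq 0$ for some $\alpha \in \Phi_+$. By \cref{lem:actRootsWeights}, $0 \neq L_\alpha \hwv \in M_{\alpha\cdot \lambda}$, so $\alpha\cdot\lambda \in \textup{Wt}_{\mathfrak{t}_0}(M)$, and we must compare it with $\lambda$ under the lexicographic order that defines ``highest weight''.

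The key observation is that the $\Phi$-action on $\mathfrak{t}_0^*$ given by \eqref{eq:rootactweight} shifts the $H_1$- and $H_2$-components by the integer coefficients $\alpha_1,\alpha_2$ of $\alpha=\alpha_1\varpi_1+\alpha_2\varpi_2$, independently of $\kappa$. In particular, evaluating at $\kappa=0$,
\[
(\alpha\cdot\lambda)_0(H_1) = \lambda_0(H_1) + \alpha_1, \qquad (\alpha\cdot\lambda)_0(H_2) = \lambda_0(H_2) + \alpha_2.
\]
Now I go through the four positive roots listed in \eqref{eq:rootsPosNeg}. For $\alpha \in \{\varpi_1,\; \varpi_1+\varpi_2,\; \varpi_1-\varpi_2\}$ one has $\alpha_1=1$, so $(\alpha\cdot\lambda)_0(H_1) > \lambda_0(H_1)$, which already makes $\alpha\cdot\lambda$ strictly higher than $\lambda$. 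For $\alpha=\varpi_2$, the $H_1$-component is unchanged while $(\alpha\cdot\lambda)_0(H_2)=\lambda_0(H_2)+1 > \lambda_0(H_2)$, so by the next coordinate of the lexicographic order $\alpha\cdot\lambda$ is again strictly higher than $\lambda$.

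In every case we reach a contradiction with $\lambda$ being the highest weight of $M$, so $L_\alpha\hwv = 0$ for all $\alpha\in\Phi_+$. There is no real obstacle here beyond bookkeeping: the only subtle point is that the order on $\textup{Wt}_{\mathfrak{t}_0}(M)$ was defined via evaluation at $\kappa=0$, but since the shift by $\alpha$ on the $\mathfrak{h}$-part is an integer independent of $\kappa$, the comparison is immediate and the argument goes through uniformly.
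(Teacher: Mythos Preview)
Your proof is correct and follows exactly the same approach as the paper's own proof, which simply invokes \cref{lem:actRootsWeights} and \eqref{eq:rootactweight} and states that otherwise $L_\alpha\hwv$ would have a higher weight than $\hwv$. Your version is just a more explicit unpacking of that one-line argument, including the useful remark that the integer shift on the $\mathfrak{h}$-components is independent of $\kappa$, so the comparison at $\kappa=0$ is immediate.
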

\begin{proof}
This follows from \cref{lem:actRootsWeights} and~\eqref{eq:rootactweight}. Indeed, for $\alpha\in\Phi_+$, we must have $L_\alpha\hwv=0$, otherwise $L_\alpha\hwv$ would have a higher weight than $\hwv$. 
\end{proof}

\subsection{Coarse classification}\label{sec:CoClass}
Using the factorisation results of \cref{prop:OddOppLadders,prop:EvenEvenLadders}, we can determine conditions in the form of a set of equations that  
an element $\mu\in\mathfrak{t}_0^*$ must respect if it appears as the highest weight of a finite-dimensional $\tama$-representation. 
Such necessary conditions imposed on a finite-dimensional irreducible representation is what we will refer to as a \emph{coarse classification}.

In order to do so, we first consider the action of the symmetries of the form $O(x)$ for $x\in V$. 
Recall that these are elements of $ \rho(\mathbb{C}\dcover W)$; see~\eqref{eq:Ox}.

\begin{lemma}
\label{lem:actionOOu}
For odd integers $\ell_a\in \{1,\dots, 2m_a-1\}$, with $a\in\{1,2\}$, let $u(\ell_1,\ell_2)$ be a simple module for the quotient group $\dcover{W}_{(\bar 0, \bar 0)}$ with the action~\eqref{e:actu} on $ \mathfrak{u}\in u(\ell_1,\ell_2)$. Then, for $a\in\{1,2\}$
\begin{equation}\label{eq:ActionOwithQa}
O_{\ib{\pm}{a}}  \mathfrak{u}
=  \pm i \, Q_a(\ell_a\pm 1) \,\dcover f_a \mathfrak{u} , \quad \text{and} \quad O_{\ib{\pm}{a}}O_{\ib{\mp}{a}}  \mathfrak{u}
=  Q_a(\ell_a\mp 1)^2 \mathfrak{u},
  \end{equation}
where
  \begin{equation}\label{eq:Qa}
\!Q_a(j):=
\begin{cases}
    m_a \kappa_{2a-1}  ,
    & \text{if $m_a$ is odd, and  $j\equiv 0 \modSB 2m_a$;}\\
     \tfrac{m_a}{2}(\kappa_{2a}\! + \kappa_{2a-1}),
    & \text{if $m_a$ is even, and $j\equiv 0 \modSB 2m_a$;}\\
    \tfrac{m_a}{2}(\kappa_{2a}-\kappa_{2a-1}),
    & \text{if $m_a$ is even, and $j\equiv m_a \modSB 2m_a$;}\\
   0, & \text{otherwise.}
\end{cases}
\end{equation}
\end{lemma}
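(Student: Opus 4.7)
The plan is to compute $O_{\ib{\pm}{a}}\mathfrak{u}$ directly from the expansion \eqref{eq:Ox}, tracking how each reflection $\dsig{p}$ acts on $\mathfrak{u}$ and then summing the contributions via an elementary roots-of-unity calculation. I focus on $a=1$; the case $a=2$ is identical upon swapping the two dihedral factors. Starting from $\alpha_p = \sin(p\pi/m_1)x_1 - \cos(p\pi/m_1)x_2$ and $z_1^\pm = x_1 \pm ix_2$, a direct computation gives $(z_1^\pm,\alpha_p) = \mp i\,\zeta_1^{\pm p}$, so
\[
O_{\ib{\pm}{1}} \;=\; \mp i \sum_{p=1}^{m_1}\kappa_{\alpha_p}\,\zeta_1^{\pm p}\,\dsig{p}.
\]

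The main intermediate step is the identity $\dsig{p}\dsig{m_1} = z\,\dr{1}^{-p}$ in $\dcover W$. I would verify it via the Clifford realisation: setting $\omega := e_1 e_2$ (so $\omega^2 = -1$), one finds $\rho(\dr{1}) = r\otimes e^{\omega\pi/m_1}$ with $r = s_{m_1}s_1$, while $\rho(\dsig{p}\dsig{m_1}) = r^{-p}\otimes(-e^{-\omega p\pi/m_1}) = -\rho(\dr{1})^{-p} = \rho(z\,\dr{1}^{-p})$, and $\rho|_{\dcover W}$ is injective by \eqref{eq:DiagImage}. Combining this with $\dsig{m_1} = \df{1}$, $\df{1}^2 = 1$, and $\dr{1}\df{1} = \df{1}\dr{1}^{-1}$ (from $(\dr{1}\df{1})^2=1$), one obtains
\[
\dsig{p}\mathfrak{u} \;=\; z\,\dr{1}^{-p}\df{1}\mathfrak{u} \;=\; z\,\df{1}\dr{1}^{p}\mathfrak{u} \;=\; -\zeta_1^{\ell_1 p}\,\df{1}\mathfrak{u},
\]
since $z$ acts as $-1$ on any spin representation. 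Substituting yields $O_{\ib{\pm}{1}}\mathfrak{u} = \pm i\,S_\pm\,\df{1}\mathfrak{u}$ with $S_\pm := \sum_{p=1}^{m_1}\kappa_{\alpha_p}\,\zeta_1^{p(\ell_1\pm 1)}$.

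It remains to identify $S_\pm$ with the case-by-case expression $Q_1(\ell_1\pm 1)$ of \eqref{eq:Qa}. Set $j := \ell_1\pm 1$, which is even since $\ell_1$ is odd. When $m_1$ is odd all $\kappa_{\alpha_p}$ equal $\kappa_1$, and the geometric sum $\sum_{p=1}^{m_1}\zeta_1^{pj}$ vanishes unless $\zeta_1^j = 1$, that is, unless $j\equiv 0\pmod{2m_1}$, in which case it equals $m_1\kappa_1$. When $m_1$ is even, splitting the sum by parity of $p$ (so that the two coupling values $\kappa_1,\kappa_2$ separate) and writing $\eta := \zeta_1^{2j}$, both partial geometric sums vanish unless $\eta = 1$, i.e., unless $j\equiv 0\pmod{m_1}$; since $j$ is even this leaves the two subcases $j\equiv 0$ and $j\equiv m_1\pmod{2m_1}$, producing $\tfrac{m_1}{2}(\kappa_1+\kappa_2)$ and $\tfrac{m_1}{2}(\kappa_2-\kappa_1)$ respectively, matching \eqref{eq:Qa}.

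For the second equation, observe that $\df{1}\mathfrak{u}$ generates a copy of $u(-\ell_1,\ell_2)$, since $\dr{1}\df{1}\mathfrak{u} = \zeta_1^{-\ell_1}\df{1}\mathfrak{u}$ while $\dr{2}$ commutes with $\df{1}$ by \eqref{eq:dWgenrel2}. Applying the first formula to $\df{1}\mathfrak{u}$ and using $\df{1}^2 = 1$ gives $O_{\ib{\pm}{1}}(\df{1}\mathfrak{u}) = \pm i\,Q_1(-\ell_1\pm 1)\mathfrak{u}$, hence $O_{\ib{\pm}{1}}O_{\ib{\mp}{1}}\mathfrak{u} = Q_1(\ell_1\mp 1)\,Q_1(-\ell_1\pm 1)\,\mathfrak{u}$; the identity $Q_1(-j) = Q_1(j)$ is immediate from the four cases of \eqref{eq:Qa}, so this equals $Q_1(\ell_1\mp 1)^2\mathfrak{u}$ as claimed. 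The main obstacle I anticipate is correctly locating the central factor in $\dsig{p}\dsig{m_1} = z\,\dr{1}^{-p}$: the $z$ is easy to miss, and an error there would flip the sign of $Q_1$ at $j\equiv m_1\pmod{2m_1}$ and break the case analysis.
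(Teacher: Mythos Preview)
Your proof is correct and follows essentially the same route as the paper: express $\dsig{p}$ in terms of $\df{1}$, $\dr{1}$ and $z$, apply this to $\mathfrak{u}$, and reduce the resulting sum $\sum_{p}\kappa_{\alpha_p}\zeta_1^{p(\ell_1\pm 1)}$ to the case analysis defining $Q_1$. The only cosmetic differences are that the paper writes the group identity directly as $\dsig{p}=z\df{1}\dr{1}^{\,p}$ (equivalent to your $\dsig{p}\dsig{m_1}=z\dr{1}^{-p}$ via $\dr{1}\df{1}=\df{1}\dr{1}^{-1}$) and is terser about the second equation, which you spell out in full.
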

\begin{proof} 
Note that from~\eqref{eq:O1f1}, if
$\mathfrak{u}\in u(\ell_1,\ell_2)$ then
$\dr{a} (O_{\ib{\pm}{a}}\mathfrak{u}) = \zeta_a^{-\ell_a}  (O_{\ib{\pm}{a}}\mathfrak{u})$ for $a\in\{1,2\}$, where $\zeta_a= e^{i\pi/m_a}$. On the other hand, using~\eqref{eq:WComm} and~\eqref{eq:actionWisotropic} we have $\dr{a}(O_{\ib{\pm}{a}}\mathfrak{u}) = \zeta_a^{\pm 2}O_{\ib{\pm}{a}}\dr{a}\mathfrak{u} = \zeta_a^{\ell_a\pm 2}(O_{\ib{\pm}{a}}\mathfrak{u})$. Thus, if  $O_{\ib{\pm}{a}}\mathfrak{u}\neq 0$ then $\ell_a\pm 2 \equiv -\ell_a\modSB 2m_a$ or, equivalently, $2(\ell_a\pm 1) \equiv 0\modSB 2m_a$.

We assume $a=1$ and prove the lemma for the first dihedral group $\dih_{2m_1}$; the results for the group $\dih_{2m_2}$ follow in the same fashion. 
Using the realisation of Notation~\ref{not:refl}, it follows that $z \dcover r_1^{\ p} \dcover f_1 = \dcover s_p $ for $p=1,\dotsc,m_1$.  From~\eqref{eq:Ox}, using~\eqref{eq:dihroots}, \eqref{eq:z}  and $\rho(z) = -1$, we obtain
\begin{equation}\label{eq:O1f1}
O(z_{\ib{\pm}{1}})
= \mp i \sum_{p=1}^{m_1}  \kappa_{\alpha_p} \zeta_1^{\pm p}
\dcover s_p
= \pm i \sum_{p=1}^{m_1}  \kappa_{\alpha_p}  \dcover f_1(\zeta_1^{\pm 1} \dcover r_1)^p.
\end{equation}
Denoting $\omega_\pm := \zeta_1^{\ell_1\pm 1 }$, we have $\zeta_1^{\pm 1} \dcover r_1 \mathfrak{u} = \omega_\pm  \mathfrak{u}$ for $\mathfrak{u}\in u(\ell_1,\ell_2)$, and
\begin{equation}
 \sum_{p=1}^{m_1}  \kappa_{\alpha_p} \omega_\pm ^p =\begin{cases}
    \kappa_1   \sum_{p=1}^{m_1}  \omega_\pm ^p,
    & \text{if $m_1$ is odd;}\\
  \sum_{p=1}^{m_1/2} (\kappa_1    \omega_\pm^{2p-1}
    +  \kappa_2 \omega_\pm^{2p} ),
    & \text{if $m_1$ is even.}
    \end{cases}
\end{equation}
The first equation of~\eqref{eq:ActionOwithQa} now follows from  $\omega_\pm $ being an $m_1$\textsuperscript{th} root of unity, while the second equation follows from $ \dcover f_1^2 =1$ and $Q_1(-j) = Q_1(j)$.
\end{proof}

For a $\tama$-module $M$,
we define, for  $\mu = (\lambda_1,\lambda_2,\Lambda,\zeta_1^{\ell_1},\zeta_2^{\ell_2}) \in \textup{Wt}_{\mathfrak{t}_0}(M)$,
 a nonnegative integer $K\in\NN$, 
 an odd root $\alpha= \delta \varpi_a\in \Phi_{\bar 1}$, and an even root $\beta  = \eps_1 \varpi_1 + \varepsilon_2 \varpi_2\in\Phi_{\bar 0}$  
 with  $\delta,\varepsilon_1,\eps_2 \in \{+,-\}$ 
 and $a\in\{1,2\}$, the two quantities
   \begin{subequations}\label{eq:A}
   \begin{alignat}{2}
    A(\mu,\alpha,K) &:= \big((\lambda_a + \delta  (K + \tfrac{1}{2}))^2 - Q_a(\ell_a + \delta ( 2K +1))^2\big)\big(((-1)^K\Lambda + \delta \lambda_b)^2 - (\lambda_a + \delta  (K + \tfrac{1}{2}))^2\big),\label{eq:Ashort}\\
    A(\mu,\beta,K) &:= \big((\lambda_1 + \varepsilon_1( K +\tfrac12))^2 - Q_1(\ell_1 + \varepsilon_1(2K+1))^2\big)
        \big((\lambda_2 + \varepsilon_2(K+\tfrac12) )^2 - Q_2(\ell_2 + \varepsilon_2(2K+1))^2\big) \nonumber\\
        &\quad \times
        \big((\varepsilon_1 \lambda_1 + \varepsilon_2 \lambda_2 + 1 +2K)^2 - (\Lambda - \tfrac{\varepsilon_1\varepsilon_2}{2})^2\big), \label{eq:Along}
   \end{alignat}
\end{subequations}
so that, by \cref{lem:actionWonT}, for $\hwv \in M_\mu$,
\begin{align}
    L_{-\alpha}L_{\alpha}L_{\alpha}^K\hwv &= 16A(\mu,\alpha,K) L_{\alpha}^K\hwv, & L_{-\beta}L_{\beta} L_{\beta}^K  \hwv &= 16A(\mu,\beta,K) L_{\beta}^K\hwv.
\end{align}
We will use this to state the main theorem of the section. 

   \begin{theorem}\label{thm:ClassiLabel}
Let $\mathcal{V}$ be a finite-dimensional irreducible $\tama$-representation. For a highest weight $\mu$, there exist nonnegative integers $N_{-\alpha}\in \NN$ for all $\alpha\in\Phi_+$ such that the following system of equations is satisfied:
\begin{equation}\label{eq:thmclassrel}
	    A(\mu,\alpha,0) =0,\quad
		A(\mu,-\alpha,N_{-\alpha}) = 0,\quad  A(\mu,-\alpha,j)\neq 0, \qquad 0\leq j < N_{-\alpha}, \quad \text{for all } \alpha\in\Phi_+.
\end{equation}
   \end{theorem}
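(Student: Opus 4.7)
The plan is to combine the scalar identity displayed just before the theorem,
\[
L_{-\alpha}L_\alpha L_\alpha^K\hwv \;=\; 16\,A(\mu,\alpha,K)\,L_\alpha^K\hwv,
\]
valid for any $\alpha\in\Phi$ and any weight vector $\hwv\in\mathcal V_\mu$, with (a) the annihilation of $\hwv$ by positive ladder operators, and (b) the finite dimensionality of $\mathcal V$. The identity itself is obtained by specialising \cref{prop:OddOppLadders,prop:EvenEvenLadders} with the sign of the root reversed so as to realise $L_{-\alpha}L_\alpha$ as a polynomial in $\mathfrak{t}_0$ and in the symmetries $O_a^{\pm}O_a^{\mp}$, and then diagonalising the latter on the simple $\dcover{W}_{(\bar 0,\bar 0)}$-components of $\mathcal V$ via \cref{lem:actionOOu}. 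A key auxiliary remark, which I will use repeatedly, is that by~\eqref{eq:rootactweight} the weights $\alpha^K\cdot\mu$ are pairwise distinct as $K$ ranges over $\NN$, since the $H_a$-eigenvalue shifts linearly in $K$.

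For the identity $A(\mu,\alpha,0)=0$ with $\alpha\in\Phi_+$, take a nonzero highest-weight vector $\hwv\in\mathcal V_\mu$. By definition $L_\alpha\hwv=0$, hence $L_{-\alpha}L_\alpha\hwv=0$. Setting $K=0$ in the displayed identity gives $16\,A(\mu,\alpha,0)\,\hwv=0$, and since $\hwv\neq 0$ we conclude $A(\mu,\alpha,0)=0$. This disposes of one of the two conditions in~\eqref{eq:thmclassrel} for every positive root.

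For the existence of $N_{-\alpha}$ satisfying the remaining two conditions, fix $\alpha\in\Phi_+$ and set $v_K:=L_{-\alpha}^K\hwv$. I would define $N_{-\alpha}$ to be the smallest nonnegative integer $K$ with $A(\mu,-\alpha,K)=0$; by minimality the non-vanishing condition $A(\mu,-\alpha,j)\neq 0$ for $0\leq j<N_{-\alpha}$ is then automatic. To see that $N_{-\alpha}$ exists, suppose to the contrary that $A(\mu,-\alpha,K)\neq 0$ for every $K\geq 0$. Applying the displayed identity with $\alpha$ replaced by $-\alpha$ yields
\[
L_\alpha\,v_{K+1} \;=\; L_\alpha L_{-\alpha}\,v_K \;=\; 16\,A(\mu,-\alpha,K)\,v_K,
\]
from which a straightforward induction gives $v_K\neq 0$ for all $K$: the base case $v_0=\hwv$ is nonzero, and if $v_K\neq 0$ then the right-hand side is nonzero, so $v_{K+1}\neq 0$. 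By the auxiliary remark the $v_K$ lie in pairwise distinct weight spaces and are therefore linearly independent, contradicting $\dim\mathcal V<\infty$.

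The place that needs genuine care, and is the main bookkeeping obstacle, is matching the two orderings $L_\alpha L_{-\alpha}$ versus $L_{-\alpha}L_\alpha$ in the factorisation propositions against the scalar definitions \eqref{eq:Ashort}--\eqref{eq:Along} of $A(\mu,\alpha,K)$. Concretely one has to check that substituting $H_a\mapsto\lambda_a+K\alpha_a$, $Z\mapsto(-1)^{K|\alpha|}\Lambda$ and $O_a^{\pm}O_a^{\mp}\mapsto Q_a(\ell_a+(2K+1)\alpha_a)^2$ into the polynomial form of $L_{-\alpha}L_\alpha$ reproduces exactly the formulas for $A(\mu,\alpha,K)$; this is a sign-sensitive but routine calculation, and the $(-1)^K$ factor in the odd case of \eqref{eq:Ashort} traces back to the parity of $Z$ under odd roots, recorded in~\eqref{eq:Zcommutation} and~\eqref{eq:rootactweight}. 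Once this verification is carried out the proof is essentially complete and does not require irreducibility of $\mathcal V$.
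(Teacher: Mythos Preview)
Your proof is correct and follows essentially the same strategy as the paper's: use the factorisation identity $L_{-\alpha}L_\alpha\,L_\alpha^K\hwv=16\,A(\mu,\alpha,K)\,L_\alpha^K\hwv$, the annihilation $L_\alpha\hwv=0$ for $\alpha\in\Phi_+$, and finite-dimensionality.

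The one genuine (though minor) difference is in how $N_{-\alpha}$ is chosen. The paper defines $N_{-\alpha}$ as the length of the $L_{-\alpha}$-chain from $\hwv$, i.e.\ the largest $K$ with $L_{-\alpha}^K\hwv\neq 0$, and then infers the nonvanishing of $A(\mu,-\alpha,j)$ for $j<N_{-\alpha}$ from that minimality. You instead take $N_{-\alpha}$ to be the smallest $K$ with $A(\mu,-\alpha,K)=0$, which makes the nonvanishing condition tautological, and you deduce existence from your induction: as long as $A(\mu,-\alpha,j)\neq 0$ for all $j<K$, one has $v_K\neq 0$, so a never-vanishing $A$ would force an infinite string of independent weight vectors. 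These two candidate integers need not coincide a priori (your value is always $\leq$ the paper's), but the theorem only asks for existence, and your choice visibly satisfies both conditions. In that sense your packaging is slightly cleaner. Your closing observations---that the sign and parity bookkeeping matching \cref{prop:OddOppLadders,prop:EvenEvenLadders} to the formulas~\eqref{eq:A} via \cref{lem:actionOOu} and~\eqref{eq:rootactweight} is the only real work, and that irreducibility is not actually used---are both accurate.
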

   \begin{proof}
   Let $\mathcal{V}$ be a finite-dimensional irreducible representation of $\tama$.  By \cref{prop:HsFirst}, we know $\mathcal{V}$ admits a basis of common eigenvectors of $\mathfrak{t}_0$. We take a highest-weight vector and denote it by $\hwv$. 

Since $\hwv$ is a highest-weight vector, we have $L_\alpha \hwv = 0$ for all $\alpha\in\Phi_+$, or else this would result in a vector with a higher weight than $\hwv$ by \cref{lem:actRootsWeights}. 
Using the factorisations of \cref{prop:OddOppLadders,prop:EvenEvenLadders}, this implies the following four relations:  $L_{-\alpha}L_{\alpha}\hwv  = 0 = A(\mu, \alpha, 0)$,  $\alpha \in \Phi_+$.

Now, let $\alpha\in \Phi_+$, then for every value $N\in \mathbb{N}$ 
the vector $L_{-\alpha}^N\hwv$ will have a different weight by \cref{lem:actRootsWeights}. 
Since $\mathcal V$ is finite-dimensional, we must have $L_{-\alpha}^N\hwv =0$ for some  $N\in \mathbb{N}$. 
Let $N_{-\alpha}$ for $\alpha\in \Phi_+$ denote the minimal value 
such that $L_{-\alpha}^{N_{-\alpha}}\hwv \neq0$, while $L_{-\alpha}^{N_{-\alpha}+1}\hwv = 0$. 

The equations $(L_{-\alpha})^{N_{-\alpha}+1}\hwv = 0 $ then imply that $L_{\alpha}L_{-\alpha} (L_{-\alpha})^{N_{-\alpha}} =0$, and thus that $A(\mu, -\alpha, N_{-\alpha})=0$ follows from the factorisations of \cref{prop:OddOppLadders,prop:EvenEvenLadders}. The minimality of the integers $N_{-\alpha}$ then implies $A(\mu, -\alpha, j)\neq 0$ for $0\leq j < N_{-\alpha}$.
   \end{proof}

We express pictorially in Figure~\ref{fig:generalpicture} what \cref{thm:ClassiLabel} entails. Each finite-dimensional irreducible representation of $\tama$ has four chains of elements in $\TriSubAlg$ of respective length $N_{-\alpha}$ for each highest-weight vector $\hwv$. 
  In this picture and from now on, we will follow the conventions of Notation~\ref{not:isotropic} and when convenient may write $N_{\ib{\delta}{a}}$ or $N_{\ib{\delta\varepsilon}{12}}$, with $a\in \{1,2\}$ and $\delta,\varepsilon\in\{+,-\}$, instead of $N_{-\alpha}$ for $\alpha\in\Phi_+$.

 \begin{figure}[h]
\centering
\begin{tikzpicture}[baseline={(current bounding box.center)},scale=0.5]
\foreach \x in {0,...,4}{
    \foreach \y in {-1,...,4}{
        \fill (2*\x +1,2*\y +1) circle(1pt);
    }}
    \linkbetween{9}{3}{7}{5}{gq-green}
    \linkbetween{5}{7}{7}{5}{gq-green}
    \linkbetween{9}{3}{7}{3}{gq-mauve}
    \linkbetween{7}{3}{5}{3}{gq-mauve}
    \linkbetween{5}{3}{3}{3}{gq-mauve}
    \linkbetween{3}{3}{1}{3}{gq-mauve}
    \linkbetween{9}{3}{9}{1}{orange}
    \linkbetween{9}{-1}{9}{1}{orange}
    \linkbetween{9}{3}{7}{1}{blue}
    \linkbetween{5}{-1}{7}{1}{blue}
	\draw[dotted, gray] (0,-2) -- (0,9); 
	\draw[dotted, gray] (0,-2) -- (9,-2); 
	\draw [decorate, decoration = {brace}]  (5.25,7.5)--  (9.25,3.5);
	\draw (8,6) node{$\Nmp$};
	\draw [decorate, decoration = {brace}]  (8.75,2.5)--  (5.25,-1.25);
	\draw (7.75,0.25) node{$\Nmm$};
	\draw [decorate, decoration = {brace}]  (8.75,2.5)--  (1.25,2.5);
	\draw (5,1.5) node{$\Mam$};
	\draw [decorate, decoration = {brace}]  (9.5,3)--  (9.5,-1);
	\draw (10.5,1) node{$\Mbm$};
	\draw (9,3) node[above right]{$\hwv$};
\end{tikzpicture}
\caption{Four chains of vectors from \cref{thm:ClassiLabel} in any finite-dimensional $\tama$-representation. Diagonal North-West links mean moving with $\Mcb$; diagonal South-West links mean moving with $\Mcd$; horizontal West with $\Lc$, and vertical South with $\Ld$. The integers $\Nmp$, $\Nmm$, $\Mam$ and $\Mbm$ are associated with their respective root.}\label{fig:generalpicture}
\end{figure}

The first equations of the  system of equations~\eqref{eq:thmclassrel} come from $\mu$ being a highest weight; the second set comes from the finite-dimensionality of the representation, and the third set comes from its irreducibility.

\subsection{Restriction to the triangular subalgebra}
In this section, we explore conditions under which the restriction to the triangular subalgebra $\TriSubAlg$ and the group $\dcover W$ is enough to determine the whole action of $\tama$ on a finite-dimensional irreducible representation.

\begin{definition}
We define $\dcover{\TriSubAlg}$ as the subalgebra of $\tama$ generated by $\rho(\dcover W)$ and $\mathfrak{T}$. 
Given a $\tama$-module $M$, for $v\in M$, we define $\dcover{\mathfrak{T}}(v)$ as the vector subspace of $M$ spanned by all translates of $v$ by the actions of $\mathbb{C}\dcover W$ and $\mathfrak{T}$, that is,
 \begin{equation}
\dcover{\mathfrak{T}}(v) := \textup{span}\{\rho(\dcover w)T v\mid \dcover w\in \dcover W,T\in \mathfrak T\}.
  \end{equation} 
\end{definition}

Outside specific values of the $\mathfrak h$-weight, the action of the two- and three-index symmetries on $v$ lies in $\dcover{\mathfrak T}(v)$ and can be given explicitly.
\begin{proposition}\label{prop:ActionTwoThreeSymOutsideHalf}
Let $M$ be a finite-dimensional $\tama$-module and let $\mu\in \textup{Wt}_{\mathfrak{t}_0}(M)$. 
If for $\delta \in \{+,-\}$ and $a\in\{1,2\}$, one has $\mu(H_a) \neq-\delta\tfrac{1}{2}$,
then for $\wv\in M_\mu$, 
\begin{equation}\label{eq:actOijk}
	    O_{\ib{\delta+-}{abb}}(\wv) = \frac{(L_{\ib{\delta}{a}} - 4O_a^{\delta}(\delta H_b+Z))}{2\mu(H_a) + \delta}\wv,
	     \end{equation}
and, if for $\delta,\varepsilon \in \{+,-\}$, one has $\mu(H_1) \neq-\delta\tfrac{1}{2}$ and $\mu(H_2) \neq-\varepsilon\tfrac{1}{2}$,  then  for $\wv\in M_\mu$,
	    \begin{equation}\label{eq:actOij}
   O_{\ib{\delta\eps}{12}}(\wv) = \frac{ \big(L_{\ib{\delta\eps}{12}}
+ O_{\ib{\delta}{1}} O_{\ib{\eps+-}{211}}(2H_2+\eps)
- O_{\ib{\delta+-}{122}} O_{\ib{\eps}{2}}(2H_1+\delta)
- 2O_{\ib{\delta}{1}} O_{\ib{\eps}{2}}(\eps\delta - 2 Z)\big)}{(2\mu(H_1)+\delta)(2\mu(H_2)+\eps)}\wv.
	    \end{equation}
\end{proposition}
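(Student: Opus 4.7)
The plan is to obtain both equations as direct consequences of the alternative expressions for the ladder operators established in Lemma~\ref{lem:LadderAlt}, by exploiting the fact that the diagonal elements $H_1,H_2,Z$ act as scalars on a weight vector $\wv\in M_\mu$.

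First I would handle \eqref{eq:actOijk}. By Lemma~\ref{lem:LadderAlt}, we have the identity
\begin{equation*}
L_{\ib{\delta}{a}} \;=\; O_{\ib{\delta+-}{abb}}(2H_a+\delta) + 4O_a^{\delta}(\delta H_b+Z)
\end{equation*}
in $H_\kappa\otimes\ca C$. Apply both sides to $\wv\in M_\mu$ and observe that, since $H_1,H_2,Z\in\mathfrak t_0$ lie to the right of $O_{\ib{\delta+-}{abb}}$ and $O_a^\delta$ in this expression, they act directly on $\wv$ and therefore as the scalars $\mu(H_a),\mu(H_b),\mu(Z)$. In particular the factor $(2H_a+\delta)$ acts as $(2\mu(H_a)+\delta)$. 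Rearranging gives
\begin{equation*}
(2\mu(H_a)+\delta)\,O_{\ib{\delta+-}{abb}}\wv \;=\; \bigl(L_{\ib{\delta}{a}} - 4O_a^{\delta}(\delta H_b+Z)\bigr)\wv.
\end{equation*}
Under the assumption $\mu(H_a)\neq -\delta/2$, the scalar $2\mu(H_a)+\delta$ is nonzero and we may divide, yielding \eqref{eq:actOijk}.

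For \eqref{eq:actOij}, the same strategy applies starting from the second formula of Lemma~\ref{lem:LadderAlt}, namely
\begin{equation*}
L_{\ib{\delta\varepsilon}{12}} \;=\; O_{\ib{\delta\varepsilon}{12}}(2H_1+\delta)(2H_2+\varepsilon) - O_{\ib{\delta}{1}} O_{\ib{\varepsilon+-}{211}}(2H_2+\varepsilon) + O_{\ib{\delta+-}{122}} O_{\ib{\varepsilon}{2}}(2H_1+\delta) + 2O_{\ib{\delta}{1}} O_{\ib{\varepsilon}{2}}(\varepsilon\delta - 2Z).
\end{equation*}
Evaluating at $\wv\in M_\mu$, the diagonal operators on the right of each term again give scalars; in particular $(2H_1+\delta)(2H_2+\varepsilon)\wv = (2\mu(H_1)+\delta)(2\mu(H_2)+\varepsilon)\wv$, which is well-defined because $H_1$ and $H_2$ commute by Proposition~\ref{prop:HsFirst}. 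Isolating the first term and dividing by $(2\mu(H_1)+\delta)(2\mu(H_2)+\varepsilon)$, which is nonzero under the hypotheses $\mu(H_1)\neq -\delta/2$ and $\mu(H_2)\neq-\varepsilon/2$, produces \eqref{eq:actOij}.

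Thus nothing more than Lemma~\ref{lem:LadderAlt} and the scalar action of $\mathfrak t_0$ on weight vectors is needed; there is essentially no analytic obstacle. The only point requiring care is the placement of the $H$'s and $Z$ on the \emph{right} in the formulas of Lemma~\ref{lem:LadderAlt}, which is precisely what makes them act as scalars on $\wv$ without having to invoke any commutation relation with $O_{\ib{\delta+-}{abb}}$, $O_{\ib{\delta\varepsilon}{12}}$ or the one-index symmetries.
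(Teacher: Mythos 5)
Your proof is correct and coincides with the paper's argument: the paper's own proof consists precisely of noting that both formulas follow by applying \cref{lem:LadderAlt} to the weight vector $\wv$ and isolating the two- or three-index symmetry, which is possible exactly because the hypotheses make the scalar factor $(2\mu(H_a)+\delta)$, respectively $(2\mu(H_1)+\delta)(2\mu(H_2)+\eps)$, nonzero. Your closing observation that the $\mathfrak{t}_0$-elements sit on the right in \cref{lem:LadderAlt} so that they act as scalars on $\wv$ without any further commutation is indeed the key bookkeeping point that makes the argument work.
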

\begin{proof}
 This is a direct application of \cref{lem:LadderAlt}, where $O_{\ib{\delta+-}{abb}}(\wv)$ and $O_{\ib{\delta\eps}{12}}(\wv)$ can be isolated since the conditions prevent a division by zero.
\end{proof}

If a $\tama$-module has weights attaining the values excluded by \cref{prop:ActionTwoThreeSymOutsideHalf}, one cannot recover the action of the two- and three-index symmetries in this way. 
Nevertheless, we can still obtain information about their action in another way.

\begin{lemma}\label{lem:LadDontCrossAxis}
  Let $M$ be a finite-dimensional $\tama$-module and  $a,b$ be distinct elements of $ \{1,2\}$. If there is a weight $\mu \in \textup{Wt}_{\mathfrak{t}_0}(M)$ such that $\mu(H_a) = -\delta\tfrac{1}{2}$ for some $\delta \in \{+,-\}$, then for $ \wv\in M_\mu$ and  $\eps \in \{+,-\}$ the following statements hold. Denoting  $\alpha = \delta \varpi_a$ and $\beta = \delta \varpi_a + \eps \varpi_b$,
    \begin{enumerate}
       \item $O_{\ib{\delta+-}{abb}}(\wv) \in  M_{\alpha\cdot\mu}$;
       \label{enum:lemLad1}
        \item $L_{\ib{\delta}{a}}(\wv)=0$;\label{enum:lemLad2}
        \item  $O_{\ib{\delta\eps}{ab}}(\wv) \in  
        \begin{cases}
          M_{\beta\cdot \mu}\oplus M_{\df{b}\cdot \alpha \cdot \mu}, 
            & \text{if }\mu(H_b) \neq - \varepsilon \tfrac12 \text{ and }
            \mu(\dr{b}) = \zeta_b^{\ell_b}$ with $\ell_b \equiv -\varepsilon \modSB m_b;
            \\
            M_{\beta\cdot\mu}, &
            \text{else;} 
        \end{cases}$
        \label{item:2indWS1}
      \item $\Lad{\ib{\delta\eps}{ab}}(\wv) =0$. \label{item:2indWS2}
    \end{enumerate}
\end{lemma}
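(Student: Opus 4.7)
The plan is to exploit the weight-space decomposition of $M$ under $\mathfrak{t}_0$ combined with one key anticommutation relation: $\{H_a, O_a^\delta\}=0$. Indeed, $O_a^\delta = O(z_a^\delta)\in\rho(\mathbb{C}\dcover{W})$ is, by~\eqref{eq:Ox}, a linear combination of reflections ($\dsig{p}$ when $a=1$, or $\dtau{q}$ when $a=2$), each of which conjugates $H_a$ to $-H_a$ via \cref{lem:actionWonT}. Consequently $O_a^\delta\wv$ is an $H_a$-eigenvector of eigenvalue $+\delta/2$, opposite to the $H_a$-weight of $\wv$ itself.

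For (i) and (ii), the definition $L_{\ib{\delta}{a}}=\{H_a, O_{\ib{\delta+-}{abb}}\}$ combined with \cref{lem:H3index} and $H_a\wv=-(\delta/2)\wv$ yields
\begin{equation*}
L_{\ib{\delta}{a}}\wv \;=\; (H_a-\tfrac{\delta}{2})\,O_{\ib{\delta+-}{abb}}\wv \;=\; 4\bigl(\delta\mu(H_b)+\mu(Z)\bigr)\,O_a^\delta\wv.
\end{equation*}
By the key anticommutation the right-hand side lies entirely in the $H_a=\delta/2$ eigenspace. Decomposing the left-hand side as $O_{\ib{\delta+-}{abb}}\wv=\sum_i v_i$ with $v_i\in M_{\mu_i}$ gives $(H_a-\delta/2)\sum_i v_i=\sum_i(\mu_i(H_a)-\delta/2)v_i$, whose $H_a=\delta/2$ component vanishes identically. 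Matching by weight-space decomposition therefore forces both sides to vanish, yielding (ii) and simultaneously showing $O_{\ib{\delta+-}{abb}}\wv$ has only $H_a=\delta/2$ components. Its remaining $\mathfrak{t}_0$-eigenvalues match $\alpha\cdot\mu$ automatically from $[H_b,O_{\ib{\delta+-}{abb}}]=0$ (\cref{lem:H3index}), $\{Z,O_{\ib{\delta+-}{abb}}\}=0$ from~\eqref{eq:Zcommutation}, and the $\dr{1}$, $\dr{2}$-actions via~\eqref{eq:WComm} and~\eqref{eq:actionWisotropic}, establishing (i).

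For (iv) and the $H_a$-half of (iii), apply the same technique to $O_{\ib{\delta\eps}{ab}}\wv$: using \cref{lem:H2index} one obtains
\begin{equation*}
(H_a-\tfrac{\delta}{2})\,O_{\ib{\delta\eps}{ab}}\wv \;=\; 2\delta\, O_a^\delta O_b^\eps\wv \pm O_a^\delta O_{\ib{+-\eps}{aab}}\wv,
\end{equation*}
where the sign depends on which of $a\in\{1,2\}$ is chosen. Both right-hand terms have $H_a$-eigenvalue $\delta/2$ (by $\{H_a,O_a^\delta\}=0$ together with $[H_a,O_{\ib{+-\eps}{aab}}]=0$, the latter from \cref{lem:H3index} after reordering the wedge). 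The same decomposition argument then forces $O_{\ib{\delta\eps}{ab}}\wv$ to have $H_a$-eigenvalue $\delta/2$ exclusively, whence $\{H_a,O_{\ib{\delta\eps}{ab}}\}\wv=0$. Since $H_1$ and $H_2$ commute we may write $L_{\ib{\delta\eps}{ab}}=\{H_b,\{H_a, O_{\ib{\delta\eps}{ab}}\}\}$, and evaluating on $\wv$ yields $L_{\ib{\delta\eps}{ab}}\wv=0$, which is (iv).

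To finish (iii), the parallel computation with $H_b$ produces
\begin{equation*}
\bigl(H_b-\mu(H_b)-\eps\bigr)\,O_{\ib{\delta\eps}{ab}}\wv \;=\; 2\eps\, O_a^\delta O_b^\eps\wv \pm O_b^\eps O_{\ib{\delta+-}{abb}}\wv,
\end{equation*}
whose right-hand side has $H_b$-eigenvalue $-\mu(H_b)$ (via $\{H_b,O_b^\eps\}=0$, $[H_b,O_{\ib{\delta+-}{abb}}]=0$, and part (i)). The decomposition argument now allows two possible $H_b$-eigenvalues, $\mu(H_b)+\eps$ and $-\mu(H_b)$; these coincide at $\eps/2$ when $\mu(H_b)=-\eps/2$, in which case $O_{\ib{\delta\eps}{ab}}\wv\in M_{\beta\cdot\mu}$. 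Otherwise, one tracks the $\dr{b}$-eigenvalue: by~\eqref{eq:WComm} and~\eqref{eq:actionWisotropic}, $O_{\ib{\delta\eps}{ab}}\wv$ has $\dr{b}$-eigenvalue $\zeta_b^{\ell_b+2\eps}$, while $\df{b}\cdot\alpha\cdot\mu$ has $\dr{b}$-eigenvalue $\zeta_b^{-\ell_b}$ by~\eqref{eq:rgroupactweight}, and these agree precisely when $\ell_b\equiv-\eps\pmod{m_b}$. Under this congruence the $H_b=-\mu(H_b)$ component lies in $M_{\df{b}\cdot\alpha\cdot\mu}$, giving the direct-sum case; when it fails, the $\dr{b}$-eigenvalue mismatch between the two sides of the displayed equation forces that component to vanish, leaving $O_{\ib{\delta\eps}{ab}}\wv\in M_{\beta\cdot\mu}$. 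The main technical subtlety throughout is this $\dr{b}$-eigenvalue bookkeeping.
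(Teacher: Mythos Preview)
Your proof is correct and follows essentially the same approach as the paper's: both rely on the commutator formulas of \cref{lem:H2index,lem:H3index} to isolate $(H_a-\tfrac{\delta}{2})O(\cdot)\wv$, then exploit the fact that the remainder term lands in the $H_a=\delta/2$ eigenspace (via $\{H_a,O_a^\delta\}=0$) to force both sides to vanish by weight-space comparison. Your handling of item~\ref{item:2indWS2} via $\{H_a,O_{\ib{\delta\eps}{ab}}\}\wv=0$ is slightly slicker than the paper's direct expansion $L_{\ib{\delta\eps}{12}}\wv=(H_1+\lambda_1)(H_2+\lambda_2)O_{\ib{\delta\eps}{12}}\wv$, and your explicit $\dr{b}$-eigenvalue matching for the case split in~\ref{item:2indWS1} is a bit more transparent than the paper's appeal to the vanishing criterion for $O_b^\eps$ from \cref{lem:actionOOu}, but the substance is the same.
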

\begin{proof}
Assume $a = 1$ and $\wv\in M_\mu$ with $\mu = (-\delta\tfrac{1}{2},\mu_2,\mu_Z,\zeta_1^{\ell_1},\zeta_2^{\ell_2})$ for $\delta \in \{+,-\}$; the case $a=2$ is similar. We write 
\begin{equation}\label{eq:3indexhalfaction}
O_{\ib{\delta+-}{122}}(\wv) = \sum_{\nu\in \textup{Wt}_{\mathfrak{t}_0}(M)} \wv_\nu,
\end{equation}
 for some $\wv_\nu\in M_\nu$. The relations $[H_2,O_{\ib{\delta+-}{122}}]=0=[\dr2,O_{\ib{\delta+-}{122}}]=\{Z,O_{\ib{\delta+-}{122}}\}$ fix the respective components of the weights that can occur in the decomposition~\eqref{eq:3indexhalfaction}. For the $\dr{1}$-component, using~\eqref{eq:WComm} and~\eqref{eq:actionWisotropic} we have $\dr{1}(O_{\ib{\delta+-}{122}}\wv) = \zeta_1^{\delta2}O_{\ib{\delta+-}{122}}\dr{1}\wv = \zeta_1^{\ell_1+ \delta 2}O_{\ib{\delta+-}{122}}\wv$, giving the claimed $\dr{1}$-component. Next, we look at the $H_1$-component.
 Using  \cref{lem:H3index}, we get
\begin{align}
H_1O_{\ib{\delta+-}{122}}(\wv)-O_{\ib{\delta+-}{122}}H_1(\wv) & = \delta O_{\ib{\delta+-}{122}}(\wv)  + 4O_{\ib{\delta}{1}}(\delta H_2 + Z)(\wv),
\end{align}
and thus
\begin{align}\label{eq:ProofLadDontCrossEq2}
(H_1 - \delta\tfrac{1}{2})O_{\ib{\delta+-}{122}}(\wv) = 4(\delta \mu_2+\mu_Z)O_{\ib{\delta}{1}}(\wv).\end{align}
We will now show that~\eqref{eq:ProofLadDontCrossEq2} must equal zero. By \cref{lem:actionOOu} we know the weight space of the right-hand side: $O_{\ib{\delta}{1}}(\wv) \in M_{(\delta/2,\mu_2,-\mu_Z,\zeta_1^{-\ell_1},\zeta_2^{\ell_2})}$. As the left-hand side must be in the same weight space, either $(H_1 - \delta\tfrac{1}{2})$ will act by zero, or $O_{\ib{\delta+-}{122}}(\wv) =0$; in both cases, \eqref{eq:ProofLadDontCrossEq2} is zero and we can conclude the proof of item~\ref{enum:lemLad1}.

 Now, using~\eqref{eq:oddLadderAlt}, item~\ref{enum:lemLad2} follows: 
\begin{equation}
    L_{\ib{\delta}{a}}(\wv) = \Tepsa{\delta}(2H_a+\delta)(\wv) + 4O_a^{\delta}(\delta H_b+Z)(\wv) = 0.
\end{equation}

Next, we prove item~\ref{item:2indWS1} so let  $\eps \in \{+,-\}$. The values of the $\dr 1$- and $\dr2$-components of $O_{\ib{\delta\varepsilon}{12}}(\wv)$ are found using again~\eqref{eq:WComm} and~\eqref{eq:actionWisotropic}. 
Moreover, $Z$ commutes with $O_{\ib{\delta\varepsilon}{12}}$. 
By \cref{lem:H2index}, 
\begin{equation}
    H_1O_{\ib{\delta\eps}{12}}(\wv) -O_{\ib{\delta\eps}{12}}H_1(\wv) = \delta O_{\ib{\delta\eps}{12}}(\wv) + \delta 2O_{\ib{\delta}{1}}O_{\ib{\eps}{2}}(\wv) - O_{\ib{\delta}{1}}O_{\ib{+-\eps}{112}}(\wv),
\end{equation}
and thus 
\begin{equation}\label{eq:WhereGoO12atCrit1}
    (H_1 - \delta\tfrac{1}{2})O_{\ib{\delta\eps}{12}}(\wv) =   \delta 2O_{\ib{\delta}{1}}O_{\ib{\eps}{2}}(\wv) - O_{\ib{\delta}{1}}O_{\ib{+-\eps}{112}}(\wv) .
\end{equation}
Equation~\eqref{eq:WhereGoO12atCrit1} must equal zero because the terms in the right-hand side all have $H_1$-weight $\delta\tfrac12$, so either $(H_1 - \delta\tfrac{1}{2})$ will act by zero, or $O_{\ib{\delta\eps}{12}}(\wv) =0$. 

All that remains is to determine the possible $H_2$-weights that can occur in a decomposition similar to~\eqref{eq:3indexhalfaction} for $O_{\ib{\delta\eps}{12}}(\wv)$.
If $\mu (H_2) = -\eps \tfrac12$,  then we can follow the same procedure for the $H_2$-weight, which shows that in this case $O_{\ib{\delta\eps}{12}}(\wv) \in  M_{\beta\cdot\mu}$ with  $\beta = \delta \varpi_1 + \eps \varpi_2$. 

Assume now $\mu (H_2) \neq -\eps \tfrac12 $, then using again \cref{lem:H2index}, one has
\begin{equation}\label{eq:WhereGoO12atCrit2}
    (H_2-(\mu(H_2)+\eps))O_{\ib{\delta\eps}{12}}(\wv) =  \eps 2O_{\ib{\delta}{1}}O_{\ib{\eps}{2}}(\wv) + O_{\ib{\eps}{2}}O_{\ib{\delta+-}{122}}(\wv) 
    =  O_{\ib{\eps}{2}}(  O_{\ib{\delta+-}{122}}-\eps 2O_{\ib{\delta}{1}})(\wv).
\end{equation}
Any term of $O_{\ib{\delta\eps}{12}}(\wv)$ that is in the weight space $M_{\beta\cdot\mu}$ with  $\beta = \delta \varpi_1 + \eps \varpi_2$ will be killed by the action of  $(H_2-(\mu(H_b)+\eps))$ in the left-hand side of \eqref{eq:WhereGoO12atCrit2}. Denote $\alpha = \delta \varpi_1$, the terms in the right-hand side go to the space $M_{\df{2}\cdot \alpha \cdot \mu}$, 
where we note that, when $\mu(H_1) = -\delta\tfrac{1}{2}$, we have $\df{1}\cdot\mu=\alpha \cdot\mu$ whenever $O_{\ib{\delta}{1}}$ has a nonzero action on $\wv \in M_\mu$.
Indeed, as seen in the proof of \cref{lem:actionOOu}, a one-index symmetry $O_{\ib{\delta}{a}}$ will have a nonzero action on $\wv \in M_\mu$ only if $\df{a}\cdot\mu(\dr{a}) = \zeta_a^{-\ell_a} = \zeta_a^{\ell_a+\delta 2} =\alpha \cdot\mu(\dr{a})$.
 
The final item is then given by expanding $L_{\ib{\delta\eps}{12}}(\wv)$. Denote $\mu(H_1) = \lambda_1$ and $\mu(H_2)  = \lambda_2$.
\begin{align*}
    L_{\ib{\delta\eps}{12}}u &= \acomm{H_1}{\acomm{H_2}{O_{\ib{\delta\eps}{12}}}}(\wv) = (H_1H_2O_{\ib{\delta\eps}{12}} + H_1O_{\ib{\delta\eps}{12}}H_2 + H_2 O_{\ib{\delta\eps}{12}}H_1 +O_{\ib{\delta\eps}{12}} H_1H_2)(\wv)\\
    &= H_2(H_1+\lambda_1)O_{\ib{\delta\eps}{12}}(\wv) + \lambda_2(H_1+\lambda_1)O_{\ib{\delta\eps}{12}}(\wv)
    = (H_1+\lambda_1)(H_2+\lambda_2)O_{\ib{\delta\eps}{12}}(\wv).
\end{align*}
This proves that  $L_{\ib{\delta\eps}{12}}u=0$ by item~\ref{item:2indWS1}.
\end{proof}
\begin{theorem}\label{thm:AllWeightSpace}
A simple finite-dimensional $\tama$-module $M$ can be expressed as a direct sum of $\dcover{\mathfrak T}$-modules according to the following situations:
\begin{enumerate}
    \item \label{enum:AllWS4T}
    if there is a weight $\mu$ such that $\mu(H_1)= \tfrac12=\mu(H_2)$, and $\mu(\dr 1) = \zeta_1^{\ell_1}$  with $\ell_1\not\equiv 1 \modSB m_1$, and $\mu(\dr 2) = \zeta_2^{\ell_2}$ with $\ell_2\not\equiv 1 \modSB m_2$; then, for any nonzero $\wv\in M_{\mu}$,
    \begin{equation}\label{eq:M=4T}
M =\dcover{\mathfrak{T}}(\wv) \oplus \dcover{\mathfrak{T}}(O_{\ib{-+-}{122}}(\wv)) \oplus \dcover{\mathfrak{T}}(O_{\ib{+--}{112}}(\wv))\oplus \dcover{\mathfrak{T}}(O_{\ib{--}{12}}(\wv));
 \end{equation}
 \item \label{enum:AllWS2T}
 if there is a weight $\mu$ such that,  for $a\in\{1,2\}$, $\mu(H_a) =\tfrac12$  and $\mu(\dr{a})=\zeta_a^{\ell_a}$ with $\ell_a\not\equiv 1 \modSB m_a$ while for 
  $b\in \{1,2\}\setminus\{a\}$, for every  weight $\lambda \in \textup{Wt}_{\mathfrak{t}_0}$, either $\lambda(H_b) \not\in \{-\tfrac12,\tfrac12\}$, or  $\lambda(H_b)= \pm\frac12$ and $\lambda(\dr b) = \zeta_b^{\ell_b}$ with $\ell_b \equiv \mp1 \modSB m_b$; then, for any nonzero $\wv\in M_{\mu}$,
 \begin{equation}\label{eq:M=2T}
M =\dcover{\mathfrak{T}}(\wv) \oplus\dcover{\mathfrak{T}}(O_{\ib{-+-}{abb}}(\wv));
 \end{equation}
 \item \label{enum:AllWS1T}
 or in all other cases, for any nonzero $\wv \in M$, then
 \begin{equation}\label{eq:M=1T}
     M = \dcover{\mathfrak T}(\wv).
 \end{equation}
\end{enumerate}
\end{theorem}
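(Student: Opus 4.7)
The proof hinges on the simplicity of $M$: it suffices to exhibit, in each case, a nonzero $\tama$-stable subspace having the stated direct sum form. By Theorem~\ref{thm:TAMAgen}, and since $\dcover{\mathfrak{T}}$ already contains $\rho(\dcover{W})$ and the ladder operators, $\tama$-stability reduces to checking closure under the action of the two- and three-index symmetries $O(\cdot)$.

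The central dichotomy governing the argument comes from Proposition~\ref{prop:ActionTwoThreeSymOutsideHalf} and Lemma~\ref{lem:LadDontCrossAxis}. At any weight vector $\psi \in M_\lambda$ with $\lambda(H_a) \neq -\delta/2$, Proposition~\ref{prop:ActionTwoThreeSymOutsideHalf} expresses $O_{\ib{\delta+-}{abb}}(\psi)$ as an explicit combination of $L_{\ib{\delta}{a}}(\psi)$ and $O_{\ib{\delta}{a}}(\psi)$, both of which lie in $\dcover{\mathfrak{T}}(\psi)$; the analogous statement holds for $O_{\ib{\delta\varepsilon}{12}}(\psi)$ when both $\lambda(H_1) \neq -\delta/2$ and $\lambda(H_2) \neq -\varepsilon/2$. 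Hence closure of a cyclic $\dcover{\mathfrak{T}}$-module can only fail at \emph{boundary weights}, where $\lambda(H_a) = -\delta/2$; at such weights, Lemma~\ref{lem:LadDontCrossAxis} pins down the image of the $O$-action to a specific weight space $M_{\alpha\cdot\lambda}$, or to the sum $M_{\beta\cdot\lambda} \oplus M_{\df{b}\cdot\alpha\cdot\lambda}$ in the two-index situation.

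The case analysis then reduces to comparing these boundary targets with the weights already reached by $\dcover{\mathfrak{T}}$ from the chosen generator(s). In Case~3, the hypotheses are precisely what ensures no genuine fall-out: either the relevant $O$-action vanishes by Lemma~\ref{lem:LadDontCrossAxis}(ii), (iv), or the potentially exceptional target $M_{\df{b}\cdot\alpha\cdot\lambda}$ coincides with $M_{\beta\cdot\lambda}$ through the safe character condition $\ell_b \equiv \mp 1 \modSB m_b$, so that it is already accessible within $\dcover{\mathfrak{T}}(\wv)$ via a group translate of a ladder sequence from $\wv$. Consequently $\dcover{\mathfrak{T}}(\wv)$ is $\tama$-stable, nonzero for any nonzero $\wv$, and equals $M$ by simplicity. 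In Cases~1 and~2, the failing character conditions at the exceptional weight $\mu$ force at least one boundary-weight $O$-image to lie in a weight space genuinely outside $\dcover{\mathfrak{T}}(\wv)$; the three (resp., one) extra cyclic summands listed are precisely these fall-outs, whose weights are computed through~\eqref{eq:rootactweight} and Lemma~\ref{lem:LadDontCrossAxis}.

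The main obstacle lies in verifying that the four (resp., two) summands suffice: one must iterate the boundary analysis on each auxiliary summand and check that no further cyclic summand is required. This uses the factorisations of Lemma~\ref{lem:LadderAlt}, Proposition~\ref{prop:OddOppLadders}, Proposition~\ref{prop:EvenEvenLadders} and Proposition~\ref{prop:OddOddLaddersIsEven} to show that any further fall-out either returns to a summand already on the list (the same ladder operator reverses direction modulo scalars in $\mathfrak{T}_0$) or meets the safe character condition within the sum. Directness of the decomposition is immediate from the pairwise distinctness of the $(\dr{1},\dr{2})$-characters of the generators, which is exactly what the failing congruences $\ell_a \not\equiv 1 \modSB m_a$ encode: they ensure $\zeta_a^{\ell_a - 2} \neq \zeta_a^{-\ell_a}$, so the characters of $\wv$, $O_{\ib{-+-}{122}}(\wv)$, $O_{\ib{+--}{112}}(\wv)$ and $O_{\ib{--}{12}}(\wv)$ are all different.
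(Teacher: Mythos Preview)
Your outline captures the global strategy—show that the proposed sum is $\tama$-stable by checking closure under the $O$-generators, with Proposition~\ref{prop:ActionTwoThreeSymOutsideHalf} handling generic weights and Lemma~\ref{lem:LadDontCrossAxis} governing the boundary $\lambda(H_a)=\pm\tfrac12$—but it elides a mechanism that the paper's proof genuinely needs.

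The gap appears in your treatment of Case~3 (and, by the same token, in reducing Case~2 to two summands). At a boundary weight $\lambda(H_a)=-\delta/2$, Lemma~\ref{lem:LadDontCrossAxis}\ref{enum:lemLad1} sends the three-index symmetry $O_{\ib{\delta+-}{abb}}(\psi)$ into $M_{\alpha\cdot\lambda}$, and you must show this vector lies in $\dcover{\mathfrak T}(\wv)$. Under the safe character condition one does have $\alpha\cdot\lambda=\df{a}\cdot\lambda$ as $\mathfrak t_0$-weights, so the weight \emph{spaces} coincide; but this does not yet place the \emph{vector} $O_{\ib{\delta+-}{abb}}(\psi)$ in $\dcover{\mathfrak T}(\wv)$, since at this point of the argument you do not know the weight space is one-dimensional (Proposition~\ref{prop:WS_1D_If_HWS_1D} is proved \emph{using} this theorem). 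The paper closes this by an irreducibility trick: setting $\hwv_1=\df{a}\psi$, $\hwv_2=O_{\ib{\delta+-}{abb}}(\psi)$ and forming $v_\pm=\hwv_1\pm\hwv_2/\sqrt{D}$ with $D$ the scalar from Lemma~\ref{lem:TT}, one checks $O_{\ib{\bar\delta+-}{abb}}v_\pm=\pm\sqrt{D}\,\df{a}v_\pm$, so $\dcover{\mathfrak T}(v_+)$ and $\dcover{\mathfrak T}(v_-)$ would be proper $\tama$-submodules unless $\hwv_1,\hwv_2$ are dependent. Your sketch invokes neither this argument nor any substitute for it; Lemma~\ref{lem:LadDontCrossAxis}(ii),(iv), which you cite, only kill the \emph{ladder} operators $L_{\ib{\delta}{a}}$, $L_{\ib{\delta\eps}{ab}}$, not the $O$-symmetries themselves.

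Two smaller points. First, the closure verification in the paper is not an ``iterated boundary analysis on each summand'' but an induction on the length $m$ of ladder monomials $L_\Upsilon$: one commutes $O(v)$ past the leftmost factor using Lemma~\ref{lem:LT} for three-index $v$ and then Lemma~\ref{lem:LadderAlt} together with~\eqref{eq:LongShortO-comm} for two-index $v$, with the base case $m=0$ handled via the anticommutators~\eqref{eq:TT2}. Second, directness requires that \emph{all} $\mathfrak t_0$-weights across the summands are distinct, not merely the $(\dr1,\dr2)$-characters of the four generators; the paper obtains this from Lemmas~\ref{lem:actRootsWeights} and~\ref{lem:LadDontCrossAxis}.
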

\begin{proof}
First, if there are no weights $\mu \in \textup{Wt}_{\mathfrak{t}_0}(M)$ such that $\mu(H_a)\in \{-\tfrac12,\frac12\}$ for $a\in\{1,2\}$, 
then, by \cref{prop:ActionTwoThreeSymOutsideHalf},  it follows that $M =\dcover{\mathfrak{T}}(\wv)$  for any nonzero $\wv\in M$.

Now, assume there is a weight $\mu \in \textup{Wt}_{\mathfrak{t}_0}(M)$ 
satisfying the condition of item~\ref{enum:AllWS4T} and let $\wv\in M_\mu$.
We will show that~\eqref{eq:M=4T} holds. 
Denote by $M'$ the subspace of $M$ in the right-hand side of equation~\eqref{eq:M=4T}. It suffices to show that for any generator $O(v)$ of $\tama$ (with $v\in \bigwedge^2(V) \cup \bigwedge^3(V)$), we have $O(v)(M')\subseteq M'$, since this shows that $M'$ is a $\tama$-module and hence it must be all of $M$ by irreducibility. Furthermore, from \cref{lem:actRootsWeights,lem:LadDontCrossAxis}, it follows that all possible $\mathfrak{t}_0$-weights occurring in the right-hand side of~\eqref{eq:M=4T} are distinct and hence the sum must be indeed  direct.  

From the factorisation of Proposition~\ref{prop:triangstruc}, and since elements of $\TriSubAlg_0$ act as a scalar on a weight vector, a general element of $M'$ is written as a sum of vectors of the type 
\begin{equation}\label{eq:GenVecT4}
    \hwv' = \dcover{w}L_\Upsilon (\wv'),
\end{equation}
where $L_\Upsilon=L_{\Upsilon_1}\cdots L_{\Upsilon_m}$
for $\Upsilon=(\Upsilon_1,\dots, \Upsilon_m)$ a sequence with $\Upsilon_j\in\Phi$, $\wv'\in\{\wv,O_{\ib{-+-}{122}}(\wv),O_{\ib{+--}{112}}(\wv),O_{\ib{--}{12}}(\wv)\}$ and $\dcover{w}\in\dcover{W}$.  
We show by induction on $m=|\Upsilon|$, the size of the sequence $\Upsilon$, that $O(v)(\hwv')\in M'$ for any $\hwv'$ as in \eqref{eq:GenVecT4} and any $v \in \bigwedge^2(V)$ or $v \in \bigwedge^3(V)$. 

We do the base case, that is, when $m=0$. We only need to consider $\wv'=\wv$ since we can use \cref{thm:TAMArel,lem:TT} to reduce to this case. The results of the action of $\sym{\ib{--}{12}}$, $\sym{\ib{-+-}{122}}$ and $\sym{\ib{+--}{112}}$ on $\wv$ are in $M'$ by construction, and the results of the action of $\sym{\ib{++}{12}}$, $\sym{\ib{++-}{122}}$ and $\sym{\ib{+-+}{112}}$ on $\wv$ are in $M'$ by \cref{prop:ActionTwoThreeSymOutsideHalf}. Thus, it only remains to show that $\sym{\ib{-+}{12}}\wv, \sym{\ib{+-}{12}}\wv\in M'$. To this end, we consider the anticommutation relations coming from the equality \eqref{eq:TT2} when acting on $\wv$:
\begin{subequations}
\label{eq:BaseCase}
    \begin{alignat}{2}
\acomm{\sym{\ib{++-}{211}}}{\sym{\ib{-+-}{122}}} &= 8\sym{\ib{-+}{12}} + 2 \acomm{\sym{\ib{+}{2}}}{\sym{\ib{-+-}{122}}} - 2\acomm{\sym{\ib{-}{1}}}{\sym{\ib{++-}{211}}} + 4\acomm{\sym{\ib{-}{1}}}{\sym{\ib{+}{2}}}, \label{eq:BaseCaseOmp}\\
\acomm{\sym{\ib{-+-}{211}}}{\sym{\ib{++-}{122}}} &= -8\sym{\ib{+-}{12}} + 2 \acomm{\sym{\ib{+}{1}}}{\sym{\ib{-+-}{211}}} - 2\acomm{\sym{\ib{-}{2}}}{\sym{\ib{++-}{122}}} + 4\acomm{\sym{\ib{+}{1}}}{\sym{\ib{-}{2}}}. \label{eq:BaseCaseOpm}
     \end{alignat}
\end{subequations}
Then, \eqref{eq:BaseCaseOmp} shows that $\sym{\ib{-+}{12}}\wv\in M'$, and  \eqref{eq:BaseCaseOpm} shows that $\sym{\ib{+-}{12}}\wv\in M'$.

Now assume by induction that for all $v\in\bigwedge^2(V)\cup \bigwedge^3(V)$ we have $O(v)(\hwv')\in M'$ for any $\hwv'$ as in \eqref{eq:GenVecT4}, whenever $|\Upsilon| \leq m$. Without loss of generality we can assume $\dcover{w} = 1$. Let $\Upsilon=(\Upsilon_0,\Upsilon_1,\dots, \Upsilon_m)$ and  $\Upsilon'=(\Upsilon_1,\dots, \Upsilon_m)$. 
Note that if $m>0$ and $\Upsilon_m\in\Phi_{\bar 0}$, then using Lemma \ref{lem:LadDontCrossAxis}\ref{item:2indWS2}, only one out of the four possible options for $L_{\Upsilon_m}$ satisfies $L_{\Upsilon_m}(\wv')\neq 0$. Suppose $\Upsilon_m$ is that option. 
In the case where $\Upsilon_j = \Upsilon_m$ for all $1\leq j <m$,
the weight space where $L_{\Upsilon}(\wv')$ lands has a well-defined action of $O(v)$ by \cref{prop:ActionTwoThreeSymOutsideHalf}. So we can assume that $\Upsilon$ is not of the type $(\Upsilon_m,\ldots,\Upsilon_m)$.

In addition, we claim that we can always assume that $\Upsilon_0 \in\Phi_{\bar{1}}$. Indeed, suppose that $\Upsilon_0\in\Phi_0$. There is a minimal index $1\leq j \leq m$ such that $\Upsilon_j\neq \Upsilon_0$, for otherwise we would have $L_\Upsilon(\wv')=0$, or $\Upsilon = (\Upsilon_0,\ldots,\Upsilon_0)$ for $\Upsilon_0$ the unique root in $\Phi_0$ for which $L_{\Upsilon_0}(\wv')\neq 0$, which is a situation we already treated above. Suppose that $\Upsilon_j\in\Phi_0$. If $\Upsilon_j=-\Upsilon_0$, then by Proposition \ref{prop:EvenEvenLadders}, $L_{\Upsilon_0}L_{\Upsilon_j}\in\TriSubAlg_0$ and hence after sending that abelian term to the right, $L_\Upsilon$ can be expressed as a sum of monomials all with fewer ladder elements and we would be done by the inductive hypothesis. If, on the other hand, $\Upsilon_j\notin\{\Upsilon_0,-\Upsilon_0\}$, then using Proposition \ref{prop:ExtraTriangRules}\ref{prop:ExtraTriangRulesItemiii} we have
$L_{\Upsilon_0}L_{\Upsilon_j} = L_\beta^2p$, where $\beta\in\Phi_{\bar 1}$ forms an obtuse angle with $\Upsilon_0$ and $p\in\TriSubAlg$. But in this situation, using Proposition \ref{prop:ExtraTriangRules}\ref{prop:ExtraTriangRulesItemii}, we can commute $L_\beta$ with all the $\Upsilon_0$ so that $L_\Upsilon$ can be expressed as a sum of monomials all starting with $L_\beta$. 

All that said, fix $v\in\bigwedge^2(V)\cup\bigwedge^3(V)$. Assume both that $L_{\Upsilon_0}\in\Phi_{\bar 1}$ and that our inductive hypothesis hold.
Then,
\begin{equation}\label{eq:commindT4}
 O(v) L_\Upsilon(\wv') = L_{\Upsilon_0}O(v) L_{\Upsilon'}(\wv') + [O(v), L_{\Upsilon_0}]L_{\Upsilon'}(\wv').   
\end{equation}
Since $L_{\Upsilon_0}(M')\subseteq M'$,
the term $L_{\Upsilon_0}O(v) L_{\Upsilon'}(\wv')\in M'$ by the inductive hypothesis. As for the term $[O(v), L_{\Upsilon_0}]L_{\Upsilon'}(\wv')$,
if $v\in\bigwedge^3(V)$, using Lemma \ref{lem:LT}, this commutator is expressed as a linear combination of monomials $\dcover{w}O(u) p$, with $\dcover{w}\in\dcover{W}$, $u\in\bigwedge^3(V)$ and $p\in\TriSubAlg_0$, for single symmetry operators $O(u)$. Hence, by induction, $[O(v), L_{\Upsilon_0}]L_{\Upsilon'}(\wv')$ is also in $M'$. Thus, $O(v)(M')\subseteq M'$ whenever $v\in\bigwedge^3(V)$.

Suppose next that $v\in\bigwedge^2(V)$ and let $\Upsilon_0 = \delta\varpi_a$, so that $L_{\Upsilon_0} = L_{\ib{\delta}{a}}$. Using Lemma \ref{lem:LadderAlt}, we have
\begin{align*}
[O(v),L_{\ib{\delta}{a}}] &= [O(v),\Tepsa{\delta}(2H_a+\delta) + 4O_a^{\delta}(\delta H_b+Z)]  \\
&=[O(v),\Tepsa{\delta}](2H_a+\delta) + 
2\Tepsa{\delta}[O(v),H_a] + 
4[O(v),O_a^{\delta}(\delta H_b+Z)].
\end{align*}
By the inductive hypothesis $XL_{\Upsilon'}(\wv')\in M'$ whenever $X$ is $[O(v),\Tepsa{\delta}],[O(v),H_a]$ or $[O(v),O_a^{\delta}(\delta H_b+Z)]$, since each of these commutators are expressed as a linear combination of monomials with at most one symmetry operator $O(u)$ (see~\eqref{eq:LongShortO-comm} for $[O(v),\Tepsa{\delta}]$). Since, moreover, $\Tepsa{\delta}$ is a 3-index symmetry, this also preserves $M'$, by what was shown in the previous paragraph. Hence, we also have $\Tepsa{\delta}[O(v),H_a]L_{\Upsilon'}(\wv')\in M'$. This finishes the proof of item~\ref{enum:AllWS4T}.

We now pass to item~\ref{enum:AllWS2T}. Without loss of generality, we suppose $a=1$; the case $a=2$ is similar. Assume $\mu$ is a weight such that $\mu(H_1)  = 1/2$ and $\mu(\dr 1) = \zeta_1^{\ell_1}$ with $\ell_1 \not\equiv 1 \modSB m_1$. First, if there is no weight $\lambda$ such that $\lambda(H_2) \in \{-\tfrac12, \tfrac12\}$, then we show that $M' = \dcover{\mathfrak T}(\wv)\oplus \dcover{\mathfrak{T}}(\sym{\ib{++-}{122}}\wv)$. Indeed, we see that $\sym{\ib{-+-}{211}} \wv \in \dcover{\TriSubAlg}( \wv)$ by \cref{prop:ActionTwoThreeSymOutsideHalf}. For $\sym{\ib{--}{12}}\wv$, we use \cref{lem:TT} as in \cref{eq:BaseCase} to express its values in terms of elements in $\dcover{\TriSubAlg}(\wv) \oplus \dcover{\TriSubAlg}(\sym{\ib{-+-}{122}}\wv)$. Then the rest follows by the proof of item~\ref{enum:AllWS4T}.

Now if there is a weight $\lambda(H_2) = \varepsilon\tfrac{1}{2}$ and $\lambda(\dr 2) = \zeta_b^{-\eps}$, for $\varepsilon\in \{+,-\}$, then, assuming without loss of generality that $\eps = -$, we see by \cref{lem:LadDontCrossAxis} that, for $\wv\in M_{\lambda}$, $\sym{\ib{-+-}{211}}\wv$ and $\df{2}\wv$ share the same $\mathfrak{t}_0$-weight values, and that $\sym{\ib{--}{12}}\wv$ and $\df{2}\sym{\ib{-+-}{122}}\wv$ share the same $\mathfrak t_0$-weight value. We will show that, in both cases, they are in fact linearly dependent. 

We suppose $\sym{\ib{-+-}{211}}\wv \neq 0$. 
We first denote $\hwv_1:=\df{2}\wv $ and $\hwv_2:=\sym{\ib{-+-}{211}}\wv$. By \cref{lem:TT}, we have $\sym{\ib{++-}{211}}\sym{\ib{-+-}{211}}\wv = D \wv$ for a certain constant $D$. We can suppose $D\neq 0$ since, if it were 0 all the time, then $\dcover{\TriSubAlg}(\sym{\ib{-+-}{211}}\wv)$ would be a submodule of $M$, since $\sym{\ib{\pm -}{12}}\sym{\ib{-+-}{211}}\wv=0$ by \cref{lem:TT}, a contradiction of the irreducibility of $M$. 

So we denote $\wv_+ = \hwv_1 + \hwv_2/\sqrt{D}$ and $\wv_- = \hwv_1 - \hwv_2/\sqrt{D}$. Then $\sym{\ib{++-}{211}}\wv_{\pm} = \pm \sqrt{D} \df{2} \wv_{\pm}$. But that would mean $M=\dcover{\TriSubAlg}(\wv_+)\oplus \dcover{\TriSubAlg}(\wv_-)$, for two nonzero submodules, contradicting the irreducibility of $M$. Hence, $\sym{\ib{++-}{211}}\wv$ is proportional to $\df{2}\wv$ and in particular $\sym{\ib{++-}{211}}\wv\in\dcover{\TriSubAlg}(\wv)$. In a similar fashion, we show that $\sym{\ib{--}{12}}\wv$ is in the same weight space as $\df{2}\sym{\ib{-+-}{122}}\wv$ using \cref{lem:TT}. This proves that $M= \dcover{\TriSubAlg}(\wv) \oplus \dcover{\TriSubAlg}(\sym{\ib{-+-}{122}}\wv)$.

The last case remaining is if there are weight $\lambda$ with $\lambda(H_a) = \varepsilon/2$ and $\lambda(\dr{a}) = \zeta_a^{\ell_a}$ with $\ell_a \equiv \varepsilon \modSB m_a$ without being in the situation of item~\ref{enum:AllWS2T}. Then, we follow the same proof as above to prove that $M=\dcover{\TriSubAlg}(\wv)$ for any nonzero $\wv$. This concludes the last instances of item~\ref{enum:AllWS1T}, finishing the proof. 
\end{proof}

\begin{remark}
It is a natural question whether we can induce a simple $\tama$-module from a $\widetilde{\TriSubAlg}$-module. \cref{prop:ActionTwoThreeSymOutsideHalf,thm:AllWeightSpace} indicate a way to go, but without having the full presentation by generators and relations of $\tama$, it is not straightforward to define the induction. In light of the recent work in the classical setting~\cite{CDMO25}, this will be pursued  in future work. 
\end{remark}

We can now use the previous theorem to describe in more detail the $\mathfrak{h}$- and $\mathfrak{t}_0$-weight spaces of finite-dimensional irreducible representations.

\begin{proposition}\label{prop:WS_1D_If_HWS_1D}
	 For a simple finite-dimensional $\tama$-module $M$, the following hold:
  \begin{enumerate}
 \item 
All $\mathfrak{t}_0$-weight spaces of $M$ are one-dimensional.
\item If for all $\mu \in \textup{Wt}_{\mathfrak{t}_0}(M)$ and $a\in\{1,2\}$, we have $\mu(H_a)\notin \{-\tfrac12,0,\frac12\}$, 
then all $\mathfrak{h}$-weight spaces of $M$ are one-dimensional.
  \end{enumerate}
\end{proposition}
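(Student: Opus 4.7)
My approach combines \cref{thm:AllWeightSpace} with the triangular decomposition in \cref{prop:triangstruc} and the factorisation relations for products of ladder operators. For part~(1), I begin by applying \cref{thm:AllWeightSpace} to write $M=\bigoplus_i \dcover{\TriSubAlg}(\wv_i)$ with $i$ ranging over a set of size $1$, $2$, or $4$. Each summand is stable under $\mathfrak{t}_0$, so $M_\mu=\bigoplus_i\bigl(\dcover{\TriSubAlg}(\wv_i)\bigr)_{\!\mu}$, and it suffices to establish two facts: each summand has at most one-dimensional $\mathfrak{t}_0$-weight spaces, and distinct summands contribute to disjoint weights. Using~\eqref{eq:WComm} to push elements of $\rho(\dcover{W})$ past symmetry operators yields $\dcover{\TriSubAlg}(\wv_i)=\rho(\dcover{W})\TriSubAlg \wv_i$, and \cref{prop:triangstruc} together with the fact that $\TriSubAlg_0$ acts by scalars on $\wv_i$ gives $\TriSubAlg \wv_i=\TriSubAlg_-\TriSubAlg_+\wv_i$.

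Iterating the factorisations of \cref{prop:OddOppLadders,prop:EvenEvenLadders,prop:OddOddLaddersIsEven,prop:ExtraTriangRules}, every monomial in ladder operators reduces modulo $\TriSubAlg_0$ to a canonical single-ladder (or squared-ladder) expression whose shape is entirely determined by the target weight. This shows that $(\TriSubAlg \wv_i)_\nu$ is at most one-dimensional for every $\nu$, and after applying an element of $\dcover{W}$ (which permutes weight spaces according to~\eqref{eq:rgroupactweight}) the same holds for $\bigl(\dcover{\TriSubAlg}(\wv_i)\bigr)_{\!\mu}$. To finish part~(1), I check that for any $\mu$ at most one index $i$ yields a nonzero contribution: the generators $\wv_i$ are at distinct $\mathfrak{h}$-weights (the four corner values $(\pm\tfrac12,\pm\tfrac12)$ in case~\ref{enum:AllWS4T}, two opposing corners in case~\ref{enum:AllWS2T}), and the specific conditions $\ell_a\not\equiv 1\modSB m_a$ in \cref{thm:AllWeightSpace} ensure that the $\dr_a$-eigenvalues required to land at a common $\mathfrak{h}$-weight from different summands never coincide.

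For part~(2), the assumption that $\mu(H_a)\notin\{-\tfrac12,0,\tfrac12\}$ for every $\mu\in\textup{Wt}_{\mathfrak{t}_0}(M)$ and $a\in\{1,2\}$ rules out the existence of the distinguished weights required for cases~\ref{enum:AllWS4T} and~\ref{enum:AllWS2T} of \cref{thm:AllWeightSpace}, placing us in case~\ref{enum:AllWS1T} so that $M=\dcover{\TriSubAlg}(\wv)$ for any nonzero $\wv$. By part~(1) each $\mathfrak{t}_0$-weight space is one-dimensional, so it only remains to check that an $\mathfrak{h}$-weight $(\lambda_1,\lambda_2)$ is lifted by a unique $\mathfrak{t}_0$-weight. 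Fixing a highest weight vector $\hwv$ with weight $\mu_0=(\lambda_1^0,\lambda_2^0,\Lambda^0,\zeta_1^{\ell_1^0},\zeta_2^{\ell_2^0})$, every weight of $M$ is obtained by combining shifts from~\eqref{eq:rootactweight} and sign-flips from~\eqref{eq:rgroupactweight}. The $H_a$-shift determines the $\dcover{W}_{(\bar\imath,\bar\jmath)}$-class acting (up to the kernel $\dcover{W}_{(\bar 0,\bar 0)}$), the parity of odd ladders applied, and the total shift in $\dr_a$; combined with the conditions $\mu(H_a)\neq\pm\tfrac12$ (which removes the branching described in \cref{lem:LadDontCrossAxis}\ref{item:2indWS1}) and $\mu(H_a)\neq 0$ (which prevents the $Z$-sign ambiguity coming from an $H_a$-fixed point), this forces the $\Lambda$-value and the $\dr_a$-eigenvalues to be uniquely determined by $(\lambda_1,\lambda_2)$.

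The main obstacle is the combinatorial bookkeeping in part~(1): specifically, verifying that the various monomials in ladder operators sending $\wv_i$ into a fixed weight space are all scalar multiples of a single canonical one, and that the $\dr_a$-eigenvalues produced by the different summands can never collide. Both points hinge on the delicate interplay between the quadratic factorisations of \cref{prop:OddOppLadders,prop:EvenEvenLadders,prop:OddOddLaddersIsEven,prop:ExtraTriangRules} and the arithmetic conditions on $\ell_1,\ell_2\modSB m_1,m_2$ built into \cref{thm:AllWeightSpace}.
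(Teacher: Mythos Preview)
Your overall architecture for part~(i) --- decompose $M$ via \cref{thm:AllWeightSpace}, show that $\TriSubAlg\wv_i$ has one-dimensional $\mathfrak{t}_0$-weight spaces via the factorisation results, and then pass to $\dcover{\TriSubAlg}(\wv_i)=\rho(\dcover W)\TriSubAlg\wv_i$ --- matches the paper's strategy, and your observation that the arithmetic condition $\ell_a\not\equiv 1\modSB m_a$ from cases~\ref{enum:AllWS4T} and~\ref{enum:AllWS2T} is exactly what separates the $\dr_a$-eigenvalues of different summands is correct and well identified. Your argument for part~(ii) is a legitimate variant of the paper's (you deduce uniqueness of the $\mathfrak{t}_0$-lift of an $\mathfrak{h}$-weight and then invoke part~(i), whereas the paper argues more directly that no $\pm N$ collision can occur).

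There is, however, a genuine gap in your treatment of part~(i). The step ``after applying an element of $\dcover W$ the same holds for $(\dcover{\TriSubAlg}(\wv_i))_\mu$'' is not automatic: it fails precisely when some $\dcover w\in\dcover W\setminus\dcover W_{(\bar 0,\bar 0)}$ \emph{fixes} the weight $\mu$, because then $\dcover w\cdot(\TriSubAlg\wv_i)_\mu$ and $(\TriSubAlg\wv_i)_\mu$ land in the same $\mathfrak{t}_0$-weight space and may be linearly independent. In cases~\ref{enum:AllWS4T} and~\ref{enum:AllWS2T} this is ruled out by your $\dr_a$-argument, but in case~\ref{enum:AllWS1T} no such condition is available. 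Concretely, if $\mu(H_1)=0$, $\mu(Z)=0$ and $\mu(\dr1)=\zeta_1^{m_1}=-1$ (which can occur for $m_1$ odd), then $\df1\cdot\mu=\mu$, and nothing in your canonical-form reduction prevents $v$ and $\df1 v$ from spanning a two-dimensional $\mathfrak{t}_0$-weight space.

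The paper handles this by a separate argument exploiting irreducibility: one forms $v_\pm=v\pm\df1 v$, observes that each generates a $\dcover{\TriSubAlg}$-submodule not containing the other, and derives a contradiction with simplicity of $M$ (the case $\mu(H_1)=\mu(H_2)=0$ is then excluded via the equations of \cref{thm:ClassiLabel}). You need an argument of this type; the ladder factorisations alone do not see the obstruction, because it lives entirely in the $\dcover W$-action at a fixed weight. A minor additional point: your phrase ``canonical single-ladder (or squared-ladder) expression'' is imprecise --- for a shift like $3\varpi_1$ the reduced monomial is $(L_{\ib{+}{1}})^3$ --- though the intended claim that $(\TriSubAlg\wv_i)_\nu$ is at most one-dimensional is correct.
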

\begin{proof}
Let $\hwv \in M$ be a nonzero weight vector. 
Denote by $\mathfrak{T}(\hwv)$ the subspace of $M$ obtained by applying the triangular subalgebra $\TriSubAlg$ on the vector. Because of \cref{prop:OddOppLadders,prop:EvenEvenLadders,prop:OddOddLaddersIsEven}, 
the space $\TriSubAlg(\hwv)$ will decompose into one-dimensional $\mathfrak{h}$-weight spaces, where \cref{lem:actRootsWeights} can be used to track down the action of each $L_\alpha\in\TriSubAlg$. %

Now, for $a\in\{1,2\}$, the action of $\df{a} \in \dcover{W}$ on an $H_a$-eigenvector will result in an eigenvector with opposite sign eigenvalue. 
Hence, the only possibility for the action of $\dcover{W}$ on $\TriSubAlg(\hwv)$ to result in $\mathfrak{h}$-weight spaces of dimension higher than 1 in $\dcover{\TriSubAlg}(\hwv)$ is if the eigenvalues of $H_a$ for some $a\in\{1,2\}$ on $\TriSubAlg(\hwv)$ contain both $N$  and its opposite $-N$, where $N$ must then be either an 
integer or half-integer by Lemma~\ref{lem:actRootsWeights}. 
If this is the case, then there must be a $\mu \in \textup{Wt}_{\mathfrak{t}_0}(M)$ and $a\in\{1,2\}$ such that $\mu(H_a)\in \{-\tfrac12,0,\frac12\}$. 
If the latter is not the case, then by \cref{thm:AllWeightSpace},  we have 
$M=\dcover{\mathfrak{T}}(\hwv) $ and  the action of $\dcover{\mathfrak{T}}$ suffices to describe all weight spaces of $M$. 
This proves the second item.

Now, assume the $\mathfrak{h}$-weights of $M$ contain half-integers, but not zero (we will handle this case later). 
In this case, the $\mathfrak{h}$-weight spaces of $M$ may not be one-dimensional.
Within a $\TriSubAlg$-submodule, the eigenvalues of $H_a$ for $a\in\{1,2\}$ cannot change sign because of item (ii) and (iv) of \cref{lem:LadDontCrossAxis}, so the $\mathfrak{h}$-weight spaces of a $\dcover{\mathfrak{T}}$-submodule of $M$ will be one-dimensional.
Using \cref{lem:actRootsWeights,lem:LadDontCrossAxis}, we then see 
that all $\mathfrak{t}_0$-weight spaces in each decomposition of $M$ in \cref{thm:AllWeightSpace} will be one-dimensional. 


Next, we consider the case where $\mu(H_1) = 0$ for some $\mu \in \textup{Wt}_{\mathfrak{t}_0}(M)$. The case $\mu(H_2) = 0$ is similar. We assume for now $\mu(H_2) \neq 0$, and we will handle the case where both are zero at the end of the proof. 
Let $v\in M_\mu$ be such that $H_1 v = 0$. 
We have  $H_1\df{1} v =-\df{1}H_1 v = 0$, while $H_2\df{1} v =\df{1}H_2 v=\mu( H_2 )\df{1}v $
so $v$ and 
$\df{1} v$ are in the same $\mathfrak{h}$-weight space.
We note that, for $v$ and  $\df{1} v$ to be in the same $\mathfrak{t}_0$-weight space, their $Z$-eigenvalue must be zero, and their $\dr{1} $-eigenvalue must be $-1 = \zeta_1^{m_1}$, which can only occur if the dihedral parameter $m_1$ is odd.

Now, assume that $v$ and 
$\df{1} v$ are in the same $\mathfrak{t}_0$-weight space.
We will show that they must be linearly dependent. 
We first consider the (sub)case where for every weight $\lambda \in \textup{Wt}_{\mathfrak{t}_0}(M)$, we have 
$\lambda(H_2) \not\in \{-\tfrac12,0,\tfrac12\}$. 
In this case,   $M = \dcover{\mathfrak T}(\wv)$ for any nonzero $\wv \in M$. 
However, denoting $v_\pm = v \pm  \df{1} v$, one observes that $v_\pm$ is not in  $M = \dcover{\mathfrak T}(v_\mp)$, which contradicts $M$ being irreducible. 

Next, assume $\lambda(H_2) \in \{-\tfrac12,\tfrac12\}$ for some $\lambda \in \textup{Wt}_{\mathfrak{t}_0}(M)$. 
We distinguish between two cases depending on the value of $\lambda(\dr 2)$. 
On the one hand, if $\lambda(\dr 2) = \zeta_2^{\ell_2}$ with $\ell_2 \equiv -\varepsilon \modSB m_2$ where $\eps\in\{+,-\}$ such that $\lambda(H_2) = \varepsilon\frac12$, then again $M = \dcover{\mathfrak T}(\wv)$ for any nonzero $\wv \in M$ and we can use the same argument. 
On the other hand, we can use that for a specific choice of $\wv$ the right-hand side of \eqref{eq:M=2T} in \cref{thm:AllWeightSpace} can only contain one of $ v_\pm$ but not the other, which again contradicts $M$ being irreducible. 

Finally, if $\lambda(H_2) = 0$ for some $\lambda \in \textup{Wt}_{\mathfrak{t}_0}(M)$, then, if both $m_1$ and $m_2$ are odd, we could have a weight $\mu = (0,0,0,\zeta_1^{m_1},\zeta_2^{m_2}) \in \textup{Wt}_{\mathfrak{t}_0}(M)$, which will be invariant under the actions of $\df{1}$, $\df2$ and also $\df 1\df2$. Using the system of equations in \cref{thm:ClassiLabel}, it follows that such a weight cannot occur in a finite-dimensional irreducible representation, since the chains $\{A(\mu,\alpha,K)\}_{K\in\NN}$ of \eqref{eq:A} will not terminate for the same value of $\kappa$ for all negative $\alpha$. 

To conclude, in all cases, $v$ and $\df{1} v$ must be linearly dependent, and since $M$ is irreducible and can be generated from $v$, all $\mathfrak{t}_0$-weight spaces will be one-dimensional.
\end{proof}

\begin{example}\label{ex:OneHalf}
For $u\in\{1,\ldots,\lfloor m_1/2 \rfloor -1\}$, let $(\rho_u,V_u)$ be the two-dimensional representation of $\dih_{2m_1}$ with basis $\{z^+,z^-\}\subset V_u$ where $\rho_u(s_p)(z^\pm)=\zeta_1^{2up}z^\mp$, with $\zeta_1 = e^{i\pi/m_1}$, for all $p=1,\ldots,m_1$.
    Consider the standard module $M_\kappa(\tau)$ of $H_\kappa$ where $\tau = \rho_u\otimes\textup{triv}$.  Let $M^0=\bbc\otimes V_\tau \otimes \bbs$ be the degree $0$ space inside $M_c(\tau)\otimes \bbs$. Since $M^0$ is in the kernel of $\underline{D}$, this is an eight-dimensional $\tama$-module which is also a $*$-unitary $\tama$-module. We omit the detailed computations and summarise the situation in what follows. First of all, all ladder operators $L_\alpha$ act by $0$ on $M^0$. We consider the ordered basis $\{\hwv_1,\ldots,\hwv_8\}$ explicitly given by
    \[
    \begin{aligned}
       \hwv_1&:=1\otimes z^+\otimes 1,& 
       \hwv_2&:=1\otimes z^-\otimes \bar\theta_1,&
       \hwv_3&:=1\otimes z^+\otimes \bar\theta_2,&
       \hwv_4&:=1\otimes z^-\otimes \bar\theta_1\wedge \bar\theta_2,\\
       \hwv_5&:=1\otimes z^-\otimes 1,& 
       \hwv_6&:=1\otimes z^+\otimes \bar\theta_1,& v_7&:=1\otimes z^-\otimes \bar\theta_2,&
       \hwv_8&:=1\otimes z^+\otimes \bar\theta_1\wedge \bar\theta_2.
    \end{aligned}
    \]
    
    We have two cases to consider. If $ 2u\neq m_1$, then $Q_1(2u)=0$ and the vectors $\hwv_1,\ldots,\hwv_8$ are weight vectors with corresponding $(H_1,H_2,Z,\dcover{r_1},\dcover{r_2})$-eigenvalues $\nu_1,\ldots,\nu_8$ given by 
    \[
    \begin{aligned}
       \nu_1&=(\lambda_1,\lambda_2,\Lambda,\zeta_1^{2u+1},\zeta_2),& 
       \nu_5&=(\lambda_1,\lambda_2,\Lambda,\bar\zeta_1^{2u-1},\zeta_2),\\
       \nu_2&=(-\lambda_1,\lambda_2,-\Lambda,\bar\zeta_1^{2u+1},\zeta_2), &
       \nu_6&=(-\lambda_1,\lambda_2,-\Lambda,\zeta_1^{2u-1},\zeta_2),\\
       \nu_3&=(\lambda_1,-\lambda_2,-\Lambda,\zeta_1^{2u+1},\bar\zeta_2),& 
       \nu_7&=(\lambda_1,-\lambda_2,-\Lambda,\bar\zeta_1^{2u-1},\bar\zeta_2),\\
       \nu_4&=(-\lambda_1,-\lambda_2,\Lambda,\bar\zeta_1^{2u+1},\bar\zeta_2),&  
       \nu_8&=(-\lambda_1,-\lambda_2,\Lambda,\zeta_1^{2u-1},\bar\zeta_2),
    \end{aligned}
    \]
    with $\lambda_1 = \tfrac{1}{2}$, $\lambda_2 = \tfrac{1}{2} + Q_2(0)$ and $\Lambda = -\tfrac{1}{2} - \lambda_1-\lambda_2$. Recall that $\zeta_a = e^{i\pi m_a}$ for $a \in\{ 1,2\}$. One checks that 
    \[
      \begin{aligned}
    O_{\ib{--}{12}}(\hwv_1) &= 2(1+Q_2(0))\hwv_8,&& 
    O_{\ib{+-}{12}}(\hwv_2) &= -2(1+Q_2(0))\hwv_7,\\
    O_{\ib{-+}{12}}(\hwv_3) &= 2(1+Q_2(0))\hwv_6,&&
    O_{\ib{++}{12}}(\hwv_4) &= -2(1+Q_2(0))\hwv_5.
     \end{aligned}
    \]
    Provided that $\kappa$ is such that $1+Q_2(0) \neq 0$, this module is an irreducible $\tama$-module which decomposes as $M^0 = U_1\oplus U_2$ when viewed as a $\dcover{W}$-representation, with $U_1=\textup{span}\{\hwv_1,\hwv_2,\hwv_3,\hwv_4\}$ and $U_2=\textup{span}\{\hwv_5,\hwv_6,\hwv_7,\hwv_8\}$. Note that the $\mathfrak{h}$-weight spaces are two-dimensional in this situation, while all $\mathfrak{t}_0$-weights are distinct. If $1+Q_2(0) = 0$, this module is irreducible if and only if $Q_2(2) \neq 0$.

    In the case when $2u=m_1$, as $\zeta_1$ is a $2m_1$-root of unity, the behaviour of the spinor weights change since $\zeta_1^{2u+1}=\bar\zeta_1^{2u-1}$. Furthermore, the vectors $\hwv_1,\ldots,\hwv_8$ are not necessarily weight vectors, depending on whether $Q_1(2u) = 0$ or not (so depending on the parameter $\kappa$). We thus consider the basis $\{\wv_1^+,\ldots,\wv_4^+,\wv_1^-,\ldots,\wv_4^-\}$ with $\wv_j^\pm := \hwv_j \pm \hwv_{4+j}$. The corresponding $\mathfrak{t}_0$-weights $\mu_1^\pm,\ldots,\mu_4^\pm$ are given by 
    \[
    \begin{aligned}
       \mu^+_1&=(\lambda_1^+,\lambda_2,\Lambda^+,\zeta_1^{2u+1},\zeta_2),& 
       \mu^-_1&=(\lambda_1^-,\lambda_2,\Lambda^-,\zeta_1^{2u+1},\zeta_2),\\
       \mu^+_2&=(-\lambda_1^+,\lambda_2,-\Lambda^+,\bar\zeta_1^{2u+1},\zeta_2), &
       \mu^-_2&=(-\lambda_1^-,\lambda_2,-\Lambda^-,\bar\zeta_1^{2u+1},\zeta_2),\\
       \mu^+_3&=(\lambda_1^+,-\lambda_2,-\Lambda^+,\zeta_1^{2u+1},\bar\zeta_2),& 
       \mu^-_3&=(\lambda_1^-,-\lambda_2,-\Lambda^-,\zeta_1^{2u+1},\bar\zeta_2),\\
       \mu^+_4&=(-\lambda_1^+,-\lambda_2,\Lambda^+,\bar\zeta_1^{2u+1},\bar\zeta_2),& 
       \mu^-_4&=(-\lambda_1^-,-\lambda_2,\Lambda^-,\bar\zeta_1^{2u+1},\bar\zeta_2),
    \end{aligned}
    \]
with $\lambda^\pm_1 = \tfrac{1}{2}\pm Q_1(2u)$, $\lambda_2 = \tfrac{1}{2} + Q_2(0)$ and $\Lambda^\pm = -\tfrac{1}{2} - \lambda_1^\pm -\lambda_2$. In the present case when $2u=m_1$, the module $M^0$ is reducible as a $\tama$-module and decomposes as $M^0 = U^+\oplus U^-$ where $U^\pm = \textup{span}\{u_1^\pm,\ldots,u_4^\pm\}$ are irreducible $\tama$-modules that satisfy $U^+\cong U^-$ as $\dcover{W}$-representations. Both $\tama$-irreducible summands have one-dimensional $\mathfrak{t}_0$-weights. When $1+2Q_1(2u)=0$, then $\lambda^+=-\lambda^+=0\in\bbz$ and in this situation the irreducible module $U^+$ has $\mathfrak{h}$-spaces of dimension $2$. Finally, when $Q_1(2u)=0$, we have $\lambda_1^+=\lambda^-$ (and $\Lambda^+=\Lambda^-)$, the vectors $\hwv_1,\ldots,\hwv_8$ are weight vectors and $U^+\cong U^-$ as $\tama$-modules. 
\end{example}

Recall that a finite-dimensional $\tama$-module $M$ is called $*$-unitary if it is equipped with a positive-definite sesquilinear pairing (which we agree to be conjugate-linear in the second entry) that satisfies $\langle L u,v\rangle = \langle u,L^*v\rangle$, for any $u,v\in M$ and $L\in\tama$. Unitary modules enjoy the following properties.

\begin{proposition}\label{prop:UnitModule}
    Let $M$ be a finite-dimensional $*$-unitary $\tama$-module. Then the following hold.
    \begin{enumerate}
        \item If $\mu,\lambda$ are distinct elements in $\textup{Wt}_{\mathfrak{t}_0}(M)$, then $M_\mu \perp M_\lambda$.
        \label{enum:Unit1}
         \item When restricted to a weight space  $M_\lambda$, a ladder operator $L_\alpha$ either acts invertibly or by zero.
        \label{enum:Unit2}
    \end{enumerate}
\end{proposition}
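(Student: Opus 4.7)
For item \ref{enum:Unit1}, the idea is to exploit the $\ast$-structure on $\mathfrak{t}_0$. By \cref{prop:StarStrucTriangAlg}, the elements $H_1, H_2, Z$ are self-adjoint and $\dr{1}, \dr{2}$ are unitary, so all five commuting generators of $\mathfrak{t}_0$ are normal, with eigenvalues lying in $\mathbb{R}$ or on the unit circle. For $v \in M_\mu$, $w \in M_\lambda$ and any such generator $A$, the standard manipulation $\mu(A)\langle v, w\rangle = \langle A v, w\rangle = \langle v, A^\ast w\rangle = \overline{\lambda(A)}\langle v, w\rangle$ applies, and $\overline{\lambda(A)}$ equals $\lambda(A)$ in all five cases. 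Since $\mu \neq \lambda$ forces a discrepancy on at least one generator of $\mathfrak{t}_0$, orthogonality of $M_\mu$ and $M_\lambda$ follows.

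For item \ref{enum:Unit2}, the central observation is that, for every root $\alpha \in \Phi$, the product $L_{-\alpha}L_\alpha$ acts as a scalar on each weight space $M_\lambda$. Indeed, the factorisations of \cref{prop:OddOppLadders,prop:EvenEvenLadders} (applied with the roles of $\alpha$ and $-\alpha$ swapped) express $L_{-\alpha}L_\alpha$ as a polynomial in $H_1, H_2, Z$ together with products of the form $O_a^{\delta}O_a^{\bar\delta}$. The polynomial in $H_1, H_2, Z$ obviously acts as a scalar on $M_\lambda$; for the remaining factors, \cref{lem:actionOOu} shows that $O_a^{\delta}O_a^{\bar\delta}$ acts as the scalar $Q_a(\ell_a \mp 1)^2$ on the fixed $\dcover{W}_{(\bar 0, \bar 0)}$-isotype specified by the $\dr{a}$-eigenvalues on $M_\lambda$. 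Combining this with the identity $L_\alpha^\ast = \epsilon_\alpha L_{-\alpha}$ of \cref{prop:StarStrucTriangAlg}, where $\epsilon_\alpha := -(-1)^{|\alpha|} \in \{\pm 1\}$, yields $L_\alpha^\ast L_\alpha = \epsilon_\alpha c_\lambda \, \id_{M_\lambda}$ for some scalar $c_\lambda \in \mathbb{C}$. Since $L_\alpha^\ast L_\alpha$ is positive semi-definite, $\epsilon_\alpha c_\lambda \geq 0$; if this vanishes then $\|L_\alpha v\|^2 = 0$ for every $v \in M_\lambda$, giving $L_\alpha|_{M_\lambda} = 0$, while strict positivity forces $L_\alpha|_{M_\lambda}$ to be injective.

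To upgrade injectivity to invertibility onto $M_{\alpha\cdot\lambda}$, I would re-run the same argument on the target weight space with the opposite root, obtaining that $L_{-\alpha}|_{M_{\alpha\cdot\lambda}}$ is likewise either zero or injective. If $L_\alpha|_{M_\lambda} \neq 0$, then for any $0 \neq v \in M_\lambda$ we have $\langle L_{-\alpha}L_\alpha v, v\rangle = \epsilon_\alpha \|L_\alpha v\|^2 \neq 0$, so via item \ref{enum:Unit1} and the adjoint relation $L_\alpha^\ast = \epsilon_\alpha L_{-\alpha}$ one sees that $L_{-\alpha}|_{M_{\alpha\cdot\lambda}}$ cannot vanish either, hence both maps are injective; comparing dimensions yields $\dim M_\lambda = \dim M_{\alpha\cdot\lambda}$ and bijectivity of $L_\alpha|_{M_\lambda} \colon M_\lambda \to M_{\alpha\cdot\lambda}$. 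The only point that benefits from a careful check is the promotion of $O_a^{\delta}O_a^{\bar\delta}$ from a scalar on each simple $\dcover{W}_{(\bar 0, \bar 0)}$-summand to a scalar on all of $M_\lambda$; this is automatic once one notes that $M_\lambda$ is $\dcover{W}_{(\bar 0, \bar 0)}$-isotypic, so no genuine obstacle is anticipated.
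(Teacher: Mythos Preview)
Your proposal is correct and follows essentially the same approach as the paper's proof: orthogonality of weight spaces via the normality of the generators of $\mathfrak{t}_0$, and the zero/injective dichotomy for $L_\alpha|_{M_\lambda}$ via the factorisations of $L_{-\alpha}L_\alpha$ combined with $L_\alpha^* = -(-1)^{|\alpha|}L_{-\alpha}$. You are more careful than the paper in two respects: you explicitly invoke \cref{lem:actionOOu} to justify that the $O_a^\delta O_a^{\bar\delta}$ factors act as scalars on the $\dcover{W}_{(\bar 0,\bar 0)}$-isotypic space $M_\lambda$, and you upgrade injectivity to an actual bijection $M_\lambda \to M_{\alpha\cdot\lambda}$, which goes a little beyond what the paper records.
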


\begin{proof}
Given $T\in \mathfrak{T}_0 = \alg A(\mathfrak{t}_{0})$ and $\lambda \in \textup{Wt}_{\mathfrak{t}_0}(M)$, let $T_\lambda$ denote the scalar by which $T$ acts on $M_\lambda$. For any $T\in \mathfrak{T}_0$, $\wv\in M_\mu$, $\hwv\in M_\lambda$, with $\lambda,\mu \in \textup{Wt}_{\mathfrak{t}_0}(M)$, by~\cref{prop:StarStrucTriangAlg}, we have
\[
T_\mu\langle \wv,\hwv \rangle = 
\langle T\wv,\hwv \rangle = \langle \wv,T^*\hwv \rangle
=T_\lambda\langle \wv,\hwv \rangle
\]
and hence $\mu\neq \lambda$ implies $\langle \wv,\hwv \rangle =0$  proving~\ref{enum:Unit1}.  From  \cref{prop:OddOppLadders,prop:EvenEvenLadders}, we have that $L_{-\alpha}L_\alpha \wv = C_\lambda \wv$ for all $\wv\in M_\lambda$, for some scalar $C_\lambda$. If $C_\lambda = 0$ then, using \cref{prop:StarStrucTriangAlg}, we have $0 = \langle L_{-\alpha}L_\alpha \wv,\wv \rangle= -(-1)^{|\alpha|}\langle L_\alpha \wv, L_\alpha \wv\rangle$, which implies $L_\alpha \wv =0$ so~\ref{enum:Unit2} is proven. 
\end{proof}

\subsection{Triangle-representations}

    It is possible to refine the coarse classification of \cref{thm:ClassiLabel} by studying all candidate tuples respecting the equations~\eqref{eq:thmclassrel}.
    The classification for $W=D_{2m}\times \ZZ_2$ proceeded in this way, explicitly verifying the relations of the algebra; see~\cite{DBLROVdJ22}.
 
    Given a label for an abstract vector $\hwv$, we can define a vector space from $\dcover{\TriSubAlg}(\hwv)$ and give it an action of $\tama$ by using \cref{prop:ActionTwoThreeSymOutsideHalf} in most cases. However, without having the full presentation by generators and relations of $\tama$ we cannot guarantee that such a vector space endowed with a linear action of the generators descends to a module of $\tama$.

Nevertheless, using that $(\tama,\mathfrak{g})$ is a commuting pair of subalgebras of $H_\kappa\otimes \ca C$, in favourable situations we \emph{can} guarantee the realisation of a finite-dimensional $\tama$-module inside some $(H_\kappa\otimes \ca C)$-module.  

We focus on specific representations, those having a highest-weight vector $\hwv$ such that $\Ld \hwv = 0 = \Mcd \hwv$ and $\Nmm=\Mam$.  They will contain the motivating example of the polynomial monogenic representations and, for $\kappa$ sufficiently small, will also cover the only possibilities; see~\cref{thm:ClassiSmallKappa}. For positive $\kappa$, we will also be able to give an explicit basis for the representations, to be compared with the explicit realisation of monogenics in~\cite{DBLROVdJ23}.

Before giving the definition, we note that the relations $\Ld \hwv= 0 = \Mcd \hwv$ give rise to two extra conditions on the weights of the label of \cref{thm:ClassiLabel}: $A(\mu,-\varpi_2,0)=0=A(\mu,-\varpi_1-\varpi_2,0)$. In addition, the weight space contour will be triangular; see \cref{fig:triangle}. This inspired the name of this class of representations.

\begin{definition}\label{def:mono-type}
    We call a $\tama$-representation $\mathcal{V}$ a \emph{triangle-representation} if,  for a nonnegative integer $N\in \NN$, it has 
     a highest weight of the form $\mu=(\lambda_1,\lambda_2,\Lambda,\zeta_1^{\ell_1},\zeta_2^{\ell_2}) \in  \mathfrak{t}_0^*$ 
     where
     \begin{equation}\label{eq:triWeight}
        \lambda_1 = N+ \tfrac12 \pm Q_1(\ell_1 -2N-1), \quad \lambda_2 = \tfrac12 \pm Q_2(\ell_2-1)\quad \Lambda = -1/2 - \lambda_1 - \lambda_2,
    \end{equation}
   and if
 $\Ld \hwv = 0 = \Mcd \hwv$ for $\hwv \in \mathcal{V}_\mu$.
\end{definition}

Unitary representations have extra structure attached to them and if their highest weight respects~\eqref{eq:triWeight}, then they are a triangle-representation.

\begin{proposition}\label{prop:UnitaryWithWeightAreTri}
    A unitary representation $\mathcal{V}$ with a  highest weight $\mu = (\lambda_1,\lambda_2,\Lambda,\zeta_1^{\ell_1},\zeta_2^{\ell_2})$ given by~\eqref{eq:triWeight} for a certain integer $N$ is a triangle-representation.
\end{proposition}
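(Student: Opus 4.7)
The plan is to combine the factorisation identities of Propositions~\ref{prop:OddOppLadders} and~\ref{prop:EvenEvenLadders} with the ``invertible-or-zero'' dichotomy for ladder operators on unitary modules (Proposition~\ref{prop:UnitModule}\ref{enum:Unit2}). More precisely, for a highest-weight vector $\hwv\in\mathcal{V}_\mu$ and any $\alpha\in\Phi_+$, the element $L_\alpha L_{-\alpha}$ acts on $\hwv$ as a scalar given by the factorised expression; as soon as that scalar vanishes, unitarity forces $L_{-\alpha}\hwv=0$.

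First, I would dispatch the condition $\Lb\hwv=0$ immediately: since $\Lb=L_{+\varpi_2}$ is a positive ladder operator and $\hwv$ is a highest-weight vector, this vanishing is automatic. The substantive content is to show $\Mcd\hwv=L_{-\varpi_1-\varpi_2}\hwv=0$. Applying Proposition~\ref{prop:EvenEvenLadders} with $\alpha=\varpi_1+\varpi_2$, one has
\begin{equation*}
L_{\varpi_1+\varpi_2}L_{-\varpi_1-\varpi_2}=16\bigl((H_1-\tfrac12)^2-O_1^+O_1^-\bigr)\bigl((H_2-\tfrac12)^2-O_2^+O_2^-\bigr)\bigl((H_1+H_2-1)^2-(Z-\tfrac12)^2\bigr).
\end{equation*}
Evaluated on $\hwv$, the middle factor becomes $(\lambda_2-\tfrac12)^2-Q_2(\ell_2-1)^2$, which vanishes because the weight relation $\lambda_2=\tfrac12\pm Q_2(\ell_2-1)$ from~\eqref{eq:triWeight} is in exact balance with the identity $O_2^+O_2^-\hwv=Q_2(\ell_2-1)^2\hwv$ from Lemma~\ref{lem:actionOOu}. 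Hence $L_{\varpi_1+\varpi_2}L_{-\varpi_1-\varpi_2}\hwv=0$, and Proposition~\ref{prop:UnitModule}\ref{enum:Unit2} then delivers $\Mcd\hwv=0$.

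I do not expect any real obstacle: the weight tuple~\eqref{eq:triWeight} is engineered precisely so that the $H_2$-factor of $L_\alpha L_{-\alpha}$ annihilates $\hwv$ for every root $\alpha$ whose $\varpi_2$-component equals $+\varpi_2$, and the only care required is in matching the abbreviations ($\Lb$, $\Mcd$) with the underlying roots and signs. Should one also want the companion relation $\Ld\hwv=0$, the same recipe applied to Proposition~\ref{prop:OddOppLadders} with $\beta=\varpi_2$ (where the first factor $(H_2-\tfrac12)^2-O_2^+O_2^-$ again kills $\hwv$ by the $\lambda_2$-constraint) would yield it at once.
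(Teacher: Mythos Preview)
Your proof is correct and follows the same strategy as the paper: compute $L_{-\alpha}L_{\alpha}\hwv$ via the factorisations of Propositions~\ref{prop:OddOppLadders} and~\ref{prop:EvenEvenLadders}, observe that the $H_2$-factor $(\lambda_2-\tfrac12)^2-Q_2(\ell_2-1)^2$ vanishes by~\eqref{eq:triWeight}, and then invoke Proposition~\ref{prop:UnitModule}\ref{enum:Unit2} to conclude $L_\alpha\hwv=0$. The paper's own proof is the same two-line argument, phrased for $\Ld$ and $\Mcd$ (the $\Lb$ in Definition~\ref{def:mono-type} is evidently a typo for $\Ld$, as the surrounding discussion and the proof itself make clear); you handled this correctly by noting that $\Lb\hwv=0$ is automatic and supplying the $\Ld$ case as well.
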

\begin{proof}
 The only thing remaining to check for $\mathcal V$ to be triangle-representation is that $\Ld\hwv = 0 = \Mcd\hwv$. If $N=0$, all ladder operators act as $0$. If $N>0$, from the value of $\lambda_1$, $\lambda_2$ and $\Lambda$, we have $\Lb\Ld\hwv = 0 = \Mab\Mcd\hwv$ by the factorisations of \cref{prop:OddOppLadders,prop:EvenEvenLadders}. Then item~\ref{enum:Unit2} of \cref{prop:UnitModule}  implies that $\Ld\hwv = 0 = \Mcd\hwv$.
 \end{proof}

When $\kappa$ is in a sufficiently small neighbourhood of $0$, there are further restrictions on the possible labels that a finite-dimensional irreducible representation can have. The following definition makes precise what is meant by a sufficiently small neighbourhood of $0$ in our context.
\begin{definition}\label{def:smallkappa}
	We call the parameter function $\kappa$ \emph{small} if $|Q_a(j)|< \tfrac12$ for all $j\in \ZZ$ and $a\in \{1,2\}$.
\end{definition}

When $\kappa$ is small, the only finite-dimensional simple $\tama$-modules are the triangle-representations. 

\begin{theorem}\label{thm:ClassiSmallKappa}
	If $\kappa$ is small, then any finite-dimensional irreducible $\tama$-representation is a triangle-representation.
\end{theorem}
\begin{proof}
Let $\mathcal{V}$ be a finite-dimensional irreducible $\tama$-representation with highest $\mathfrak{t}_0$-weight $\mu =(\lambda_1,\lambda_2,\Lambda,\zeta_1^{\ell_1},\zeta_2^{\ell_2})$.
We want to solve the system $A(\mu,\alpha,0)=0$ for $\alpha\in\Phi_+$ of \cref{thm:ClassiLabel}. We focus first on  $\{\varpi_1,\varpi_2,\varpi_1+\varpi_2\}
\subset \Phi_+$. Since $0\leq |Q_1(\ell_1-1)|, |Q_2(\ell_2-1)|< 1/2$, the system of equations $A(\mu,\varpi_1,0)=0$, $A(\mu,\varpi_2,0)=0$ and $A(\mu,\varpi_1+\varpi_2,0)=0$  is equivalent to the following set of quadratic equations:
	\begin{equation}\label{eq:soluhwvgen}
		\begin{gathered}
			(\Lambda + \lambda_2)^2 = (\lambda_1+1/2)^2, \quad
			(\Lambda+\lambda_1)^2 = (\lambda_2 + 1/2)^2, \quad
			(\lambda_1+\lambda_2 + 1)^2 - (\Lambda-1/2)^2 =0.
		\end{gathered}
	\end{equation}
	The solutions to~\eqref{eq:soluhwvgen} must respect $\Lambda = -1/2 - \lambda_1 -\lambda_2$. Then the remaining equation $A(\mu,\varpi_1-\varpi_2,0)=0$ is satisfied either by $\lambda_2 = 1/2$ or $\lambda_2 = 1/2\pm Q_2(\ell_2+1)$. For both instances, $\Mbm=0=\Nmm$.

	Let $N:= \Nmp$. We turn to equation $A(\mu,-\varpi_1+\varpi_2,N)=0$. As $\Lambda = -1/2-\lambda_1-\lambda_2$,  the only way for it to hold is by having
	\begin{align*}
		\lambda_1 - N -1/2 = \pm Q_1(\ell_1+1-2N), \text{ so } \lambda_1 = 1/2+N \pm Q_1(\ell_1+1-2N),
	\end{align*}
	since $(\lambda_2 +N +1/2)^2 - Q_2(\ell_2-1+2N)^2 \neq 0$ and $(\lambda_1 - \lambda_2 - 2N -1)^2 - (\lambda_1 + \lambda_2)^2 \neq 0$. 	Then we also have $A(\mu,-\varpi_1,N)=0$, and furthermore, $A(\mu,-\varpi_1,j)\neq 0$ for $0\leq j < N$ as
	\begin{align*}
		((N - j \pm Q_1(\ell_1+1-2N))^2 - Q_1(\ell_1+1-2j)^2 &\neq 0\\
		((-1)^j(N+1/2 \mp Q_1(\ell_1+1-2N) \mp Q_2(\ell_2+1)) + 1/2 \pm Q_2(\ell_2+1))^2 &\neq 		(N-j \pm Q_1(\ell_1 +1-2N))^2,
	\end{align*}
	which comes from the fact that $Q_a(j)\leq 1/2$. This shows that $\Mam = N$.
    Finally, we show that $\Ld\hwv = 0 = \Mcd\hwv$. Suppose that $\Ld\hwv \neq 0$. Then, since $\Mad\hwv =0$, we apply  \cref{prop:OddOddLaddersIsEven} on $\hwv$ to get $\La\Ld\hwv = 4\Mad (Z-H_1 - H_2 +\tfrac{1}{2})\hwv =0$ since $\kappa$ small means $\Lambda - \lambda_1 - \lambda_2 + \tfrac{1}{2}$ is nonzero. We consider then $(\Lc\La)\Ld\hwv$. Using \cref{prop:OddOppLadders} we find
    \begin{equation}\label{eq:LcLaLdiszero}
        0=(\Lc\La)\Ld\hwv=((\lambda_1 + \tfrac12)^2 - Q_1(\ell_1-1)^2)((-\Lambda +\lambda_2)^2 - (\lambda_1 + \tfrac12)^2)\Ld\hwv.
    \end{equation}
   Replacing the values for $\Lambda$, $\lambda_1$ and $\lambda_2$ in~\eqref{eq:LcLaLdiszero} we find that $\Ld\hwv =0$ since $\kappa$ small prevents the factor to be zero.
    From $\Ld\hwv=0$ and \cref{prop:EvenEvenLadders}, we find $\Mcd\hwv=0$ because $\kappa$ is small. The representation $\mathcal{V}$ is a triangle-representation.    
\end{proof}

  In particular, when $\kappa=0$,  we find $\lambda_1 = N+1/2$, $\lambda_2 = 1/2$, and $\Lambda = -1/2-\lambda_1-\lambda_2$, and $\Mam=N = \Nmp$, $\Mbm =0=\Nmm$. If, in addition to being small, a parameter is in the open neighbourhood of $0$ in the unitary loci of all standard modules (in the sense of~\cite{ES09}), we can guarantee that there exists a triangle-representation of $\tama$ realised inside a $(H_{\kappa}\otimes \Clif)$-module using unitarity.

\begin{theorem}\label{thm:SmallKappaRealisation}
    
    Suppose $\kappa$ is small and taken in a neighbourhood of the $0$ parameter where all $H_{\kappa}$-modules $M_\kappa(\tau)$ are unitarisable. Every finite-dimensional irreducible $\tama$-representation occuring in the kernel of $\DDop$ is a triangle-representation. Such representations can thus be realised inside $M_{\kappa}(\tau)\otimes \bbs$ for the spin module $\bbs$ and a standard module $M(\tau)$.
    
\end{theorem}
\begin{proof}
    From \cite[Propositions~4.3 and 4.4]{ES09} there is a small, convex and open neighbourhood of the $0$ parameter where all $M_\kappa(\tau)$ are irreducible and unitarisable. Hence, its intersection with the locus of small parameters is a non-empty open set. For such standard modules, the unitarity will imply a completely reducible joint-decomposition of the pair $(\tama,\mathfrak{g})$. All finite-dimensional simple $\tama$-modules that occur will be triangle-representations. This proves the existence of triangle-representations realised inside a $(H_{\kappa}\otimes\Clif)$-module.
\end{proof}

Granted that a triangle-representation is realised inside some $M_\kappa(\tau)\otimes \bbs$ and has some further properties, as described in \cref{prop:MonoMaxi}, we can use the action of the $\mathfrak{T}$-algebra to construct a useful basis with explicit action. They are akin to representations of type I in the classification of~\cite{DBLROVdJ22}. The case $\tau = \mathrm{triv}$ is the monogenic case, see \cref{fig:ExKappaToZero} for a pictorial example.

We will assume $\kappa$ positive here. Most negative $\kappa$ can also be considered with minor changes, but some negative exceptional values of $\kappa$, those where the representation theory of the underlying rational Cherednik algebra changes (see~\cite{Ch06}), would modify the behaviour of the representation theory. Some of these exceptional values are considered in Example~\ref{ex:OneHalf}.  

Avoiding those exceptional values also enables the explicit realisation of the representation with the generalised symmetry methods of~\cite{DBLROVdJ23}, since those values are precisely those for which projection operators get ill-defined. 

\begin{proposition}\label{prop:MonoMaxi}
Let $N\in\NN$ and $\kappa$ be positive. Suppose that a finite-dimensional irreducible triangle-representation $\mathcal{V}$  is realised inside some $M_\kappa(\tau)\otimes\bbs$ and that its $\mathfrak{t}_0$-highest weight $\mu=(\lambda_1,\lambda_2,\Lambda,\zeta_1^{\ell_1},\zeta_2^{\ell_2})$ is given by $\lambda_1 := N+1/2 + Q_1(\ell_1 +1 -2 N)$, $\lambda_2 := 1/2 + Q_2(\ell_2+1)$ and $\Lambda := -1/2-\lambda_1-\lambda_2$ with $\ell_1 \equiv 2N+1\modSB m_1$ and $\ell_2 \equiv 1 \modSB m_2$. Then, $\mathcal{V}$ has dimension $2(N+1)(N+2)$, and a highest-weight vector $\hwv$ of weight $\mu$
    defines a basis given by
 \begin{equation}\label{eq:propMonoMaxiBasis}
 	\mathcal{B}:= \Big\{\hwv_{ij}^{\delta\eps}:= (\df2)^{\eps}(\df1)^{\delta}(\Lb)^j(\Lc)^i  \hwv  \mid  \delta,\eps\in \{0,1\},  j\leq i\in \{0,\dots, N\}\Big\}.
 \end{equation}
  \end{proposition}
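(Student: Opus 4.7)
The argument breaks into three stages---linear independence via weights, nonvanishing, and spanning. First, I compute the $\mathfrak{t}_0$-weight of $\hwv_{ij}^{\delta\eps}$ using \cref{prop:LadderProp} and \eqref{eq:rgroupactweight}: $\Lc$ lowers $\mu(H_1)$ by $1$, $\Lb$ raises $\mu(H_2)$ by $1$, while $\df{1}$ and $\df{2}$ flip the signs of the $H_1$- and $H_2$-components (together with the corresponding $\dr{a}$-components) and toggle the sign of the $Z$-component. For $\kappa$ small and positive, $\lambda_1$ lies strictly between $N$ and $N+1$ and $\lambda_2$ strictly between $0$ and $1$, so $\lambda_1 - i$ and $-(\lambda_1 - i)$ never coincide for $0 \le i \le N$, and similarly for $\lambda_2$; hence no two of the prescribed vectors share a $\mathfrak{t}_0$-weight. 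Combined with the fact that \cref{prop:WS_1D_If_HWS_1D} gives one-dimensional $\mathfrak{t}_0$-weight spaces, the vectors in $\mathcal B$ are pairwise linearly independent as soon as they are shown to be nonzero.

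For nonvanishing, since $\df{1},\df{2}$ are group units, I only need to show $(\Lb)^j(\Lc)^i\hwv \neq 0$ for $0 \le j \le i \le N$. Feeding the hypothesis on $\mu$ into the chain analysis of \cref{thm:ClassiSmallKappa} yields $\Mam = \Nmp = N$ and $\Mbm = \Nmm = 0$, so \cref{prop:OddOppLadders} immediately gives $(\Lc)^i\hwv \neq 0$ and $(\Mcb)^j\hwv \neq 0$ for $0 \le i,j \le N$. By the acute-angle case of \cref{prop:ExtraTriangRules}, $\Mcb$ commutes with both $\Lc$ and $\Lb$, and by \cref{prop:OddOddLaddersIsEven} the product $\Lb\Lc$ acts on any $\mathfrak{t}_0$-weight vector as a scalar multiple of $\Mcb$. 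An induction on $j$, using $\Lb\hwv = 0$, then produces
\[
(\Lb)^j (\Lc)^i \hwv = c_{ij}\, (\Lc)^{i-j} (\Mcb)^j \hwv
\]
with $c_{ij} \in \CC^{\times}$ for $\kappa$ small and positive. The right-hand side is nonzero by a final application of \cref{prop:OddOppLadders} to the $\Lc$-chain starting at $(\Mcb)^j\hwv$, whose length is $N - j \ge i - j$.

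For spanning, by \cref{prop:triangstruc} we have $\TriSubAlg = \TriSubAlg_- \TriSubAlg_+ \TriSubAlg_0$; on $\hwv$ the positive part acts as zero and the Cartan part as scalars, so $\TriSubAlg(\hwv) = \TriSubAlg_-(\hwv)$. The triangle-representation hypothesis $\Ld\hwv = \Mcd\hwv = 0$ combined with $\Mbm = \Nmm = 0$ forces every monomial in $\TriSubAlg_-$ acting nontrivially on $\hwv$ to involve only $\Lc$ and $\Mcb$; by the preceding stage this identifies $\TriSubAlg(\hwv)$ with $\mathrm{span}\{(\Lb)^j(\Lc)^i\hwv : 0 \le j \le i \le N\}$, of dimension $(N+1)(N+2)/2$. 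The $\ell_a$-congruences $\ell_1 \equiv 2N+1 \pmod{m_1}$ and $\ell_2 \equiv 1 \pmod{m_2}$ together with $\lambda_a > \tfrac12$ rule out cases \ref{enum:AllWS4T} and \ref{enum:AllWS2T} of \cref{thm:AllWeightSpace} at every weight of $\mathcal V$, so case \ref{enum:AllWS1T} applies and $\mathcal V = \TWsub(\hwv)$. Multiplying by the four coset representatives $1, \df{1}, \df{2}, \df{1}\df{2}$ of $\dcover W_{(\bar 0,\bar 0)}$ in $\dcover W$ then yields $\dim \mathcal V = 4 \cdot (N+1)(N+2)/2 = 2(N+1)(N+2)$, and $\mathcal B$ is a basis.

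The hardest step will be confirming, at every weight in the triangle and its $\dcover W$-reflections, that the exceptional cases of \cref{thm:AllWeightSpace} are avoided---this requires inspecting the $\ell_a$-congruences and the $H_a$-eigenvalues throughout $\mathcal V$, not only at $\hwv$. A secondary difficulty is the careful tracking of the scalars $c_{ij}$ in the inductive rewriting of $(\Lb)^j(\Lc)^i\hwv$, to show they remain nonzero along the whole triangle under the smallness and positivity of $\kappa$.
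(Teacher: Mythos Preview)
Your overall architecture---distinct $\mathfrak{t}_0$-weights for linear independence, chain-length control for nonvanishing, and \cref{thm:AllWeightSpace}\ref{enum:AllWS1T} for spanning---matches the paper's, and the extra care you take with nonvanishing (rewriting $(\Lb)^j(\Lc)^i\hwv$ as $c_{ij}(\Lc)^{i-j}(\Mcb)^j\hwv$ via \cref{prop:OddOddLaddersIsEven} and \cref{prop:ExtraTriangRules}) is a genuine improvement over the paper's terse treatment.

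There is, however, a recurring slip: the hypothesis is that $\kappa$ is \emph{positive}, not small, and several of your steps silently use smallness. First, your claim that ``$\lambda_1$ lies strictly between $N$ and $N+1$'' fails once $Q_1(\ell_1-2N-1)>\tfrac12$; the correct (and stronger) statement, which the paper uses, is that positivity of $\kappa$ and the congruence conditions force $\lambda_1-i>\tfrac12$ and $\lambda_2+j>\tfrac12$ for all $0\le j\le i\le N$. This is what you actually need, both to separate $\mathfrak{h}$-weights under the sign flips and to keep every weight away from the exceptional values $\pm\tfrac12$ so that \cref{prop:ActionTwoThreeSymOutsideHalf} applies and cases \ref{enum:AllWS4T}--\ref{enum:AllWS2T} of \cref{thm:AllWeightSpace} are excluded. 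Second, you invoke \cref{thm:ClassiSmallKappa} to obtain $N_{\ib{-}{1}}=N_{\ib{-+}{12}}=N$ and $N_{\ib{-}{2}}=N_{\ib{--}{12}}=0$, but that theorem assumes small $\kappa$. The latter two equalities are immediate from the triangle-representation hypothesis $\Ld\hwv=\Mcd\hwv=0$; for the former, argue directly that $A(\mu,-\varpi_1+\varpi_2,N)=0$ from the explicit form of $\lambda_1$, and that $A(\mu,-\varpi_1+\varpi_2,j)\neq 0$ for $0\le j<N$ because the $\ell_a$-congruences make $Q_1,Q_2$ vanish at all intermediate shifts (this is the ``maximality'' the paper refers to). Third, the nonvanishing of your scalars $c_{ij}$ must likewise be checked for positive~$\kappa$, not small; again the inequality $|\lambda_a-\cdots|>\tfrac12$ does the work. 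Once you replace the smallness appeals by these positivity arguments, the proof goes through.
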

   \begin{proof}
   The element $\hwv$ defined in the proposition is a highest-weight vector  and so $\La \hwv = \Lb\hwv = 0 = \Mab\hwv = \Mad\hwv$. The value of $\lambda_1$ makes it so that $A(\mu,-\varpi_1+\varpi_2,N)=0$. The values of $\ell_1$ and $\ell_2$ ensure that $\mathcal{V} = \dcover{\TriSubAlg}(\hwv)$ by \cref{thm:AllWeightSpace} item~\ref{enum:AllWS1T}. 
   From the maximality of $Q_1(\ell_1 + 1 -2N)$ and $Q_2(\ell_2+1)$, we have that $A(\mu,-\varpi_1+\varpi_2,j)\neq 0$ for $0\leq j < N$.

   Then the action of $\mathfrak{T}$ on the set~\eqref{eq:propMonoMaxiBasis} is retrieved from \cref{prop:OddOppLadders,prop:EvenEvenLadders,prop:OddOddLaddersIsEven}. The full action of $\tama$ on the $\hwv_{ij}^{\delta\eps}$ is retrieved from \cref{eq:actOijk,eq:actOij}. Indeed, the $H_1$- and $H_2$-weights of elements of $\mathcal{B}$ are given by:
   \begin{align}
   H_1 \hwv_{ij}^{\delta\eps} &= (-1)^{\delta} (\lambda_1-i)\hwv_{ij}^{\delta\eps}, & H_2 \hwv_{ij}^{\delta\eps} &= (-1)^{\eps} (\lambda_2+i)\hwv_{ij}^{\delta\eps},
   \end{align}
   which implies in particular that $(2\lambda_1^{\mathfrak{u}}+\delta)\neq 0$ and $(2\lambda_2^{\mathfrak{u}}+\eps)\neq 0$ for all $u\in \mathcal{B}$, since $|(\lambda_1-i)| > 1/2$ and $|(\lambda_2+i)| >1/2$ for all $0\leq i\leq N$. So the conditions of the proposition are fulfilled.

   Finally, all the vectors of $\mathcal{B}$ have distinct pairs of  $H_1$- and $H_2$-eigenva\-lues, so they are all linearly independent. It is thus a basis of $2(N+1)(N+2)$ elements, proving the proposition.
   \end{proof}

\newcommand{\DIM}{3}
\begin{figure}[h]
    \centering
    \begin{tikzpicture}[scale=.8]
 \foreach \i in {2,...,\DIM}
    {
    \foreach \j in {\i,...,\DIM}
        {
    	\foreach \x/\y in {1/1, 1/-1, -1/1, -1/-1}
             {
            \linkbetween{{\x*(\DIM-\j+1-.5)}}{{\y*(\i-.5}}{{\x*(\DIM-\j+1-.5)}}{{\y*(\i-1-.5)}}{orange}
            \linkbetween{{\x*(\i-.5)}}{{\y*(1-.5)}}{{\x*(\i-1-.5)}}{{\y*(1-.5)}}{teal} }
        }
    }
    \foreach \i in {1,...,\DIM}
        {
        \foreach \j in {\i,...,\DIM}
        {
        \foreach \x/\y in {1/1, 1/-1, -1/1, -1/-1}
        {
    \draw[color = black,fill=gray, opacity=.7]
    ({\x*(\i-.5)},{\y*(\DIM-\j+1-.5)}) circle(3pt);
    }
    }}
    \draw (3-.5,1) node {$\hwv_{00}^{++}$};
    \draw ({-(3-.5)},1) node {$\hwv_{00}^{-+}$};
    \draw ({-(3-.5)},-1) node {$\hwv_{00}^{--}$};
    \draw (3-.5,-1) node {$\hwv_{00}^{+-}$};
    \draw (1,3) node {$\hwv_{22}^{++}$};
    \draw[dotted,->] (-\DIM,0) -- (\DIM,0) node[right] {$\varpi_1$};
    \draw[dotted,->] (0,-\DIM) -- (0,\DIM) node[above] {$\varpi_2$} ;
\end{tikzpicture}
    \caption{An example of a triangle-representation basis with $N=2$ reconstructed in \cref{prop:MonoMaxi}. The horizontal teal lines are applications of $\Lc$, and the vertical orange lines, of $\Lb$, with the appropriate reflections. A node at $(i,j)$ is the weight space of $\mathfrak h$-weight $(\tfrac12+i+Q_1(\ell_1+1-2N),\tfrac12+j+Q_2(\ell_2+1))$.}
    \label{fig:triangle}
\end{figure}

\section*{Acknowledgements}
Part of the results of this paper appeared in the doctoral thesis of ALR. The authors, and ALR in particular, wish to thank Kieran Calvert and Martina Balagovi\'c for many insightful comments on the research presented here given in the course of the thesis evaluation. Finally, we thank the referees for their comments.

\section*{Declarations}
\begingroup
\singlespacing
\subsection*{Ethical Approval} Not applicable.
\subsection*{Competing interests} The authors declare no conflict of interest related to this work.
\subsection*{Authors' contribution} MDM, ALR, and RO took an active role in all parts of the research. MDM, ALR and RO wrote and reviewed the whole manuscript.
\subsection*{Funding}  ALR was supported in part by the EOS Research Project [grant number 30889451] and by scholarships from the Fonds de recherche du Qu\'ebec -- Nature et technologies [grant number 270527 and 326641]. ALR acknowledges the support given under Federal Ministry of Education and Research of Germany and by Sächsische Staatsministerium für Wissenschaft, Kultur und Tourismus 
(project identification number ScaDS.AI) while he worked at ScaDS.AI and the hospitality of the MPI-MiS Leipzig during his stay, and has received funding through his Hausdorff postdoc, which is funded by Deutsche Forschungsgemeinschaft (DFG, German Research Foundation) under Germany's Excellence Strategy -- GZ 2047/1, Projekt-ID 390685813. 
RO was supported by a postdoctoral fellowship, fundamental research, of the Research Foundation -- Flanders (FWO), number 12Z9920N.
MDM gladly acknowledges the support of the special research fund (BOF) from Ghent University [BOF20/PDO/058].
\subsection*{Availability of data and software}
Data sharing is not applicable to this article as no datasets
were generated or analysed during the study.
\endgroup
\printbibliography
\end{document}